\documentclass[12pt,reqno]{amsart}

\newcommand\version{October 21, 2026}

\usepackage{amsmath,amsfonts,amsthm,amssymb,amsxtra}
\usepackage{hyperref}
\usepackage{bbm} 
\usepackage{bm} 
\usepackage{accents} 

\newtheorem{theorem}{Theorem}[section]
\newtheorem{proposition}[theorem]{Proposition}
\newtheorem{lemma}[theorem]{Lemma}
\newtheorem{corollary}[theorem]{Corollary}

\newtheorem*{problem*}{Problem}

\theoremstyle{definition}

\newtheorem{definition}[theorem]{Definition}

\newtheorem{assumption}[theorem]{Assumption}

\theoremstyle{remark}

\newtheorem{remark}[theorem]{Remark}

\numberwithin{equation}{section}

\newcommand{\C}{\mathbb{C}}

\newcommand{\D}{\mathcal{D}}

\renewcommand{\epsilon}{\varepsilon}

\newcommand{\F}{\mathcal{F}}

\newcommand{\ck}{\mathcal{K}}

\newcommand{\N}{\mathbb{N}}

\renewcommand{\phi}{\varphi}
\newcommand{\R}{\mathbb{R}}

\newcommand{\bs}{\mathbb{S}}

\DeclareMathOperator{\im}{Im}

\DeclareMathOperator{\ran}{ran}
\DeclareMathOperator{\re}{Re}
\DeclareMathOperator*{\slim}{s-lim}

\DeclareMathOperator{\supp}{supp}
\DeclareMathOperator{\sgn}{sgn}

\def\bs{\mathbb{S}}

\def\cb{\mathcal{B}}
\def\ce{\mathcal{E}}
\def\ch{\mathcal{H}}

\def\Rd{{\mathbb{R}^d}}

\newcommand{\me}[1]{\mathrm{e}^{#1}}
\newcommand{\one}{\mathbf{1}}
\newcommand{\f}{\frac}
\newcommand{\La}{L_\kappa}

\begin{document}

\title[Wave operators for Hardy operators --- \version]{On wave operators for Hardy operators \\ with and without perturbations}

\author[A. Ishida]{Atsuhide Ishida}
\address[Atsuhide Ishida]{Katsushika Division, Institute of Arts and Sciences, Tokyo University of Science, 6-3-1 Niijuku, Katsushika-ku, Tokyo 125-8585, Japan}
\email{aishida@rs.tus.ac.jp}

\author[K. Merz]{Konstantin Merz}
\address[Konstantin Merz]{Institute for Theoretical Physics, ETH Zurich, Wolfgang-Pauli-Strasse 27, 8093 Zurich, Switzerland, and Institut f\"ur Analysis und Algebra, Technische Universit\"at Braunschweig, Universit\"atsplatz 2, 38106 Braun\-schweig, Germany}
\email{k.merz@tu-bs.de, k.merz@phys.ethz.ch}


\subjclass[2020]{35Q40,46N50,47A40,35P25,81U05}
\keywords{Hardy inequality, Hardy operator, scattering theory, wave operator, asymptotic completeness}

\date{\version}

\begin{abstract}
  We prove asymptotic completeness of the wave operators in time-de\-pen\-dent scattering theory which intertwine the ordinary or fractional Laplacian and the corresponding Hardy operator, i.e., the fractional Laplacian with an added subcritical or critical Hardy potential, whenever the Hardy potential is short-range.
  Moreover, we prove asymptotic completeness of the wave operators intertwining Hardy operators with and without external short-range perturbations. We also show the nonexistence of the considered wave operators when the Hardy or external potentials are long-range.
  We arrive at these results using three different approaches.
  The first approach uses an abstract semigroup framework for asymptotic completeness of wave operators for Fourier multipliers with short-range potentials. The second approach uses Kato smoothness theory. In the third approach, we develop and apply new propagation estimates for Hardy operators with and without perturbations.
\end{abstract}

\maketitle
\tableofcontents


\section{Introduction and main results}

Let $d\in\N=\{1,2,...\}$ and $\alpha\in(0,d)\cap(0,2]$. Our principal goal in this paper is to advance the scattering theory of
\begin{align}
  L_\kappa = (-\Delta)^{\alpha/2}-\frac{\kappa}{|x|^\alpha} \quad \text{in} \ L^2(\R^d).
\end{align}
This operator is well-defined as a nonnegative, selfadjoint operator for all $\kappa\leq\kappa_{\rm c}$ with
\begin{align}
  \kappa_{\rm c} = \kappa_{\rm c}(d,\alpha) = \frac{2^\alpha\Gamma\left(\frac{d+\alpha}{4}\right)^2}{\Gamma\left(\frac{d-\alpha}{4}\right)^2}
\end{align}
being the sharp constant in the Hardy--Kato--Herbst inequality \cite{Kato1966,Herbst1977}, or Hardy inequality for short, which states
\begin{align}
  L_0 \geq \kappa_{\rm c}|x|^{-\alpha}
\end{align}
in the sense of quadratic forms on $L^2(\R^d)$; see also \cite{Kovalenkoetal1981,Yafaev1999,Franketal2008H,FrankSeiringer2008,Bogdanetal2016} for different proofs. We call $\kappa/|x|^\alpha$ the Hardy potential and $L_\kappa$ the Hardy operator.
The scattering theory of $L_\kappa$ for $\alpha=2$ is well understood \cite{Boutetetal1996,DerezinskiRichard2017,Mizutani2020S,Miaoetal2023}; see also the overview at the end of this introduction. While our primary interest concerns the nonlocal case $\alpha<2$, our methods also allow us to recover and extend existing results for $\alpha=2$.

\smallskip
The operator $L_\kappa$ is of great interest in both pure and applied mathematics. Indeed, since $L_\kappa$ is homogeneous under dilations, it provides an important test ground in harmonic analysis to investigate how much of Euclidean Fourier analysis remains valid in the absence of translation invariance. Moreover, $L_\kappa$ frequently arises as a scaling limit of more complicated operators, and one aims to analyze it in as much detail as possible to draw conclusions about the original operators from which $L_\kappa$ arises. For instance, for $\alpha=1$, $L_\kappa$ occurs in mathematical physics, where it provides a relativistic description of electrons in heavy atoms and stars, see, e.g., \cite{Sommerfeld1915,Franketal2008,Franketal2020P,Chandrasekhar1931,LiebThirring1984,LiebYau1987,LiebYau1988,FrohlichLenzmann2007D}.

\smallskip
In the next subsection, we recall the definition and some spectral properties of $L_\kappa$. In the ensuing subsections, we state the main results of the paper, first informally, then precisely, put them into context and discuss previous results, and give an outlook. We close the introduction with the organization of the rest of the paper.


\subsection{Definition and spectral properties of $L_\kappa$}

For $\kappa\leq\kappa_{\rm c}$, the operator $L_\kappa$ is defined as the Friedrichs extension of the corresponding expression on $C_c^\infty(\R^d)$. Its form domain is the inhomogeneous Sobolev space $H^{\alpha/2}(\R^d)$ if $\kappa<\kappa_{\rm c}$, while for $\kappa=\kappa_{\rm c}$ it is the closure of $C_c^\infty(\R^d)$ with respect to the norm $(\langle u,L_\kappa u\rangle + \|u\|_2^2)^{1/2}$. Here, $\langle\cdot,\cdot\rangle$ denotes the usual scalar product in $L^2(\R^d)$, which is linear in the second argument, and $\|\phi\|_2^2:=\langle\phi,\phi\rangle$. If there is no danger of confusion, we abbreviate $\|\phi\|_2$ by $\|\phi\|$.

For $\kappa<\kappa_{\rm c}$, the spectrum of $L_\kappa$ in $[0,\infty)$ is purely absolutely continuous, see \cite[Theorem~2.3]{Herbst1977}, \cite[Remark~1.4]{MizutaniYao2021}, and the remark after Lemma~\ref{lem5}. Moreover, zero is not an eigenvalue for all $\kappa\leq\kappa_{\rm c}$, which can be read off from the ground state representation \cite[Proposition~4.1]{Franketal2008H}, \cite[Proposition~5]{Bogdanetal2016}; see also \cite{Yafaev1999,BogdanMerz2024} for different proofs, which work in every separate angular momentum channel. Although zero is not an eigenvalue, $L_\kappa$ has infinitely many generalized ground states, i.e., functions $h\in L_{\rm loc}^2(\R^d)$ which do not belong to $L^2(\R^d)$ but for which $L_\kappa h=0$ in the distributional sense. For instance, $L_\kappa|x|^{-\sigma}=0$, where $\sigma=\sigma_\kappa\in(-M,(d-\alpha)/2]$ with $M=\alpha$ if $\alpha<2$ and $M=\infty$ if $\alpha=2$, is the unique solution to
\begin{align}
  \label{eq:defsigma}
  \kappa = \Phi(\sigma) := \frac{2^\alpha\Gamma\left(\frac{\sigma+\alpha}{2}\right) \Gamma\left(\frac{d-\sigma}{2}\right)}{\Gamma\left(\frac\sigma2\right)\Gamma\left(\frac{d-\alpha-\sigma}{2}\right)}, \quad \sigma\in\left(-M,\frac{d-\alpha}{2}\right],
\end{align}
see, e.g., \cite[(3.5)]{Franketal2008H} or \cite[Proposition~5]{Bogdanetal2016}. In particular, $\Phi(\sigma)$ is strictly monotonically increasing for $\sigma\in(-M,(d-\alpha)/2]$, and $\Phi(\sigma)$ satisfies $\lim_{\sigma\to-M}\Phi(\sigma)=-\infty$, $\Phi(0)=0$, and $\Phi((d-\alpha)/2)=\kappa_{\rm c}$. Note also that $L_{\kappa_{\rm c}}$ has a so-called virtual level or zero-energy resonance, i.e., $L_{\kappa_{\rm c}}-V$ has an eigenvalue for any $0\not\equiv V \geq0$ satisfying certain integrability conditions, cf.~\cite{EkholmFrank2006} for $\alpha=2$.


\subsection{Summary of main results and new tools}

With the most important notation being settled, let us give an overview over our main results and new tools that may be of independent interest.

\subsubsection*{Main results}

Our main results show asymptotic completeness, i.e., existence and completeness of (M\o ller) wave operators intertwining, possibly perturbed, Hardy operators with fractional Laplacians or Hardy operators for various parameter regimes, and absence of singular continuous spectrum of the perturbed operators. In the following, we introduce this terminology precisely.

For two selfadjoint operators $H$ and $H_0$ on a Hilbert space $\mathcal H$, the wave operators are defined as the strong limits
\begin{align}
  W_\pm(H,H_0) := \slim_{t\to\pm\infty}\me{itH}\me{-itH_0}P_{\rm ac}(H_0)
\end{align}
provided these limits exist. Here, $P_{\rm ac}(H_0)$ denotes the spectral projection on the absolutely continuous spectral subspace of $H_0$, denoted by $\mathcal H_{\rm ac}(H_0)$. As the analysis of $W_-(H,H_0)$ is under our assumptions the same as that of $W_+(H,H_0)$, we will from now on only consider the latter and denote it by $W(H,H_0)$. If $W(H,H_0)$ exists, then $\ran(W(H,H_0))$ is a closed subspace of $\mathcal H_{\rm ac}(H)$, we have the intertwining property $\phi(H)W(H,H_0)=W(H,H_0)\phi(H_0)$ for any bounded Borel function $\phi$, and $\|W(H,H_0)u\|=\|P_{\rm ac}(H_0)u\|$ for all $u\in\ch$, i.e., $W(H,H_0)$ is a partial isometry with initial subspace $\ch_{\rm ac}(H_0)$.
If $W(H,H_0)$ exists and $\ran(W(H,H_0))=\ch_{\rm ac}(H)$, it is called complete. Thus, if $W(H,H_0)$ exists and is complete, then for every initial datum $u\in\ch_{\rm ac}(H)$ there is an initial datum $u_+\in\ch_{\rm ac}(H_0)$ such that $\me{-itH}u-\me{-itH_0}u_+\to0$ as $t\to\infty$. Existence and completeness of $W(H,H_0)$ is equivalent to the simultaneous existence of $W(H,H_0)$ and $W(H_0,H)$; we recall the argument at the end of Section~\ref{s:asympcomplperturbedwosmallalpha_a}.
We say that $W(H,H_0)$ is {\em asymptotically complete} if in addition the singular continuous spectral subspace satisfies $\ch_{\rm sc}(H)=\{0\}$. We note that some literature uses ``asymptotic completeness'' already for $\ran(W(H,H_0))=\ch_{\rm ac}(H)$, e.g., \cite{Isozaki2023}, while our stronger convention of asymptotic completeness follows \cite{ReedSimon1978,Teschl2014}; it is also sometimes called {\em strong asymptotic completeness}, e.g., in \cite{AmreinGeorgescu1974}, \cite[Remark~4.4]{Kuroda1983}, or \cite[Sec.~5.5]{Amrein2009}.

Our main results concerning asymptotic completeness can then be listed as follows:
\begin{enumerate}
\item[(a)] If $\alpha\in(1,2]\cap(1,d)$ and $\kappa\leq\kappa_{\rm c}$, then $W(L_\kappa,L_0)$ is asymptotically complete. (Theorem~\ref{asympcomplhardy}(1))

\item[(b)] If $\alpha\in(1,2]\cap(1,d)$, $\kappa\leq\kappa_{\rm c}$, and $V^S$ is short-range, i.e., decays at least like $|x|^{-\rho_S}$ for some $\rho_S>1$, then $W(L_\kappa-V^S,L_0)$ and $W(L_\kappa-V^S,L_\kappa)$ are asymptotically complete. (Theorem~\ref{asympcomplhardyplusperturbation}(\ref{asympcomplhardyplusperturbation:acshortrange}))

\item[(c)] The previous two items even hold when $\kappa/|x|^\alpha$ is replaced with $V_\kappa$ obeying $\tilde\kappa/|x|^\alpha \leq V_\kappa(x) \leq \kappa/|x|^\alpha$ with $\tilde\kappa\leq\kappa\leq\kappa_{\rm c}$.

\item[(d)] If $1/2\leq\alpha\leq1$, $\kappa<\kappa_{\rm c}$, and $\kappa/|x|^\alpha+V^S$ obeys certain assumptions from Mourre theory for $L_0$ with added scaling-critical perturbations (to be made precise in Assumption~\ref{mizutaniyaoconditions} below), then $W(L_\kappa-V^S,L_\kappa)$ is asymptotically complete if $V^S$ decays like $|x|^{-\rho_S}$ with $\rho_S>2\alpha$; this result continues to hold for $1/3<\alpha<1/2$ if $\sigma=\Phi^{-1}(\kappa)<(d+\alpha-1)/2$ and $\rho_S>1$. Sufficient conditions on $V^S$ are that $V^S$ is $C^2$, small in some quantitative sense, and that $|\partial_x^\beta V^S(x)|$ decays like $|x|^{-\rho_S-|\beta|}$ with $\rho_S\geq\alpha$ for all multi-indices $\beta$ with $|\beta|\leq2$. (Theorem~\ref{asympcomplhardyplusperturbation}(\ref{asympcomplhardyplusperturbation:acshortrangesmallalpha}), Lemma~\ref{lem:MY-sufficient})
  
\item[(e)] For smooth Fourier multipliers $C_0(-\Delta)$ with discrete sets of critical points, and which have the same large-momentum behavior as $L_0$, the wave operators $W(C_\kappa,C_0)$, $W(C_\kappa-V^S,C_0)$, and $W(C_\kappa-V^S,C_\kappa)$ with $C_\kappa=C_0-\kappa/|x|^\alpha$ are asymptotically complete under the same assumptions as in (a) and (b); the same generalization as in (c) is possible, too. An important example is $C_0=(1-\Delta)^{\alpha/2}-1$, which is the pseudo-relativistic kinetic energy of a particle with unit mass when $\alpha=1$. (Corollary~\ref{cor:massiverelativisticexistence})

\end{enumerate}

For $\alpha=2$, the statement in (a) was proved in \cite{Boutetetal1996,Mizutani2020S} for $\kappa<\kappa_{\rm c}$. Using tools that are insensitive to virtual levels, we are able to treat also $\kappa=\kappa_{\rm c}$.

In some cases we are able to show that the above results are sharp by proving the nonexistence of wave operators for long-range perturbations.

\begin{enumerate}
\item[(f)] If $\alpha\leq1$, $\kappa\neq0$, and $\kappa\leq\kappa_{\rm c}$, then $W(L_\kappa,L_0)$ does not exist. (Theorem~\ref{asympcomplhardy}(2))

\item[(g)] If $V^L$ is long-range, i.e., it decays like $\pm |x|^{-\rho_L}$ for some $0<\rho_L\leq1$ with $\rho_L\neq\alpha$, then $W(L_\kappa-V^L,L_0)$ does not exist. If $\rho_L=\alpha$, we further assume that $|V^L+\kappa/|x|^\alpha|$ is bounded from below by a positive multiple of $|x|^{-\alpha}$ at large distances. Moreover, $W(L_\kappa-V^L,L_\kappa)$ does not exist if $0<\rho_L\leq1$ and $\alpha>1$. (Theorem~\ref{asympcomplhardyplusperturbationnonexistence}(\ref{asympcomplhardyplusperturbation:nonexistencefree}--\ref{asympcomplhardyplusperturbation:nonexistencerelativelargealpha}))

\item[(h)] If $1/2<\alpha\leq1$, $\kappa<\kappa_{\rm c}$, $0<\rho_L\leq\alpha$, and $V^L$ is smooth with $|\partial_x^\beta V^L|$ decaying like $|x|^{-\rho_L-|\beta|}$ for all multi-indices $\beta$, then $W(L_\kappa-V^L,L_\kappa)$ does not exist. If $d\geq2$, this result extends to $\alpha=1/2$. (Theorem~\ref{asympcomplhardyplusperturbationnonexistence}(\ref{asympcomplhardyplusperturbation:nonexistencelongrangesmallalpha}))

\item[(i)] The nonexistence results in (f)--(g) also hold when $L_0$ is replaced by more general Fourier multipliers $C_0$ discussed in (e). (Corollary~\ref{cor:massiverelativisticnonexistence})
\end{enumerate}

In these cases, the reference dynamics needs to be modified in order to construct suitable analogs of the standard wave operators. We will not discuss such long-range scattering here.

\subsubsection*{Used frameworks and new tools}

Our proofs of asymptotic completeness use three approaches.
For $\alpha>1$ we use short-range scattering theory for pairs of operators $(H_0,H)$ whose heat kernel difference is compact and obeys a certain weighted estimate. This framework has been established in \cite{DemuthSinha1999} for $H_0=-\Delta$ and in \cite{Demuthetal2001,Giere2004} for $H_0$ being a spherically symmetric, smooth Fourier multiplier with a discrete set of critical points. Thus, to prove (a)--(c) above, we use sharp heat kernel bounds for $\me{-t L_\kappa}$ by \cite{Bogdanetal2019,JakubowskiWang2020,Choetal2020}. These estimates factorize into $\me{-tL_0}$ and the locally square-integrable ground state $|x|^{-\sigma}$ for all $\kappa\leq\kappa_{\rm c}$, which is the reason why we can treat in particular the critical coupling constant.
Regarding asymptotic completeness of $W(L_\kappa-V^S,L_\kappa)$ for $\alpha\leq1$, one approach builds on and further develops Kato's smooth method with Kato--Yajima-type estimates for $L_\kappa$ in \cite{MizutaniYao2021}. Moreover, in this case, we avoid the small-momentum singularity of the group velocity $\alpha|D|^{\alpha-1}$ by replacing certain powers of $L_0$ with powers of $L_\kappa$, using that both operators generate equivalent homogeneous Sobolev norms for certain parameter ranges of $d,\alpha,\kappa$ \cite{Killipetal2018,Franketal2021}.
The second approach to asymptotic completeness for $\alpha\leq1$ relies on the Cook--Kuroda method. 

\smallskip
We also contribute new methodological infrastructure which may be of independent interest. Most importantly, we prove new propagation estimates for $L_\kappa$ and $L_\kappa-V$, which are important to run the Cook--Kuroda method and to show the nonexistence results.
Propagation estimates are classical tools in time-dependent scattering theory, see, e.g., the monographs \cite{DerezinskiGerard1997,Isozaki2023} and the references therein for $\alpha=2$. We discuss previous propagation estimates for fractional Schr\"odinger operators in Subsection~\ref{ss:previousresults} below; to our knowledge these do not incorporate scaling-critical potentials like $|x|^{-\alpha}$.
Our estimates will likely also be useful to develop a corresponding scattering theory for $N$-particle Hardy operators. In the nonrelativistic setting, $N$-particle scattering theory has been established in the foundational works \cite{SigalSoffer1987,Graf1990,Derezinski1993}.

To state our results, we denote the characteristic function of a set $\Omega$ by
\begin{align}
  \label{eq:defF}
  x \mapsto
  \one_\Omega(x) :=
  \begin{cases}
    1 & \quad \text{for} \ x \in \Omega, \\
    0 & \quad \text{for} \ x \notin \Omega.
  \end{cases}
\end{align}
For a selfadjoint operator, we use the same notation for the corresponding spectral projection.
For $x,\theta\in\R$, we sometimes write $\one(x\leq\theta)$ or $\one_{x\leq\theta}$ to mean $\one_{(-\infty,\theta]}(x)$. Let us also write $D:=-i\nabla$ and denote the generator of dilations by
\begin{align}
  A := \frac12\left(x\cdot D + D\cdot x\right).
\end{align}
The latter is a natural and important escape observable in scattering theory.
Let us recall a short motivation, in the spirit of \cite{Hunzikeretal1999}, for studying its time evolution. Note $i[H,|x|^2/2]=2A$ if $H=-\Delta-V$. Hence, a lower bound on $i[H,A]$ implies the lower bound $\langle\me{-itH}\psi, |x|^2 \me{-itH}\psi\rangle \gtrsim t^2$ as $t\to\infty$ when $\psi$ is an element of the continuous spectral subspace of $H$, i.e., (ballistic) escape from bounded regions.
While $A$ cannot be written as a commutator of $|D|^\alpha-V$ with a power of $|x|$ when $\alpha<2$, we still consider it as a relevant propagation observable in view of
$$
\frac{d}{dt}\left(\me{it(L_\kappa-V)} A \me{-it(L_\kappa-V)}\right)\Big|_{t=0}
= i[L_\kappa-V,A] = \alpha L_\kappa + x\cdot\nabla V.
$$
Since $L_\kappa \sim |D|^\alpha$ for $\kappa<\kappa_{\rm c}$ by Hardy's inequality, the time evolution of $A$ under the flow generated by $L_\kappa-V$ and that generated by $L_0$ are expected to be comparable when $V$ is not too rough.

Let us now list our new propagation estimates, the first two of which confirm our expectation. In all of the following estimates, the constants also depend on $d,\alpha,\kappa$. Since these parameters are fixed, we will generally not track their dependence explicitly.
\begin{enumerate}
\item[(a)] For all $\alpha\in(0,2]\cap(0,d)$ and $\kappa<\kappa_{\rm c}$, we show
  $$
  \left\| \one\left(\frac{A}{t}\leq\theta\right)f(H_\kappa) \me{-i t H_\kappa}\phi\right\|
  \leq C_{\rho,\theta,\nu,f} t^{-\nu}\|(1+A^2)^{\nu/2}\phi\|
  $$
  where $H_\kappa=L_\kappa-V$ and $V$ decays like $|x|^{-\rho}$ for some $\rho>0$, and $f\in C_c^\infty(0,\infty)$ with $\supp f \cap \sigma_{\rm p}(H_\kappa) = \emptyset$. In view of the above discussion of the time evolution of $A$, we call it a minimal escape velocity estimate.
  Indeed, the estimate shows that (after energy localization) the part of the state contained in $A/t\leq\theta$ decays rapidly. Thus, the state is rather located in that part of phase space, where free, outgoing particles are. The proof is an implementation of an abstract, robust result in \cite{Skibsted1991}.
  (Theorem~\ref{the6})
  
\item[(b)] For all $\alpha\in(1/2,2]\cap(1/2,d)$ and $\kappa<\kappa_{\rm c}$, we show outgoing phase space propagation estimates of the form
  $$
  \left\| f_1(K_1)\me{itK_1} V^S \chi\left(\frac At\right)^2 \me{-itK_0}f_0(K_0) \right\|
  \leq C_{\rho,\mu,f_0,f_1} t^{-\mu}
  $$
  with a ``switch function'' $0\leq\chi(r)\leq1$ with $\chi(r)=0$ if $r\leq1/4$ and $\chi(r)=1$ for $r\geq1/2$, where $f_0,f_1$ are bump functions, and where $K_0,K_1\in\{L_\kappa,L_\kappa-V^S\}$. The cutoff $\chi(A/t)$ selects that region in phase space where states escape like free outgoing particles.
  The $t^{-\mu}$-bound shows that when a state is restricted to this phase-space region, its overlap with the potential $V^S$ rapidly decays, i.e., the particle escapes the effective range of the potential in phase space. This estimate will be crucial in the application of the Cook--Kuroda method to establish asymptotic completeness.
  (Lemma~\ref{the11suff-unified})
  
\item[(c)] For $\alpha\in[1,2]\cap[1,d)$ and $\kappa<\kappa_{\rm c}$, we show weighted propagation estimates of the form
  $$
  \|(1+x^2)^{\mu/2} f(H_\kappa) \me{-i t H_\kappa} (1+ x^2)^{-\mu/2}\|
  \leq C_{\mu,f,\rho} (1+t^2)^{\mu/2}, \quad t\in\R
  $$
  for some $\mu\geq0$, where $H_\kappa=L_\kappa-V$ and $V$ decays like $|x|^{-\rho}$ for some $\rho$. Such bounds say that an initially spatially localized state develops spatial moments no faster than the ballistic scale. When $V=0$, we can also treat $\alpha>1/2$ and $\kappa<\kappa_{\rm c}$, as well as $\alpha=1/2$ when $d\geq2$.
  These estimates are used to prove maximal velocity estimates, which we prove in \cite{IshidaMerz2026M}.
  (Lemmas~\ref{lem9} and~\ref{lem9-small-alpha})
\end{enumerate}

We use (a) and (b) to give another proof of asymptotic completeness of $W(L_\kappa-V^S,L_\kappa)$ for $1/2<\alpha\leq1$, however for a slightly smaller class of potentials. The tools in (a) and (c) are the key to showing the above nonexistence result (h) for $\alpha\leq1$.
To prove the nonexistence results for $\alpha>1$, we develop a new nonexistence criterion for wave operators intertwining Fourier multipliers with their perturbed versions, which in particular works for singular perturbations. The proof of this criterion is inspired by \cite{IshidaWada2020} where a special class of long-range perturbations was considered.
The proofs of the propagation estimates (b)--(c) rely on commutator expansions and Helffer--Sj\"ostrand calculus for functions that do not decay at infinity, while (a) is an implementation of an abstract result in \cite{Skibsted1991}.

\smallskip
In the following subsections, we state our main results and tools precisely.


\subsection{Scattering theory for Hardy operators}

We first consider
$$
W(L_\kappa,L_0) = \slim_{t\to\infty}\me{i t L_\kappa}\me{- i t L_0},
$$
where we recall $\ch_{\rm ac}(L_0) = L^2(\R^d)$.
In fact, our positive result on its asymptotic completeness holds for a slightly larger class of potentials, namely those that are bounded from above and below by (sub-)critical multiples of $|x|^\alpha$.

\begin{theorem}
  \label{asympcomplhardy}
  Let $d\in\N$, $0<\alpha\in(0,d)\cap(0,2]$, and $\kappa\leq\kappa_{\rm c}$.
  \begin{enumerate}
  \item Let $\alpha>1$, $\tilde\kappa\leq\kappa$, and $V_\kappa:\R^d\to\R$ be such that
    $$
    \frac{\tilde\kappa}{|x|^\alpha} \leq V_\kappa(x) \leq \frac{\kappa}{|x|^\alpha}, \quad x\in \R^d.
    $$
    Then the wave operator $W(L_0-V_\kappa,L_0)$ is asymptotically complete.

  \item Let $\alpha\leq1$ and $\kappa\neq0$. Then $W(L_\kappa,L_0)$ does not exist.
  \end{enumerate}
\end{theorem}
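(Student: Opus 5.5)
For part (1), I will work in the heat-kernel framework of \cite{DemuthSinha1999,Demuthetal2001,Giere2004} with $H_0=L_0=|D|^\alpha$ and $H=L_0-V_\kappa$. The criterion I intend to use says the following. Suppose $D(t):=\me{-tH}-\me{-tH_0}$ is compact for some $t>0$, and a weighted bound holds, for example $\int_1^\infty\|\one_{\{|x|>R\}}D(t)\|\,dR<\infty$ or a trace-class variant of $(1+|x|)^{\beta}$-weights. Then $W(H,H_0)$ and $W(H_0,H)$ both exist and $\ch_{\rm sc}(H)=\{0\}$. These conditions are exactly what we need for asymptotic completeness.

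Step 1 is to define $H$ as a form sum. Since $V_\kappa\le\kappa|x|^{-\alpha}$ and $V_\kappa\ge\tilde\kappa|x|^{-\alpha}$, Hardy's inequality gives that $L_0-V_\kappa\ge L_\kappa\ge0$. It is closable on $C_c^\infty$, and we take its Friedrichs extension. Step 2 is a two-sided heat-kernel comparison. By domination of semigroups (Trotter/Feynman--Kac for $\alpha<2$ via subordinate Brownian motion), we have
\[
0\le \me{-t(L_0-\tilde\kappa|x|^{-\alpha}\mathbf 1_{\tilde\kappa<0})}(x,y)\le\me{-tH}(x,y)\le \me{-tL_\kappa}(x,y).
\]
The lower comparison is actually not needed for positive $V$. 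The sharp bounds of \cite{Bogdanetal2019,JakubowskiWang2020,Choetal2020} then give
\[
\me{-tL_\kappa}(x,y)\asymp \Bigl(1+\tfrac{t^{\sigma/\alpha}}{|x|^\sigma}\Bigr)\Bigl(1+\tfrac{t^{\sigma/\alpha}}{|y|^\sigma}\Bigr)\me{-tL_0}(x,y).
\]
From this I obtain
\[
|D(t)(x,y)|\lesssim \Bigl[\bigl(1+t^{\sigma/\alpha}|x|^{-\sigma}\bigr)\bigl(1+t^{\sigma/\alpha}|y|^{-\sigma}\bigr)-1\Bigr]p_t(x-y),
\]
together with the analogous bound for negative $\tilde\kappa$ with $\sigma$ replaced by $\sigma_{\tilde\kappa}<0$. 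Step 3 is the Duhamel formula
\[
D(t)=\int_0^t \me{-(t-s)H}V_\kappa\me{-sH_0}\,ds,
\]
which I combine with the pointwise bounds to get $|D(t)(x,y)|\lesssim \int_0^t\!\int \me{-(t-s)L_\kappa}(x,z)|z|^{-\alpha}p_s(z-y)\,dz\,ds$. Since $\sigma<(d-\alpha)/2$ or $=$, the weight $|x|^{-\sigma}$ is locally $L^2$ and the kernel singularity is integrable. The singular parts then give Hilbert--Schmidt operators, and hence compactness, once $|x|$ or $|y|$ is cut off near the origin. For the far region I use the decay $|z|^{-\alpha}$ against $p_s$.

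Step 4, which I expect to be the main obstacle, is the weighted decay estimate in $R$. The operator $\one_{|x|>R}D(t)$ must have norm decaying like $R^{-\alpha}$ or so, up to $t$-dependent constants. I would split $\one_{|x|>R}D(t)$ into the contributions $|z|<R/2$ and $|z|>R/2$. The piece with $|z|>R/2$ is bounded by $R^{-\alpha}$ using $\|\me{-sH}\|\le1$. The piece with $|z|<R/2$ uses the off-diagonal polynomial decay $p_{t}(x-z)\lesssim t|x-z|^{-d-\alpha}$ (for $\alpha<2$; Gaussian for $\alpha=2$), which yields a factor $R^{-\alpha}$ up to a logarithm, combined with Schur's test and the local integrability of $|z|^{-\alpha-\sigma}$. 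This integrability holds since $\alpha+\sigma<d$. Integrability in $R$ at infinity then needs $\alpha>1$, and this is precisely where the hypothesis $\alpha>1$ enters, matching the short-range threshold. For $\kappa=\kappa_{\rm c}$ the argument is unchanged because only local $L^2$-ness of $|x|^{-\sigma}$ is used.

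For part (2), the approach is to disprove existence via a Dollard-type phase. Suppose the limit $W$ existed for $\alpha\le1$, $\kappa\ne0$. For $\phi$ with $\hat\phi\in C_c^\infty(\R^d\setminus\{0\})$, the state $\me{-itL_0}\phi$ concentrates near $x=t\alpha|\xi|^{\alpha-2}\xi$. I would then use the nonexistence criterion developed later, or directly compute $\frac{d}{dt}\langle \me{-itL_0}\psi,\me{itL_\kappa}\cdots\rangle$. The phase accumulated, $\int_1^t\kappa|s\,v|^{-\alpha}ds$, diverges for $\alpha\le1$: it behaves like $\log t$ when $\alpha=1$ and like $t^{1-\alpha}$ otherwise. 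Following \cite{IshidaWada2020}, this forces $\me{itL_\kappa}\me{-itL_0}\phi$ to converge weakly to zero while keeping norm $\|\phi\|$, which contradicts strong convergence.
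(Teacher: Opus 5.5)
\textbf{Part (1).} This follows the paper's route: the Demuth--Sinha/Giere criterion with $H_0=L_0$, Trotter domination by $\me{-tL_\kappa}$, and the sharp heat kernel bounds. Compactness comes from Hilbert--Schmidt localization plus smallness of $\one_{\{|x|>R\}}D(1)$, the decay comes from Duhamel, and $\alpha>1$ enters through integrability in $R$. Since $D(1)$ is selfadjoint, a bound $\|\one_{\{|x|>R\}}D(1)\|\lesssim R^{-\alpha}$ gives, by dyadic summation, boundedness of $D(1)\langle x\rangle^\gamma$ for every $\gamma<\alpha$. That is exactly the weighted hypothesis the paper verifies, with $\gamma\in(1,\alpha)$.

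Two details need repair.
\begin{enumerate}
\item Your bracket bound $|D(t)(x,y)|\lesssim[(1+a(x))(1+a(y))-1]\,p_t(x-y)$ does not follow from two-sided bounds whose constants differ from $1$. It is false when $\sigma>\alpha$: for $\kappa>0$ one has $D(1)(x,x)\gtrsim|x|^{-\alpha}$ as $|x|\to\infty$. You don't need it, so drop it.
\item In the piece $|z|<R/2$ the unweighted Schur test does not close near the origin. Integrating in $x$ first leaves a $y$-bound that is infinite at $y=0$; for $\kappa>0$ it contains $\int_0^1 s^{-1-\sigma/\alpha}\,ds$. Near $y=0$ you should use the direct bound $|D(1)(x,y)|\lesssim\me{-L_\kappa}(x,y)+\me{-L_0}(x,y)$ with the weight $((1\wedge|y|)/(1\wedge|x|))^{-\sigma}$, as the paper does, or estimate the $y$-dependence in $L^2$ using $\sigma<d/2$.
\end{enumerate}

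\textbf{Part (2): the main gap is the mechanism.} You claim the divergent phase ``forces $\me{itL_\kappa}\me{-itL_0}\phi\rightharpoonup0$'', but nothing supports this. Weak convergence is a statement about $\langle\me{-itL_\kappa}\psi,\phi_t\rangle$ for arbitrary $\psi$, i.e.\ about the perturbed dynamics. Proving it amounts to constructing Dollard-modified asymptotics for $L_\kappa$. The phase $\int_1^t\kappa|sv|^{-\alpha}\,ds$ you compute only concerns the free state $\phi_t=\me{-itL_0}\phi$.

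The argument that closes (Lemma~\ref{lem:generalized-ishida-wada}) uses the contradiction hypothesis to trade the perturbed dynamics for the free one. Suppose $W$ exists, and set $W_t=\me{itL_\kappa}\me{-itL_0}$ and $U=\kappa|x|^{-\alpha}$. Then for $t\geq T_\delta$,
\[
\langle W\phi,(W_{t_2}-W_{t_1})\phi\rangle=-i\int_{t_1}^{t_2}\langle\me{-itL_\kappa}W\phi,U\phi_t\rangle\,dt,
\qquad
\|\me{-itL_\kappa}W\phi-\phi_t\|=\|(W-W_t)\phi\|\leq\delta,
\]
so the integrand equals $\langle\phi_t,U\phi_t\rangle+O(\delta\|U\phi_t\|)$. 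For $\hat\phi\in C_c^\infty(\R^d\setminus\{0\})$, non-stationary phase and $\|x\phi_t\|\lesssim1+t$ keep half the mass of $\phi_t$ in $\{at<|x|<bt\}$. This gives $\sgn(\kappa)\langle\phi_t,U\phi_t\rangle\gtrsim t^{-\alpha}$ and, away from the origin, $\|U\phi_t\|\lesssim t^{-\alpha}$. For small $\delta$ the left-hand side is bounded by $2\|\phi\|^2$, yet it would be $\gtrsim\int_{t_1}^{t_2}t^{-\alpha}\,dt\to\infty$. What rules out cancellation is the fixed sign of $\kappa|x|^{-\alpha}$, not a phase.

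\textbf{Part (2): the singularity at the origin is never addressed.} The Duhamel identity requires $\phi_t\in\D(L_\kappa)$, i.e.\ $|x|^{-\alpha}\phi_t\in L^2$. The inner contribution must also satisfy $\|\one_{\{|x|<R\}}|x|^{-\alpha}\phi_t\|=o(t^{-\alpha})$. If $|x|^{-\alpha}\notin L^2_{\rm loc}$ (namely $d=1$ with $\alpha\in[1/2,1)$, or $d=2$ with $\alpha=1$), this fails for every $\phi_t$ with $\phi_t(0)\neq0$, so a generic test state does not work.

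The paper chooses $\hat\phi(\xi)=g(|\xi|)Y_\ell(\xi/|\xi|)$ with $\ell>\alpha-d/2$ (odd $\hat\phi$ when $d=1$). Then $\phi_t$ stays in that angular momentum channel and vanishes to order $\ell$ at $0$. Non-stationary phase gives $|\phi_t(x)|\lesssim_N t^{-N}|x|^\ell$ on a fixed ball. This yields both the domain property and $O(t^{-N})$ decay of the singular part.
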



\subsection{Scattering theory for perturbed Hardy operators}

We now study the scattering theory for $L_\kappa$ perturbed by real-valued short- or long-range potentials. In the following, let $V^S,V^L \in L^\infty(\R^d;\R)$ be such that
\begin{align}
  \label{eq:basicassumpshortrange}
  |V^S(x)| \leq C\langle x\rangle^{-\rho_S}
\end{align}
for some $C>0$, $\rho_S>0$, and
\begin{subequations}
  \label{eq:basicassumplongrange}
  \begin{align}
    \label{ass1}
    K_1\langle x\rangle^{-\rho_L} \leq V^L(x) \leq K_2 \langle x\rangle^{-\rho_L} \quad \text{or} \\
    \label{ass2}
    K_1\langle x\rangle^{-\rho_L} \leq -V^L(x) \leq K_2 \langle x\rangle^{-\rho_L}
    \end{align}
\end{subequations}
for some $0<K_1 \leq K_2$ and $\rho_L>0$. Often, but not always, we will assume $\rho_S>1$ and $\rho_L\leq1$.
Here, we write $\langle x\rangle:=(1+|x|^2)^{1/2}$ for $x\in\R^d$ or a selfadjoint operator. Since all potentials considered here are real-valued, we just write $V\in L^\infty(\R^d)$ when there is no danger of confusion.
Some of our statements require the following smoothness assumptions for the short- or long-range parts,
\begin{align}
  \label{ass5}
  |\partial_x^\beta V(x)| \lesssim \langle x\rangle^{-\rho-|\beta|}, \quad \beta\in\N_0^d.
\end{align}
Here and below, we write $A\lesssim B$ for functions $A,B\geq0$ to indicate that there is a constant $c$ such that $A\leq c B$. If $c$ depends on a parameter $\tau$, we may write $A\lesssim_\tau B$, but the dependence on $d,\alpha,\kappa$ is usually ignored in the notation. The notation $A\sim B$ means $A\lesssim B\lesssim A$. Furthermore, $\|A\|$ denotes the operator norm of a linear operator $A$ on a Hilbert space.

Studying the case $\alpha\leq1$ also involves the following assumptions arising in the Kato smoothness theory of nonlocal operators with Hardy-type potentials in \cite{MizutaniYao2021}.

\begin{assumption}[{\cite[Assumption~A]{MizutaniYao2021}}]
  \label{mizutaniyaoconditions}
  The function $V:\R^d\to\R$ is such that for all $j=0,1,2$, we have $V_j(x):=(-x\cdot\nabla)^j V \in L_{\rm loc}^1(\R^d)$ and the estimates
  \begin{align}
    \label{eq:mizutaniyaocond1}
    \| |(x\cdot\nabla)^j V|^{1/2}(L_0+1)^{-1/2}\| & \lesssim 1, \quad j=0,1,2, \\
    \label{eq:mizutaniyaocond2}
    \langle u,(L_0-V)u\rangle & \gtrsim \langle u,L_0 u\rangle, \\
    \label{eq:mizutaniyaocond3}
    \langle u,(\alpha L_0-V_1)u\rangle & \gtrsim \langle u,L_0 u\rangle, \\
    \label{eq:mizutaniyaocond4}
    |\langle u,(\alpha^2 L_0-V_2)u\rangle| & \lesssim \langle u,(\alpha L_0 - V_1) u\rangle
  \end{align}
  hold for all $u\in C_c^\infty(\R^d)$.
\end{assumption}

As shown in \cite[Example~1.2]{MizutaniYao2021}, $\kappa|x|^{-\alpha}$ with $\kappa<\kappa_{\rm c}$ or $\Psi(x/|x|)|x|^{-\alpha}$ with $\Psi\in L^\infty(\bs^{d-1})$ and $\sup\Psi<\kappa_{\rm c}$ satisfy Assumption~\ref{mizutaniyaoconditions}. In Lemma~\ref{lem:MY-sufficient}, we discuss further examples.

We are now ready to state our results for the scattering theory of $L_\kappa$ perturbed by short-range or long-range potentials. In the following theorem, we collect results concerning asymptotic completeness. In Theorem~\ref{asympcomplhardyplusperturbationnonexistence} below, we state nonexistence results.

\begin{theorem}
  \label{asympcomplhardyplusperturbation}
  Let $d\in\N$, $\alpha\in(0,d)\cap(0,2]$, $\kappa\leq\kappa_{\rm c}$, and $\sigma=\Phi^{-1}(\kappa)\in(-M,(d-\alpha)/2]$.
  Then, the following statements hold.
  \begin{enumerate}
  \item \label{asympcomplhardyplusperturbation:acshortrange}
    Assume in addition $\alpha>1$. Let $\tilde\kappa\leq\kappa$ and $V_\kappa:\R^d\to\R$ be such that
    $$
    \frac{\tilde\kappa}{|x|^\alpha} \leq V_\kappa(x) \leq \frac{\kappa}{|x|^\alpha}, \quad x\in \R^d.
    $$
    Assume also $V^S$ satisfies \eqref{eq:basicassumpshortrange} for some $\rho_S>1$.
    Then $W(L_0 - V_\kappa - V^S, L_0)$ and $W(L_0 - V_\kappa - V^S,L_0-V_\kappa)$ are asymptotically complete.

  \item \label{asympcomplhardyplusperturbation:acshortrangesmallalpha}
    Assume in addition $\alpha\leq1$ and one of the following assumptions:
    \begin{enumerate}
    \item[(a)] Assume $d\geq2$, $\alpha>1/3$, $\rho_S>\max\{1,2\alpha\}$.
      Assume $V^S$ satisfies \eqref{eq:basicassumpshortrange}, that $\kappa/|x|^\alpha+V^S(x)$ obeys Assumption~\ref{mizutaniyaoconditions}, and that there are $\tilde\kappa\leq\kappa'<\kappa_{\rm c}$ such that
      \begin{align}
        \label{ass6}
        \frac{\tilde\kappa}{|x|^\alpha}
        \leq \frac{\kappa}{|x|^\alpha} + V^S(x)
        \leq \frac{\kappa'}{|x|^\alpha}.
      \end{align}
      Let $\sigma'=\Phi^{-1}(\kappa')\in(-M,(d-\alpha)/2)$. If $1/3<\alpha<1/2$, assume in addition that $\sigma'<(d+\alpha-1)/2$.
      
    \item[(b)] Assume $\alpha>1/2$, $\sigma<(d-1)/2$, $\rho_S>1$, and $V^S\in C^\infty(\R^d)$ satisfies~\eqref{ass5} for some $\rho_S>1$ and all multi-indices $\beta$.
    \end{enumerate}
    Then $W(L_\kappa - V^S,L_\kappa)$ is asymptotically complete. Moreover, in case (a), the spectrum of $L_\kappa-V^S$ is absolutely continuous and equals $[0,\infty)$.
  \end{enumerate}
\end{theorem}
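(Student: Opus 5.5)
The plan follows the three approaches announced in the introduction. Part (1), where $\alpha>1$, goes through the heat-kernel framework of \cite{DemuthSinha1999,Demuthetal2001,Giere2004}. Part (2)(a) goes through Kato smoothness. Part (2)(b) goes through the Cook--Kuroda method together with the propagation estimates.

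\emph{Part (1).} I would reduce everything to the abstract criterion of \cite{Demuthetal2001,Giere2004}. For a pair $(H_0,H)$ with $H_0=L_0$ a symmetric Fourier multiplier with discrete critical set, the criterion requires the following: the semigroup difference $\me{-tH}-\me{-tH_0}$ is compact for some $t>0$, and after multiplication by a weight $\langle x\rangle^{\rho}$ with $\rho>1$ it is bounded or trace-class. These conditions then yield existence and completeness of $W(H,H_0)$ together with $\ch_{\rm sc}(H)=\{0\}$.

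For $H=L_0-V_\kappa$, I would write the difference via Duhamel's formula,
\[
\me{-tH}-\me{-tL_0}=\int_0^t \me{-(t-s)H}V_\kappa\me{-sL_0}\,ds .
\]
The kernel of the integrand would be controlled by the sharp two-sided heat-kernel bounds \cite{Bogdanetal2019,JakubowskiWang2020,Choetal2020}. These bound the kernel by $\me{-tL_0}(x,y)$ times ground-state factors $(1+t^{1/\alpha}/|x|)^{\sigma}(1+t^{1/\alpha}/|y|)^{\sigma}$, and they hold uniformly up to $\kappa=\kappa_{\rm c}$. The sandwiching $\tilde\kappa|x|^{-\alpha}\le V_\kappa\le\kappa|x|^{-\alpha}$ and domination/monotonicity of semigroups reduce the general $V_\kappa$ to $L_{\tilde\kappa}$ and $L_\kappa$. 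In the weighted estimate, the decay $|x|^{-\alpha}$ with $\alpha>1$ plays the role of a short-range potential. The local singularity is harmless because $|x|^{-\sigma}$ is locally square integrable.

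The perturbation $V^S$ with $\rho_S>1$ is handled the same way, since it is bounded. This gives asymptotic completeness of $W(L_0-V_\kappa-V^S,L_0)$. Asymptotic completeness of $W(L_0-V_\kappa-V^S,L_0-V_\kappa)$ then follows from the chain rule $W(H_2,H_1)=W(H_2,H_0)W(H_1,H_0)^*$, using completeness of both factors.

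\emph{Part (2)(a).} I would use Kato's smooth method. By \cite{MizutaniYao2021}, Assumption~\ref{mizutaniyaoconditions} gives Kato--Yajima-type smoothing estimates: $|x|^{-\alpha/2}$, and hence $\langle x\rangle^{-\rho_S/2}$ for $\rho_S\ge\alpha$, is $L_\kappa$-smooth and $(L_\kappa-V^S)$-smooth. Hypothesis \eqref{ass6} is applied to both $\kappa$ and $\kappa/|x|^\alpha+V^S$. I would factor
\[
V^S=\bigl(|V^S|^{1/2}\langle x\rangle^{\gamma}\bigr)\bigl(\langle x\rangle^{-\gamma}\sgn V^S\,|V^S|^{1/2}\bigr).
\]
The difficulty is that the smoothing weights carry powers of $|D|$, namely $|x|^{\beta-1}|D|^{\beta-\alpha/2}$-type weights, and these are singular at small momentum when $\alpha<1$. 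The remedy is to replace these powers of $|D|$ by powers of $L_\kappa$ and $L_\kappa-V^S$. This is justified by equivalence of homogeneous Sobolev norms \cite{Killipetal2018,Franketal2021}, which holds when $\sigma'<(d+\alpha-1)/2$ (automatic for $\alpha\ge1/2$); this is also where $\rho_S>2\alpha$ enters.

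Once both operators have smooth factors, the Kato theorem gives existence and completeness. Since the whole space is then smooth for $L_\kappa-V^S$, the spectrum is absolutely continuous. It equals $[0,\infty)$ because the operator is nonnegative by \eqref{ass6} and has a scaling-invariant essential spectrum.

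\emph{Part (2)(b).} I would use Cook--Kuroda. Take $f\in C_c^\infty(0,\infty)$ supported away from eigenvalues. It suffices to show that $\int^\infty\|V^S\me{-itL_\kappa}f(L_\kappa)\phi\|\,dt<\infty$ for a dense set of $\phi$, and the analogous statement with the roles exchanged. I would split
\[
1=\chi(A/t)^2+\bigl(1-\chi(A/t)^2\bigr).
\]
The outgoing part decays like $t^{-\mu}$, $\mu>1$, by Lemma~\ref{the11suff-unified}. The incoming part is bounded by the minimal escape estimate, Theorem~\ref{the6}, with rate $t^{-\nu}$ on $\phi$ in the domain of $\langle A\rangle^\nu$. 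Both estimates are used for $K_0,K_1\in\{L_\kappa,L_\kappa-V^S\}$, which gives both directions and hence completeness. The absence of singular continuous spectrum and the local finiteness of eigenvalues come from Mourre theory with conjugate operator $A$, using $i[L_\kappa-V^S,A]=\alpha L_\kappa+x\cdot\nabla V^S$, which is positive modulo compact operators on positive energies.

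\emph{Main obstacle.} In (1), I expect the hardest step to be verifying the weighted trace-class condition for the heat-kernel difference at $\kappa=\kappa_{\rm c}$, where the ground-state factor is most singular. In (2), it is controlling the small-momentum singularity of the group velocity.
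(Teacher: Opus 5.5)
Part (1) follows the paper's route: the Demuth--Giere criterion, Duhamel's formula with the sharp heat kernel bounds, Trotter sandwiching between $L_{\tilde\kappa}$ and $L_\kappa$, and then the chain rule. Two small corrections: the criterion only needs boundedness of $(\me{-H}-\me{-L_0})\langle x\rangle^\gamma$ for some $\gamma>1$, not a trace-class property; and for $V^S$ what matters is the decay $\rho_S>1$, which allows $1<\gamma<\min\{\alpha,\rho_S\}$, not its boundedness.

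\textbf{The genuine gap is in (2)(a).} Your starting claim is that Assumption~\ref{mizutaniyaoconditions} makes $|x|^{-\alpha/2}$, and hence $\langle x\rangle^{-\rho_S/2}$, smooth for $L_\kappa$ and $L_\kappa-V^S$. This is false when $\alpha\leq1$. The estimate \eqref{eq:mizutaniyaosmoothing} holds only for $G=|x|^{-s}|D|^{-\beta}$ with $\beta=s-\alpha/2$ and $s\in(1/2,d/2)$. The plain weight is the excluded case $s=\alpha/2\leq1/2$; already for $L_0$ the trace lemma fails there, and the paper uses $|x|^{-\alpha/2}$-smoothness only for $\alpha>1$.

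So the smooth operator you actually have carries $|D|^{-\beta}$ with $\beta>0$. You must then show that $|V^S|^{1/2}$, after an energy cutoff, can be written as $C\,G$ with $C$ bounded. Trading $|D|^\beta$ for powers of $L_\kappa$ via Sobolev-norm equivalence does not achieve this. A bounded factor may stand to the left of an $H$-smooth operator, but to the right only if it commutes with $H$. With $E_H(\Omega)$ as the cutoff you would need boundedness of $|V^S|^{1/2}E_H(\Omega)|D|^\beta|x|^s$, i.e.\ weighted bounds on spectral projections, and you have no means to prove these.

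The missing idea is heat-kernel regularization. By Lemma~\ref{heatkernelsmoothness} it suffices that $|V^S|^{1/2}\me{-tH}$ be $H$-smooth, since $\me{tH}E_H(\Omega)$ is bounded and commutes with $H$. One then writes $|V^S|^{1/2}\me{-tH}=\bigl(|V^S|^{1/2}\me{-tH}|D|^\beta|x|^s\bigr)G$, where the bracket is bounded by Lemma~\ref{vhsmoothestimate}. That lemma proceeds as follows:
\begin{enumerate}
\item it commutes $|D|^\beta$ past $|x|^s$;
\item it replaces $|D|^\beta$ by $(L_0-W_\kappa)^{\beta/\alpha}$ via Proposition~\ref{eqsob} and interpolates;
\item it commutes $L_0-W_\kappa$ back next to the semigroup;
\item it closes with weighted Schur tests using the pointwise bounds on $\me{-tH}$ and $H\me{-tH}$ from Proposition~\ref{thm-ptk}.
\end{enumerate}
This is where the hypotheses come from. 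One needs $s>\alpha$ (so that $[(-\Delta)^{\alpha/2},|x|^s]$ is a Riesz potential), $s\leq 3\alpha/2$ (so that $\beta\leq\alpha$), $s>1/2$, and $s<\rho_S/2$. These force $\alpha>1/3$ and $\rho_S>\max\{1,2\alpha\}$. The Sobolev equivalence produces only the $\sigma'$-conditions, not $\rho_S>2\alpha$ as you suggest.

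\textbf{In (2)(b), your reduction does not follow from Lemma~\ref{the11suff-unified}.} That lemma controls $f_1(K_1)\me{itK_1}V^S\chi(A/t)^2\me{-itK_0}f_0(K_0)$. Its rate $t^{-\mu}$ comes from splitting $|V^S|\lesssim\langle x\rangle^{-\mu}$ into two weights $\langle x\rangle^{-\mu/2}$. Each weight is paired with a spectrally localized outgoing state and yields $t^{-\mu/2}$ via Lemma~\ref{lem12}. One-sidedly you only get $\|V^S\chi(A/t)\me{-itL_\kappa}f(L_\kappa)\phi\|\lesssim t^{-\mu/2}$ with $\mu<2$, which is not integrable. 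So $\int^\infty\|V^S\me{-itL_\kappa}f(L_\kappa)\phi\|\,dt<\infty$ is not established.

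To repair this, run Cook's argument on $f_1(H)\me{itH}\me{-itH_0}f_0(H_0)$ instead, and then remove $f_1(H)$. This uses the identity $(1-f_1(H))\me{itH}\me{-itH_0}f_0(H_0)=\me{itH}(1-f_1(H))(f_0(H_0)-f_0(H))\me{-itH_0}$, the compactness of $f_0(H_0)-f_0(H)$, and $\me{-itH_0}\phi\rightharpoonup0$. With this repair, the rest of your outline of (2)(b) coincides with the paper's.
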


Note that for $\alpha>1$, we can prove asymptotic completeness for all $\kappa\leq\kappa_{\rm c}$, including the critical coupling constant $\kappa_{\rm c}$. This is not possible using perturbative results, such as \cite{MizutaniYao2021}, which we use to treat the case $\alpha\leq1$. Of course, one should not interpret the bounds $\rho_S>\max\{1,2\alpha\}$, $\sigma'<(d-1+\alpha)/2$, and $\alpha>1/3$ in Item~\ref{asympcomplhardyplusperturbation:acshortrangesmallalpha}(a) or $\sigma<(d-1)/2$ and $\alpha>1/2$ in Item~\ref{asympcomplhardyplusperturbation:acshortrangesmallalpha}(b) as physical obstructions. These are likely technical requirements arising from the rather slow growth of $(-\Delta)^{\alpha/2}$ at high momenta, which weakens the propagation. On a technical level they arise from our proof where we compare powers of $L_\kappa$ with powers of $L_0$ (via Proposition~\ref{eqsob} below).
From the proof it transpires that Item~(\ref{asympcomplhardyplusperturbation:acshortrangesmallalpha})(a) holds whenever there is $s\in\R$ satisfying
\begin{align}
  \begin{split}
    \max\left\{\frac12 , \alpha\right\} < s < \min\left\{\frac{\rho_S}{2},\frac{d}{2},1,\frac{3\alpha}{2},\frac{d+\alpha-\sigma'}{2} ,\frac{d+\alpha-2\sigma'}{2} \right\}, \quad \alpha<1, \\
    \frac12 < s < \min\left\{\frac{\rho_S}{2},\frac{d}{2},1,\frac{d+1-\sigma'}{2} ,\frac{d+1-2\sigma'}{2} \right\}, \quad \alpha=1.
  \end{split}
\end{align}
Note that this condition is satisfied for $1/2\leq\alpha\leq1$ when $\sigma'<(d-\alpha)/2$. When $1/3<\alpha<1/2$, the condition $\sigma'<(d+\alpha-1)/2$ guarantees the existence of such $s$.

Let us give a sufficient condition for $V^S$ so that $\kappa/|x|^\alpha+V^S$ obeys the assumptions in Theorem~\ref{asympcomplhardyplusperturbation}(\ref{asympcomplhardyplusperturbation:acshortrangesmallalpha})(a). In the following, we write $x_+=\max\{x,0\}$ for $x\in\R$.

\begin{lemma}
  \label{lem:MY-sufficient}
  Let $d\in\N$, $\alpha\in(0,2]\cap(0,d)$, $\kappa<\kappa_{\rm c}$. Let $V^S:\R^d\to\R$ be such that $V_j^S:=(-x\cdot\nabla)^j V^S \in L_{\rm loc}^1(\R^d)$, and assume that
  \begin{equation}
    \label{eq:MY-norm}
    M := \sup_{x\in\R^d\setminus\{0\}} |x|^\alpha \sum_{j=0}^2 |V_j^S(x)| <\infty.
  \end{equation}
  Define $W_\kappa(x) := \kappa/|x|^\alpha + V^S(x)$.
  Then the following statements hold.

  \begin{enumerate}
  \item We have
    \begin{align}
      \label{eq:natural-sandwich}
      \frac{\kappa-M}{|x|^\alpha} \leq W_\kappa(x) \leq \frac{\kappa+M}{|x|^\alpha}.
    \end{align}
    
  \item If $M < \min\{1,\alpha\} (\kappa_{\rm c}-\kappa_+)$, then $W_\kappa$ also satisfies Assumption~\ref{mizutaniyaoconditions}.

  \item Let $\tilde\kappa<\kappa<\kappa'<\kappa_{\rm c}$ be given. If
    \begin{align}
      \label{eq:MY-smallness-fixed}
      M < \min\left\{\kappa-\tilde\kappa,\, \kappa'-\kappa,\, \alpha(\kappa_{\rm c}-\kappa_+) \right\},
    \end{align}
    then $\tilde\kappa/|x|^\alpha \leq W_\kappa(x) \leq \kappa'/|x|^\alpha$, and $W_\kappa$ satisfies Assumption~\ref{mizutaniyaoconditions}.

  \item Let $\rho_S\geq\alpha$ and let $V^S\in C^2(\R^d;\R)$. Set
    \begin{align}
      \label{eq:A0}
      A_0 & := \sup_{x\in\R^d} \langle x\rangle^{\rho_S}|V^S(x)|, \\
      \label{eq:A1}
      A_1 & := \sup_{x\in\R^d} \langle x\rangle^{\rho_S+1}|\nabla V^S(x)|, \\
      \label{eq:A2}
      A_2 & := \sup_{x\in\R^d} \langle x\rangle^{\rho_S+2} \|{\rm Hess}(V^S(x))\|_{\R^d,\R^d},
    \end{align}
    where $\|{\rm Hess}(V^S(x))\|_{\R^d,\R^d}$ denotes the $\R^d\to\R^d$-operator norm of the Hessian of $V^S$ at $x$. Assume
    \begin{align}
      \label{eq:symbol-smallness-zero}
      A_0 & \leq \min\{\kappa-\tilde\kappa,\kappa'-\kappa\}, \\
      \label{eq:symbol-smallness-one}
      A_1 & < \alpha(\kappa_{\rm c}-\kappa_+), \\
      \label{eq:symbol-second-finite}
      A_2 & < \infty.
    \end{align}
    Then, $W_\kappa(x)$ satisfies $\tilde\kappa |x|^{-\alpha} \leq W_\kappa(x) \leq \kappa'|x|^{-\alpha}$ and Assumption~\ref{mizutaniyaoconditions}. In particular, \eqref{eq:symbol-smallness-zero}--\eqref{eq:symbol-second-finite} hold if
    \begin{align}
      \label{eq:symbol-common-constant}
      |\partial^\beta V^S(x)| \leq C_S\langle x\rangle^{-\rho_S-|\beta|}, \quad x\in\R^d, \ |\beta|\leq2
    \end{align}
    with
    \begin{align}
      \label{eq:common-constant-smallness}
      C_S < \min\left\{\kappa-\tilde\kappa,\, \kappa'-\kappa,\, \frac{\alpha(\kappa_{\rm c}-\kappa_+)}{\sqrt d} \right\}.
    \end{align}
  \end{enumerate}
\end{lemma}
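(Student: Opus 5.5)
The plan is to reduce every item to pointwise bounds of the form $|\cdot|\lesssim|x|^{-\alpha}$ and then convert these into operator inequalities with the Hardy inequality $L_0\geq\kappa_{\rm c}|x|^{-\alpha}$. The starting observation is that $|x|^{-\alpha}$ is an eigenfunction of $-x\cdot\nabla$ with eigenvalue $\alpha$. Hence, writing $(W_\kappa)_j:=(-x\cdot\nabla)^jW_\kappa$, we have
\begin{align*}
  (W_\kappa)_j(x) = \frac{\alpha^j\kappa}{|x|^\alpha} + V_j^S(x), \quad j=0,1,2,
\end{align*}
and by \eqref{eq:MY-norm} each $|V_j^S(x)|\leq M|x|^{-\alpha}$.

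Item (1) follows at once from $|V_0^S|\leq M|x|^{-\alpha}$.

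For Item (2) we check the four conditions of Assumption~\ref{mizutaniyaoconditions} in turn.
\begin{itemize}
\item[] \emph{Condition \eqref{eq:mizutaniyaocond1}.} Since $|(x\cdot\nabla)^jW_\kappa|\leq(\alpha^j|\kappa|+M)|x|^{-\alpha}$, Hardy's inequality gives $\||x|^{-\alpha/2}(L_0+1)^{-1/2}\|\leq\kappa_{\rm c}^{-1/2}$, which yields the bound.
\item[] \emph{Condition \eqref{eq:mizutaniyaocond2}.} We have $W_\kappa\leq(\kappa_++M)|x|^{-\alpha}$, so
\begin{align*}
  L_0-W_\kappa\geq\Bigl(1-\frac{\kappa_++M}{\kappa_{\rm c}}\Bigr)L_0 .
\end{align*}
The prefactor is positive because $M<\kappa_{\rm c}-\kappa_+$.
\item[] \emph{Condition \eqref{eq:mizutaniyaocond3}.} Similarly $(W_\kappa)_1\leq(\alpha\kappa_++M)|x|^{-\alpha}$, so
\begin{align*}
  \alpha L_0-(W_\kappa)_1\geq\Bigl(\alpha-\frac{\alpha\kappa_++M}{\kappa_{\rm c}}\Bigr)L_0 .
\end{align*}
The prefactor is positive because $M<\alpha(\kappa_{\rm c}-\kappa_+)$. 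This explains the factor $\min\{1,\alpha\}$ in the hypothesis.
\item[] \emph{Condition \eqref{eq:mizutaniyaocond4}.} We use the algebraic identity
\begin{align*}
  \alpha^2L_0-(W_\kappa)_2=\alpha\bigl(\alpha L_0-(W_\kappa)_1\bigr)+\alpha V_1^S-V_2^S .
\end{align*}
The remainder satisfies $|\langle u,(\alpha V_1^S-V_2^S)u\rangle|\leq(1+\alpha)M\kappa_{\rm c}^{-1}\langle u,L_0u\rangle$. By the already established \eqref{eq:mizutaniyaocond3}, this is $\lesssim\langle u,(\alpha L_0-(W_\kappa)_1)u\rangle$.
\end{itemize}
I expect this last step to be the only point needing care. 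The key is to absorb the remainder into the coercive quantity of \eqref{eq:mizutaniyaocond3} rather than to seek a sign.

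For Item (3), the sandwich $\tilde\kappa|x|^{-\alpha}\leq W_\kappa\leq\kappa'|x|^{-\alpha}$ follows from (1) and $M<\min\{\kappa-\tilde\kappa,\kappa'-\kappa\}$. The conditions are then rerun with two adjustments. For \eqref{eq:mizutaniyaocond2} we use the upper bound $W_\kappa\leq\kappa'|x|^{-\alpha}$ together with $\kappa'<\kappa_{\rm c}$, so that no separate smallness relative to $\kappa_{\rm c}-\kappa_+$ is needed there. Conditions \eqref{eq:mizutaniyaocond3} and \eqref{eq:mizutaniyaocond4} only use $M<\alpha(\kappa_{\rm c}-\kappa_+)$, exactly as above.

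For Item (4), we replace the single constant $M$ by separate pointwise bounds, using $\rho_S\geq\alpha$ and $\langle x\rangle^{-\rho_S-k}|x|^k\leq\langle x\rangle^{-\alpha}\leq|x|^{-\alpha}$:
\begin{align*}
  |V^S|&\leq A_0|x|^{-\alpha},\\
  |V_1^S|&=|x\cdot\nabla V^S|\leq A_1|x|^{-\alpha},\\
  |V_2^S|&=|x\cdot\nabla V^S+x^{T}{\rm Hess}(V^S)x|\leq(A_1+A_2)|x|^{-\alpha}.
\end{align*}
Local integrability of the $V_j^S$ follows from $V^S\in C^2$. The argument of (2)--(3) then goes through as follows:
\begin{itemize}
\item[] \eqref{eq:symbol-smallness-zero} gives the sandwich and hence \eqref{eq:mizutaniyaocond2};
\item[] \eqref{eq:symbol-smallness-one} gives \eqref{eq:mizutaniyaocond3};
\item[] \eqref{eq:symbol-second-finite} suffices for \eqref{eq:mizutaniyaocond1} and \eqref{eq:mizutaniyaocond4}, since there only finiteness of the constant is needed.
\end{itemize}
Finally, under \eqref{eq:symbol-common-constant} we have $A_0\leq C_S$, $|\nabla V^S|\leq\sqrt d\,C_S\langle x\rangle^{-\rho_S-1}$, and, bounding the operator norm by the Frobenius norm, $A_2\leq dC_S<\infty$. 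Therefore \eqref{eq:common-constant-smallness} implies \eqref{eq:symbol-smallness-zero}--\eqref{eq:symbol-second-finite}.
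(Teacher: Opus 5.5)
Your proposal is correct and follows essentially the same route as the paper. You use the pointwise bounds $|V_j^S|\le M|x|^{-\alpha}$ (respectively $A_0$, $A_1$, $A_1+A_2$ in Item~(4)) together with Hardy's inequality, obtain \eqref{eq:mizutaniyaocond2} from the upper sandwich bound in Items~(3)--(4), and reduce \eqref{eq:mizutaniyaocond4} to the coercivity in \eqref{eq:mizutaniyaocond3}.

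The only difference is cosmetic. For \eqref{eq:mizutaniyaocond4} you split off $\alpha(\alpha L_0-(W_\kappa)_1)$ and bound only the remainder $\alpha V_1^S-V_2^S$. The paper instead bounds the whole form $|\langle u,(\alpha^2L_0-W_{\kappa,2})u\rangle|$ by a multiple of $\langle u,L_0u\rangle$ before invoking \eqref{eq:mizutaniyaocond3}. Your version gives a slightly better constant, but the idea is the same.
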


We defer the proof of this lemma to Appendix~\ref{s:proof-MY-sufficient}. The condition $\rho_S \geq \alpha$ in Item~(4) is natural from the point of view of short-range scattering theory. Indeed, Theorem~\ref{asympcomplhardyplusperturbationnonexistence} below states nonexistence of $W(L_\kappa-V^L,L_\kappa)$ for $V^L$ decaying like $\langle x\rangle^{-\rho_L}$ with $\rho_L\leq\alpha$.

\smallskip
The smoothness assumption in Item~(\ref{asympcomplhardyplusperturbation:acshortrangesmallalpha})(b) is due to our use of the following minimal escape velocity estimates, which are also useful to show nonexistence of wave operators with long-range potentials. To state the estimates, we denote the set of eigenvalues of a linear operator $H$ with domain $\D(H)$ by $\sigma_{\rm p}(H)$.

\begin{theorem}
  \label{the6}
  Let $d\in\N$, $\alpha\in(0,2]\cap(0,d)$, $\kappa<\kappa_{\rm c}$, and $\rho>0$.
  Assume $V\in C^\infty(\R^d)$ satisfies \eqref{ass5} for all multi-indices $\beta$.
  Let $H_\kappa=L_\kappa-V$ and $f\in C_c^\infty(0,\infty)$ with $\supp f \cap \sigma_{\rm p}(H_\kappa) = \emptyset$.
  Then, for all $\nu\geq1$ there are $\theta>0$ and $T>0$ such that
  \begin{equation}
    \label{eq_the6_1}
    \left\| \one\left(\frac{A}{t}\leq\theta\right)f(H_\kappa) \me{-i t H_\kappa}\phi\right\|
    \lesssim_{\theta,\nu,f,T,V} t^{-\nu}\|\langle A\rangle^\nu\phi\|, \quad t\geq T,
  \end{equation}
  holds for all $\phi\in\D(\langle A\rangle^\nu)$.
\end{theorem}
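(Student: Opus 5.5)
We will derive \eqref{eq_the6_1} from Skibsted's abstract minimal velocity theorem \cite{Skibsted1991}, as announced in the introduction. That result applies to a selfadjoint $H$ and a conjugate operator $A$ under three requirements: sufficiently many iterated commutators $\mathrm{ad}_A^k(H)$ are $H$-bounded (equivalently, $f(H)$ is of class $C^k(A)$ for all $k$); a strict Mourre estimate holds on the support of $f$; and $\supp f$ avoids eigenvalues. Its conclusion is exactly a bound $\|\one(A/t\le\theta)f(H)\me{-itH}\phi\|\lesssim t^{-\nu}\|\langle A\rangle^\nu\phi\|$ for $\theta$ below the Mourre constant. The plan is therefore to verify these hypotheses for $H_\kappa=L_\kappa-V$ and $A=\frac12(x\cdot D+D\cdot x)$.

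\emph{Step 1 (commutators).} Since $L_\kappa$ is homogeneous of degree $\alpha$ under dilations, i.e.\ $\me{isA}L_\kappa\me{-isA}=\me{\alpha s}L_\kappa$, we get $i[L_\kappa,A]=\alpha L_\kappa$ exactly, and hence $\mathrm{ad}_A^k(L_\kappa)=(-i\alpha)^kL_\kappa$ as forms on $C_c^\infty(\R^d\setminus\{0\})$. For the potential, $\mathrm{ad}_A^k(V)=(ix\cdot\nabla)^kV$, which is bounded with decay $\langle x\rangle^{-\rho}$ by \eqref{ass5}. Because $\kappa<\kappa_{\rm c}$, Hardy's inequality shows that $L_\kappa$ and $L_0$ have equivalent forms, so all these iterated commutators are $H_\kappa$-form bounded and even $H_\kappa$-bounded. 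Invariance of $\D(H_\kappa)$ under the dilation group $\me{isA}$ then follows from the scaling of $L_\kappa$ together with $V\circ\me{s}$ being bounded. This places $H_\kappa$ in $C^\infty(A)$. Skibsted's framework uses resolvent or $f(H)$ regularity, which then follows by the Helffer--Sjöstrand formula.

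\emph{Step 2 (Mourre estimate).} We compute
\[
i[H_\kappa,A]=\alpha L_\kappa+x\cdot\nabla V=\alpha H_\kappa+\alpha V+x\cdot\nabla V .
\]
The terms $\alpha V+x\cdot\nabla V$ decay like $\langle x\rangle^{-\rho}$, so after localization $f(H_\kappa)(\alpha V+x\cdot\nabla V)f(H_\kappa)$ is compact. Compactness follows because $\langle x\rangle^{-\rho}(L_0+1)^{-1}$ is compact and $(L_\kappa+1)^{-1}\lesssim(L_0+1)^{-1}$ in the appropriate operator sense via Hardy. This yields
\[
f(H_\kappa)i[H_\kappa,A]f(H_\kappa)\ge \alpha\inf(\supp f)\,f(H_\kappa)^2+K
\]
with $K$ compact. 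Since $\supp f\subset(0,\infty)$ contains no eigenvalues, the standard virial and compactness argument upgrades this to a strict estimate
\[
\tilde f(H_\kappa)i[H_\kappa,A]\tilde f(H_\kappa)\ge c_0\tilde f(H_\kappa)^2
\]
for energy cutoffs $\tilde f$ with sufficiently small support near each point of $\supp f$, with $c_0>0$ close to $\alpha\inf\supp f$. A partition of unity then covers $\supp f$. We choose $\theta<c_0$.

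\emph{Step 3 (applying \cite{Skibsted1991}).} We invoke the abstract theorem for each localized piece and sum the finitely many pieces, which gives \eqref{eq_the6_1} for $t\ge T$. The constant depends on $\theta,\nu,f,V$, and the regularity order needed grows with $\nu$, which is why $V\in C^\infty$ is assumed.

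The main obstacle is Step 1 at the level of rigor: the Hardy singularity at $x=0$ means $L_\kappa$ is defined as a Friedrichs extension, so the commutator identities must be justified on the operator domain, not just on $C_c^\infty(\R^d\setminus\{0\})$. I would handle this through the explicit unitary dilation group. Because $\me{isA}$ maps the form domain of $L_\kappa$ onto itself and scales the form by $\me{\alpha s}$, this avoids the need for a core argument. The same scaling also preserves the operator domain, since the Friedrichs extension commutes with this scaling.
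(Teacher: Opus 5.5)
Your proposal is correct and follows essentially the same route as the paper. The paper also verifies the hypotheses of Skibsted's abstract result using $\mathrm{ad}_A^n(L_\kappa)=(-i\alpha)^nL_\kappa$, the bounded multipliers $(x\cdot\nabla)^nV$, and a local strict Mourre estimate obtained from compactness of $(\alpha V+x\cdot\nabla V)E_J(H_\kappa)$ together with $E_{\{\lambda\}}(H_\kappa)=0$; it then uses a finite partition of unity, realizes the positivity hypothesis via $A(\tau)=A-3\epsilon\tau$ with $3\epsilon<c_j$, and passes from the smooth to the sharp cutoff with $\theta=\epsilon$. One harmless slip: since $(\me{isA}u)(x)=\me{sd/2}u(\me{s}x)$, one has $\me{isA}L_\kappa\me{-isA}=\me{-\alpha s}L_\kappa$, and this is the sign consistent with your (correct) identity $i[L_\kappa,A]=\alpha L_\kappa$.
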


The proof follows from an abstract result in \cite{Skibsted1991}, which relies on estimates for iterated commutators. Although only a bounded number of such commutators (and hence derivatives of $V$) need to be controlled---their number depends on the desired decay exponent $\nu$---we have not attempted to determine or bound this number.

\smallskip
We now state sufficient conditions for the nonexistence of certain wave operators.

\begin{theorem}
  \label{asympcomplhardyplusperturbationnonexistence}
  Let $d\in\N$, $\alpha\in(0,d)\cap(0,2]$, and $\kappa\leq\kappa_{\rm c}$. Assume that $V^L$ satisfies~\eqref{ass1} or \eqref{ass2} for some $0<\rho_L\leq1$. Then the following statements hold.

  \begin{enumerate}
  \item \label{asympcomplhardyplusperturbation:nonexistencefree}
    If $\rho_L=\alpha$, assume in addition that there is $\epsilon_{\rm s}\in\{-1,1\}$ and $c,R>0$ so that
    \begin{align}
      \label{eq:noncancellation-equal-order}
      \epsilon_{\rm s} \left(\frac{\kappa}{|x|^\alpha}+V^L(x) \right)
      \geq c\langle x\rangle^{-\alpha}, \quad |x|\geq R.
    \end{align}
    Then $W(L_\kappa-V^L,L_0)$ does not exist.

  \item \label{asympcomplhardyplusperturbation:nonexistencerelativelargealpha}
    Assume $\alpha>1$. Then $W(L_\kappa-V^L,L_\kappa)$ does not exist.

  \item \label{asympcomplhardyplusperturbation:nonexistencelongrangesmallalpha}
    Assume $\kappa<\kappa_{\rm c}$ and $V^L\in C^\infty(\R^d)$ satisfies \eqref{ass5} for all multi-indices $\beta$. Assume further that either $1/2<\alpha\leq1$ and $0<\rho_L\leq\alpha$, or $d\geq2$, $\alpha=1/2$, and $0<\rho_L\leq1/2$. Then $W(L_\kappa-V^L,L_\kappa)$ does not exist.
  \end{enumerate}
\end{theorem}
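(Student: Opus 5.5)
The statement immediately above is Theorem~\ref{asympcomplhardyplusperturbationnonexistence}. All three items will be proved by contradiction with the same scheme. Suppose $W=W(H,H_0)$ exists, with $H=L_\kappa-V^L$ and $H_0\in\{L_0,L_\kappa\}$. Pick a nonzero $\phi$ with $\hat\phi\in C_c^\infty$ supported away from the origin, energy localized by some $f\in C_c^\infty(0,\infty)$. By the Cook representation,
\[
\langle \psi,(W-1)\phi\rangle = i\int_0^\infty \langle \me{-itH}\psi,(H-H_0)\me{-itH_0}\phi\rangle\,dt .
\]
The strategy is to choose $\psi$ for which the integrand is bounded below by $c\,t^{-\rho}$ with $\rho\le1$. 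Then the integral diverges, contradicting $|\langle\psi,(W-1)\phi\rangle|\le 2\|\psi\|\|\phi\|$.

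The convenient form of this argument, following \cite{IshidaWada2020}, is a quasi-Cook criterion. Take $\psi=\me{itH_0}\cdot$ applied to a suitable test state. By intertwining, $\me{-itH}W\phi=W\me{-itH_0}\phi$, so
\[
\langle \me{-itH_0}\phi, V_{\rm eff}\,\me{-itH_0}\phi\rangle = \frac{d}{dt}\langle \me{-itH_0}\phi,\me{itH}\dots\rangle\to \text{integrable},
\]
where $V_{\rm eff}=H_0-H$. For item (\ref{asympcomplhardyplusperturbation:nonexistencefree}) this is $\kappa|x|^{-\alpha}+V^L$; for items (\ref{asympcomplhardyplusperturbation:nonexistencerelativelargealpha})--(\ref{asympcomplhardyplusperturbation:nonexistencelongrangesmallalpha}) it is $V^L$. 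The contradiction then comes from a sign-definite lower bound
\[
\epsilon\langle \me{-itH_0}\phi,V_{\rm eff}\me{-itH_0}\phi\rangle\ \gtrsim\ t^{-\rho_{\rm eff}}
\]
with $\rho_{\rm eff}\le1$, while existence of $W$ forces the time integral of this quantity to converge. Precisely, I will show
\[
\me{itH}\me{-itH_0}\phi-\phi=i\int_0^t \me{isH}V_{\rm eff}\me{-isH_0}\phi\,ds .
\]
Pairing with $W\phi$ and using intertwining gives $\int_0^t\langle \me{-isH_0}\phi,V_{\rm eff}\me{-isH_0}\phi\rangle ds$ plus a term $\int_0^t\langle (W-\me{isH}\me{-isH_0})\phi, \dots\rangle$. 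I expect the second term to be the delicate one: I will handle it by smoothing in $t$ (averaging $\frac1T\int_T^{2T}$) and using that the difference tends to zero.

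The heart of each case is therefore a \emph{lower} bound on the potential energy along the reference evolution, obtained as follows.
\begin{enumerate}
\item[(i)] For $H_0=L_0$ (item 1), the free evolution is explicit. For $\hat\phi$ supported in an annulus, the state $\me{-itL_0}\phi$ concentrates, up to $O(t^{-\infty})$ tails, in $\{c_1t\le|x|\le c_2t\}$ (stationary phase, since the group velocity $\alpha|\xi|^{\alpha-1}$ is bounded above and below there).
\begin{itemize}
\item When $\rho_L\ne\alpha$, the term with the smaller exponent dominates the sign of $V_{\rm eff}$ at large $|x|$. Using \eqref{ass1}/\eqref{ass2} this gives $\epsilon V_{\rm eff}\gtrsim\langle x\rangle^{-\min(\rho_L,\alpha)}$ for $|x|\ge R$, hence a lower bound $\gtrsim t^{-\min(\rho_L,\alpha)}$, which is non-integrable because $\rho_L\le1$.
\item When $\rho_L=\alpha$, the same conclusion follows directly from \eqref{eq:noncancellation-equal-order}.
\item When $\alpha\le1$ with $V^L=0$, the same computation also yields Theorem~\ref{asympcomplhardy}(2).
\end{itemize}
The local singularity $|x|^{-\alpha}$ near the origin is harmless: it only has positive sign issues on a region carrying $O(t^{-\infty})$ mass, and it is controlled by Hardy's inequality.

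\item[(ii)] For item 2, $H_0=L_\kappa$ is no longer free. Here I will instead use a propagation estimate showing that $\me{-itL_\kappa}f(L_\kappa)\phi$ has mass bounded below in $\{|x|\le Ct\}$ and $o(1)$ mass in $\{|x|\le \epsilon t\}$.
\begin{itemize}
\item The upper region comes from a maximal velocity/weighted estimate. For $\alpha>1$ the group velocity is bounded on energy shells. I would derive this via the commutator $i[L_\kappa,\langle x\rangle]$, which is $L_\kappa^{(\alpha-1)/\alpha}$-bounded for $\alpha>1$.
\item The lower region follows from the minimal escape estimate Theorem~\ref{the6} combined with $|x|\gtrsim |A|/|D|$ on the energy shell.
\end{itemize}
Then $\langle V^L\rangle_t\gtrsim t^{-\rho_L}$ with a definite sign, and the contradiction follows as in (i).

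\item[(iii)] Item 3 is the same as (ii), except that for $\alpha\le1$ I use the weighted propagation estimates (Lemmas~\ref{lem9}, \ref{lem9-small-alpha}) for the upper bound and Theorem~\ref{the6} for the lower bound. The restrictions on $\alpha$ and $d$ are inherited from those results. The requirement $\rho_L\le\alpha$ enters because the relevant time scale for small $\alpha$ degrades: the weighted estimates only give localization $|x|\lesssim t^{1/\alpha}$-type control in the combination used.
\end{enumerate}

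I expect the main obstacle to be the lower bound on $\langle\me{-itH_0}\phi,V^L\me{-itH_0}\phi\rangle$ when $H_0=L_\kappa$. It requires two-sided spatial localization of the perturbed flow at the ballistic scale: minimal escape alone only bounds $A/t$ from below, and this must be converted into control of $|x|$ uniformly. I would first try the cutoff $\one(A\ge\theta t)$ together with an energy cutoff and the operator inequality $A^2\lesssim \langle x\rangle^2 L_\kappa^{2/\alpha}$ on bounded energies.
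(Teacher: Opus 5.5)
You have the right overall skeleton: argue by contradiction, apply Duhamel's formula paired with $W\phi$, use intertwining, and obtain a sign-definite lower bound $\gtrsim t^{-\rho}$ on $\langle u_t,V_{\rm eff}u_t\rangle$ with $u_t=\me{-itH_0}\phi$. For item~(1) your free-propagation input is essentially the paper's.

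\textbf{The error term is the gap.} Write $\me{-itH}W\phi=u_t+\me{-itH}(W-\me{itH}\me{-itH_0})\phi$. The only smallness available is $\sup_{t\geq t_1}\|(W-\me{itH}\me{-itH_0})\phi\|\to0$. So the argument closes only if you also have an upper bound $\|V_{\rm eff}u_t\|\lesssim t^{-\rho}$ of the same order as your lower bound. Averaging over $[T,2T]$ does not change this comparison.

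For $H_0=L_0$ the upper bound follows from non-stationary phase, with one exception near the origin. When $2\alpha\geq d$, $|x|^{-\alpha}u_t\notin L^2$ unless $u_t$ vanishes at $0$. Then neither your Duhamel formula nor the norm bound makes sense as stated; Hardy's inequality only controls the expectation. The paper takes $\hat\phi$ in an angular momentum channel $\ell>\alpha-d/2$, so that $u_t$ vanishes to order $\ell$ at $0$ (Lemma~\ref{lem:generalized-ishida-wada}).

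For $H_0=L_\kappa$, ``$o(1)$ mass in $\{|x|\leq\epsilon t\}$'' is too weak: you would need $O(t^{-\rho})$. Converting the minimal escape estimate in $A$ into spatial localization is exactly the step you leave open. The paper never converts to $|x|$:
\begin{itemize}
\item It uses $(\lambda+H)W\phi=W(\lambda+L_\kappa)\phi$ to place $R_\lambda=(\lambda+H)^{-1}$ in front of $V^Lu_t$. It then bounds $\|R_\lambda V^Lu_t\|\lesssim t^{-\rho}$ by splitting with $\chi(A/t)$.
\item The part $A\leq\theta t$ is $O(t^{-2})$ by Theorem~\ref{the6}.
\item For the rest, $\chi(A/t)$ is commuted through $R_\lambda V^L$ at cost $O(t^{-1})$ (Lemma~\ref{lem:commutators-spectral-cutoff}). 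One then uses $\langle A\rangle^\rho R_\lambda V^Lf(L_\kappa)\in\mathcal B(L^2)$ together with $\|\chi(A/t)\langle A\rangle^{-\rho}\|\lesssim t^{-\rho}$. The boundedness is Corollary~\ref{cor:fractional-A-resolvent-V} for $\rho_L<1$, and a direct computation when $\rho_L=\alpha=1$.
\end{itemize}
This $A$-regularity is where smoothness of $V^L$ and $\rho_L\leq\alpha$ enter. By Lemma~\ref{lem12}, $\langle A\rangle^\rho$ costs $\rho/2$ derivatives on each side, while the resolvent and the energy cutoff supply only $\alpha/2$. Your explanation via a $t^{1/\alpha}$ localization scale is not correct: Lemma~\ref{lem9-small-alpha} gives ballistic control $\|\langle x\rangle u_t\|\lesssim t$.

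\textbf{Item~(2) needs no propagation estimate for $L_\kappa$.} Your direct route would also miss part of the statement. Item~(2) allows $\kappa=\kappa_{\rm c}$, whereas Theorem~\ref{the6}, Lemma~\ref{lem9}, and the identification of the form domain of $L_\kappa$ with $H^{\alpha/2}(\R^d)$ are only available for $\kappa<\kappa_{\rm c}$. The paper uses the chain rule instead:
\begin{itemize}
\item For $\alpha>1$, $W(L_\kappa,L_0)$ exists for all $\kappa\leq\kappa_{\rm c}$ by Theorem~\ref{asympcomplhardy}(1). That proof uses heat kernel bounds, which are insensitive to the critical coupling.
\item Since $\rho_L\leq1<\alpha$, item~(1) applies with no non-cancellation hypothesis, so $W(L_\kappa-V^L,L_0)$ does not exist.
\item If $W(L_\kappa-V^L,L_\kappa)$ existed, then $W(L_\kappa-V^L,L_\kappa)W(L_\kappa,L_0)=W(L_\kappa-V^L,L_0)$ would exist, a contradiction.
\end{itemize}
This is also why item~(2) requires no regularity of $V^L$.
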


We assume~\eqref{eq:noncancellation-equal-order} in Item~(\ref{asympcomplhardyplusperturbation:nonexistencefree}) since for $\rho_L=\alpha$, the long-range parts of the Hardy potential and $V^L$ could cancel if they have opposite signs and the same magnitude. This assumption is not necessary in the other two items, where the reference dynamics are generated by $L_\kappa$.

\smallskip
The nonexistence statements in Theorem~\ref{asympcomplhardyplusperturbationnonexistence}(\ref{asympcomplhardyplusperturbation:nonexistencefree},\ref{asympcomplhardyplusperturbation:nonexistencerelativelargealpha}) follow from an abstraction and generalization of an argument of Ishida--Wada \cite{IshidaWada2020} and, in Item~(\ref{asympcomplhardyplusperturbation:nonexistencerelativelargealpha}), asymptotic completeness and the chain rule. In particular, these arguments require no regularity assumptions on $V^L$ beyond \eqref{eq:basicassumplongrange}.
Item~(\ref{asympcomplhardyplusperturbation:nonexistencelongrangesmallalpha}) is of a different nature. When $\alpha\leq1$, the wave operator $W(L_\kappa,L_0)$ itself does not exist for $\kappa\neq0$, so a chain rule argument is unavailable. We prove Item~(\ref{asympcomplhardyplusperturbation:nonexistencelongrangesmallalpha}) more directly using minimal escape velocity estimates, and weighted propagation estimates for the reference dynamics $L_\kappa$. To prove the latter, we compare powers of $L_0$ and $L_\kappa$. This comparison requires the (likely technical) restriction $\alpha>1/2$, or $\alpha=1/2$ when $d\geq2$.


\subsection{Scattering theory for more general fractional operators and Hardy potentials}

We can actually extend several of the results in Theorems~\ref{asympcomplhardy}, \ref{asympcomplhardyplusperturbation}, and~\ref{asympcomplhardyplusperturbationnonexistence} to a wider class of Fourier multipliers.
For $\alpha<2$, we consider Fourier multipliers $C_0(-\Delta)$ whose heat kernels arise via subordination of $\me{t\Delta}$ and which are bounded from below by $(-\Delta)^{\alpha/2}$ modulo an additive constant. To make this precise, let $\sigma_t^{(\alpha/2)}(\tau)$ be the $\frac{\alpha}{2}$-stable subordination transition density function, i.e., the probability density function for which
\begin{align}
  \label{eq:laplacetransform}
  \me{-t\lambda^{\alpha/2}} = \int_0^\infty \me{-\tau\lambda}\, \sigma_t^{(\alpha/2)}(\tau)\,d\tau, \quad t,\lambda > 0.
\end{align}
For a list of properties of $\sigma_t^{(\alpha/2)}(\tau)$, and references where they are proved, see, e.g., \cite[Appendix~B]{BogdanMerz2024}.
Then, we define the following class of kinetic energy symbols
\begin{align}
  \label{eq:fmclass}
  \begin{split}
    \ck_\alpha := \Big\{ & T:[0,\infty)\to[0,\infty):\, \text{There is a map $\R_+\ni t \mapsto \sigma_t^{(T)}(\cdot)$ so that}  \\ &  \text{for all $t>0$, $\sigma_t^{(T)}(\cdot)$ is a density function on $\R_+$ for which} \\ & \text{there is $c\geq0$ such that $\sigma_t^{(T)}(\tau)\leq \me{ct} \sigma_t^{(\alpha/2)}(\tau)$ for all $t,\tau>0$ and} \\ & \text{$\me{-tT(\lambda)}=\int_0^\infty \me{-\tau\lambda}\sigma_t^{(T)}(\tau)\,d\tau$.} \Big\}.
  \end{split}
\end{align}
Importantly, if $C_0\in\ck_\alpha$, then
\begin{align}
  \me{-t C_0(-\Delta)}(x,y) \leq \me{ct}\me{-t L_0}(x,y), \quad x,y\in\R^d.
\end{align}
Hence, by Trotter's formula,
\begin{align}
  \me{-t(C_0(-\Delta)-\kappa/|x|^\alpha)}(x,y) \leq \me{ct}\me{-t L_\kappa}(x,y), \quad x,y\in\R^d.
\end{align}
For every $t>0$, the function $\lambda\mapsto \me{-tC_0(\lambda)}$ is completely monotone by Bernstein's theorem \cite[Theorem~1.4]{Schillingetal2012}.
Recalling the set of Bernstein functions
\begin{align}
  \label{eq:defbernstein}
  \cb = \left\{ \Phi\in C^\infty(0,\infty;[0,\infty)) : \, \Phi(\lambda)\geq0, \,(-1)^k\frac{d^k\Phi}{d\lambda^k}\leq0, \, \lambda\in(0,\infty), \, k\in\N \right\},
\end{align}
we see that $\ck_\alpha\subset\cb$; see, e.g., \cite[Theorem~3.7]{Schillingetal2012}. In particular, by operator monotonicity of the logarithm, we have for $C_0\in\ck_\alpha$,
\begin{align}
  C_0(-\Delta) \geq (-\Delta)^{\alpha/2} - c,
\end{align}
i.e., $C_0(-\Delta) - \kappa/|x|^\alpha$ is bounded from below in the sense of quadratic forms for all $\kappa\leq\kappa_{\rm c}$. An important example of a function in $\ck_\alpha$ is $\R_+\ni\lambda\mapsto(1+\lambda)^{\alpha/2}-1 \in \ck_\alpha$. Indeed,
\begin{align}
  \me{-t[(\lambda+1)^{\alpha/2}-1]} = \me{t} \int_0^\infty \me{-\tau\lambda}\me{-\tau}\sigma_t^{(\alpha/2)}(\tau)\,d\tau, \quad t>0,\ \lambda\geq0,
\end{align}
and in this case, $c=1$. When $\alpha=1$, the corresponding Fourier multiplier provides a relativistic description of the kinetic energy of a particle in units where the particle mass and speed of light are set to $1$.
We remark that the notation $C_0$ is inspired by Chandrasekhar's use of $(1-\Delta)^{\alpha/2}-1-\kappa/|x|^\alpha$ as a model for studying the stability of stars \cite{Chandrasekhar1931}, in which case $\alpha=1$.

We are now ready to state the following result on asymptotic completeness of wave operators involving $C_0\in\ck_\alpha$.

\begin{corollary}
  \label{cor:massiverelativisticexistence}
  Let $d\in\N$, $\alpha\in(1,\min\{2,d\})$, and $\kappa\leq\kappa_{\rm c}$.
  Let $C_0\in\ck_\alpha$ and assume in addition that, with $h(k):=C_0(k^2)$, $k>0$, the set $M_2:=\{k>0:\,h''(k)=(h'(k))^2\}$ has no accumulation point at zero.
  Let further $\tilde\kappa\leq\kappa$, and $V_\kappa:\R^d\to\R$ be such that
  $$
  \frac{\tilde\kappa}{|x|^\alpha} \leq V_\kappa(x) \leq \frac{\kappa}{|x|^\alpha}, \quad x\in \R^d,
  $$
  and assume $V^S$ satisfies \eqref{eq:basicassumpshortrange} for some $\rho_S>1$. Then, the wave operators $W(C_0-V_\kappa,C_0)$, $W(C_0-V_\kappa-V^S,C_0-V_\kappa)$, and $W(C_0-V_\kappa-V^S,C_0)$ are asymptotically complete under the same assumptions as in Theorems~\ref{asympcomplhardy}(1) and~\ref{asympcomplhardyplusperturbation}(\ref{asympcomplhardyplusperturbation:acshortrange}).
\end{corollary}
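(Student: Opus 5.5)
The plan is to run the same heat-kernel/semigroup argument used for Theorem~\ref{asympcomplhardy}(1) and Theorem~\ref{asympcomplhardyplusperturbation}(\ref{asympcomplhardyplusperturbation:acshortrange}), with $L_0$ replaced by $C_0(-\Delta)$. The abstract framework of \cite{Demuthetal2001,Giere2004} is formulated for $H_0$ a spherically symmetric smooth Fourier multiplier with a discrete set of critical points. With $h(k)=C_0(k^2)$, the hypothesis that $M_2$ has no accumulation point at zero is exactly the non-degeneracy condition required there. Since $C_0\in\ck_\alpha\subset\cb$, the function $h$ is smooth on $(0,\infty)$. Strict monotonicity of $C_0$, unless it is trivial, excludes critical points of $h$ away from $0$. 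So the free-operator hypotheses of that framework should hold.

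The criterion then asks for two things. First, the difference $\me{-tH}-\me{-tH_0}$ must be compact. Second, a weighted estimate must hold, roughly $\|(\me{-tH}-\me{-tH_0})\langle x\rangle^{\rho}\|<\infty$, or a trace-class/Hilbert--Schmidt version of it with a weight of power $>1$. This is required for the pairs $(C_0-V_\kappa,C_0)$, $(C_0-V_\kappa-V^S,C_0-V_\kappa)$, and $(C_0-V_\kappa-V^S,C_0)$.

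The key input is the pointwise domination established above. Since $\me{-tC_0(-\Delta)}(x,y)\le\me{ct}\me{-tL_0}(x,y)$, Trotter's formula and monotonicity in the potential give
$$0\le \me{-t(C_0-V_\kappa)}(x,y)\le \me{ct}\me{-t(L_0-V_\kappa)}(x,y)\le \me{ct}\me{-tL_\kappa}(x,y),$$
and the same holds with $-V^S$ added, up to a factor $\me{t\|V^S\|_\infty}$. I would then write the difference via Duhamel's formula,
$$\me{-tH}-\me{-tH_0}=\int_0^t \me{-(t-s)H}\,W\,\me{-sH_0}\,ds,$$
with $W=V_\kappa$ or $W=V^S$. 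Each kernel factor is dominated by the corresponding $L_\kappa$ or $L_0$ kernels. The whole Duhamel integrand is therefore pointwise bounded by $\me{c't}$ times the analogous integrand for the operators treated in Theorems~\ref{asympcomplhardy}(1) and~\ref{asympcomplhardyplusperturbation}(\ref{asympcomplhardyplusperturbation:acshortrange}), with $|W|\le\max(|\kappa|,|\tilde\kappa|)|x|^{-\alpha}$ or $C\langle x\rangle^{-\rho_S}$. Since Hilbert--Schmidt norms, and boundedness of weighted operators with nonnegative kernels, are monotone under kernel domination, the weighted and compactness estimates proved there for $L_\kappa$ transfer verbatim. For these I use the sharp bounds \cite{Bogdanetal2019,JakubowskiWang2020,Choetal2020}, which factorize through $|x|^{-\sigma}$. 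The factor $\me{c't}$ is harmless because the criterion only needs the estimate for a fixed $t>0$.

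Absence of singular continuous spectrum and the chain rule are handled as in the $L_\kappa$ case. The abstract theorem yields existence and completeness together with $\ch_{\rm sc}=\{0\}$ for each pair. Existence for the third pair also follows from the first two via $W(A,C)=W(A,B)W(B,C)$.

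The main obstacle I expect is the Duhamel step at short times, and the possibly signed $V_\kappa$ when $\tilde\kappa<0$. The kernel of $|W|$ is not dominated by a positive-operator identity, so I would instead bound $|\me{-tH}-\me{-tH_0}|(x,y)$ by the Duhamel expression with $|W|$ and positive kernels on both sides. The resulting expression is exactly the quantity already controlled for $L_\kappa$. If the earlier proof used instead a monotone-difference argument that requires $V\ge0$, I would split $V_\kappa=V_\kappa^+-V_\kappa^-$ and treat the two parts in two steps.
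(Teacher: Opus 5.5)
Your overall route matches the paper's: apply Theorem~\ref{giere} with $H_0=C_0(-\Delta)$, use the domination $\me{-tC_0(-\Delta)}(x,y)\le \me{ct}\me{-tL_0}(x,y)$ together with Trotter's formula, and bound the Duhamel difference by a majorant with $|V_\kappa|$, $|V^S|$ and positive kernels. That majorant is the kernel already estimated in Sections~\ref{s:wlkappal0complete}--\ref{s:wlkappavlkappacomplete}. Two steps, however, are not right as written.

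First, the hypothesis on $M_2$ is \emph{not} exactly the condition of Theorem~\ref{giere}. That theorem needs $M_1$ and $M_2$ to be discrete in $(0,\infty)$, with accumulation allowed only at infinity. The corollary's hypothesis only excludes accumulation at $0$. For a merely $C^\infty$ function $h$, the zeros of $g:=h''-(h')^2$ could accumulate at some interior point $k_0>0$, so this must be proved.

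The missing idea is analyticity. A Bernstein function extends holomorphically to $\{\re\lambda>0\}$, so $h(k)=C_0(k^2)$ and $g$ are real-analytic on $(0,\infty)$. Hence the zeros of $g$ are isolated there unless $g\equiv0$. The case $g\equiv0$ is impossible: it gives $(\me{-h})''=-g\,\me{-h}\equiv0$, i.e.\ $\me{-h(k)}=Ak+B$. This contradicts $h'>0$ on $(0,\infty)$, since for $A<0$ the right-hand side becomes negative and for $A\ge0$ the function $h$ is nonincreasing.

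Similarly, $M_1=\emptyset$ should be derived from the fact that $C_0'$ is completely monotone and not identically zero, hence strictly positive. Strict monotonicity of $C_0$ alone does not exclude zeros of $h'$.

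Second, Theorem~\ref{giere} requires the reference operator to be a Fourier multiplier $P(D)$, since its proof is a non-stationary phase argument in momentum space. So it cannot be applied to the pair $(C_0-V_\kappa-V^S,\,C_0-V_\kappa)$, and the statement that ``the abstract theorem yields existence and completeness \dots\ for each pair'' fails for that pair. Your chain rule also runs in the wrong direction: you derive the pair with free reference from the relative one, but the criterion does not give you the relative one.

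The fix is as follows.
\begin{itemize}
\item Verify Assumption~\ref{assumheatdifference} only for $(H_1,T)=(C_0-V_\kappa,C_0)$ and $(H_2,T)=(C_0-V_\kappa-V^S,C_0)$. Use Duhamel with $V_\kappa$, respectively $V_\kappa+V^S$, against $\me{-sT}$.
\item Write $W(H_2,H_1)=W(H_2,T)\,W(T,H_1)$. Here $W(T,H_1)$ exists because $W(H_1,T)$ is complete, and it maps $\ch_{\rm ac}(H_1)$ onto $L^2(\R^d)=\ch_{\rm ac}(T)$.
\item Then $\ran W(H_2,H_1)=\ran W(H_2,T)=\ch_{\rm ac}(H_2)$, and $\ch_{\rm sc}(H_2)=\{0\}$ follows from Theorem~\ref{giere} applied to $(H_2,T)$.
\end{itemize}
This is how the paper concludes.
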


Let us explain the assumption on the above set $M_2$. Our proof uses that the Fourier multiplier generating the reference dynamics has the property that $M_2$ and $M_1:=\{k>0:\,h'(k)=0\}$ are discrete in $\R_+$ with only possible accumulation point at $\infty$. In the proof we will show that $C_0$ is a Bernstein function implies that $M_1=\emptyset$, while $M_2$ has no accumulation point in $(0,\infty)$. Since this does not rule out accumulation at zero, we have to enforce this assumption by hand.
  Observe that $C_0=(1-\Delta)^{\alpha/2}-1$ obeys the assumptions of Corollary~\ref{cor:massiverelativisticexistence}. Indeed, $h'(k)=\alpha k(k^2+1)^{\alpha/2-1}>0$ for all $k>0$ and $h''(k)-(h'(k))^2=\alpha(k^2+1)^{\alpha/2-2}q(k^2)$, where $q(y)=1+(\alpha-1)y-\alpha y(y+1)^{\alpha/2}$. For $\alpha>1$, we have $q'(y)<0$. Since $q(0)=1$ and $q(y)\to-\infty$ as $y\to\infty$, $q$ has one positive zero, so $M_2$ consists just of one point.

In \cite[Theorem~5.1]{MizutaniYao2021}, the authors generalize the Kato smoothness theory used in our proof of Theorem~\ref{asympcomplhardyplusperturbation}(\ref{asympcomplhardyplusperturbation:acshortrangesmallalpha})(a) to a larger class of Fourier multipliers obeying \cite[(5.1)]{MizutaniYao2021}. This assumption states that the map $\xi\mapsto C_0(|\xi|^2)$ belongs to $C(\R^d)\cap C^2(\R^d\setminus\{0\})$ and there are constants $c_{1,j},c_{2,j},c>0$ such that
\begin{align}
  \label{eq:mizutaniyaocond5}
  c_{1,j}|\xi|^\alpha \leq (\xi\cdot\nabla)^j C_0(|\xi|^2) \leq c_{2,j} \langle \xi\rangle^\alpha, \quad
  |(\xi\cdot\nabla)^2 C_0(|\xi|^2)| \leq c |(\xi\cdot\nabla) C_0(|\xi|^2)|
\end{align}
holds for all $\xi\in\R^d$ and $j=0,1$. Hence, by an adaptation of our proof, one could also prove asymptotic completeness of $W(C_0-\kappa/|x|^\alpha-V^S,C_0-\kappa/|x|^\alpha)$ for such Fourier multipliers $C_0$ under the assumptions of Theorem~\ref{asympcomplhardyplusperturbation}(\ref{asympcomplhardyplusperturbation:acshortrangesmallalpha})(a). However, the massive pseudo-relativistic kinetic energy $C_0(|\xi|^2)=(|\xi|^2+1)^{\alpha/2}-1$ does not satisfy the lower bound in \eqref{eq:mizutaniyaocond5} for $\alpha<2$. (We correct here a minor inaccuracy in \cite[p.~609]{MizutaniYao2021}, which states this for all $0<\alpha<d$.) Since we are currently unable to prove asymptotic completeness of $W(C_0-\kappa/|x|^\alpha-V^S,C_0-\kappa/|x|^\alpha)$ with the physically important symbol $C_0(|\xi|^2)=(|\xi|^2+1)^{\alpha/2}-1$, we omit the statement and proof of asymptotic completeness of $W(C_0-\kappa/|x|^\alpha-V^S,C_0-\kappa/|x|^\alpha)$ with $C_0$ obeying \eqref{eq:mizutaniyaocond5}.

\smallskip
Finally, we also prove the following nonexistence results.

\begin{corollary}
  \label{cor:massiverelativisticnonexistence}
  Let $d\in\N$, $\alpha\in(0,d)\cap(0,2]$ and $\kappa\leq\kappa_{\rm c}$.
  Let $C_0\in\ck_\alpha$ and $C_\kappa:=C_0-\kappa/|x|^\alpha$. Assume that $V^L$ satisfies \eqref{eq:basicassumplongrange}. Then the following statements hold.
  \begin{enumerate}
  \item If $\alpha\leq1$ and $\kappa\neq0$, then $W(C_\kappa,C_0)$ does not exist.

  \item If $\alpha\neq\rho_L$, then $W(C_\kappa-V^L,C_0)$ does not exist.
    The same conclusion holds when $\alpha=\rho_L$ provided there are $\epsilon_{\rm s}\in\{-1,1\}$, $c>0$, and $R>0$ such that
    \begin{align}
      \label{eq:massive-equal-order-noncancellation}
      \epsilon_{\rm s} \left(\frac{\kappa}{|x|^\alpha}+V^L(x)\right)
      \geq c\langle x\rangle^{-\alpha}, \quad |x|\geq R.
    \end{align}

  \item Assume $\alpha>1$. Then $W(C_\kappa-V^L,C_\kappa)=\slim_{t\to\infty}\me{i t(C_\kappa-V^L)}\me{-i tC_\kappa}P_{\rm ac}(C_\kappa)$ does not exist.
  \end{enumerate}
\end{corollary}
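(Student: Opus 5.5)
The plan is to reduce Corollary~\ref{cor:massiverelativisticnonexistence} to the abstract nonexistence criterion behind Theorem~\ref{asympcomplhardyplusperturbationnonexistence}(\ref{asympcomplhardyplusperturbation:nonexistencefree},\ref{asympcomplhardyplusperturbation:nonexistencerelativelargealpha}). That criterion is the abstraction of Ishida--Wada, stated for a Fourier multiplier $T(D)$ perturbed by a possibly singular potential $W$, so we can apply it with $T=C_0(|\cdot|^2)$ in place of $|\cdot|^\alpha$. Its mechanism is as follows. If the wave operator $W(T-W,T)$ existed, then for suitable free states $u$ the Cook integrand $\langle \me{-itT}u, W \me{-itT}u\rangle$ would be integrable in $t$. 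Meanwhile, the free evolution $\me{-itT}u$ of a state with momentum support away from critical points localizes in $\{|x|\sim t\}$. A sign-definite $W\gtrsim\langle x\rangle^{-\rho}$ with $\rho\le1$ then gives a lower bound $\gtrsim t^{-\rho}$, which is not integrable; from this one derives a contradiction, for instance via the phase $\int^t W(sv)\,ds$.

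For item (2), the total perturbation is $W=\kappa|x|^{-\alpha}+V^L$, and it suffices to check sign-definiteness at large $|x|$ with decay rate $\min\{\alpha,\rho_L\}\le1$. If $\rho_L<\alpha$, the term $V^L$ dominates and \eqref{ass1} or \eqref{ass2} fixes the sign. If $\rho_L>\alpha$ (so $\alpha<1$), the Hardy term dominates, provided $\kappa\neq0$; when $\kappa=0$, $V^L$ alone is sign-definite. If $\rho_L=\alpha$, the hypothesis \eqref{eq:massive-equal-order-noncancellation} supplies the sign. The only input specific to $C_0$ is the free propagation estimate. For this we take $u$ with $\hat u\in C_c^\infty$ supported in an annulus, and use that $h(k)=C_0(k^2)$ is Bernstein, so $h'(k)>0$ for $k>0$, as noted after Corollary~\ref{cor:massiverelativisticexistence}. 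Nonstationary phase then gives $\|\one(|x|\le ct)\me{-itC_0}u\|=O(t^{-N})$. Item (1) is the special case $V^L=0$ of the same argument, with $W=\kappa|x|^{-\alpha}$ and $\rho=\alpha\le1$.

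For item (3), with $\alpha>1$, we use the chain rule. Corollary~\ref{cor:massiverelativisticexistence} applied with $V_\kappa=\kappa|x|^{-\alpha}$ and $V^S=0$ shows that $W(C_\kappa,C_0)$ exists and is complete. Suppose $W(C_\kappa-V^L,C_\kappa)$ existed. Then the chain rule would give existence of $W(C_\kappa-V^L,C_0)=W(C_\kappa-V^L,C_\kappa)\,W(C_\kappa,C_0)$. This contradicts item (2), since $\rho_L\le1<\alpha$ means $\rho_L\neq\alpha$.

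One caveat concerns the range of $\alpha$. Corollary~\ref{cor:massiverelativisticexistence} requires the additional hypothesis on $M_2$ and $\alpha<\min\{2,d\}$. If that hypothesis is not available, I would instead use the abstract heat-kernel framework directly, together with the domination $\me{-tC_\kappa}(x,y)\le \me{ct}\me{-tL_\kappa}(x,y)$. Strictly, the chain rule only needs existence of $W(C_0,C_\kappa)$ on $\ran P_{\rm ac}(C_\kappa)$; the existence of $W(C_\kappa,C_0)$ itself is not what is used.

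The main obstacle is verifying that the nonexistence criterion applies to $C_0$ with singular $W$. This requires that the singularity of $|x|^{-\alpha}$ at the origin be harmless: the free state is negligible near the origin for large $t$, with $\||x|^{-\alpha/2}\one(|x|\le1)\me{-itC_0}u\|$ decaying rapidly, which follows from the localization estimate above combined with Hardy's inequality. The criterion also requires that the pairing $\langle \me{-it(C_\kappa-V^L)}v, W\me{-itC_0}u\rangle$ be well defined through the form domain, for which the bound $C_0(-\Delta)\ge(-\Delta)^{\alpha/2}-c$ suffices.
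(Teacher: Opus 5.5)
Your overall plan is the paper's: Lemma~\ref{lem:generalized-ishida-wada} with $H_0=C_0(-\Delta)$, the same three-case sign analysis of the potential $U:=\kappa|x|^{-\alpha}+V^L$, nonstationary phase via $C_0'>0$, and the chain rule for (3). The gap is in how you handle the singularity at the origin.

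The contradiction does not come from existence forcing the Cook integrand to be integrable. It comes from $|\langle W\phi,(W_{t_2}-W_{t_1})\phi\rangle|\le2\|\phi\|^2$, where $W_t=\me{itH}\me{-itH_0}$ and $\phi_t=\me{-itH_0}\phi$. The integrand $\langle\me{-itH}W\phi,U\phi_t\rangle$ equals $\langle\phi_t,U\phi_t\rangle\gtrsim t^{-\gamma}$ up to the error $\langle\me{-itH}(W-W_t)\phi,U\phi_t\rangle$. The only smallness you have is $\|(W-W_t)\phi\|_2\le\delta$ in $L^2$. So the error is bounded by $\delta\|U\phi_t\|_2$, which requires $U\phi_t\in L^2$; the $L^2$ Duhamel formula needs $\phi_t\in\D(H)$ for the same reason.

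If $2\alpha<d$, your annulus state works, since $|x|^{-\alpha}\in L^2_{\rm loc}$. If $2\alpha\ge d$ (e.g.\ $d=1$, $\alpha\ge1/2$, or $d=2$, $\alpha\ge1$), a generic annulus state has $\phi_t(0)\ne0$, so $U\phi_t\notin L^2$. Your substitute is rapid decay of $\||x|^{-\alpha/2}\one(|x|\le1)\phi_t\|$ together with a form pairing. That would additionally need a uniform bound on $\||x|^{-\alpha/2}\me{-itH}(W-W_t)\phi\|$ for the singular part of the error.
\begin{itemize}
\item For $\kappa<\kappa_{\rm c}$ this bound follows from conservation of the form norm of $H$ together with Hardy's inequality, which you did not spell out.
\item For $\kappa=\kappa_{\rm c}$, which the corollary includes, the form domain of $C_{\kappa_{\rm c}}-V^L$ does not control $\||x|^{-\alpha/2}\cdot\|$, because the Hardy constant is sharp. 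The argument then breaks down, and the lower bound $C_0\ge L_0-c$ cannot repair it.
\end{itemize}

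The missing idea is the paper's choice of test state, $\hat\phi(\xi)=g(|\xi|)Y_\ell(\xi/|\xi|)$ with $\ell>\alpha-d/2$ (and $\ell=1$ if $d=1$). Since $C_0(-\Delta)$ is radial, $\phi_t$ stays in the angular channel $\ell$, so $\partial^\beta\phi_t(0)=0$ for $|\beta|<\ell$. Taylor's theorem plus nonstationary phase then give $|\phi_t(x)|\lesssim_N t^{-N}|x|^\ell$ near $0$. Hence $\|U^R\phi_t\|_2=O(t^{-N})$ and $\phi_t\in\D(H)$, and the whole argument runs in $L^2$ for every $\kappa\le\kappa_{\rm c}$.

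On (3): you are right that the paper's proof goes through Corollary~\ref{cor:massiverelativisticexistence} and so tacitly uses its $M_2$ hypothesis. Your fallback does not remove this, because Theorem~\ref{giere} itself requires $M_2$ to be discrete. Your final remark also has the direction reversed. With $H_0=C_0$, $H_1=C_\kappa$, $H_2=C_\kappa-V^L$, write $\me{itH_2}\me{-itH_0}=(\me{itH_2}\me{-itH_1})(\me{itH_1}\me{-itH_0})$. What is needed is existence of $W(C_\kappa,C_0)$, whose range lies in $\ch_{\rm ac}(C_\kappa)$, where the assumed $W(C_\kappa-V^L,C_\kappa)$ acts. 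Neither $W(C_0,C_\kappa)$ nor completeness plays any role.
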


Proving the nonexistence of $W(C_\kappa-V^L,C_\kappa)$ for $\alpha\leq1$ would also require propagation estimates for $C_\kappa$. Proving them goes beyond the scope of this manuscript. Let us conclude with an optimistic remark: We expect that the technical threshold $\alpha>1/2$ (with the endpoint $\alpha=1/2$ included when $d\geq2$) from the nonexistence Theorem~\ref{asympcomplhardyplusperturbationnonexistence}(\ref{asympcomplhardyplusperturbation:nonexistencelongrangesmallalpha}) is not necessary. This is best seen when considering the operator $(1-\Delta)^{\alpha/2}-1$ whose group velocity is linear in the momentum for small momenta.


\subsection{Discussion and comparison with earlier results}
\label{ss:previousresults}

In the following, we briefly survey results in scattering theory for fractional or homogeneous Schr\"odinger operators. This overview is not meant to be complete but to point to related earlier results.

\subsubsection*{Scattering theory for fractional Schr\"odinger operators}

Scattering theory for nonlocal Schr\"odinger operators appears to have begun in \cite{Schechter1976} and \cite{Simon1979P}, where asymptotic completeness was established for sufficiently smooth kinetic energy symbols, such as $(1-\Delta)^{\alpha/2}$, and sufficiently smooth short-range potentials. More than ten years later, \cite{Sigal1990} proved asymptotic completeness for suitably modified wave operators associated with such kinetic energy symbols in the presence of long-range potentials using propagation estimates. Another decade later, the short-range scattering theory for perturbations of $(-\Delta)^{\alpha/2}$ was developed, which is more delicate because of the singularity in Fourier space at low momenta. For $\sqrt{-\Delta}-V$ with $|V|\lesssim\langle x\rangle^{-1-\epsilon}$, \cite{Umeda2001,Umeda2006,Wei2007} established eigenfunction expansions, asymptotic completeness of the associated wave operators, and boundedness and large-distance asymptotics of generalized eigenfunctions. Subsequently, the scattering theory for $(-\Delta)^{\alpha/2}-V$ with $1\leq\alpha<2$ and long-range potentials was studied in \cite{Kitada2010,Kitada2011}, where modified wave operators were constructed in the spirit of the local theory $\alpha=2$ and shown to be asymptotically complete via propagation estimates for $L_0$. In \cite{Soffer2016}, maximal and minimal escape velocity bounds for $(-\Delta)^{1/2}-V$ in $d=3$ were proved under the assumption that $V$, $x\cdot\nabla V$, and $\langle x\rangle\Delta V$ decay faster than $\langle x\rangle^{-2}$. More recently, short-range scattering theory in the spirit of Agmon--H\"ormander \cite{AgmonHormander1976} has been developed in \cite{Zhangetal2021} for all $\alpha>0$. In \cite{IshidaWada2020}, the sharp threshold between existence and nonexistence of wave operators for a general class of nonlocal kinetic energy symbols with perturbations was investigated; this work, together with \cite{IshidaKawamoto2023}, served as inspiration for our nonexistence results. Finally, inverse scattering theory and propagation estimates for $(-\Delta)^{\alpha/2}$ with $1<\alpha\leq2$ are treated in \cite{Ishida2020} and minimal escape velocity bounds are proved for $(1-\Delta)^{\alpha/2}-1-V_{\rm sing}-V_{\rm short}-V_{\rm long}$ with $0<\alpha\leq2$ in \cite{Ishida2025,SigalWu2025}.

\subsubsection*{Scattering theory for homogeneous operators related to Hardy operators}

For $\alpha=2$, the asymptotic completeness of $W(L_\kappa,L_0)$ was proved in \cite[Corollary~2.1]{Boutetetal1996} by means of positive commutators and Mourre theory. This approach requires the Hardy potential to be form-bounded with respect to $L_0$ with relative bound strictly smaller than $1$, which requires $\kappa<\kappa_{\rm c}$. In contrast, our result in Theorem~\ref{asympcomplhardy}(1) also covers the critical coupling constant $\kappa=\kappa_{\rm c}$, since it does not rely on commutator estimates but instead uses the ground state of $L_\kappa$ in a more direct way. Put differently, the reason why we can also treat $\kappa_{\rm c}$ here is that the structure of the heat kernel, unlike that of the resolvent, is insensitive to a zero-energy resonance in the form of an incipient eigenvalue.

In \cite[Theorem~4.10]{DerezinskiRichard2017}, wave operators intertwining Bessel operators, formally given by $-\partial_r^2+(m^2-1/4)/r^2$ in $L^2(\R_+,dr)$ and indexed by $m\in\C$ with $\Re(m)>-1$, were explicitly constructed by the stationary approach to scattering theory and shown to coincide with the wave operators from the time-dependent approach considered here. For special values of the Bessel parameter $m$, these coincide with the radial operators arising in the angular momentum decomposition of $L_\kappa$.

In \cite{DAnconaetal2026}, the authors show existence and completeness of the wave operators $W(\mathcal{L}_{\vec{A},a},-\Delta)$ and $W(H_\alpha,H_0)$, where $\mathcal{L}_{\vec{A},a}=(-i\nabla+\vec A(\theta)/r)^2+a(\theta)/r^2$ and $H_\alpha=(-i\nabla+\alpha(-x_2,x_1)/|x|^2)^2$ is the Aharonov--Bohm operator, both realized in $L^2(\R^2)$; here $\vec{A}(\theta)=b(\theta)(-\sin\theta,\cos\theta)$ with $b\in C^1(\mathbb T,\R)$. Both magnetic vector potentials allow to accommodate a non-trivial Hardy potential when their fluxes are nonintegral \cite{LaptevWeidl1999}.

Finally, modified wave operators for the physically relevant Dirac--Coulomb operator have also been studied. Their existence was proved in \cite{DollardVelo1966}, while asymptotic completeness was established in \cite{Muthuramalingam1985} and \cite{EnssThaller1986}. More general long-range potentials were treated in \cite{Thaller1986}. For a textbook account and an extensive list of references, see \cite{Thaller1992}.


\subsection{Outlook}

Let us give two future research directions.
\begin{enumerate}
\item It would be interesting to study scattering for other homogeneous operators, such as $(-\Delta)^{\alpha/2}|_\Omega-\kappa/{\rm dist}(x,\partial\Omega)^\alpha$ with Dirichlet boundary conditions at the boundary $\partial\Omega$ of a connected, unbounded domain $\Omega\subset\R^d$. To this end, it seems advantageous to prove corresponding propagation estimates. The semigroup framework in \cite{Demuthetal2001,Giere2004} relies on the Fourier transform at many places, which is unavailable in this setting. It would be interesting to see to which extent this framework can be extended. Notably, sharp heat kernel bounds for this operator are proved in \cite{Choetal2020,Songetal2025,Kinzebulatovetal2021,KinzebulatovSemenov2023F}.

\item A more ambitious goal is the $N$-particle scattering theory for
  $$
  \sum_{j=1}^N(-\Delta_j)^{\alpha/2} - \sum_{j<k}\frac{\kappa}{|x_j-x_k|^\alpha},
  $$
  possibly with additional pair interactions. The operator could be considered on $L^2(\R^{dN})$ or on its totally symmetric (bosonic) or antisymmetric (fermionic) subspace. The range of admissible coupling constants $\kappa$ depends both on the chosen Hilbert space and on $N$. Sharp ranges are currently known only in the limit $N\to\infty$ \cite{LiebYau1987,Franketal2024} or for $N=3$ \cite{Mahadevan2026}. When $\alpha=1$, the above operator provides, e.g., a hyperrelativistic description of particles in gravitational systems like stars. The commutation relation with the many-particle-analogue of the generator of dilations suggests that the present propagation estimates can be extended to the many-particle case, which would be an important ingredient in this endeavor.
\end{enumerate}


\subsection{Organization}

In Section~\ref{s:wlkappal0complete}, we prove asymptotic completeness of $W(L_\kappa,L_0)$ when $\alpha>1$ (Theorem~\ref{asympcomplhardy}(1)).
In Section~\ref{s:wlkappavlkappacomplete}, we prove asymptotic completeness of $W(L_\kappa-V^S,L_\kappa)$ when $\alpha>1$ (Theorem~\ref{asympcomplhardyplusperturbation}(\ref{asympcomplhardyplusperturbation:acshortrange})).
In Sections~\ref{s:asympcomplperturbedwosmallalpha_k} and~\ref{s:asympcomplperturbedwosmallalpha_a}, we prove asymptotic completeness of $W(L_\kappa-V^S,L_\kappa)$ for certain $\alpha\leq1$ (Theorem~\ref{asympcomplhardyplusperturbation}(\ref{asympcomplhardyplusperturbation:acshortrangesmallalpha})).
In Section~\ref{s:technicalpreparations}, we prepare tools to prove the propagation estimates; their actual proofs are carried out in Section~\ref{s:propagationestimates}. In that section, we also prove an outgoing phase space propagation estimate, Lemma~\ref{the11suff-unified}, which is important for applying the Cook--Kuroda method.
In Section~\ref{s:proofcormassiverelativistic}, we prove Corollary~\ref{cor:massiverelativisticexistence}.
In Section~\ref{s:nonexistence}, we prove the nonexistence results in Theorems~\ref{asympcomplhardy}(2) and~\ref{asympcomplhardyplusperturbationnonexistence}, and Corollary~\ref{cor:massiverelativisticnonexistence}.
%


\section{Asymptotic completeness of $W(L_0-V_\kappa,L_0)$ when $1<\alpha\leq2$}
\label{s:wlkappal0complete}

Let $\alpha\in(1,2]$, $\alpha<d$, $\kappa\leq\kappa_{\rm c}$, and $V_\kappa$ be as in Theorem~\ref{asympcomplhardy}.
To prove the asymptotic completeness of $W(L_0-V_\kappa,L_0)$ in this setting, we use the semigroup framework in \cite{Demuthetal2001,Giere2004}, first developed for the free dynamics generated by $-\Delta$ in \cite{DemuthSinha1999}. Let us state this framework now. In passing we note that the main results in \cite{Demuthetal2001,Giere2004} are the same, with \cite{Giere2004} containing detailed proofs and an application to scattering by obstacles.

\begin{assumption}
  \label{assumheatdifference}
  Let $H_0$ and $H$ be two selfadjoint operators in $L^2(\R^d)$ that are bounded from below. We assume
  \begin{enumerate}
  \item[(i)] $\me{-t H}-\me{-t H_0}$ is compact for a fixed $t>0$, such as $t=1$, and
    
  \item[(ii)] there is $\gamma>1$ such that $(\me{-H}-\me{-H_0})\langle x\rangle^\gamma$ is a bounded operator on $L^2(\R^d)$.
  \end{enumerate}
\end{assumption}

\begin{theorem}[{\cite[Theorem~1.1]{Demuthetal2001}, \cite[Theorem~1.5]{Giere2004}}]
  \label{giere}
  Let $H_0=P(D)$ on $L^2(\R^d)$, where $P:\R^d\to\R$ is a real-valued, nonnegative, spherically symmetric Fourier multiplier, which is smooth away from the origin. Let $h(|\xi|):=P(\xi)$ and assume $h$ is nondecreasing with $h(k)\to\infty$ as $k\to\infty$.
  Assume further
  $$
  M_1 := \{ k>0:\, h'(k)=0\} \ \text{and}\ M_2 := \{ k>0:\, h''(k)=(h'(k))^2 \}
  $$
  are discrete in $\R_+$ with only possible accumulation point at infinity.
  Suppose Assumption~\ref{assumheatdifference} holds.  
  Then, the wave operators
  $$
  W(H,H_0)
  := \slim_{t\to\infty}\me{i t H}\me{-i t H_0}P_{ac}(H_0)
  $$
  are asymptotically complete. Specifically, $\ran(W(H,H_0))=\ch_{ac}(H)$ and $\ch_{\rm sc}(H)=\{0\}$.
\end{theorem}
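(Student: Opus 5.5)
This result is quoted from \cite{Demuthetal2001,Giere2004}. If I had to reprove it, I would use an Enss-type time-dependent argument in which the heat semigroups $\me{-H}$ and $\me{-H_0}$ replace $H$ and $H_0$. The main device is the Birman--Kato invariance principle applied to $\phi(\lambda)=\me{-\lambda}$. Since $H,H_0$ are bounded from below, $\phi$ is strictly monotone on their spectra, so $W(H,H_0)=W(\me{-H},\me{-H_0})$ up to sign and orientation, and similarly for the reversed pair. It therefore suffices to study the bounded pair $B=\me{-H}$, $B_0=\me{-H_0}=\psi(D)$ with $\psi(\xi)=\me{-h(|\xi|)}$. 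In radial momentum $k$ we have $\psi'=-h'\me{-h}$ and $\psi''=\me{-h}(h'^2-h'')$. Hence $M_1$ is exactly where the group velocity of $B_0$ vanishes, and $M_2$ is exactly where its radial Hessian degenerates. This explains why both sets must be avoided.

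\textbf{Step 1 (existence).} I would prove existence by Cook's method on a dense set of $u$ with $\hat u\in C_c^\infty$ supported in a compact shell $\{a<|\xi|<b\}$ avoiding $M_1\cup M_2$; discreteness makes this set dense in $\ch_{\rm ac}(H_0)$. Writing $B-B_0=\bigl[(B-B_0)\langle x\rangle^\gamma\bigr]\langle x\rangle^{-\gamma}$ and using Assumption~\ref{assumheatdifference}(ii), it is enough to show
\begin{align*}
\|\langle x\rangle^{-\gamma}\me{-itB_0}u\|\lesssim \langle t\rangle^{-\gamma}.
\end{align*}
On the shell, the phase $x\cdot\xi-t\psi(\xi)$ has gradient in $\xi$ bounded below by $c\,|t|$ wherever $|x|\le c'|t|$, because $|\psi'|\geq c>0$ there. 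Repeated integration by parts (nonstationary phase) then gives rapid decay on $\{|x|\le c'|t|\}$. On the complement, the weight itself supplies $\langle t\rangle^{-\gamma}$. Since $\gamma>1$, the Cook integral converges. The same computation applied to $B^{*}$-type differences does not give the reverse existence $W(H_0,H)$ directly, which is why completeness needs Step 2.

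\textbf{Step 2 (completeness and $\ch_{\rm sc}(H)=\{0\}$).} Here I would follow Enss. Take $v\in\ch_{\rm cont}(H)$ with compact energy support away from the finitely many thresholds corresponding to $M_1\cup M_2$ in a given window. By RAGE and the compactness in Assumption~\ref{assumheatdifference}(i), there are times $t_n\to\infty$ such that $\me{-it_nB}v$ leaves every ball and $\|(B-B_0)\me{-it_nB}v\|\to0$. Next I would decompose phase space with a partition $P^{\pm}$ into outgoing and incoming parts, defined by the sign of $x\cdot\nabla\psi(\xi)$ on the momentum shell. I would then prove the key propagation estimates
\begin{align*}
\bigl\|\langle x\rangle^{-\gamma}\me{\mp isB_0}P^{\pm}\one_{\{|x|\ge R\}}\bigr\|\lesssim (R+s)^{-\gamma}, \quad s\ge0.
\end{align*}
These again come from nonstationary phase. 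Non-degeneracy of $\psi''$ (avoiding $M_2$) is used to control the symbol calculus of $P^\pm$ and its commutators uniformly. Combined with (ii), these estimates show that $(\Omega_\mp-1)P^{\pm}\me{-it_nB}v\to0$, where $\Omega$ denotes the corresponding wave operators. Therefore $\me{-it_nB}v$ is asymptotically in $\ran W$, and since the range of $W$ is closed and invariant, $v\in\ran W$. This gives $\ch_{\rm cont}(H)\subset\ran W(H,H_0)\subset\ch_{\rm ac}(H)$, hence both $\ran W=\ch_{\rm ac}(H)$ and $\ch_{\rm sc}(H)=\{0\}$.

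\textbf{Main obstacle.} I expect the hardest part to be the incoming/outgoing estimates for a general radial multiplier $\psi$ that is only smooth away from $\xi=0$. The partition must be built on shells that avoid the origin and $M_1\cup M_2$, and one must check that the energy cutoffs $g(B)$ can be localized to such shells modulo compact errors. I would first try to get this from (i): compactness of $g(B)-g(B_0)$ for continuous $g$ should follow from (i) by Stone--Weierstrass.
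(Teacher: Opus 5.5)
Your plan follows the argument of the cited works, which the paper only quotes: as item (3) of the remark after the theorem indicates, one runs an Enss-type nonstationary-phase argument for the unitary groups generated by the bounded operators $\me{-H}$, $\me{-H_0}$ with phase $x\cdot\xi-t\me{-P(\xi)}$, and you correctly identify $M_1$ and $M_2$ as the zero sets of $\psi'$ and $\psi''$ for $\psi=\me{-h}$. The one step stated too casually is the transfer back to $(H,H_0)$: strict monotonicity of $\me{-\lambda}$ does not by itself give the invariance principle (the classical Birman--Kato version rests on trace-class hypotheses that are unavailable here), so you must either invoke a form of it valid under compactness plus weighted decay, or prove existence of the wave operator for the $H$-dynamics separately.
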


\begin{remark}~
  \noindent
  (1) In \cite[Assumption~1.4]{Giere2004}, it is assumed that $S(t)-S_0(t)$ is compact for all $t\geq0$.
  However, it suffices to verify this assumption only for fixed $t_0>0$. Indeed, the compactness of $S(t_0)-S_0(t_0)$ for a fixed $t_0>0$ implies compactness of $S(t)-S_0(t)$ for every $t>0$. To see this, let $A:=S(t_0)$ and $A_0:=S_0(t_0)$, i.e., $\me{-tH}=A^{t/t_0}$ and $\me{-tH_0}=A_0^{t/t_0}$. Since $\R_+\ni s\mapsto s^{t/t_0}$ is continuous, it follows from functional calculus that also $A^{t/t_0}-A_0^{t/t_0}$ is compact. The fact that Assumption~\ref{assumheatdifference} is only needed for fixed $t$ is also stated explicitly in \cite{Demuthetal2001}.

  \noindent
  (2) In this section, we use $P(\xi)=|\xi|^\alpha$. In this regard, we note that \cite[p.~133]{Giere2004} states that $P$ is assumed to be $C^\infty$, but inspection of the proof shows that only $P\in C^\infty(\R^d\setminus\{0\})$ is used.

  \noindent
  (3) The assumption that the sets $M_1$ and $M_2$ in Theorem~\ref{giere} are discrete comes from running the non-stationary phase argument with phase function $\exp\left(ix\cdot\xi-it\me{-P(\xi)}\right)$.
\end{remark}

Let us write
\begin{align}
  S_\kappa(t)(x,y) := \me{-tL_\kappa}(x,y),
  \qquad
  S_V(t)(x,y) := \me{-t(L_0-V_\kappa)}(x,y).
\end{align}
To verify Assumption~\ref{assumheatdifference}, let us collect some properties of $S_\kappa(t)$. First, we record $\int_{\R^d}\me{-t L_0}(x,y)\,dy = 1$ and that $\me{-tL_\kappa}$ satisfies the Chapman--Kolmogorov equation and semigroup property, i.e.,
$$
\me{-t_1 L_\kappa} \me{-t_2 L_\kappa} = \me{-(t_1+t_2) L_\kappa} \quad \text{in $L^2$ and pointwise.}
$$
Next, we recall the following sharp heat kernel bounds for $L_\kappa$ in \cite{MilmanSemenov2004,Bogdanetal2019,JakubowskiWang2020,Choetal2020}: For all $t>0$ and $x,y\in\R^d$, we have
\begin{align}
  \label{eq:heatkernelbounds}
  S_\kappa(t)(x,y)
  & \sim_{d,\alpha,\kappa} \me{-t L_0}(x,y) \left(1\wedge \frac{|x|}{t^{1/\alpha}}\right)^{-\sigma} \left(1\wedge\frac{|y|}{t^{1/\alpha}}\right)^{-\sigma},
  \\
  \label{eq:heatkernelboundsfree}
  \begin{split}
    S_0(t)(x,y) & \sim_{d,\alpha} \frac{t}{(t^{1/\alpha}+|x-y|)^{d+\alpha}}, \quad \alpha<2, \\
    S_0(t)(x,y) & = (4\pi t)^{-d/2}\me{-|x-y|^2/(4t)}, \quad \alpha=2,
  \end{split}
\end{align}
where $\sigma=\sigma_\kappa\in(-M,(d-\alpha)/2]$ is the unique solution to the equation in \eqref{eq:defsigma}. Here and below, we write $A\vee B:=\max\{A,B\}$ and $A\wedge B:=\min\{A,B\}$ for $A,B \geq 0$.
In our analysis below, we will in fact only need these bounds for $\kappa>0$, while for $\kappa<0$ we content ourselves with $\me{-t L_\kappa}(x,y) \leq \me{-t L_0}(x,y)$.
To ease the notation, we write
\begin{align}
  S_\kappa := S_\kappa(1), \qquad S_V:=S_V(1).
\end{align}

We can now give the

\begin{proof}[{Proof of Theorem~\ref{asympcomplhardy}(1)}]
  By Trotter's formula,
  $$
  \me{-tL_{\tilde\kappa}}(x,y) \leq \me{-t(L_0-V_\kappa)}(x,y) \leq \me{-tL_\kappa}(x,y)
  $$
  and therefore
  $$
  \left| S_V(t)(x,y) - S_0(t)(x,y) \right|
  \leq \left|S_\kappa(t)-S_0(t)\right| + \left|S_{\tilde\kappa}(t)-S_0(t)\right|.
  $$
  To conclude the proof, we apply Theorem~\ref{giere} by verifying Assumption~\ref{assumheatdifference}. We do so in the following two subsections, where we just consider $t=1$.
  To verify (ii), it suffices, by the above estimate, to consider the difference $S_\kappa-S_0$. To verify (i) for $\me{-H}=S_V$ and $\me{-H_0}=S_0$, we first verify it for $\me{-H}=S_\kappa$ and then show how the compactness of $S_\kappa-S_0$ can be used to show compactness of $S_V-S_0$.
  Since our techniques make no use of the locality of $-\Delta$ or the Gaussian decay of heat kernels corresponding to the operators when $\alpha=2$, we will in the following only treat the case $0<\alpha<2$ explicitly. The generalizations to $\alpha=2$ are analogous.
\end{proof}

Assume $\kappa\neq0$ and let
\begin{align}
  D(t):=\kappa^{-1}(S_\kappa(t)-S_0(t)).
\end{align}
By Duhamel's formula,
\begin{align}
  D(t)(x,y) = \int_0^t ds\, \int_{\R^d}dz\, \me{-(t-s)L_\kappa}(x,z)|z|^{-\alpha} \me{-s L_0}(z,y).
\end{align}
Moreover, we write
\begin{align}
  D(x,y) := D(1)(x,y).
\end{align}
It will be useful to split the $z$-integral in Duhamel's formula to control the Hardy singularity. Thus, for $R>0$, we decompose
\begin{align}
  D(t)(x,y) & = D^{<R}(t)(x,y) + D^{>R}(t)(x,y), \\
  D^{<R}(t)(x,y) & := \int_0^t ds\, \int_{|z|<R}dz\, \me{-(t-s)L_\kappa}(x,z)|z|^{-\alpha}\me{-s L_0}(z,y), \\
  D^{>R}(t)(x,y) & := \int_0^t ds\, \int_{|z|>R}dz\, \me{-(t-s)L_\kappa}(x,z)|z|^{-\alpha}\me{-s L_0}(z,y).
\end{align}
Similarly as before, write $D^{>R}(x,y):=D^{>R}(1)(x,y)$ and $D^{<R}(x,y):=D^{<R}(1)(x,y)$. We emphasize that we will not use $-i\nabla$ in this section, which enables us to abuse notation by writing $D(x,y):=D(1)(x,y)$ for the kernel of $D:=D(1)$.
The Duhamel formula is only going to be used when we verify that $D\langle x\rangle^\gamma \in \mathcal B(L^2(\R^d))$, where $\mathcal B(L^2(\R^d))$ denotes the space of bounded linear operators on $L^2(\R^d)$.


\subsection{On the compactness assumption}
\label{s:compactness}

In this subsection, we verify (i) in Assumption~\ref{assumheatdifference}. To show compactness of $D(1)$, we approximate it by a sequence of Hilbert--Schmidt operators $D_r(1):=\chi_r D(1)$, where $\chi_r$ is the indicator function on the ball $B_r(0)$ centered at the origin with radius $r>0$. We also write $\chi_r^c:=1-\chi_r$ and $D_r:=D_r(1)$. Specifically, it suffices to show that for all $r>0$, the operator $D_r$ is compact and that $\|D-D_r\|\to0$ as $r\to\infty$.

\subsubsection{The case $\kappa<0$}

Here we use $\me{-L_\kappa}(x,y)\leq\me{-L_0}(x,y)$.
We first show compactness of $D_r$. Since $|D_r(x,y)|\leq 2|\kappa|^{-1}\chi_r(x)\me{-L_0}(x,y)$,
\begin{align}
  \|D_r\|_{\rm HS} \lesssim r^{d/2}
\end{align}
by the Seiler--Simon inequality \cite[Theorem~4.1]{Simon2005}; here, $\|\cdot\|_{\rm HS}$ denotes the Hilbert--Schmidt norm.

Let us now show that the operator norm of $\chi_r^c D$ vanishes as $r\to\infty$. To this end, we show
\begin{align}
  \lim_{r\to\infty} \left(\|\chi_r^c D \chi_{r/2}^c\| + \|\chi_r^c D \chi_{r/2}\| \right) = 0.
\end{align}
Since $|D(x,y)| \leq 2 |\kappa|^{-1} \me{-L_0}(x,y)$,
\begin{align}
  \|\chi_r^c D \chi_{r/2}\|
  \leq 2 \|\chi_r^c \me{-L_0} \chi_{r/2}\|
  \lesssim r^{-\alpha}
\end{align}
follows from the computations carried out in \cite[p.~149]{Giere2004}; see in particular the third displayed formula. On the other hand,
\begin{align}
  \|\chi_r^c D \chi_{r/2}^c\|
  \leq \|\chi_r^c D \langle x\rangle^\gamma\| \|\langle x\rangle^{-\gamma}\chi_{r/2}^c\|
  \lesssim r^{-\gamma}
\end{align}
in view of $\|\chi_r^c D \langle x\rangle^\gamma\| \leq \|D \langle x\rangle^\gamma\| \lesssim 1$, which we show in Subsection~\ref{s:weightedbound}, where we verify (ii) in Assumption~\ref{assumheatdifference}.

Thus, for $\kappa<0$, the difference of the heat kernels $D$ is compact.

\subsubsection{The case $\kappa>0$}

Recall that $0<\sigma<d/2$ since $\sigma\leq(d-\alpha)/2$.
Let $r>2$.
Note first that $\chi_r S_\kappa$ is Hilbert--Schmidt since
\begin{align}
  \begin{split}
    \|\chi_r S_\kappa\|_{\rm HS}^2
    & \lesssim \int_{\R^{d}}dx\, \chi_r(x)^2 \left(1\wedge |x|\right)^{-2\sigma} \int_{\R^d} dy\, \left(1\wedge |y|\right)^{-2\sigma} \frac{1}{(1 + |x-y|)^{2d+2\alpha}} \\
    & \lesssim \left(\int_{|x|<2r}dx\, (1 + |x|^{-2\sigma})\right) \sup_{|x|<2r}\int_{\R^d}dy\, (1 + |y|^{-2\sigma})(1 + |x-y|)^{-2d-2\alpha} \\
    & \lesssim r^d (1 + r^{-2\sigma}) \cdot \left( r^d (1 + r^{-2\sigma}) + r^{-d-2\alpha}\right)
      \lesssim r^{2d}.
  \end{split}
\end{align}

Next, we show that the operator norm of $(1-\chi_r)D$ vanishes as $r\to\infty$. To this end, we show
\begin{align}
  \lim_{r\to\infty} \left(\|\chi_r^c D \chi_{r/2}^c\| + \|\chi_r^c D \chi_{r/2}\| \right) = 0.
\end{align}
We start with the second summand and estimate $\|\chi_r^c D \chi_{r/2}\|$ using a weighted Schur test with weight $\left(\frac{1 \wedge |y|}{1\wedge |x|}\right)^{-\sigma}$. Using $|D(x,y)| \lesssim S_\kappa(x,y)$, we have
\begin{align}
  \begin{split}
    & \sup_{|x|>r}\int_{|y|<r/2}dy\, |D(x,y)| \cdot \left(\frac{1\wedge |y|}{1\wedge |x|}\right)^{-\sigma} \\
    & \quad \lesssim \sup_{|x|>r}\int_{|y|<r/2}dy\, \left(1 \wedge |x|\right)^{-\sigma}\left(1 \wedge |y|\right)^{-\sigma} \frac{1}{(1 + |x-y|)^{d+\alpha}} \cdot \left(\frac{1\wedge |y|}{1\wedge |x|}\right)^{-\sigma} \\
    & \quad \lesssim \sup_{|x|>r} \int_{|y|<r/2}(1+|y|^{-2\sigma}) |x|^{-d-\alpha}
      \lesssim r^{-\alpha},
  \end{split}
\end{align}
which vanishes as $r\to\infty$. On the other hand,
\begin{align}
  \begin{split}
    & \sup_{|y|<r/2}\int_{|x|>r}dx\, |D(x,y)| \cdot \left(\frac{1\wedge |x|}{1\wedge |y|}\right)^{-\sigma} \\
    & \quad \lesssim \sup_{|y|<r/2}\int_{|x|>r}dx\, \left(1\wedge |x|\right)^{-\sigma} \left(1\wedge |y|\right)^{-\sigma} \frac{1}{(1 + |x-y|)^{d+\alpha}} \cdot \left(\frac{1\wedge |x|}{1\wedge |y|}\right)^{-\sigma} \\
    & \quad \lesssim \int_{|x|>r}dx\, |x|^{-d-\alpha}
      \sim r^{-\alpha},
  \end{split}
\end{align}
which also vanishes as $r\to\infty$.

Finally, we show $\lim_{r\to\infty}\|\chi_r^c D \chi_{r/2}^c\|=0$. We have
\begin{align}
  \|\chi_r^c D \chi_{r/2}^c\|
  \leq \|\chi_r^c D \langle x\rangle^\gamma\| \|\langle x\rangle^{-\gamma}\chi_{r/2}^c\|
  \lesssim r^{-\gamma}
\end{align}
in view of $\|\chi_r^c D \langle x\rangle^\gamma\| \leq \|D \langle x\rangle^\gamma\| \lesssim 1$, which we show in Subsection~\ref{s:weightedbound}, where we verify (ii) in Assumption~\ref{assumheatdifference}.

\subsubsection{Conclusion of the verification of the compactness assumption}

We finally show compactness of
$$
D_V := S_V - S_0.
$$
Recall $\tilde\kappa/|x|^\alpha \leq V_\kappa(x) \leq \kappa/|x|^\alpha$. Thus, $S_{\tilde\kappa}(x,y) \leq S_V(x,y) \leq S_\kappa(x,y)$ by Trotter's formula. Hence,
\begin{equation}
  \label{eq:DV-endpoint-domination}
  |D_V(x,y)| \leq |S_\kappa(x,y)-S_0(x,y)| +  |S_{\widetilde\kappa}(x,y)-S_0(x,y)|.
\end{equation}
The two differences on the right-hand side have integral kernels with a fixed sign. Moreover, the above analysis shows that their localizations by $\chi_r$ are Hilbert--Schmidt and that the operator norms of their complements (i.e., the semigroup differences multiplied with $(1-\chi_r)$) converge to zero as $r\to\infty$. Thus, by \eqref{eq:DV-endpoint-domination}, we see that $\chi_rD_V$ is Hilbert--Schmidt for every $r>0$. Moreover, for $f \in L^2$,
$$
|\chi_r^c D_V f| \leq \chi_r^c \left( |S_\kappa-S_0|+|S_{\widetilde\kappa}-S_0| \right)|f|.
$$
Thus,
$$
\|\chi_r^c D_V\| \leq \|\chi_r^c |S_\kappa-S_0|\| + \|\chi_r^c|S_{\widetilde\kappa}-S_0|\| \to0.
$$
Thus, $D_V$ is a norm limit of the compact operators $\chi_rD_V$ and therefore also compact. Thus, we have verified Assumption~\ref{assumheatdifference}(i).


\subsection{On the boundedness assumption}
\label{s:weightedbound}

We verify Assumption~\ref{assumheatdifference}(ii) by showing $D\langle x\rangle^\gamma \in \mathcal B(L^2)$ for every $\gamma\in(1,\alpha)$, which is a nonempty interval since $\alpha>1$.

\subsubsection{The case $\kappa<0$}

Since $\langle x\rangle^\gamma \sim \one_{|x|<1}+|x|^\gamma\one_{|x|>1}$ and $|D(x,y)| \leq 2|\kappa|^{-1} \me{-L_0}(x,y)$ and $\me{-2L_0}$ is $L^2$-bounded, it suffices to check whether $D(x,y)|y|^\gamma\one_{|y|>1}$ generates an $L^2$-bounded operator.
To this end, we split the Duhamel integral in $D(x,y)$ at $|y|/2$. Then,
\begin{align}
  D^{>|y|/2}(x,y)|y|^\gamma \lesssim |y|^{\gamma-\alpha} \me{-L_0}(x,y).
\end{align}
Since $\me{-L_0}$ is $L^2$-bounded and $1<\gamma<\alpha$, the part $D^{>|y|/2}(x,y) \langle y\rangle^\gamma$ generates an $L^2$-bounded operator. We now show the $L^2$-boundedness of the operator generated by $D^{<|y|/2}(x,y)|y|^\gamma\one_{|y|>1}$. To this end, we split the operator into its contributions from $|x|<R_0$ and $|x|>R_0$ for some $R_0>0$ to be chosen below; for instance, $R_0=2$ works. Using $D^{<|y|/2}(x,y)\leq |\kappa|^{-1}\me{-L_0}(x,y)$, we see that $\one_{|x|<R_0}D^{<|y|/2}(x,y)|y|^\gamma\one_{|y|>1}$ generates an $L^2$-bounded operator in view of the first displayed bound in \cite[p.~147]{Giere2004}.

Thus, it remains to prove the $L^2$-boundedness of the operator with integral kernel
\begin{align}
  K(x,y) = \one_{|x|>R_0} \int_0^1 ds \int_{|z|<|y|/2}dz\, \me{-(1-s)L_0}(x,z)|z|^{-\alpha}\me{-s L_0}(z,y)|y|^\gamma \one_{|y|>1}.
\end{align}
To this end, we use a Schur test.
We start with the $\sup_y \int_{\R^d}dx$ Schur test. Using that $\int_{\R^d}\me{-L_0}(x)\,dx=1$, we have, since $1<\gamma<\alpha$,
\begin{align}
  \begin{split}
    \sup_y \int_{\R^d}dx\, K(x,y)
    & \leq \int_0^1 ds \int_{|z|<|y|/2}dz\, \me{-s L_0}(z,y)|z|^{-\alpha}|y|^\gamma \one_{|y|>1} \\
    & \lesssim |y|^{-d-\alpha} \cdot |y|^{d-\alpha} \cdot |y|^\gamma \one_{|y|>1} \lesssim 1.
  \end{split}
\end{align}
We now turn to the $\sup_x\int_{\R^d}dy$ Schur test. Since $1<\gamma<\alpha$,
\begin{align}
  \begin{split}
    & \one_{|x|>R_0}\int_0^1 ds \int_{|y|>1} dy \int_{|z|<|y|/2}dz\, \me{-(1-s)L_0}(x,z)|z|^{-\alpha} |y|^\gamma \me{-s L_0}(z,y) (\one_{|z|\leq1}+\one_{|z|>1}) \\
    & \quad \leq \one_{|x|>R_0}\int_0^1 ds\, s\int_{|z|<1} dz\, \me{-(1-s)L_0}(x,z)|z|^{-\alpha} + C.
  \end{split}
\end{align}
Here, we used that $|z|^{-\alpha}\leq1$ and the pointwise decay the free heat kernel together with $\alpha>\gamma$ to treat the integral over $|z|\geq1$.
Choosing $R_0=2$, we can bound the contribution from the integral over $|z|<1$ from above by
\begin{align}
  \begin{split}
    & \one_{|x|>2} \int_0^1 ds\, s\int_{|z|<1} dz\, \me{-(1-s)L_0}(x,z)|z|^{-\alpha} \\
    & \quad \lesssim \one_{|x|>2} \int_0^1 ds\, s\int_{|z|<1} dz\, \frac{1-s}{((1-s)^{1/\alpha}+|x|)^{d+\alpha}}|z|^{-\alpha} \\
    & \quad \lesssim \int_0^1 ds\, s(1-s) \int_{|z|<1}dz\, |z|^{-\alpha}
    \lesssim 1.
  \end{split}
\end{align}
This concludes the $L^2$-boundedness of $D\langle\cdot\rangle^\gamma$ for $1<\gamma<\alpha$.

\subsubsection{The case $\kappa>0$}

Since $\langle x\rangle^\gamma \sim \one_{|x|<1}+|x|^\gamma\one_{|x|>1}$ and $D(x,y)\leq \kappa^{-1}\me{-L_\kappa}(x,y)$ and $\me{-L_\kappa}$ is $L^2$-bounded, it suffices to check whether $D(x,y)|y|^\gamma\one_{|y|>1}$ generates an $L^2$-bounded operator.
To this end, we split the Duhamel integral in $D(x,y)$ at $|y|/2$. Then,
\begin{align}
  D^{>|y|/2}(x,y)|y|^\gamma
 \lesssim |y|^{\gamma-\alpha} \me{-L_{\kappa}}(x,y).
\end{align}
Since $\me{-L_\kappa}$ is $L^2$-bounded and $1<\gamma<\alpha$, the part $D^{>|y|/2}(x,y) \langle y\rangle^\gamma$ generates an $L^2$-bounded operator. We now show the $L^2$-boundedness of the operator generated by $D^{<|y|/2}(x,y)|y|^\gamma\one_{|y|>1}$. To this end, we split the operator into its contributions from $|x|<R_0$ and $|x|>R_0$ for some $R_0>1/2$ to be chosen below. For instance, $R_0=2$ works.

We start with establishing the $L^2$-boundedness of $\chi_{|x|<R_0}D^{<|y|/2}(x,y)|y|^\gamma\one_{|y|>1}$. For $|x|\in(1/2,R_0)$, the kernel is bounded by $\chi_{|x|<R_0}S_0|y|^\gamma\one_{|y|>1}$, which we already considered in the case $\kappa<0$.
Thus, it remains to consider
\begin{align}
  \begin{split}
    K(x,y)
    & := \one_{|x|<1/2}|x|^{-\sigma}\frac{1}{(1 + |y|)^{d+\alpha}}(|y|^\gamma + |y|^{\gamma-\sigma})\one_{|y|>1} \\
    & \ \lesssim \one_{|x|<1/2}|x|^{-\sigma}\frac{1}{(1 + |y|)^{d+\alpha}}|y|^\gamma\one_{|y|>1}.
  \end{split}
\end{align}
This operator is even Hilbert--Schmidt since $\sigma<d/2$.

Thus, it remains to prove the $L^2$-boundedness of the operator with integral kernel
\begin{align}
  \begin{split}
    & \one_{|x|>R_0} \int_0^1 ds \int_{|z|<|y|/2}dz\, \me{-(1-s)L_\kappa}(x,z)|z|^{-\alpha}\me{-s L_0}(z,y)|y|^\gamma \one_{|y|>1} \\
    & \ \lesssim \one_{|x|>R_0} \int_0^1 ds \int\limits_{|z|<|y|/2}dz\, \me{-(1-s)L_0}(x,z)\left(1+\frac{1}{|z|^\sigma}\right) \frac{1}{|z|^\alpha}\me{-s L_0}(z,y)|y|^\gamma \one_{|y|>1} \\
    & \ =: K(x,y).
  \end{split}
\end{align}
To this end, we use a Schur test.
We start with the $\sup_y \int_{\R^d}dx$ Schur test. Using $1<\gamma\leq\alpha$, $\sigma<d-\alpha$, and $\int_{\R^d}\me{-L_0}(x)\,dx=1$, we have
\begin{align}
  \begin{split}
    & \sup_y \int_{\R^d}dx\, K(x,y) \\
    & \ \lesssim \int_0^1 ds \int_{|z|<|y|/2}dz\, \me{-s L_0}(z,y) (1+|z|^{-\sigma})|z|^{-\alpha}|y|^\gamma \one_{|y|>1} \\
    & \ \lesssim |y|^{-d-\alpha} \cdot |y|^{d-\alpha} \cdot |y|^\gamma \one_{|y|>1}
      \lesssim 1.
  \end{split}
\end{align}

We now turn to the $\sup_x\int_{\R^d}dy$ Schur test. Since $1<\gamma<\alpha$,
\begin{align}
  \begin{split}
    & \one_{|x|>R_0}\int_0^1 ds \int_{|y|>1} dy \int_{|z|<|y|/2}dz\, \me{-(1-s)L_0}(x,z)(1+|z|^{-\sigma})|z|^{-\alpha} |y|^\gamma \me{-s L_0}(z,y) \\
    & \quad \leq \one_{|x|>R_0}\int_0^1 ds\, s \int_{\R^d} dz\, \me{-(1-s)L_0}(x,z)(1+|z|^{-\sigma})|z|^{-\alpha} (1\vee |z|)^{\gamma-\alpha}.
  \end{split}
\end{align}
Thus, with $R_0=2$, we can bound this contribution further from above by
\begin{align}
  \begin{split}
    & \one_{|x|>2} \int_0^1 ds\, s\int_{\R^d} dz\, \me{-(1-s)L_0}(x,z)(1+|z|^{-\sigma})|z|^{-\alpha} (1\vee |z|)^{\gamma-\alpha} \\
    & \quad \lesssim \one_{|x|>2} \int_0^1 ds\, s\int_{|z|<1} dz\, \frac{1-s}{((1-s)^{1/\alpha}+|x|)^{d+\alpha}} |z|^{-\sigma-\alpha} \\
    & \qquad + \one_{|x|>2} \int_0^1 ds\, s\int_{|z|>1} dz\, \me{-(1-s)L_0}(x,z) |z|^{\gamma-2\alpha}.
  \end{split}
\end{align}
The integral over $|z|<1$ gives
\begin{align}
  \begin{split}
    & \one_{|x|>2} \int_0^1 ds\, s\int_{|z|<1} dz\, \frac{1-s}{((1-s)^{1/\alpha}+|x|)^{d+\alpha}} |z|^{-\sigma-\alpha} \\
    & \quad \lesssim \int_0^1 ds\, s(1-s) \int_{|z|<1}dz\, |z|^{-\sigma-\alpha}
      \lesssim 1.
  \end{split}
\end{align}
Next, since $\int\me{-\tau L_0}(x,z)\,dz=1$,
\begin{align}
  \label{eq:ahelp}
  \one_{|x|>2}\int_{|z|>1} dz\ \me{-\tau L_0}(x,z) |z|^{\gamma-2\alpha}
  \lesssim 1, \tau >0
\end{align}
since $|z|^{\gamma-2\alpha}\leq1$.
This concludes the proof of \eqref{eq:ahelp}.


\section{Asymptotic completeness of $W(L_0-V_\kappa - V^S,L_0-V_\kappa)$ and $W(L_0-V_\kappa - V^S,L_0)$ when $\alpha>1$}
\label{s:wlkappavlkappacomplete}

By recycling the arguments of the previous section, we give the

\begin{proof}[{Proof of Theorem~\ref{asympcomplhardyplusperturbation}(\ref{asympcomplhardyplusperturbation:acshortrange})}]
  Let $H:=L_0-V_\kappa-V^S$. By the assumptions on $V_\kappa$ and Trotter's formula,
  \begin{align}
    \label{eq:perturbedheatkernel}
    \me{-t (L_{\tilde \kappa}-V^S)}(x,y)
    \leq \me{-tH}(x,y)
    \leq \me{-t (L_\kappa-V^S)}(x,y).
  \end{align}
  Moreover,
  \begin{align}
    \label{eq:perturbedheatkernel2}
    0\leq \me{-tH}(x,y) \leq \me{t\|(V^S)_+\|_\infty}\me{-tL_\kappa}(x,y).
  \end{align}
  We verify Assumption~\ref{assumheatdifference} with $H_0=L_0$.

  \smallskip
  \noindent
  {\bf On the boundedness assumption.}
  Choose $\gamma\in(1,\alpha\wedge\rho_S)$. By Duhamel's formula,
  $$
  \me{-H}-\me{-L_0} = \int_0^1 \me{-(1-s)H}(V_\kappa+V^S)\me{-sL_0}\,ds.
  $$
  Since $|V_\kappa(x)|\lesssim |x|^{-\alpha}$, the contribution containing $V_\kappa$ is bounded after composing from the right with $\langle \cdot\rangle^\gamma$ by the computations of Subsection~\ref{s:weightedbound} using \eqref{eq:perturbedheatkernel2}.

  For the contribution containing $V^S$, we use $\langle y\rangle^\gamma \lesssim \langle z\rangle^\gamma+|y-z|^\gamma$. Since $\gamma<\rho_S$, we have $\|V^S\langle z\rangle^\gamma\|_\infty<\infty$. Since $\gamma<\alpha$, we have
  $$
  \int_{\R^d}|z|^\gamma\me{-sL_0}(z,0)\,dz \lesssim s^{\gamma/\alpha}.
  $$
  Hence,
  $$
  \|V^S\me{-sL_0}\langle x\rangle^\gamma\| \lesssim 1+s^{\gamma/\alpha},
  \quad 0<s\leq1.
  $$
  Together with the $L^2$-boundedness of $\me{-(1-s)H}$, this yields
  $$
  \|(\me{-H}-\me{-L_0})\langle x\rangle^\gamma\|<\infty.
  $$

  \smallskip
  \noindent
  {\bf On the compactness assumption.}
  By \eqref{eq:perturbedheatkernel2},
  $$
  |\me{-H}(x,y)-\me{-L_0}(x,y)| \lesssim \me{-L_\kappa}(x,y)+\me{-L_0}(x,y).
  $$
  Hence, the arguments of Subsection~\ref{s:compactness} extend to the present situation. Indeed, as before, we can check that $\chi_r(\me{-H}-\me{-L_0})$ is Hilbert--Schmidt. On the other hand, $\chi_r^c(\me{-H}-\me{-L_0})\chi_{r/2}$ tends to zero by the heat-kernel bounds and
  $$
  \|\chi_r^c(\me{-H}-\me{-L_0})\chi_{r/2}^c\| \lesssim r^{-\gamma}
  $$
  follows from the above-proved weighted estimate. Thus, $\me{-H}-\me{-L_0}$ is compact.

  \smallskip
  We can therefore apply Theorem~\ref{giere} and obtain the asymptotic completeness of $W(L_0-V_\kappa-V^S,L_0)$. By the chain rule and Theorem~\ref{asympcomplhardy}(1), we obtain also the asymptotic completeness of $W(L_0-V_\kappa-V^S,L_0-V_\kappa)$.
\end{proof}


\section{Asymptotic completeness of $W(L_\kappa-V^S,L_\kappa)$ for $\alpha>1/3$ \\ using Kato smoothness theory}

\label{s:asympcomplperturbedwosmallalpha_k}

In this section, we show Theorem~\ref{asympcomplhardyplusperturbation}(\ref{asympcomplhardyplusperturbation:acshortrangesmallalpha})(a).
Thus, we consider short-range potentials $V^S:\R^d\to\R$ such that $|V^S(x)|\lesssim\langle x\rangle^{-\rho_S}$ with $\rho_S>1 \vee 2\alpha$. We also assume that
\begin{align}
  W_\kappa(x) := \kappa/|x|^\alpha + V^S
\end{align}
obeys Assumption~\ref{mizutaniyaoconditions} and that there are $\tilde\kappa\leq\kappa'<\kappa_{\rm c}$ such that
\begin{align}
  \frac{\tilde\kappa}{|x|^\alpha}
  \leq W_\kappa(x)
  \leq \frac{\kappa'}{|x|^\alpha};
\end{align}
in particular, $\kappa\leq\kappa'<\kappa_{\rm c}$.
This restriction will be important below to control the heat kernel of $(L_0-W_\kappa)^k \me{-t(L_0-W_\kappa)}$ pointwise for $k\in\{0,1\}$, and powers of $L_0$ by powers of $(L_0-W_\kappa)$. The restriction that our coupling constants need to be subcritical is perhaps technical and due to the use of the Kato smoothing estimates below, which rely on positive commutator estimates. In the following, let $\sigma'=\Phi^{-1}(\kappa')\in (-M,(d-\alpha)/2)$ and $D=-i\nabla$.

\subsection{Kato smoothness theory with heat kernels, proof of Theorem~\ref{asympcomplhardyplusperturbation}(\ref{asympcomplhardyplusperturbation:acshortrangesmallalpha})(a)}
Our proof proceeds by applying Kato's smooth approach to scattering theory (see, e.g., \cite[Section~XIII.7]{ReedSimon1978}), which we recall in the following.
For $j\in\N$, let $\Omega_j\subset\R_+=(0,\infty)$ be bounded and such that $[0,\infty)\setminus\bigcup_{j\in\N}\Omega_j$ has Lebesgue measure zero. Then, the existence and completeness of $W(L_\kappa-V^S,L_\kappa)$ follows once we show
\begin{align}
  \begin{split}
    & \sup_{z\in\C\setminus[0,\infty)} \left(\|(V^S)^{1/2}E_{L_\kappa - V^S}(\Omega_j)\, (L_\kappa - V^S - z)^{-1} E_{L_\kappa - V^S}(\Omega_j)\,|V^S|^{1/2}\| \right. \\ & \qquad\qquad \left. + \|(V^S)^{1/2} E_{L_\kappa}(\Omega_j)\,(L_\kappa-z)^{-1} E_{L_\kappa}(\Omega_j)\,|V^S|^{1/2}\|\right) < \infty;
  \end{split}
\end{align}
here, $V^{1/2}(x):=V(x)/|V(x)|^{1/2}$ whenever $V(x)\neq0$.
By \cite[Theorem~1.3]{MizutaniYao2021}, for all $d\geq2$, $\alpha\in(0,d)$, $s\in(1/2,d/2)$, and $V$ satisfying Assumption~\ref{mizutaniyaoconditions},
\begin{align}
  \label{eq:mizutaniyaosmoothing}
  \sup_{z\in\C\setminus[0,\infty)}\||x|^{-s}|D|^{-s+\alpha/2}(L_0 - V-z)^{-1}|D|^{-s+\alpha/2}|x|^{-s}\|<\infty,
\end{align}
i.e., $|x|^{-s}|D|^{-s+\alpha/2}$ is $(L_0-V)$-supersmooth in the sense of Kato--Yajima \cite{KatoYajima1989}; in the following we merely write ``$H$-smooth'' instead of ``$H$-supersmooth''.
Thus, to show the asymptotic completeness of $W(L_\kappa-V^S,L_\kappa)$, it would suffice to prove, for $\beta:=s-\alpha/2\in(\frac{1-\alpha}{2},\frac{d-\alpha}{2})$,
\begin{align}
  \|(V^S)^{1/2} E_{L_0 - W_\kappa}(\Omega)\,|D|^\beta |x|^{s}\|
  + \|(V^S)^{1/2} E_{L_\kappa}(\Omega)\,|D|^\beta |x|^{s}\|
  \lesssim_{d,\alpha,\kappa,s} 1.
\end{align}

It does not seem to be straightforward to prove this estimate directly as our understanding of the spectral projectors is rather limited. Thus, we attempt to construct an approach using heat kernels, which in our situation are better understood. Before we do so, observe that if $A$ is $H$-smooth and $C$ is bounded, then $CA$ is also $H$-smooth. However, $AC$ is generally not $H$-smooth, unless $C$ commutes with $H$.

\begin{lemma}
  \label{heatkernelsmoothness}
  Let $H_0$ and $H$ be two selfadjoint operators in a Hilbert space $\ch$ that are bounded below, and assume $H_0$ has (for simplicity) purely absolutely continuous spectrum. Assume $H-H_0=A^*B$ where $A$ is $H$-bounded and $A\me{-t_0H}$ is $H$-smooth for every $t_0>0$ and $B$ is $H_0$-bounded and $B\me{-t_0H_0}$ is $H_0$-smooth for every $t_0>0$. Then the wave operators
  \begin{align}
    W(H,H_0) := \slim_{t\to\infty}\me{i t H}\me{-i t H_0}
    \quad \text{and} \quad
    W(H_0,H) := \slim_{t\to\infty}\me{i t H_0}\me{-i t H}P_{\rm ac}(H)
  \end{align}
  exist and are complete.
\end{lemma}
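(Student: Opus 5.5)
The plan is to reduce the claim to Kato's smooth method applied to the pair of semigroup-modified operators, and then transfer the result back to $(H,H_0)$ via the invariance principle, or, more directly, via a Cook-type argument. Both $H$ and $H_0$ are bounded below, so after a harmless shift we may assume $H,H_0\geq1$. Fix $t_0>0$ and set $A_0:=A\me{-t_0H}$ and $B_0:=B\me{-t_0H_0}$. Since $\me{-t_0H}$ is injective with dense range, it suffices to show that the limits defining $W(H,H_0)$ exist on the dense set $\ran(\me{-2t_0H_0})$, and that those defining $W(H_0,H)$ exist on $\ran(\me{-2t_0H}P_{\rm ac}(H))$, which is dense in $\ch_{\rm ac}(H)$ because $\me{-2t_0H}$ commutes with $P_{\rm ac}(H)$.

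For $u=\me{-2t_0H_0}v$ and $w\in\ch$, compute as in Cook's method, writing $\Omega(t)=\me{itH}\me{-itH_0}$:
\begin{align}
  \langle w,(\Omega(t_2)-\Omega(t_1))u\rangle = i\int_{t_1}^{t_2}\langle A\me{-itH}\me{-t_0H}w',\, B\me{-t_0H_0}\me{-itH_0}\me{-t_0H_0}v\rangle\,dt,
\end{align}
where we have inserted $\me{-t_0H}$ on the left by replacing $w$ with $\me{t_0H}w'$. More cleanly, we work with $\me{-t_0H}\Omega(t)\me{-t_0H_0}$ and use the factorization $H-H_0=A^*B$ in the form sense on $\D(H)\cap\D(H_0)$, which is where the relative boundedness assumptions enter. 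Cauchy--Schwarz in $t$, together with the smoothness estimates $\int_\R\|A_0\me{-itH}w\|^2dt\lesssim\|w\|^2$ and $\int_\R\|B_0\me{-itH_0}v\|^2dt\lesssim\|v\|^2$, gives
\begin{align}
  \|\me{-t_0H}(\Omega(t_2)-\Omega(t_1))\me{-2t_0H_0}v\|\lesssim\Bigl(\int_{t_1}^{t_2}\|B_0\me{-itH_0}\me{-t_0H_0}v\|^2dt\Bigr)^{1/2}\to0 .
\end{align}
Hence $\me{-t_0H}\Omega(t)\me{-2t_0H_0}$ converges strongly. The symmetric computation, with the roles of $H$ and $H_0$ exchanged and $A_0$ supplying the tail, shows that $\me{-t_0H_0}\me{itH_0}\me{-itH}\me{-2t_0H}$ converges strongly.

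To remove the outer factor $\me{-t_0H}$, we use the intertwining that emerges in the limit: $\me{-t_0H}\Omega(t)=\Omega(t)\me{-t_0H_0}+(\me{-t_0H}\Omega(t)-\Omega(t)\me{-t_0H_0})$. The difference $\me{-t_0H}\Omega(t)-\Omega(t)\me{-t_0H_0}$ tends to zero strongly on $\ran(\me{-t_0H_0})$ by the same Cook/smoothness argument applied to $\me{-t_0H}-\me{-t_0H_0}$. That operator is itself of the form $\int_0^{t_0}\me{-(t_0-s)H}A^*B\me{-sH_0}\,ds$ (Duhamel), and it is handled with the smoothness of $A\me{-(t_0-s)H}$, $B\me{-sH_0}$ for $s$ bounded away from the endpoints, with the endpoints controlled by splitting off small $s$-intervals. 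Consequently $\Omega(t)\me{-3t_0H_0}v$ converges, since $\me{-t_0H}$ is injective and all the families involved are uniformly bounded. Density then gives existence of $W(H,H_0)$, and analogously of $W(H_0,H)$. Completeness follows from the standard chain/adjoint argument: existence of both wave operators implies $\ran W(H,H_0)=\ch_{\rm ac}(H)$.

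The main obstacle is the last step, removing the semigroup weights. Because $A_0$ need not commute with anything, smoothness of $A\me{-t_0H}$ does not directly give smoothness of $A$. I expect the cleanest route is to first prove that $\me{-t_0H}\Omega(t)\me{-t_0H_0}$ converges, and then apply the intertwining-in-the-limit argument above. Handling the Duhamel endpoints $s\to0$ or $s\to t_0$ may require the relative boundedness of $A$ and $B$ to absorb a factor $(H+1)$ against a slightly faster semigroup.
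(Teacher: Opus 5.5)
You get the first step right. The Cook computation with Cauchy--Schwarz in $t$ shows that $\me{-t_0H}\me{itH}\me{-itH_0}\me{-t_0H_0}$ converges strongly, and so does its analogue with $H,H_0$ interchanged. The closing chain/adjoint argument for completeness is also standard.

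The gap is exactly the step you flag: removing the semigroup weights. The mechanism you propose there does not work.
\begin{itemize}
\item A Cook/smoothness argument gives existence of a limit. It does not show that $\me{itH}(\me{-t_0H}-\me{-t_0H_0})\me{-itH_0}y\to0$, which requires $\|(\me{-t_0H}-\me{-t_0H_0})\me{-itH_0}y\|\to0$.
\item In the Duhamel representation $\langle w,(\me{-t_0H}-\me{-t_0H_0})z\rangle=-\int_0^{t_0}\langle A\me{-(t_0-s)H}w,B\me{-sH_0}z\rangle\,ds$, the factor $A\me{-(t_0-s)H}$ acts on an arbitrary unit vector $w$. It only satisfies $\|A\me{-(t_0-s)H}\|\lesssim1+(t_0-s)^{-1}$, which is not integrable at $s=t_0$.
\item $H$-boundedness of $A$ gives no bound $\int_0^\delta\|A\me{-rH}x\|^2\,dr\lesssim\|x\|^2$; that would need form-boundedness. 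There is also no faster semigroup on the $A$-side to absorb $(H+1)$.
\item Cutting off $s\in[t_0-\delta,t_0]$ only reproduces $(\me{-\delta H}-\me{-\delta H_0})\me{-(t_0-\delta)H_0}\me{-itH_0}y$. Its smallness uniformly in $t$ is the same unproved statement: that no mass of $\me{-itH_0}y$ leaks to high $H$-energies.
\item Injectivity of $\me{-t_0H}$ cannot turn convergence of $\me{-t_0H}x_t$ into strong convergence of $x_t$, because the inverse is unbounded; at best you get weak convergence.
\end{itemize}

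The idea you are missing is the one the paper uses. For every bounded Borel set $\Omega$, the operator $\me{t_0H}E_H(\Omega)$ is bounded and commutes with $H$. Hence $AE_H(\Omega)=(A\me{-t_0H})(\me{t_0H}E_H(\Omega))$ is $H$-smooth, and likewise $BE_{H_0}(\Omega)$ is $H_0$-smooth. Your remark that smoothness of $A\me{-t_0H}$ does not give smoothness of $A$ is true globally. It does give local smoothness on every bounded set, and the local Kato smoothness theorem (Reed--Simon, Section XIII.7) then yields existence and completeness directly.

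If you want to keep your self-contained route, replace semigroups by resolvents in the weight removal.
\begin{itemize}
\item After shifting so that $H,H_0\geq0$, set $R=(H+1)^{-1}$ and $R_0=(H_0+1)^{-1}$. Then $\langle w,(R-R_0)z\rangle=-\langle ARw,BR_0z\rangle$, with $AR$ and $BR_0$ bounded by relative boundedness.
\item The function $t\mapsto\|B\me{-t_0H_0}R_0\me{-itH_0}v\|^2$ is integrable (by smoothness) and uniformly Lipschitz, so it tends to zero. Hence $(R-R_0)\me{-itH_0}\to0$ strongly.
\item For bounded $J$, $E_H(J)\me{itH}\me{-itH_0}\me{-t_0H_0}=(E_H(J)\me{t_0H})\,\me{-t_0H}\me{itH}\me{-itH_0}\me{-t_0H_0}$ converges by your first step. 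Since also $\|RE_H(\R\setminus J)\|\to0$ as $J\uparrow[0,\infty)$, the family $R\me{itH}\me{-itH_0}\me{-t_0H_0}v$ is Cauchy.
\item Therefore $\me{itH}\me{-itH_0}R_0\me{-t_0H_0}v=R\me{itH}\me{-itH_0}\me{-t_0H_0}v-\me{itH}(R-R_0)\me{-itH_0}\me{-t_0H_0}v$ converges on a dense set.
\end{itemize}
The same argument works with $H$ and $H_0$ interchanged.
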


\begin{proof}
  Let $E_H(\cdot)$ and $E_{H_0}(\cdot)$ denote the projection-valued spectral measures for $H$ and $H_0$, respectively.
  Let $\Omega\subset\R$ be an arbitrary bounded Borel set. Thus, for any $t_0>0$, the operator $\me{t_0 H} E_H(\Omega)$ is bounded on $\ch$ and commutes with $H$. We factor
  $$
  A E_H(\Omega) = \left(A \me{-t_0 H}\right)\cdot \left(\me{t_0 H} E_H(\Omega)\right).
  $$
  By assumption, $A\me{-t_0 H}$ is $H$-smooth. Thus, $A E_H(\Omega)$ is $H$-smooth as the product of an $H$-smooth operator and a bounded operator commuting with $H$. Since $\Omega$ is arbitrary, $A$ is locally $H$-smooth. The same argument also shows that $B$ is locally $H_0$-smooth. Thus, by local Kato smoothness theory \cite[Section~XIII.7]{ReedSimon1978}, it follows that $W(H,H_0)$ and $W(H_0,H)$ exist and are complete.
\end{proof}

We are now ready to give the
\begin{proof}[{Proof of Theorem~\ref{asympcomplhardyplusperturbation}(\ref{asympcomplhardyplusperturbation:acshortrangesmallalpha})(a)}]
  We aim to apply Lemma~\ref{heatkernelsmoothness} with $H=L_\kappa-V^S$, $H_0=L_\kappa$, $A=|V^S|^{1/2}$, and $B=-(V^S)^{1/2}$. Let $\beta=s-\alpha/2$ and recall that $G:=|x|^{-s}|D|^{-\beta}$ is supersmooth by \eqref{eq:mizutaniyaosmoothing}. In Lemma~\ref{vhsmoothestimate} below, we will show that, for fixed $t>0$,
  \begin{align}
    \label{eq:acshortrangesmallalphasufficient}
    C := |V^S|^{1/2}\me{-tH}|D|^\beta|x|^s \in \mathcal B(L^2(\R^d)).
  \end{align}
  Therefore, $CG = |V^S|^{1/2}\me{-tH}$ is $H$-smooth as the product of a bounded operator and a $H$-smooth operator.
  Analogously, $(V^S)^{1/2}\me{-tH_0}$ is $H_0$-smooth. Applying Lemma~\ref{heatkernelsmoothness} shows existence and completeness of $W(H,H_0)$. Moreover, $G=|x|^{-s}|D|^{-\beta}$ has trivial kernel. By the standard spectral consequence of Kato smoothness, $\overline{\ran G^*}\subset\mathcal H_{\rm ac}(H)$. Since $\ker G=\{0\}$, we have $\overline{\ran G^*}=(\ker G)^\perp=\mathcal H$, and therefore $H$ has purely absolutely continuous spectrum. Therefore, $W(H,H_0)$ is also unitary. Since $H_0$ has purely absolutely continuous spectrum $[0,\infty)$, the intertwining property yields that also $H$ has purely absolutely continuous spectrum $[0,\infty)$.
\end{proof}

The rest of this section is dedicated to proving \eqref{eq:acshortrangesmallalphasufficient}. This will require us to be able to find $s\in\R$ satisfying
\begin{align}
  \begin{split}
    \max\left\{\frac12 , \alpha\right\} < s < \min\left\{\frac{\rho_S}{2},\frac{d}{2},1,\frac{3\alpha}{2},\frac{d+\alpha-\sigma'}{2} ,\frac{d+\alpha-2\sigma'}{2} \right\}, \quad \alpha<1, \\
    \frac12 < s < \min\left\{\frac{\rho_S}{2},\frac{d}{2},1,\frac{d+1-\sigma'}{2} ,\frac{d+1-2\sigma'}{2} \right\}, \quad \alpha=1
  \end{split}
\end{align}
as noted in the introduction.


\subsection{Technical preparations for the proof of (\ref{eq:acshortrangesmallalphasufficient})}

To prove \eqref{eq:acshortrangesmallalphasufficient}, we prepare two technical tools. The first one states that powers of $L_0-W_\kappa$ generate $L^2$-Sobolev norms that are comparable with the ordinary homogeneous Sobolev norms. These are also crucial later to prove a phase space propagation estimate. For $\alpha=2$, it is proved in \cite[Theorem~1.2]{Killipetal2018}, while $\alpha<2$ is treated in \cite[Theorems~1.1, 4.1]{Franketal2021}.

\begin{proposition}[Equivalence of Sobolev norms]
  \label{eqsob}
  Let $d\in\N$, $\alpha\in(0,2] \cap (0,d)$, $\tilde\kappa\leq\kappa\leq\kappa_{\rm c}$ and let $\sigma=\Phi^{-1}(\kappa)\in(-M,(d-\alpha)/2]$. Let $\tilde\kappa/|x|^\alpha \leq V(x) \leq \kappa/|x|^\alpha$ and $s\in(0,2]$.
  \begin{enumerate}
  \item If $s<\frac{d-2\sigma}{\alpha}$, then
    \begin{align}
      \label{eq:eqsob1}
      \|(L_0)^{s/2} f\|_{2}
      \lesssim_{d,\alpha,\kappa,s} \| (L_0 - V)^{\frac{s}{2}}f\|_{2}
      \quad \text{ for all } f\in C_c^\infty(\R^d).
    \end{align}
  \item If $s<\frac{d}{\alpha}$, then
    \begin{align}
      \label{eq:eqsob2}
      \|(L_0 - V)^{\frac{s}{2}}f\|_{2}
      \lesssim_{d,\alpha,\kappa,s} \| (L_0)^{s/2}f\|_{2}
      \quad \text{for all}\ f\in C_c^\infty(\R^d).
    \end{align}
  \end{enumerate}
\end{proposition}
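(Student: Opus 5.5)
The plan is to reduce the statement to the known equivalence of Sobolev norms for the pure Hardy operator $L_\kappa$, proved in \cite[Theorem~1.2]{Killipetal2018} for $\alpha=2$ and \cite[Theorems~1.1, 4.1]{Franketal2021} for $\alpha<2$. These results give exactly \eqref{eq:eqsob1} with $L_0-V$ replaced by $L_\kappa$ in the range $s<(d-2\sigma)/\alpha$, and \eqref{eq:eqsob2} for $L_\kappa$ in the range $s<d/\alpha$. The proof in those references is built on pointwise heat kernel bounds of the form \eqref{eq:heatkernelbounds}, together with Riesz-potential or square-function arguments. My approach is to check that every ingredient used there survives when $\kappa/|x|^\alpha$ is replaced by a sandwiched potential $V$, and then to invoke the same argument.

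The first step is the heat kernel comparison. By Trotter's formula and $\tilde\kappa|x|^{-\alpha}\le V\le\kappa|x|^{-\alpha}$,
$$
0\le \me{-tL_{\tilde\kappa}}(x,y)\le \me{-t(L_0-V)}(x,y)\le \me{-tL_\kappa}(x,y).
$$
So the upper bound \eqref{eq:heatkernelbounds} with exponent $\sigma$ holds for $\me{-t(L_0-V)}$. The second step treats the easy range $s\in(0,1]$. For $s=1$, inequality \eqref{eq:eqsob2} follows directly from $V\ge\tilde\kappa|x|^{-\alpha}$ and Hardy's inequality, and \eqref{eq:eqsob1} (when $\kappa<\kappa_{\rm c}$) from $V\le\kappa|x|^{-\alpha}$ and Hardy. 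For $s<1$, both directions follow from the Heinz--Kato (Löwner) inequality, since $\lambda\mapsto\lambda^{s}$ is operator monotone. When $\kappa=\kappa_{\rm c}$ or $s$ is close to the upper threshold, the form comparison fails, and I would instead use the heat kernel route below.

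The third step, for general $s\in(0,2]$, follows the Frank--Merz--Siedentop strategy. I write
$$
(L_0-V)^{-s/2}=\Gamma(s/2)^{-1}\int_0^\infty t^{s/2-1}\me{-t(L_0-V)}\,dt .
$$
Using the upper bound from the first step, the kernel of this operator is pointwise dominated by $|x-y|^{s\alpha-d}$ times the weights $(1\wedge|x|/|x-y|)^{-\sigma}(1\wedge|y|/|x-y|)^{-\sigma}$. A weighted Schur/Stein--Weiss argument then shows that $L_0^{s/2}(L_0-V)^{-s/2}$ is $L^2$-bounded exactly when $s<(d-2\sigma)/\alpha$; this is \eqref{eq:eqsob1}. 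For \eqref{eq:eqsob2} with $s\le 1$, I would use the argument above (or Hardy for lower-order terms). For $1<s\le2$, I would write $(L_0-V)^{s/2}=(L_0-V)^{s/2-1}(L_0-V)$ and expand $(L_0-V)f=L_0f-Vf$. The term $L_0 f$ is controlled by the $s\le1$ case applied with exponent $s-2\le0$, via duality. The term $Vf$ is controlled by the fractional Hardy inequality $\||x|^{-\alpha}f\|_{\dot H^{\alpha(s-2)/2}}\lesssim\|f\|_{\dot H^{\alpha s/2}}$, which requires $s<d/\alpha$. Only the bound $|V|\lesssim|x|^{-\alpha}$ enters here; if this causes difficulty I would reduce via the square-function estimates of \cite{Franketal2021}, which only use heat kernel upper bounds and Hölder continuity.

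The main obstacle I expect is that the available heat kernel information for $L_0-V$ consists only of two-sided domination, not exact homogeneity. The references use scaling and, for some estimates (e.g.\ derivative bounds of the kernel or Hölder regularity needed in Littlewood--Paley or square-function arguments), the explicit structure of $\kappa/|x|^\alpha$. I would first try to route everything through positivity-based estimates (Schur tests on dominated positive kernels and the operator-monotone Heinz step), which transfer verbatim. Only where a genuine regularity estimate is needed would I redo it using Duhamel's formula with the bound $|V|\lesssim|x|^{-\alpha}$.
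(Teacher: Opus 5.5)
Your top-level plan agrees with the paper, which gives no independent proof. It cites \cite[Theorem~1.2]{Killipetal2018} and \cite[Theorems~1.1, 4.1]{Franketal2021} and remarks that those arguments carry over to potentials squeezed between two Hardy potentials. Your first two steps are fine, and so is your treatment of (2) when $\kappa<\kappa_{\rm c}$.

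The concrete mechanism in your third step, however, fails. A weighted Schur or Stein--Weiss test applied to a pointwise bound on the kernel of $(L_0-V)^{-s/2}$ can only bound operators $w\,(L_0-V)^{-s/2}$ with $w$ a positive multiplication operator. (That kernel behaves like $|x-y|^{\alpha s/2-d}$ times the weights, not $|x-y|^{\alpha s-d}$.) With $w=|x|^{-\alpha s/2}$ it yields the generalized Hardy inequality $\||x|^{-\alpha s/2}f\|\lesssim\|(L_0-V)^{s/2}f\|$, and that is where the threshold $s<(d-2\sigma)/\alpha$ really comes from. It gives no control of $L_0^{s/2}(L_0-V)^{-s/2}$. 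The operator $L_0^{s/2}$ has positive order and no positive kernel, and positive-order Sobolev norms are not monotone under pointwise domination. So a bound on $|(L_0-V)^{-s/2}h|$ says nothing about $\|L_0^{s/2}(L_0-V)^{-s/2}h\|$.

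Consequently (1) is unproved in your proposal for $1<s\le2$ and for $\kappa=\kappa_{\rm c}$. The gap also propagates to (2). For $1<s\le2$ you need $(L_0-V)^{-(1-s/2)}L_0^{1-s/2}$ to be bounded, which is (1) at exponent $2-s$. You have this from the form comparison only if $\kappa<\kappa_{\rm c}$.

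The missing ingredient is a direct comparison of $L_0^{s/2}$ with $(L_0-V)^{s/2}$ itself. The standard way to supply it is a difference estimate:
\begin{itemize}
\item For $0<s<2$, the formula $\lambda^{s/2}=c_s\int_0^\infty(1-\me{-t\lambda})\,t^{-1-s/2}\,dt$ gives $(L_0-V)^{s/2}-L_0^{s/2}=c_s\int_0^\infty(\me{-tL_0}-\me{-t(L_0-V)})\,t^{-1-s/2}\,dt$.
\item Duhamel's formula, $|V|\lesssim|x|^{-\alpha}$ and your Trotter domination bound the heat-kernel difference well enough that a Schur test gives $\|((L_0-V)^{s/2}-L_0^{s/2})f\|\lesssim\||x|^{-\alpha s/2}f\|$. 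For $s=2$ this is just $L_0-(L_0-V)=V$.
\item Combined with the generalized Hardy inequality, this gives (1).
\item Combined with the ordinary Hardy inequality $\||x|^{-\alpha s/2}f\|\lesssim\|L_0^{s/2}f\|$, valid for $s<d/\alpha$, it gives (2).
\end{itemize}
Only positivity, two-sided domination and $|V|\lesssim|x|^{-\alpha}$ enter. This is why passing from $\kappa/|x|^\alpha$ to a sandwiched $V$ is harmless. Since both items are obtained independently, nothing at $\kappa=\kappa_{\rm c}$ relies on a form comparison that fails there.
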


\begin{remark}
  \begin{enumerate}
  \item In \cite[Theorem~1.2]{Killipetal2018}, only the case $V(x)=\kappa/|x|^2$ is treated. However, the same arguments as in the proof of \cite[Theorem~4.1]{Franketal2021} allow us to cover all potentials mentioned in Proposition~\ref{eqsob}.

  \item As noted in \cite[Theorem~A.1]{Franketal2018} (the latest arXiv version of \cite{Franketal2021}), we have that if $s<(d-2\sigma)/\alpha$, then $\D((L_\kappa)^{s/2}) \subset \D((L_0)^{s/2})$ and \eqref{eq:eqsob1} holds for all $f\in \D((L_\kappa)^{s/2})$. Moreover $C_c^\infty(\R^d)$ is an operator core for $(L_\kappa)^{s/2}$. Similarly, if $s<\frac{d}{\alpha}$, then $\D((L_0)^{s/2}) \subset \D((L_\kappa)^{s/2})$ and \eqref{eq:eqsob2} holds for all $f\in\D((L_0)^{s/2})$. Moreover, $C_c^\infty(\R^d)$ is an operator core for $(L_0)^{s/2}$.
  \end{enumerate}
\end{remark}

\begin{remark}
  A statement analogous to that in Proposition~\ref{eqsob} is proved for the $L^2$-Sobolev norms generated by powers of $(-\Delta)^{\alpha/2}|_{\R_+^d}-\kappa/x_d^\alpha$ on the half-space $\R_+^d=\R^{d-1}\times(0,\infty)$ in \cite{FrankMerz2023}, where $(-\Delta)^{\alpha/2}|_{\R_+^d}$ means the regional fractional Laplacian \cite{Bogdanetal2003}. Moreover, the equivalence of the corresponding $L^p$-Sobolev norms has been proved for these operators in \cite{Merz2021,BuiDAncona2023,BuiMerz2026}.
\end{remark}

The second tool is reminiscent of Kato--Ponce theory and is used to treat $\alpha=1$ in Lemma~\ref{vhsmoothestimate}.

\begin{lemma}
  \label{calderoncommutator}
  Let $d\geq 1$ and $0<s\leq 1$. Assume, if $s<1$, that $1-s < d/2$. Then,
  \begin{align}
    \label{eq:commutator-weight}
    \| [|D|,|x|^s]g \|_2 \lesssim_{d,s} \| |D|^{1-s}g \|_{2}.
  \end{align}
  At the endpoint $s=1$, this is understood as $\| [|D|,|x|]g \|_2 \lesssim_d \|g\|_2$.

  In particular, if $d=1$ and $1/2<s\leq1$ or $d\in\{2,3,...\}$ and $0<s<1$, then
  $$
  \| [|D|,|x|^s]g \|_2 \lesssim_{d,s} \| |D|^{1-s}g \|_2+\|g\|_2.
  $$
\end{lemma}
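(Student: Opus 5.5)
We plan to prove \eqref{eq:commutator-weight} on the Fourier side, viewing $[|D|,|x|^s]$ as an operator with an explicit kernel. The endpoint $s=1$ is classical. In that case $[|D|,|x|]$ is a Calder\'on commutator, since $|x|$ is Lipschitz and $|D|=\sum_j R_jD_j$ with Riesz transforms $R_j$. We would write
$$[|D|,|x|]=\sum_j\bigl(R_j[D_j,|x|]+[R_j,|x|]D_j\bigr),$$
where $[D_j,|x|]=-ix_j/|x|$ is bounded. The operator $[R_j,|x|]D_j$ is bounded by Calder\'on's first commutator theorem applied to the Lipschitz function $|x|$.

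For $0<s<1$ we would use the representation
$$|D|=c_d\,\mathrm{p.v.}\int\frac{g(x)-g(y)}{|x-y|^{d+1}}\,dy.$$
This gives the kernel
$$[|D|,|x|^s](x,y)=-c_d\,\frac{|x|^s-|y|^s}{|x-y|^{d+1}}.$$
The elementary inequality $\bigl||x|^s-|y|^s\bigr|\le|x-y|^s$ yields the pointwise bound $|[|D|,|x|^s]g|(x)\lesssim\int|g(y)|\,|x-y|^{s-d-1}dy$. This is not integrable near the diagonal, so the crude bound is useless, and we must exploit cancellation.

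Instead we would write $[|D|,|x|^s]g=|D|(|x|^sg)-|x|^s|D|g$ and use the dilation and rotation invariance of the problem. Both sides of \eqref{eq:commutator-weight} scale the same way under $g\mapsto g(\lambda\cdot)$, because $|D|$ has order $1$, $|x|^s$ has order $-s$, and $|D|^{1-s}$ has order $1-s$. We therefore set $h=|D|^{1-s}g$ and study $T:=[|D|,|x|^s]|D|^{s-1}$. This is a homogeneous operator of degree $0$ commuting with rotations, so it acts diagonally on spherical harmonics.

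Following the approach of \cite{Franketal2021,Yafaev1999}, we would pass to Mellin transform in $r=|x|$ in each angular momentum channel $\ell$. There $T$ becomes multiplication by an explicit ratio of Gamma functions $m_\ell(\tau)$, obtained from the known Mellin multipliers of $|D|^a$ on $|x|^{-\lambda}Y_\ell$:
$$|D|^a\bigl(|x|^{-\lambda}Y_\ell\bigr)=2^a\frac{\Gamma(\frac{\ell+d-\lambda}{2})}{\Gamma(\frac{\ell+\lambda-a}{2})}\,|x|^{-\lambda-a}Y_\ell.$$
The bound \eqref{eq:commutator-weight} is then equivalent to $\sup_{\ell,\tau}|m_\ell(\tau)|<\infty$ on the line $\lambda=d/2+i\tau$, appropriately shifted by the weights. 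The multiplier for $|x|^s|D|g$ involves the shift $\lambda\mapsto\lambda-s$, which must keep the Gamma functions away from their poles. This is exactly where the condition $1-s<d/2$ enters: it ensures $|D|^{s-1}$ is well defined on the relevant $L^2$ line, i.e., that $\bigl||x|^{-(1-s)}\bigr|$-type Hardy inequalities hold.

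The main obstacle is the uniform bound as $|\tau|\to\infty$ and $\ell\to\infty$. Each of the two terms separately grows like $(|\tau|+\ell)^{s}$ relative to the $|D|^{1-s}$ norm, so the commutator structure must produce cancellation at the leading order. We would handle this with Stirling's formula, $\Gamma(z+a)/\Gamma(z+b)=z^{a-b}\bigl(1+O(1/|z|)\bigr)$ uniformly in sectors. This shows that the two multipliers agree to leading order and that their difference is $O(1)$. Since $\ell$ and $\tau$ enter only through $z=(\ell+d/2+i\tau)/2$, the estimate is uniform in both.

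The final ``in particular'' statement follows from interpolation: $\||D|^{1-s}g\|\le\|g\|+\||D|^{1-s}g\|$ trivially. For $d=1$ with $1/2<s\le1$, the condition $1-s<1/2=d/2$ holds, and for $d\ge2$ the condition $1-s<1\le d/2$ holds automatically, so the main estimate applies in both cases.
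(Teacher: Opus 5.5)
Your route is viable but genuinely different from the paper's, and one formula in it needs correcting.

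\textbf{Comparison with the paper.} Your $s=1$ argument (Riesz transforms plus Calder\'on's first commutator theorem) matches the paper's appeal to Calder\'on. For $0<s<1$, however, the paper never goes to real space. Since $\widehat{|x|^s}$ is a multiple of $|\xi|^{-d-s}$, one has $([|D|,|x|^s]g)^\wedge(\xi)=c_{d,s}\int\frac{|\xi|-|\eta|}{|\xi-\eta|^{d+s}}\hat g(\eta)\,d\eta$. The symbol $|\xi|$ is Lipschitz, whereas $|x|^s$ is only H\"older-$s$. Hence $\bigl||\xi|-|\eta|\bigr|\le|\xi-\eta|$ leaves the locally integrable bound $|\xi-\eta|^{-d+1-s}$. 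So the cancellation you found missing in real space is automatic after the Fourier transform.

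Set $f=|\eta|^{1-s}\hat g$. The kernel $K(\xi,\eta)=c_{d,s}\frac{|\xi|-|\eta|}{|\xi-\eta|^{d+s}}|\eta|^{s-1}$ is homogeneous of degree $-d$ and rotation invariant. A Schur test with weight $|\xi|^{-d/2}$ therefore reduces to finiteness of $\int|K(e,\eta)||\eta|^{-d/2}\,d\eta$ and $\int|K(\xi,e)||\xi|^{-d/2}\,d\xi$ for a unit vector $e$. The hypothesis $1-s<d/2$ is used exactly at $\eta\to0$ in the first integral and at $|\xi|\to\infty$ in the second. This takes a few lines and needs neither special functions nor domain bookkeeping for individually unbounded pieces.

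\textbf{Your Mellin route.} It works and yields more: the exact multiplier, hence the sharp constant. However, your displayed formula for $|D|^a(|x|^{-\lambda}Y_\ell)$ is wrong. It is essentially the Fourier-transform factor, and for $a=0$ it does not reduce to the identity. The correct formula is
$$|D|^a(|x|^{-\lambda}Y_\ell)=2^a\frac{\Gamma(\frac{\ell+d-\lambda}{2})\Gamma(\frac{\ell+\lambda+a}{2})}{\Gamma(\frac{\ell+\lambda}{2})\Gamma(\frac{\ell+d-\lambda-a}{2})}|x|^{-\lambda-a}Y_\ell.$$
Apply it to $|D|^{s-1}$, then to $|D|$ acting on $|x|^{-\lambda+1}Y_\ell$, and to $|D|^s$. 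With $\lambda=\frac d2+i\tau$ and $\zeta=\frac{\ell+d/2+i\tau}{2}$, this gives
\[
m_\ell(\tau)=2^s\Bigl[\frac{\Gamma(\bar\zeta+\frac12)\,\Gamma(\zeta-\frac{1-s}{2})}{\Gamma(\zeta-\frac12)\,\Gamma(\bar\zeta+\frac{1-s}{2})}-\frac{\Gamma(\bar\zeta)\,\Gamma(\zeta+\frac s2)}{\Gamma(\zeta)\,\Gamma(\bar\zeta-\frac s2)}\Bigr].
\]
Since $\re\zeta\ge d/4>0$, Stirling shows that both terms equal $2^s|\zeta|^s(1+O(|\zeta|^{-1}))$ uniformly as $|\zeta|\to\infty$. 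Hence $m_\ell(\tau)=O(|\zeta|^{s-1})$, which confirms your cancellation heuristic.

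The factor $\Gamma(\zeta-\frac{1-s}{2})$ at $\ell=0$ is where $1-s<d/2$ enters. That condition keeps its argument in the right half-plane, which is what makes the Riesz potential $|D|^{s-1}$ converge on $|x|^{-d/2-i\tau}$ and excludes the pole at $1-s=d/2$.

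You would still need to justify the multiplier calculus on a dense class of $h$ and then extend by density. A suitable class is finitely many angular channels with smooth, compactly supported Mellin transforms. This is extra work compared with the paper's Schur test.
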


We defer the proof of this lemma to Appendix~\ref{s:calderoncommutator}.


\subsection{Proof of (\ref{eq:acshortrangesmallalphasufficient})}

Let us first prove \eqref{eq:acshortrangesmallalphasufficient} in the case where $|D|^\beta$ is absent.

\begin{lemma}
  \label{vhsmoothestimateaux}
  Let $d\in\N$, $\alpha\in(0,d)\cap(0,2)$, $\kappa\leq\kappa_{\rm c}$, $\sigma=\Phi^{-1}(\kappa)$, and $t>0$.
  Assume there is $\tilde\kappa\leq\kappa$ such that $\tilde\kappa/|x|^\alpha \leq V(x)\leq \kappa/|x|^\alpha$.
  \begin{enumerate}
  \item Let $0\leq s<d/2-\sigma$ and $0\leq\gamma\leq s$.
    Then,
    \begin{align}
      \label{eq:vhsmoothestimateaux2}
      \||x|^{-s} \me{-t(L_0-V)} |x|^\gamma\| \lesssim_{d,\alpha,\kappa,s,\gamma} t^{(\gamma-s)/\alpha}.
    \end{align}
    
  \item Let $0\leq \gamma\leq s$. Assume either $s<d/2-\sigma$, or, when $s\geq d/2-\sigma$ also $\gamma+s \leq d+\alpha-\sigma$. Then, for every $t>0$, there is $C_t>0$ such that
    \begin{align}
      \label{eq:vhsmoothestimateaux4}
      \|\langle x\rangle^{-s} \me{-t(L_0-V)} |x|^\gamma\|
      \leq C_t.
    \end{align}  
  \end{enumerate}
\end{lemma}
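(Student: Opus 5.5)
We first reduce to the case $V=\kappa/|x|^\alpha$ and then argue on the level of integral kernels. By Trotter's formula, as in Section~\ref{s:wlkappal0complete}, $0\leq \me{-t(L_0-V)}(x,y)\leq \me{-tL_\kappa}(x,y)$ pointwise. If $\kappa\leq0$ we use instead $\me{-tL_\kappa}\leq\me{-tL_0}$, so that $\sigma\leq0$ may be replaced by $0$ in the bounds below. Since the weights $|x|^{-s}$ and $|x|^\gamma$ are positive, it therefore suffices to bound the operators with the positive kernel
\[
K(x,y)=|x|^{-s}\me{-tL_\kappa}(x,y)|y|^\gamma .
\]

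For part (1), we reduce to $t=1$ by scaling. The operator $L_\kappa$ is homogeneous of degree $\alpha$, so $\me{-tL_\kappa}(x,y)=t^{-d/\alpha}\me{-L_\kappa}(t^{-1/\alpha}x,t^{-1/\alpha}y)$. Conjugating with the unitary $L^2$-dilation produces exactly the factor $t^{(\gamma-s)/\alpha}$.

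At $t=1$ we insert the sharp bound \eqref{eq:heatkernelbounds}, which gives
\[
K(x,y)\lesssim |x|^{-s}(1\wedge|x|)^{-\sigma}(1+|x-y|)^{-d-\alpha}(1\wedge|y|)^{-\sigma}|y|^\gamma .
\]
We then apply a weighted Schur test with power weights $w(x)=|x|^{-a}$, and we split the kernel into the regions $|x|,|y|\lesssim1$, the region where one variable is small and the other large, and the region $|x|,|y|\gtrsim1$.

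In the local region the kernel is essentially $|x|^{-s-\sigma}|y|^{\gamma-\sigma}$. This is a Hilbert--Schmidt-type piece, and it is $L^2$-bounded because $s+\sigma<d/2$ and $\gamma-\sigma>-d/2$; the second inequality holds since $\sigma<d/2$.

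In the far region $\gamma\leq s$ is used. Writing $|y|^\gamma\lesssim|x|^\gamma+|x-y|^\gamma$, the term with $|x|^{\gamma-s}$ gives a bounded factor for $|x|\gtrsim1$ times the free heat kernel. The term with $|x-y|^\gamma$ produces $|x|^{-s}(1+|x-y|)^{\gamma-d-\alpha}$, and this is integrable in $y$ because $\gamma\leq s<d/2<d/2+\alpha$, since $s<d/2-\sigma$ is below $d/2$ when $\sigma\geq0$. In the mixed regions, $|x|<1<|y|$ gives $|x|^{-s-\sigma}\langle y\rangle^{\gamma-d-\alpha}$, which is square integrable in both variables. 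The region $|y|<1<|x|$ is easier still.

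This region-by-region Schur test, in particular the choice of Schur weight near the origin, is where care is needed. I expect the borderline $s\to d/2-\sigma$ to be handled by Hilbert--Schmidt bounds for the local piece rather than by a Schur test.

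For part (2), we fix $t$, so scaling is irrelevant and by scaling we may take $t=1$. The weight $\langle x\rangle^{-s}$ is bounded near the origin, so only the singularity $(1\wedge|x|)^{-\sigma}$ remains at small $x$. That local factor is square integrable since $\sigma<d/2$, and the small-$x$ piece becomes harmless without any condition on $s$.

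The difficulty is then the region $|x|\lesssim1\ll|y|$, together with the region $|x|,|y|$ large, when $s\geq d/2-\sigma$. In the region $|x|\lesssim 1\ll |y|$ the kernel is $\lesssim(1\wedge|x|)^{-\sigma}|y|^{\gamma-d-\alpha}$. Its Hilbert--Schmidt norm is finite provided $2(\gamma-d-\alpha)<-d$, i.e. $\gamma<d/2+\alpha$. The stated condition $\gamma+s\leq d+\alpha-\sigma$ (with $s\geq d/2-\sigma$) is what one expects to guarantee this, up to an endpoint, which I would handle by a Schur test with weight $|y|^{-d/2}$ instead of Hilbert--Schmidt.

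For $|x|,|y|\gtrsim1$ the argument from part (1) applies verbatim, with $\langle x\rangle^{-s}|x|^{\gamma}$ bounded since $\gamma\leq s$. I regard this bookkeeping of exponents in the mixed region, matching the precise hypothesis $\gamma+s\leq d+\alpha-\sigma$, as the main obstacle.
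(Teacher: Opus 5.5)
\textbf{There is a genuine gap, in the far region $|x|,|y|\gtrsim1$; a second, smaller one concerns $\kappa<0$.} Your overall strategy is the paper's: domination by $\me{-tL_\kappa}$ via Trotter, scaling to $t=1$, the sharp bound \eqref{eq:heatkernelbounds}, and a region-by-region analysis. Using Hilbert--Schmidt norms for the pieces with $|x|\lesssim1$ or $|y|\lesssim1$ is a valid simplification of the paper's weighted Schur tests. In part (2) it shows that the piece $|x|\lesssim1\ll|y|$ only needs $\gamma<d/2+\alpha$. This holds strictly, since $2\gamma\leq\gamma+s\leq d+\alpha-\sigma<d+2\alpha$, so no endpoint arises there.

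\emph{The far region.} After splitting $|y|^\gamma\lesssim|x|^\gamma+|x-y|^\gamma$, the second kernel $|x|^{-s}(1+|x-y|)^{\gamma-d-\alpha}$ is integrable in $y$ only if $\gamma<\alpha$. The threshold $d/2+\alpha$ you quote is the one for square integrability. Square integrability does not help, because $|x|^{-s}\notin L^2(\{|x|\geq1\})$ when $s<d/2$, which is the case you are in for $\sigma\geq0$. So for $\alpha\leq\gamma\leq s$:
\begin{itemize}
\item neither Young's inequality nor a Hilbert--Schmidt bound applies;
\item the unweighted Schur integral $\sup_x\int_{|y|\geq1}|x|^{-s}(1+|x-y|)^{\gamma-d-\alpha}\,dy$ diverges.
\end{itemize}
This case is not marginal. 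The lemma is used with $\gamma=s>\alpha$ for the operator $B$ in the proof of Lemma~\ref{vhsmoothestimate}.

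What is needed is a genuinely weighted Schur test in this region. The paper uses the weight $(|x|/|y|)^\delta$ with $\gamma-\alpha<\delta<d+\alpha-s$, applied directly to $|x|^{-s}|y|^\gamma(1+|x-y|)^{-d-\alpha}$ without splitting. The factor $|y|^{-\delta}$ tames the growth of $|y|^{\gamma}$ on $\{|y|\geq2|x|\}$. The bound $|x|^{\gamma-s}\leq1$ is used only on $\{|x|\sim|y|\}$. The window for $\delta$ is nonempty because $\gamma+s<d+2\alpha$, which follows from $\sigma>-\alpha$ in part (1) and from $\gamma+s\leq d+\alpha-\sigma$ in part (2). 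The same defect carries over to your part (2), where you say the far-region argument applies verbatim.

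\emph{The case $\kappa<0$.} Your reduction here loses part of the admissible range. Replacing $\me{-tL_\kappa}$ by $\me{-tL_0}$ discards the vanishing factor $(1\wedge|x|)^{-\sigma}=(1\wedge|x|)^{|\sigma|}$ at the origin. Your local and mixed pieces then require $s<d/2$. For $\sigma<0$, however, the hypothesis $s<d/2-\sigma$ allows $d/2\leq s<d/2-\sigma$.

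For such $s$ the dominating operator $|x|^{-s}\me{-L_0}|x|^\gamma$ is unbounded. Applied to a nonnegative bump supported in $\{1\leq|y|\leq2\}$, it produces a function $\gtrsim|x|^{-s}$ near the origin, which is not in $L^2$. You should instead keep the two-sided bound \eqref{eq:heatkernelbounds} with $\sigma\in(-\alpha,0)$, as the paper does. Your Hilbert--Schmidt computations then go through under $s+\sigma<d/2$, and $\sigma>-\alpha$ gives $\gamma\leq s<d/2-\sigma<d/2+\alpha$ in the mixed region.
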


All identities in the following proof involving unbounded spatial weights or fractional powers of $|D|$ are understood first on a common dense core (for instance $C_c^\infty(\R^d\setminus\{0\})$) and, when necessary, after bounded regularization of the weights and Fourier multipliers. The estimates below are uniform in these regularizations; hence the displayed factorizations pass to the corresponding closed operators by density and closedness. We suppress these standard regularizations in the notation.

\begin{proof}
  By Trotter's formula, $\me{-t(L_0-V)}(x,y)\leq\me{-t L_\kappa}(x,y)$.
  We begin with the proof of Item~(1).
  By scaling, it suffices to consider $t=1$.
  We use Schur tests for the four regions $|x|,|y|\leq1$, $|x|\leq1\leq|y|$, $|y|\leq1\leq|x|$, and $|x|,|y|\geq1$.
  
  \smallskip
  {\bf Case $|x|,|y|\leq1$:}
  The relevant kernel is $|x|^{-s-\sigma}|y|^{\gamma-\sigma}$. We use Schur tests with weight $(|x|/|y|)^{s+\sigma}$. Then,
  \begin{align}
    |x|^{-s-\sigma}\int_{|y|\leq1}|y|^{\gamma-\sigma} \left(\frac{|x|}{|y|}\right)^{s+\sigma}
    \lesssim1, \\
    |y|^{\gamma+s}\int_{|x|\leq1}dx\, |x|^{-2(s+\sigma)}
    \lesssim 1,
  \end{align}
  where we used $s-\gamma<d-2\sigma$ (which is satisfied since $s<d/2-\sigma$ and $\gamma\geq 0$) in the first Schur test, and $s+\gamma\geq0$ and $s<d/2-\sigma$ in the second one.
  Had we not worked with the potential $|x|^{-s}$ but rather with $\langle x\rangle^{-s}$, then this Schur test would have worked with the weight $(|x|/|y|)^\sigma$, and we would not encounter the restriction $s<d/2-\sigma$.
  
  \smallskip
  {\bf Case $|x|\leq1\leq|y|$:}
  The relevant kernel is $|x|^{-s-\sigma}|y|^\gamma(1+|x-y|)^{-d-\alpha}$. We use Schur tests with weight $(|x|/|y|)^{s+\sigma}$. Then,
  \begin{align}
    \int_{|y|>1}dy\, |y|^{-\sigma+\gamma-s}(1+|x-y|)^{-d-\alpha} \lesssim 1,
  \end{align}
  where we used $s-\gamma>-\alpha-\sigma$ (which is satisfied because $\gamma\leq s$ and $\sigma>-\alpha$), and
  \begin{align}
    \begin{split}
      & |y|^{s+\gamma+\sigma}\int_{|x|\leq1}dx\, |x|^{-2(s+\sigma)}(1+|x-y|)^{-d-\alpha}\left(\one_{|x|\leq1/2}+\one_{1/2\leq|x|\leq1}(\one_{|y|\in[1,2]}+\one_{|y|>2})\right) \\
      & \quad \lesssim |y|^{s+\gamma +\sigma-d-\alpha} + 1
        \lesssim 1.
    \end{split}
  \end{align}
  where we used $s<d/2-\sigma$ and $s+\gamma\leq d+\alpha-\sigma$ (which is satisfied since $s+\gamma\leq d-2\sigma<d+\alpha-\sigma$).

  \smallskip
  {\bf Case $|y|\leq1\leq|x|$:}
  The relevant kernel is $|x|^{-s}|y|^{\gamma-\sigma}(1+|x-y|)^{-d-\alpha}$. We use Schur tests with weight $(|x|/|y|)^{-\gamma+\sigma}$. Then,
  \begin{align}
    \begin{split}
      |x|^{\sigma-s-\gamma} \int_{|y|\leq1}dy\, |y|^{2\gamma-2\sigma}(1+|x-y|)^{-d-\alpha}(\one_{|x|\in[1,2]}+\one_{|x|\geq2})
      \lesssim1,
    \end{split}
  \end{align}
  where we used $\sigma-s-\gamma \leq d+\alpha$ and $2\gamma>2\sigma-d$ (which is satisfied since $-\alpha<\sigma\leq(d-\alpha)/2$ and $s,\gamma\geq0$), and
  \begin{align}
    \begin{split}
      \int_{|x|\geq1}dx\, |x|^{-\sigma+\gamma-s}(1+|x-y|)^{-d-\alpha} \lesssim1
    \end{split}
  \end{align}
  since $s-\gamma>-\alpha-\sigma$ (which is satisfied because $\gamma\leq s$ and $\sigma>-\alpha$).

  \smallskip
  {\bf Case $|x|,|y|\geq1$:}
  The relevant kernel is $|x|^{-s}|y|^{\gamma}(1+|x-y|)^{-d-\alpha}$. We use Schur tests with weight $(|x|/|y|)^\delta$ with $\gamma-\alpha<\delta<d+\alpha-s$; such a $\delta$ exists because $\gamma+s<d-2\sigma<d+2\alpha$, which follows from $\sigma>-\alpha$. Then
  \begin{align}
    \begin{split}
      & |x|^{\delta-s}\int_{|y|>1}dy\,(1+|x-y|)^{-d-\alpha}|y|^{\gamma-\delta} \left(\one_{|y|\leq|x|/2}+\one_{|x|/2\leq|y|\leq2|x|}+\one_{|y|\geq2|x|}\right) \\
      & \quad \lesssim |x|^{\delta-s} \cdot |x|^{\gamma-\delta} \lesssim 1
    \end{split}
  \end{align}
  since $s-\gamma\geq0>-\alpha$, and
  \begin{align}
    \begin{split}
      & |y|^{\delta+\gamma} \int_{|x|\geq1}dx\, (1+|x-y|)^{-d-\alpha}|x|^{-s-\delta}\left(\one_{|x|\leq|y|/2}+\one_{|y|/2\leq |x|\leq2|y|} + \one_{|x|\geq2|y|}\right) \\
      & \quad \lesssim |y|^{\delta+\gamma} ( |y|^{-\alpha-s-\delta}+|y|^{-s-\delta})
        \lesssim 1.
    \end{split}
  \end{align}
  This concludes the proof of Item~(1).

  \smallskip
  We now prove Item~(2). When $s<d/2-\sigma$, this follows from the previous Schur tests and $\|\langle x\rangle^{-s}\me{-t L_\kappa}|x|^\gamma\| \leq \||x|^{-s}\me{-t L_\kappa}|x|^\gamma\|$. When $s\geq d/2-\sigma$, we re-inspect the Schur test in the region $|x|\leq t^{1/\alpha}$, which is the only region where this restriction was used.
  When also $|y|\leq t^{1/\alpha}$, we use the weight $(|x|/|y|)^\sigma$ and get
  \begin{align}
    t^{2\sigma/\alpha-d/\alpha} \langle x\rangle^{-s}\int_{|y|<t^{1/\alpha}}|y|^{\gamma-2\sigma}
    & \lesssim t^{\gamma/\alpha}, \\
    t^{2\sigma/\alpha-d/\alpha} |y|^{\gamma} \int_{|x|<t^{1/\alpha}}\langle x\rangle^{-s}|x|^{-2\sigma}
    & \lesssim t^{\gamma/\alpha}.
  \end{align}
  where we used $\sigma\leq(d-\alpha)/2<d/2$.
  In the region $|x|\leq t^{1/\alpha}\leq|y|$ we use the weight $(|x|/|y|)^{\sigma}(\langle x_t\rangle/\langle y_t\rangle)^s$ with $x_t=x/t^{1/\alpha}$ and $y_t=y/t^{1/\alpha}$. The $\sup_{|x_t|<1}\int_{|y_t|>1}dy$-Schur test is the same, using $\gamma\leq s$. The other Schur test is
  \begin{align}
    \begin{split}
      & t^{1+d/\alpha}|y_t|^{s+\gamma+\sigma}\int_{|x_t|\leq1}dx\, \langle x_t\rangle^{-s}\langle x\rangle^{-s} |x_t|^{-2\sigma}(t^{1/\alpha}+|x-y|)^{-d-\alpha} \\
      & \quad \lesssim_t |y_t|^{s+\gamma+\sigma-d-\alpha}+1 \lesssim1, \quad |y_t|>1,
    \end{split}
  \end{align}
  where now we just used $s+\gamma\leq d+\alpha-\sigma$.
  This concludes the proof.
\end{proof}

We now repeat these estimates, but include $|D|^{s-\alpha/2}$ to conclude the proof of~\eqref{eq:acshortrangesmallalphasufficient}.

\begin{lemma}
  \label{vhsmoothestimate}
  Let $d\in\N$, $\alpha\in(0,d)\cap(0,1]$, $\kappa<\kappa_{\rm c}$, $\sigma=\Phi^{-1}(\kappa)$, and $t>0$.
  Assume there is $\tilde\kappa\leq\kappa$ such that $\tilde\kappa/|x|^\alpha \leq V(x)\leq \kappa/|x|^\alpha$.
  Assume also that either
  \begin{align*}
    \alpha < s \leq \min\{1,3\alpha/2\}
  \end{align*}
  or
  \begin{align*}
    \alpha=1 \quad \text{and} \quad \frac12<s\leq1.
  \end{align*}

  \begin{enumerate}
  \item Let $s<d/2-\sigma$. Then,
    \begin{align}
      \label{eq:vhsmoothestimate1}
      \||x|^{-s}\me{-t(L_0-V)}|D|^{s-\alpha/2}|x|^s\| \lesssim_{d,\alpha,\kappa,s} t^{-(s-\alpha/2)/\alpha}.
    \end{align}
    
  \item Assume
    \begin{align}
      \label{eq:vhsmooth-sobolev-condition}
      s < (d + \alpha - 2\sigma)/2, \\
      \label{eq:vhsmooth-kernel-condition}
      s < (d + \alpha - \sigma)/2.
    \end{align}
    Then, for every $t>0$, there is $C_t>0$ such that
    \begin{align}
      \label{eq:vhsmoothestimate2}
      \|\langle x\rangle^{-s}\me{-t(L_0-V)}|D|^{s-\alpha/2}|x|^s\|
      \lesssim_{d,\alpha,\kappa,s} C_t.
    \end{align}
    In particular, \eqref{eq:acshortrangesmallalphasufficient} holds.
  \end{enumerate}
\end{lemma}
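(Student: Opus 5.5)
The plan is to reduce Lemma~\ref{vhsmoothestimate} to Lemma~\ref{vhsmoothestimateaux} by moving the Fourier multiplier $|D|^{\beta}$, $\beta:=s-\alpha/2\in(0,\alpha]$, through the heat semigroup. We use Proposition~\ref{eqsob} for this, and a commutator with $|x|^s$. Write $H:=L_0-V$. By scaling it again suffices to take $t=1$ in Item~(1). We split
\begin{align*}
  \me{-H}|D|^\beta|x|^s = \me{-H}|x|^s|D|^\beta + \me{-H}[|D|^\beta,|x|^s].
\end{align*}
For the commutator term in the case $\alpha=1$, $\beta=s-1/2$, we would rather write $|D|^\beta = |D|\,|D|^{\beta-1}$ and use Lemma~\ref{calderoncommutator}. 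In general we treat the two pieces separately.

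\textbf{Main term.} We factor
\begin{align*}
  |x|^{-s}\me{-H}|x|^s|D|^\beta = \bigl(|x|^{-s}\me{-H/2}|x|^{s}\bigr)\bigl(|x|^{-s}\me{-H/2}|x|^s|D|^\beta\bigr).
\end{align*}
This does not immediately help, so we instead move $|D|^\beta$ to the left by duality. The adjoint is $|D|^\beta|x|^s\me{-H}|x|^{-s}$, and we want its boundedness. Here is the first route. Proposition~\ref{eqsob}(1) with exponent $2\beta/\alpha\le 2$ gives $\||D|^\beta g\|\lesssim\|H^{\beta/\alpha}g\|$, valid since $2\beta/\alpha<(d-2\sigma)/\alpha$, i.e.\ $s<(d+\alpha-2\sigma)/2$. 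This is exactly \eqref{eq:vhsmooth-sobolev-condition}, which is reassuring.

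So the strategy becomes: write $|D|^\beta|x|^s=|x|^s|D|^\beta+[|D|^\beta,|x|^s]$. For the piece $|x|^s|D|^\beta$ we commute $|D|^\beta$ past $|x|^s$ once more, and for the piece $|D|^\beta\me{-H}$ we use $\||D|^\beta\me{-H}f\|\lesssim\|H^{\beta/\alpha}\me{-H}f\|$. The semigroup $H^{\beta/\alpha}\me{-H}$ has kernel bounded pointwise by a constant times $\me{-cL_\kappa}(x,y)$-type bounds, since $\beta/\alpha\le1$. Justifying this is where the restriction to $k\in\{0,1\}$ and interpolation enter: we use the subordination/analyticity bound $|H\me{-H}(x,y)|\lesssim\me{-H/2}(x,y)$-type heat-kernel estimates, available for $\kappa'<\kappa_{\rm c}$. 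With these pointwise bounds, the weighted Schur tests of Lemma~\ref{vhsmoothestimateaux} apply verbatim with $\gamma$ replaced by the relevant weight exponents.

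\textbf{Commutator term.} The piece $[|D|^\beta,|x|^s]$ is handled with a Kato--Ponce/Calder\'on-type bound. For $\beta\le1$ and $0<s\le1$ we have $\|[|D|^\beta,|x|^s]g\|\lesssim\||D|^{\beta-s}g\|$, the fractional analogue of Lemma~\ref{calderoncommutator}, proved the same way through the kernel $|x-y|^{-d-\beta}(|x|^s-|y|^s)$ with $||x|^s-|y|^s|\le|x-y|^s$. Since $\beta-s=-\alpha/2<0$, we then need $|x|^{-s}\me{-H}|D|^{-\alpha/2}$ bounded, i.e.\ a fractional integration $|D|^{-\alpha/2}$ absorbed by the weight $|x|^{-s}$. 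Hardy's inequality $\||x|^{-\alpha/2}|D|^{-\alpha/2}\|\lesssim1$ together with Lemma~\ref{vhsmoothestimateaux} at $\gamma=s-\alpha/2$ does this. The conditions $\alpha<s\le 3\alpha/2$ ensure $0\le\gamma\le s$ and the admissibility of the exponents; the case $\alpha=1$, $1/2<s$ instead goes through Lemma~\ref{calderoncommutator} directly.

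\textbf{Item (2) and the main obstacle.} For Item~(2) we repeat the argument with $\langle x\rangle^{-s}$, invoking Lemma~\ref{vhsmoothestimateaux}(2). Condition \eqref{eq:vhsmooth-kernel-condition} is what makes $\gamma+s\le d+\alpha-\sigma$ hold there. Then \eqref{eq:acshortrangesmallalphasufficient} follows from $|V^S|^{1/2}\lesssim\langle x\rangle^{-\rho_S/2}\le\langle x\rangle^{-s}$, using $s<\rho_S/2$. The hardest step is obtaining pointwise kernel control for $|D|^\beta\me{-H}$, or equivalently the order-of-operators juggling that lets Proposition~\ref{eqsob} be applied on the correct side without producing a commutator with the unbounded weight that we cannot control. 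If the duality route fails, I would try writing $|D|^\beta=c\int_0^\infty\tau^{-\beta/\alpha-1}(1-\me{-\tau L_0})\,d\tau$ and estimating the heat-kernel differences directly via Schur tests.
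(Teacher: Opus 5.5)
Your treatment of the main term has a genuine gap. Its adjoint is $|D|^\beta|x|^s\me{-H}|x|^{-s}$, and your first move is right: with $|D|^\beta$ leftmost, Proposition~\ref{eqsob} reduces matters to bounding $\|H^{\beta/\alpha}(|x|^s\me{-H}|x|^{-s}f)\|$. You then abandon this. Instead you commute $|D|^\beta$ back through $|x|^s$, which is circular, to place it next to the semigroup, and are left with $|x|^s|D|^\beta\me{-H}|x|^{-s}$. For this operator Proposition~\ref{eqsob} is of no use, because it is an unweighted $L^2$ comparison. It does not give $\||x|^s|D|^\beta g\|\lesssim\||x|^sH^{\beta/\alpha}g\|$, and pointwise kernel bounds for $H^{\beta/\alpha}\me{-H}$ carry no information about $|D|^\beta\me{-H}$. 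You flag this as the hardest step and leave it open. The subordination fallback is only a sketch: it would need H\"older-type regularity in $x$ of the kernel of $\me{-H}$, which is singular at the origin, and nothing you cite provides that. Control of $|D|^\beta\me{-tL_\kappa}$ is precisely what is unavailable; cf.\ Remark~\ref{rem:technicaldescription}.

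The missing idea is to commute an integer power of $H$, not $|D|^\beta$, through the weight.
\begin{itemize}
\item After Proposition~\ref{eqsob}, interpolate: $\|H^{\beta/\alpha}u\|\le\|u\|^{1-\beta/\alpha}\|Hu\|^{\beta/\alpha}$. This needs $\beta\le\alpha$, i.e.\ $s\le3\alpha/2$. Since $\|u\|$ is controlled by Lemma~\ref{vhsmoothestimateaux}, it suffices to bound $H|x|^s\me{-H}|x|^{-s}$.
\item Because $V$ commutes with $|x|^s$, you have $H|x|^s=|x|^sH+[(-\Delta)^{\alpha/2},|x|^s]$.
\item The term $|x|^sH\me{-H}|x|^{-s}$ is bounded by the pointwise kernel estimate \eqref{eq:derivativeheatkernel} for $H\me{-H}$ together with the Schur tests of Lemma~\ref{vhsmoothestimateaux}.
\item The commutator has kernel dominated by $|x-y|^{-d-\alpha+s}$. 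This is the Riesz potential $|D|^{\alpha-s}$ exactly because $s>\alpha$, and that is where $s>\alpha$ enters. It is then absorbed by Hardy's inequality and \eqref{eq:vhsmoothestimateaux2} with $\gamma=s-\alpha$.
\item For $\alpha=1$, where $s>\alpha$ is impossible, use Lemma~\ref{calderoncommutator} together with Proposition~\ref{eqsob} at exponent $2(1-s)$. This is needed for $[|D|,|x|^s]$, not for $[|D|^\beta,|x|^s]$ as you suggest.
\end{itemize}
Your commutator term is essentially right, with two corrections. It must use pointwise domination by the positive kernel of $|D|^{-\alpha/2}$ together with positivity of $\me{-H}$; an $L^2$ commutator bound is not enough. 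And the exponent in \eqref{eq:vhsmoothestimateaux2} must be $\gamma=\alpha/2$, so that $|x|^{\alpha/2}$ pairs with Hardy's $|x|^{-\alpha/2}|D|^{-\alpha/2}$. With your $\gamma=s-\alpha/2$ the leftover factor $|x|^{-(s-\alpha/2)}|D|^{-\alpha/2}$ is not bounded.
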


The assumption \eqref{eq:vhsmooth-sobolev-condition} is used to apply the equivalence of Sobolev norms result (Proposition~\ref{eqsob}), while \eqref{eq:vhsmooth-kernel-condition} controls weighted heat kernels (cf.~Lemma~\ref{vhsmoothestimateaux}(2)).

\begin{proof}[Proof of Lemma~\ref{vhsmoothestimate}]
  {\bf Case $\alpha<1$:}
  We first show Item (1). By scaling, it suffices to consider $t=1$. Let $\beta = s - \alpha/2$. By our assumptions, $\beta \in (\alpha/2, \min\{1,3\alpha/2\}-\alpha/2]$.
  We consider
  \begin{align}
    T := |x|^s |D|^\beta \me{-(L_0-V)}|x|^{-s}.
  \end{align}
  We commute $|D|^\beta$ to the left and write $T = T_1 - T_2$, where
  \begin{align}
    T_1 & := |D|^\beta|x|^s\me{-(L_0-V)}|x|^{-s}, \\
    T_2 & := [|D|^\beta, |x|^s] \me{-(L_0-V)}|x|^{-s}.
  \end{align}

  \smallskip
  \noindent {\bf Bound for $T_2$:}
  The integral kernel of $[|D|^\beta, |x|^s]$ is bounded by $c_{d,\beta} \frac{||x|^s - |y|^s|}{|x-y|^{d+\beta}}$. Since $s \in (0, 1]$, we can use $||x|^s - |y|^s| \leq |x-y|^s$. Thus, the kernel is bounded pointwise by a multiple of $|x-y|^{-(d+\beta-s)}$. Since $\beta - s = -\alpha/2$, this is the convolution kernel of the Riesz potential $|D|^{-\alpha/2}$. Because the constituent operators possess positive integral kernels, we have the pointwise bound
  \begin{align}
    |T_2 f|
    \lesssim |D|^{-\alpha/2}|x|^{-\alpha/2} \left(|x|^{\alpha/2} \me{-(L_0-V)}|x|^{-s} |f| \right).
  \end{align}
  Therefore,
  \begin{align}
    \|T_2 f\|_2
    \lesssim \left\| |x|^{\alpha/2} \me{-(L_0-V)} |x|^{-s} |f| \right\|_2
    \lesssim \|f\|_2
  \end{align}
  by Hardy's inequality and \eqref{eq:vhsmoothestimateaux2} in Lemma~\ref{vhsmoothestimateaux} (applied with $\gamma = \alpha/2$ there), which requires $\alpha/2 \leq s$ and $s < d/2 - \sigma$; this is possible due to our assumption $\sigma < (d-\alpha)/2$. Thus, $\|T_2\| \lesssim 1$.

  \smallskip\noindent {\bf Bound for $T_1$:}
  To estimate $T_1$, we use the equivalence of Sobolev norms in Proposition~\ref{eqsob}, i.e., $|D|^{2\beta} \lesssim (L_0-V)^{2\beta/\alpha}$ whenever $\beta < (d-2\sigma)/2$ and $\beta\leq \alpha$. The assumption $\beta\leq\alpha$ is ensured by $s\leq3\alpha/2$.
  The assumption $\beta < (d-2\sigma)/2$ is equivalent to $s<(d-2\sigma+\alpha)/2$; this is justified here since $s<d/2-\sigma$.
  Thus, by Proposition~\ref{eqsob},
  \begin{align}
    \begin{split}
      \|T_1 f\|_2
      & = \left\| |D|^\beta \left( |x|^s \me{-(L_0-V)}|x|^{-s} f \right) \right\|_2 \\
      & \lesssim \left\| (L_0-V)^{\beta/\alpha} \left(|x|^s \me{-(L_0-V)}|x|^{-s} f\right) \right\|_2.
    \end{split}
  \end{align}
  Next, by the spectral theorem with $\beta\leq\alpha$,
  $$
  \| (L_0-V)^{\beta/\alpha} u \|_2
  \leq \| u \|_2^{1-\beta/\alpha} \| (L_0-V) u \|_2^{\beta/\alpha}
  \lesssim \|u\|_2 + \| (L_0-V) u\|_2.
  $$
  Thus, letting
  $$
  B = |x|^s\me{-(L_0-V)}|x|^{-s},
  $$
  it suffices to prove that $\|(L_0-V) B\| \lesssim 1$ since $\|B\| \lesssim 1$ follows from Lemma~\ref{vhsmoothestimateaux}.

  We now show $\|(L_0-V) B\| \lesssim 1$ by commuting $L_0-V$ next to $\me{-(L_0-V)}$. We have $(L_0-V) (|x|^s u) = [ (-\Delta)^{\alpha/2}, |x|^s ] u + |x|^s (L_0-V) u$. Letting $u = \me{-(L_0-V)} |x|^{-s} f$, we split $(L_0-V) B = W + C$, where
  \begin{align}
    W f & = [ (-\Delta)^{\alpha/2}, |x|^s ] \me{-(L_0-V)} |x|^{-s} f, \\
    C f & = |x|^s (L_0-V) \me{-(L_0-V)} |x|^{-s} f.
  \end{align}
  To bound $C$, we use that for every $\epsilon>0$, we have
  \begin{align}
    \label{eq:derivativeheatkernel}
    \left| (L_0-V) \me{-t(L_0-V)}(x,y) \right|
    \lesssim_\epsilon t^{-1-\frac{d}{\alpha}} \left(1\wedge\frac{|x|}{t^{1/\alpha}}\right)^{-\sigma} \left(1\wedge\frac{|y|}{t^{1/\alpha}}\right)^{-\sigma} \left(1+\frac{|x-y|}{t^{1/\alpha}}\right)^{-d-\alpha+\epsilon}
  \end{align}
  by the bounds for $\me{-t(L_0-V)}$, Cauchy's integral formula, and a Phragm\'en--Lindel\"of argument \cite{DuongRobinson1996,Merz2022}; we give a full proof in Proposition~\ref{thm-ptk}.
  Accepting \eqref{eq:derivativeheatkernel} and taking $\epsilon<\min\{\alpha,\alpha+\sigma\}$, we can argue exactly as in the proof of Lemma~\ref{vhsmoothestimateaux} to infer $\|C\|\lesssim1$ whenever $s<d/2-\sigma$.
    
  To bound $W$, we note that the kernel of $[ (-\Delta)^{\alpha/2}, |x|^s ]$ is bounded by $|x-y|^{-d-\alpha+s}$, which is the kernel of the Riesz potential $|D|^{\alpha-s}$ since $\alpha<s$. Applying the pointwise bound and Hardy's inequality (using $s\leq3\alpha/2$) yields
  \begin{align}
    \begin{split}
      \| W f \|_2
      & \lesssim \left\| |D|^{\alpha-s} \left( \me{-(L_0-V)} |x|^{-s} |f| \right) \right\|_2 \\
      & \lesssim \left\| |x|^{s-\alpha} \me{-(L_0-V)} |x|^{-s} |f| \right\|_2
        \lesssim \|f\|_2
    \end{split}
  \end{align}
  by \eqref{eq:vhsmoothestimateaux2} in Lemma~\ref{vhsmoothestimateaux} (with $\gamma=s-\alpha$ there).
  Thus, we have proved $\|(L_0-V) B\| \lesssim 1$, which yields $\|T_1\| \lesssim 1$. Combining the bounds for $T_1$ and $T_2$ yields $\|T\| \lesssim 1$. This concludes the proof of Item~(1).

  \smallskip 
  The proof of Item~(2) is analogous, but uses \eqref{eq:vhsmoothestimateaux4} in Lemma~\ref{vhsmoothestimateaux}. Let us give the details. Here we define
  \begin{align}
    T & := |x|^s|D|^\beta \me{-(L_0-V)}\langle x\rangle^{-s}, \\
    B & := |x|^s \me{-(L_0-V)}\langle x\rangle^{-s}.
  \end{align}
  First, let us recall the decomposition of $T=T_1-T_2$. The operator $T_2$ is bounded as before using \eqref{eq:vhsmoothestimateaux4} in Lemma~\ref{vhsmoothestimateaux} (with $\gamma=\alpha/2$ there) and the analysis of $T_1$ is reduced to that of $B$ using Proposition~\ref{eqsob}, which requires $s<(d+\alpha-2\sigma)/2$.
  Using \eqref{eq:vhsmoothestimateaux4} in Lemma~\ref{vhsmoothestimateaux} (with $\gamma=s$ there), we have $\|B\|\lesssim1$ when $s<(d+\alpha-\sigma)/2$. Moreover, we have $\||x|^s(L_0-V)\me{-(L_0-V)}\langle x\rangle^{-s}\|\lesssim1$ (the analog of $\|C\|\lesssim1$) when $s<(d+\alpha-\sigma)/2$; here we need to choose $\epsilon<\min\{\alpha,\alpha+\sigma,d+\alpha-\sigma-2s\}$ in \eqref{eq:derivativeheatkernel}. Finally, we have $\|[(-\Delta)^{\alpha/2},|x|^s]\me{-(L_0-V)}\langle x\rangle^{-s}\|\lesssim1$ using \eqref{eq:vhsmoothestimateaux4} in Lemma~\ref{vhsmoothestimateaux} (with $\gamma=s-\alpha$ there) when $s<(d+2\alpha-\sigma)/2$.

  \smallskip
  {\bf Case $\alpha=1$:} From the above proof for $\alpha<1$, it transpires that we only need to re-verify the boundedness of $W$, where we used $s>\alpha$ when $\alpha<1$. Let us show that when $\alpha=1$, it suffices to assume $s>1/2$.
  We apply Lemma~\ref{calderoncommutator} with $g=\me{-(L_0-V)}\omega^{-s}f$, where $\omega(x)=|x|$ in Item~(1) and $\omega(x)=\langle x\rangle$ in Item~(2).

  Since $\sigma<(d-1)/2$ and $s>1/2$, we have $2(1-s)<1<d-2\sigma$. Hence, Proposition~\ref{eqsob}, applied with exponent $2(1-s)$, gives
  $$
  \| |D|^{1-s}g\|_2 \lesssim \|(L_0-V)^{1-s}g\|_2.
  $$
  Therefore, by the spectral theorem,
  \begin{align*}
    \|(L_0-V)^{1-s}\me{-(L_0-V)} \omega^{-s}f\|_2
    \leq \|(L_0-V)^{1-s}\me{-(L_0-V)/2}\| \|\me{-(L_0-V)/2} \omega^{-s}f\|_2
    \lesssim \|f\|_2;
  \end{align*}
  here we used \eqref{eq:vhsmoothestimateaux2} in Lemma~\ref{vhsmoothestimateaux} (with $\gamma=0$ there) when $\omega(x)=|x|$ for Item~(1) and $\langle x\rangle^{-s}\leq1$ for Item~(2). Thus,
  $$
  \|[|D|,|x|^s]\me{-(L_0-V)}|x|^{-s}f\|_2
  + \|[|D|,|x|^s]\me{-(L_0-V)}\langle x\rangle^{-s}f\|_2
  \lesssim \|f\|_2.
  $$
  This shows the boundedness of $W$ and concludes the proof for $\alpha=1$ since the estimates of $T_2$ and $C$, as well as the final interpolation argument for $T_1$ are the same.
\end{proof}

\begin{remark}
  \label{rem:technicaldescription}
  The above proof is quite cumbersome. We first commuted $|D|^\beta$ out, then used the equivalence of Sobolev norms, only to commute $L_\kappa$ back to be next to $\me{-t L_\kappa}$. The reason for doing so is that we do not know how to control $|D|^\beta\me{-t L_\kappa}$. This leads to the technical assumption $s\leq 3\alpha/2$. (Note that a similar technical problem arises in \cite{Buietal2026}, where derivative bounds for heat kernels would also be beneficial.)
  The assumption $s>\alpha$ ensures that $[|D|^\alpha,|x|^s]$ is smoothing. As we explained above, this back-and-forth commuting is likely not necessary, hence also this assumption is likely technical, too.
  However, the assumption $\sigma<(d-\alpha)/2$ is not too surprising, because of the zero-energy resonance of $L_\kappa$, which is particularly severe when $\alpha\leq1$.
\end{remark}

\begin{remark}
  Since for $\alpha\in(1,d)$ we also have, according to \cite[Theorem~1.3]{MizutaniYao2021},
  \begin{align}
    \label{eq:mizutaniyaosmoothingshortrange}
    \sup_{z\in\C\setminus[0,\infty)}\||x|^{-\alpha/2}(L_0 - V - z)^{-1}|x|^{-\alpha/2}\| < \infty,
  \end{align}
  Kato's smoothness approach also yields another proof of asymptotic completeness of $W(L_\kappa,L_0)$ for $\alpha\in(1,2]$ when $\kappa<\kappa_{\rm c}$. The advantage of our above proof using the semigroup framework is that we can also treat $\kappa=\kappa_{\rm c}$.
\end{remark}


\section{Preparations for propagation estimates}
\label{s:technicalpreparations}

This section serves as a technical preparation for proving propagation estimates for $H_\kappa=L_\kappa-V^S$.
In the first two subsections, we recall commutator classes, Helffer--Sj\"ostrand formulae, and commutator expansions.
In the third subsection, we prepare properties of the generator of dilations $A = \frac{1}{2} (D \cdot x + x \cdot D)$.


\subsection{Commutator classes}

The following notation will be helpful in the ensuing commutator expansions.
Let $A$ be selfadjoint on a Hilbert space $\mathcal H$. For $B\in\mathcal B(\mathcal H)$, we write
$$
B\in C^1(A)
$$
if the map
$$
\R\ni t\mapsto \me{-itA}B\me{itA}
$$
is strongly continuously differentiable. Equivalently, $B\mathcal D(A)\subset\mathcal D(A)$ and the commutator
$$
[B,A]=BA-AB,
\qquad
\mathcal D([B,A])=\mathcal D(A),
$$
extends to a bounded operator on $\mathcal H$.

More generally, for $k\in\N$, we write $B\in C^k(A)$ if
$$
\R\ni t\mapsto\me{-itA}B\me{itA}
$$
is strongly $C^k$.

If $H$ is selfadjoint (not necessarily bounded), we write $H\in C^k(A)$ if, for one (and hence every) $z\in\rho(H)$ (the resolvent set of $H$),
$$
(z-H)^{-1}\in C^k(A).
$$
Whenever the commutator $[H,A]$ is represented by an operator for which the following products are meaningful, one has
$$
[(z-H)^{-1},A] = (z-H)^{-1}[H,A](z-H)^{-1}.
$$


\subsection{Helffer--Sj\"ostrand formulae and commutator expansions}
\label{ss:functionalcalculus}

We recall the Helffer--Sj\"ostrand formula \cite[Proposition~7.2]{HelfferSjostrand1989}; see also the textbooks \cite[Section~2.2]{Davies1996}, \cite[Theorem~8.1]{DimassiSjostrand1999}, \cite[Proposition~C.2.1]{DerezinskiGerard1997}, and \cite[Theorem~1.17]{Isozaki2023}. It allows us to control functions of selfadjoint operators in terms of their resolvents. To state the formula, we recall the notation
\begin{align}
  \bar\partial
  = \frac{\partial}{\partial\overline z}
  = \frac12\left(\frac{\partial}{\partial x} + i \frac{\partial}{\partial y}\right),
  \qquad
  z=x+iy,
\end{align}
and $dz\wedge d\overline z=-2i dx\,dy$, which is $-2i$ times the Lebesgue measure on $\C$.

\begin{proposition}
  \label{prop:analyticextension}
  Let $f\in C_c^\infty(\R)$. Then there exists $\tilde f\in C_c^\infty(\C)$, called an {\em almost analytic extension of $f$} such that
  $$
  \tilde f\big|_{\R} = f
  $$
  and for every $N\in\N$ there is a constant $C_N>0$ such that
  $$
  \left| \frac{\partial \tilde f}{\partial\overline z}(z) \right|
  \leq C_N\, |\im(z)|^{N},  \quad z\in\C.
  $$
  Moreover, for every selfadjoint operator $A$ and every such $f$, we have the Helffer--Sj\"ostrand formula
  \begin{align}
    \label{eq:hs}
    f(A) = \frac{1}{2\pi i} \int_{\C}\frac{\partial\tilde f}{\partial \overline z}(z)(z-A)^{-1} \, dz\wedge d\overline z,
  \end{align}
  where the integral converges absolutely in operator norm.
  More generally, for $n\in\N$,
  \begin{align}
    \label{eq:hsderivative}
    f^{(n)}(A) = \frac{n!}{2\pi i}\int_{\C}\frac{\partial\tilde f}{\partial \overline z}(z)(z-A)^{-n-1}\, dz\,\wedge d\overline z.
  \end{align}
\end{proposition}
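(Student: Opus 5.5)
The statement has three parts: existence of an almost analytic extension $\tilde f$, the Helffer--Sj\"ostrand formula \eqref{eq:hs}, and its derivative version \eqref{eq:hsderivative}.

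\emph{Step 1: the extension.} I would use the standard Mather/Dyn'kin-type construction. Fix $\chi\in C_c^\infty(\R)$ with $\chi=1$ near $0$, and choose a rapidly growing sequence $\lambda_k\to\infty$. Set
\begin{align*}
\tilde f(x+iy)=\sum_{k\ge0}\frac{f^{(k)}(x)}{k!}(iy)^k\chi(\lambda_k y).
\end{align*}
Choosing $\lambda_k$ so that $\sup|f^{(k)}|\,\lambda_k^{-1}\le 2^{-k}$ (and similarly for finitely many derivatives) makes the series converge in $C^\infty$. Multiplying by a cutoff $\psi(y)$ with $\psi=1$ near $0$ keeps $\supp\tilde f$ compact. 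A direct computation shows that in $\bar\partial\tilde f$ the terms telescope; the only survivors are $\frac{f^{(N+1)}(x)}{N!}(iy)^N$-type remainders and terms supported where $\chi(\lambda_k y)\ne1$, i.e. $|y|\gtrsim\lambda_k^{-1}$. These are $O(|y|^N)$ for every $N$. A simpler alternative, sufficient for each fixed $N$, is the finite Taylor extension $\sum_{k\le N+1}f^{(k)}(x)(iy)^k/k!\,\psi(y)$. However, the statement asks for one $\tilde f$ working for all $N$, so I would use the series.

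\emph{Step 2: the formula.} By the spectral theorem it suffices to prove the scalar identity
\begin{align*}
f(\lambda)=\frac1{2\pi i}\int_\C\bar\partial\tilde f(z)(z-\lambda)^{-1}\,dz\wedge d\bar z,\qquad \lambda\in\R.
\end{align*}
The operator statement then follows, with absolute norm convergence from $\|(z-A)^{-1}\|\le|\im z|^{-1}$ together with $|\bar\partial\tilde f|\le C_1|\im z|$ on a compact support. This bound is uniform in $\lambda$, so Fubini against the spectral measure is justified.

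For the scalar identity, excise the strip $|\im z|<\epsilon$ and apply Stokes/Cauchy--Pompeiu on the complement, using $\bar\partial(\tilde f(z)(z-\lambda)^{-1})=\bar\partial\tilde f\,(z-\lambda)^{-1}$ there. The boundary terms give $\frac{1}{2\pi i}\int_\R[\tilde f(x+i\epsilon)/(x+i\epsilon-\lambda)-\tilde f(x-i\epsilon)/(x-i\epsilon-\lambda)]dx$ with appropriate orientation. As $\epsilon\to0$, Taylor expansion $\tilde f(x\pm i\epsilon)=f(x)+O(\epsilon)$ and the Poisson-kernel limit $\frac1\pi\frac{\epsilon}{(x-\lambda)^2+\epsilon^2}\to\delta_\lambda$ yield $f(\lambda)$. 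The strip itself contributes $o(1)$ by the bound $|\bar\partial\tilde f|\lesssim|y|$. Getting the sign and orientation conventions right with $dz\wedge d\bar z=-2i\,dx\,dy$ is the only delicate point.

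\emph{Step 3: derivatives.} Differentiate the scalar identity $n$ times in $\lambda$ under the integral, using $\partial_\lambda^n(z-\lambda)^{-1}=n!(z-\lambda)^{-n-1}$. This is legitimate because $|\bar\partial\tilde f|\le C_{n+1}|y|^{n+1}$ dominates $|y|^{-n-1}$, with local uniformity in $\lambda$. The operator version then follows as in Step 2, since $\|(z-A)^{-n-1}\|\le|\im z|^{-n-1}$.

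I expect the main obstacle to be the bookkeeping in Step 1: verifying the telescoping of $\bar\partial\tilde f$ and the uniform $O(|y|^N)$ bound for all $N$ at once. Everything after that is routine.
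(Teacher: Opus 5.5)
Your proposal is correct. The paper gives no proof of this proposition; it cites Helffer--Sj\"ostrand and several textbooks. Your Steps~2--3 are the standard argument found there, and the signs do work out. With $dz\wedge d\overline z=-2i\,dx\,dy$, the scalar identity is exactly the Cauchy--Pompeiu formula $u(w)=-\frac1\pi\int_{\C}\overline\partial u(z)(z-w)^{-1}\,dx\,dy$ for $u=\tilde f\in C_c^1(\C)$. Your strip excision re-derives this formula, with a harmless boundary error $O(\epsilon\log(1/\epsilon))$ rather than $O(\epsilon)$. So almost analyticity is needed only for the absolute norm convergence of the operator integrals.

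Where you genuinely differ is the extension. The remark after the proposition only exhibits $\tilde f_N(z)=\bigl(\sum_{r=0}^N f^{(r)}(x)(iy)^r/r!\bigr)\eta(y/\langle x\rangle)$. For it, $\overline\partial\tilde f_N=\frac12 f^{(N+1)}(x)(iy)^N/N!$ where $\eta=1$, so the bound holds only for that one $N$. This suffices for \eqref{eq:hs} with $N\geq1$ and for \eqref{eq:hsderivative} with $N\geq n+1$, and it is how the paper uses the formula later, always fixing $N$ large. Your Steps~2--3 use nothing beyond $\tilde f\in C_c^1(\C)$, $\tilde f|_{\R}=f$ and $|\overline\partial\tilde f|\lesssim|\im z|^{n+1}$, which also shows the formulas do not depend on the extension.

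Your Borel-type series buys a single $\tilde f$ satisfying the bound for every $N$, as the statement literally asserts, and your telescoping is right. Explicitly, $2\overline\partial\tilde f=\sum_k\frac{f^{(k+1)}(x)}{k!}(iy)^k[\chi(\lambda_ky)-\chi(\lambda_{k+1}y)]+i\sum_k\frac{f^{(k)}(x)}{k!}(iy)^k\lambda_k\chi'(\lambda_ky)$. With $\chi=1$ on $[-c,c]$ and $\lambda_k$ increasing, the $k$-th summand vanishes for $|y|<c/\lambda_{k+1}$. Hence the finitely many terms with $k\leq N$ are trivially bounded by $C_N|y|^N$, and the tail is $O(|y|^N)$.

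Two bookkeeping points on Step~1. For $C^\infty$ convergence, $\lambda_k$ must dominate $\|f^{(j)}\|_\infty$ for $j$ up to about $2k$, since you differentiate the $k$-th term up to order $k-1$. Your extra cutoff $\psi(y)$ is unnecessary, because $\chi(\lambda_ky)$ already confines the support to $|y|\lesssim(\inf_k\lambda_k)^{-1}$.
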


\begin{remark}
  One construction of an almost analytic extension of $f\in C^\infty(\R:\,\C)$ of order $N$ is
  $$
  \tilde f(z) := \left(\sum_{r=0}^N f^{(r)}(x)(iy)^r/r!\right) \eta\left(\frac{y}{\langle x\rangle}\right),
  $$
  where $\eta \in C_c^\infty(\R;[0,1])$ satisfies $\eta(s)=1$ for $|s|\leq1$, $\eta(s)=0$ for $|s|>2$, and is smooth in between; see \cite{Davies1996,DimassiSjostrand1999}. The precise values of $N$ and choice of $\eta$ are not relevant.
  It is sometimes useful to indicate the order $N$ of the above construction, in which case we write more explicitly $\widetilde f_N(z)$ instead of just $\tilde f(z)$.
  Note that $\tilde f(z)$ is smooth as a function of $x$ and $y$, but not analytic, and that $\tilde f(z)|_\R = f(x)$. It follows from this construction that
  $$
  \left|\frac{\partial \tilde f}{\partial\overline z}\right|
  = \mathcal{O}(|\im(z)|^N) \quad \text{as} \ y \to 0 \ \text{for all} \ x\in\R.
  $$
  In particular,
  $$
  \frac{\partial \tilde f}{\partial\overline z}=0, \quad z\in \R,
  $$
  which is why one calls $\tilde f$ an almost analytic extension of $f$.
  The idea and above construction of almost analytic extensions was introduced by H\"ormander \cite[Section~4.2]{Hormander1969}. Further constructions were introduced by Mather \cite{Mather1971} and later by Jensen and Nakamura \cite{JensenNakamura1994}. See Sj\"ostrand's final comments in \cite{Hormander1969} and \cite[Ch.~8]{DimassiSjostrand1999} for more interesting historical remarks.
  It is also possible to prove \eqref{eq:hs} and formulae for higher-order derivatives $f^{(n)}(A)$ even if $f$ is not compactly supported, but decays at infinity. Such versions are contained, e.g., in \cite[Theorem~1.17, Corollary~1.18, Lemma~1.19]{Isozaki2023}.
\end{remark}

In the proofs below of the phase space propagation estimates, we frequently encounter commutators of the form $[|D|^\alpha,\chi(x)]$ for a radial switch function $\chi \in C^\infty(\R^d:[0,1])$ with $\chi(x)=0$ if $|x|\leq1/2$ and $\chi(x)=1$ if $|x|\geq1$, say. When $\alpha=2$, this commutator can be evaluated explicitly and is seen to depend only on $\nabla\chi$, which has support in $|x|\in[1/2,1]$. However, when $\alpha<2$, the commutator has a kernel of the form $(\chi(x)-\chi(y))|x-y|^{-d-\alpha}$, which is not localized to the support of $\nabla\chi$, and requires more care.
To prepare ourselves for such situations, we prove the following commutator expansion involving functions of operators which do not decay at infinity.

\begin{lemma}
  \label{lem:functional-calculus-commutator}
  Let $\mathcal H$ be a Hilbert space, let $A$ be selfadjoint on $\mathcal H$, and let $X\in\mathcal B(\mathcal H)$. Let $f\in L^\infty(\R)\cap C^\infty(\R)$ be such that
  \begin{align}
    \label{eq:f-S0}
    \sup_{x\in\R} \langle x\rangle^k |f^{(k)}(x)|<\infty, \quad k\in\N_0.
  \end{align}
  For $N\in\N$ let $\widetilde f_N$ be an almost analytic extension of $f$.
  Let also $R_t(z) := \left(z-A/t\right)^{-1}$.

  \begin{enumerate}
  \item Assume that $X\in C^1(A)$, i.e.,
    $$
    X \D(A) \subset \D(A)
    $$
    and that the commutator
    $$
    [X,A] = XA-AX, \quad \D([X,A]) = \D(A),
    $$
    extends to a bounded operator on $\mathcal H$, denoted again by $[X,A]$. Then, for every $N\geq2$,
    \begin{align}
      \label{eq:first-commutator-HS-corrected}
      \left[X,f\left(\frac{A}{t}\right)\right]
      = \frac{1}{2\pi i\,t} \int_{\C} \overline{\partial} \widetilde f_N(z) R_t(z)[X,A]R_t(z)\, dz\wedge d\overline z
    \end{align}
    with
    \begin{align}
      \label{eq:first-functional-commutator}
      \left\|\left[X,f\left(\frac At\right)\right]\right\|
      \lesssim_f \frac1t\|[X,A]\|,
      \qquad t>0.
    \end{align}

  \item Assume in addition that $X\in C^2(A)$, i.e.,
    $$
    [X,A] \D(A) \subset \D(A)
    $$
    and that
    $$
    [[X,A],A] = [X,A]A-A[X,A], \quad \D([[X,A],A]) = \D(A),
    $$
    extends to a bounded operator on $\mathcal H$, denoted again by $[[X,A],A]$. Then, for every $N\geq3$,
    \begin{align}
      \label{eq:second-functional-commutator}
      \left[X,f\left(\frac At\right)\right]
      = \frac1t f'\left(\frac At\right)[X,A] + \ce_f(t)
    \end{align}
    where
    \begin{align}
      \label{eq:explicit-Rf}
      \ce_f(t) := \frac{1}{2\pi i\,t^2} \int_{\C} \overline\partial\widetilde f_N(z) R_t(z)^2[[X,A],A]R_t(z)\, dz\wedge d\overline z.
    \end{align}
    The integral in \eqref{eq:explicit-Rf} is an absolutely convergent Bochner integral in $\mathcal B(\mathcal H)$ and
    \begin{align}
      \label{eq:second-functional-remainder}
      \|\ce_f(t)\| \lesssim_f \frac1{t^2}\|[[X,A],A]\|, \quad t>0.
    \end{align}
  \end{enumerate}
\end{lemma}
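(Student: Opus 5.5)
The plan is to prove the Helffer--Sjöstrand representation first for functions that decay, and then handle the general symbol-class $f$ through an approximation argument. Both items reduce to resolvent identities. For $z\notin\R$ we have $R_t(z)\D(A)$-mapping properties, and since $X\in C^1(A)$ the standard identity
\begin{align*}
[X,R_t(z)] = R_t(z)\,[X,A/t]\,R_t(z) = \frac1t R_t(z)[X,A]R_t(z)
\end{align*}
holds on $\mathcal H$. To check it, compute $(z-A/t)X - X(z-A/t) = t^{-1}[X,A]$ on $\D(A)$ and sandwich between resolvents, using $X\D(A)\subset\D(A)$. Inserting this identity into the Helffer--Sjöstrand formula gives \eqref{eq:first-commutator-HS-corrected}. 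The norm bound then follows from $\|R_t(z)\|\le|\im z|^{-1}$ together with
\begin{align*}
\int_\C|\bar\partial\widetilde f_N(z)|\,|\im z|^{-2}\,dx\,dy<\infty .
\end{align*}

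The main obstacle is that $f$ is not compactly supported, so Proposition~\ref{prop:analyticextension} does not apply directly. I would use the extension
\begin{align*}
\widetilde f_N(z)=\sum_{r\le N}f^{(r)}(x)(iy)^r/r!\,\eta(y/\langle x\rangle).
\end{align*}
One obtains $|\bar\partial\widetilde f_N|\lesssim |f^{(N+1)}(x)||y|^N+\sum_r|f^{(r)}(x)||y|^{r}\langle x\rangle^{-1}\one_{\langle x\rangle\le|y|\le2\langle x\rangle}$. By \eqref{eq:f-S0} this is bounded by $\langle x\rangle^{-N-1}|y|^N$ on $|y|\le2\langle x\rangle$ and by $\langle x\rangle^{-1}$ on the transition strip. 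Hence $\int|\bar\partial\widetilde f_N||\im z|^{-k}$ converges for $k\le N$: on the strip the integrand is roughly $\langle x\rangle^{-1-k}\cdot\langle x\rangle$, which is integrable in $x$ for $k\ge 2$; near the real axis the power $|y|^{N-k}$ makes it integrable, and the integral over $x$ converges because of the decay $\langle x\rangle^{-N-1}\cdot\langle x\rangle^{N-k+1}$. This explains the conditions $N\ge2$ in Item~(1) and $N\ge3$ in Item~(2).

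The Helffer--Sjöstrand formula for such $f$ is only valid in a strong or weak sense, with a possible constant term at infinity. To avoid this, I would work directly with commutators. Take $f_n=f\,\phi(\cdot/n)$ with a cutoff $\phi\in C_c^\infty$ equal to $1$ near $0$. The $f_n$ obey \eqref{eq:f-S0} uniformly in $n$, and $f_n(A/t)\to f(A/t)$ strongly. For each $f_n$, Proposition~\ref{prop:analyticextension} together with the resolvent identity gives the formula. The integrands converge pointwise, with the integrable majorant obtained above, which is uniform in $n$. Dominated convergence in the strong sense then passes the identity to $f$.

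For Item~(2), I would iterate the resolvent identity once more, using $[X,A]\in C^1(A)$:
\begin{align*}
R_t[X,A]R_t=[X,A]R_t^2+\frac1t R_t^2[[X,A],A]R_t ,
\end{align*}
valid after commuting $[X,A]$ past $R_t$ on the left. Inserting this into \eqref{eq:first-commutator-HS-corrected} and applying \eqref{eq:hsderivative} with $n=1$, justified by the same approximation argument, gives
\begin{align*}
\frac{1}{2\pi i}\int\bar\partial\widetilde f\,R_t^2=f'(A/t).
\end{align*}
This produces the leading term $t^{-1}[X,A]f'(A/t)$.

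If the leading term should appear in the stated order $f'(A/t)[X,A]$, I would instead expand $R_t[X,A]=[X,A]$-first on the other side, that is, write $R_t[X,A]R_t=R_t^2[X,A]+\frac1t R_t^2[[X,A],A]R_t$ with the appropriate sign. This yields exactly $\frac1t f'(A/t)[X,A]+\ce_f(t)$, where $\ce_f(t)$ is given by \eqref{eq:explicit-Rf}. The bound \eqref{eq:second-functional-remainder} follows from $\|R_t\|^3\le|\im z|^{-3}$ and the integrability established above with $k=3\le N$. The delicate part throughout is the careful domain bookkeeping needed to justify the double commutator identity on $\D(A)$.
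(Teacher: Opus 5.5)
Your proof is correct and follows essentially the same route as the paper. The shared steps are the explicit almost analytic extension with the bound $|\overline\partial\widetilde f_N|\lesssim\langle x\rangle^{-N-1}|y|^N\one_{|y|\le2\langle x\rangle}$, the truncation $f\phi(\cdot/n)$ with dominated convergence, and, for Item~(2), the expansion $R_t[X,A]R_t=R_t^2[X,A]+\frac1t R_t^2[[X,A],A]R_t$ together with identifying $\frac{1}{2\pi i}\int\overline\partial\widetilde f_N R_t^2\,dz\wedge d\overline z$ with $f'(A/t)$ by the same truncation.

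One correction: your first, discarded expansion is mis-stated. Commuting $[X,A]$ to the left actually gives $R_t[X,A]R_t=[X,A]R_t^2-\frac1t R_t[[X,A],A]R_t^2$, not $[X,A]R_t^2+\frac1t R_t^2[[X,A],A]R_t$. This does not affect the version you actually use, which is correct.
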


The assumption on $f$ means that $f$ belongs to the symbol class $S^0(\R)$. In particular, the lemma applies to $f\in C^\infty(\R)\cap L^\infty(\R)$ with $\supp(f')$ compact.
More general commutator expansions are available for functions in symbol classes $S^s(\R)$, although their precise formulation and mapping properties depend on the order $s\in\R$; see, e.g., \cite[p.~187--188]{Isozaki2023} and related specialized commutator expansions in \cite[Lemma~2.4]{Ishida2025}. For the present argument, only the $S^0$ case is needed, but in Lemma~\ref{lem:fractional-A-commutator} below, we will consider such a generalization for $s<1$; higher orders can be considered similarly.

\begin{proof}[Proof of Lemma~\ref{lem:functional-calculus-commutator}]
  Let us fix $N\in\{2,3,4,...\}$ and choose $\eta\in C_c^\infty(\R)$ such that
  $$
  \eta(s)=1 \quad \text{for} \ |s| \leq 1,
  \qquad
  \eta(s)=0 \quad \text{for} \ |s| \geq 2.
  $$
  We define the almost analytic extension
  \begin{align}
    \label{eq:explicit-AA-extension}
    \widetilde f_N(x+iy)
    := \eta\left(\frac{y}{\langle x\rangle}\right) \sum_{k=0}^N \frac{f^{(k)}(x)(iy)^k}{k!},
  \end{align}
  which obeys
  $$
  \widetilde f_N|_{\R}=f, \qquad
  \supp\widetilde f_N \subset \{x+iy:|y|\leq2\langle x\rangle\}.
  $$
  Moreover, by \eqref{eq:f-S0},
  \begin{align}
    \label{eq:AA-extension-S0-corrected}
    |\overline\partial\widetilde f_N(x+iy)|
    \leq C_{f,N}\langle x\rangle^{-N-1}|y|^N \one_{|y|\leq2\langle x\rangle}.
  \end{align}
  Indeed, for those $x,y$ for which $\eta(y/\langle x\rangle)=1$, the terms of order strictly less than $N$ vanish when $\overline{\partial}$ is applied. On the other hand, on the support of the derivatives of $\eta$, we have $|y|\sim\langle x\rangle$. Thus, for $m\in\{2,3,...,N\}$, we obtain from \eqref{eq:AA-extension-S0-corrected},
  \begin{align}
    \label{eq:Im-finite-corrected}
    \begin{split}
      \int_{\C} |\overline\partial\widetilde f_N(z)| |\im(z)|^{-m}\,dx\,dy
      & \lesssim_{f,N} \int_{\R} \langle x\rangle^{-N-1} \int_0^{2\langle x\rangle} y^{N-m}\,dy\,dx \\
      & \lesssim \int_{\R}\langle x\rangle^{-m}\,dx < \infty.
    \end{split}
  \end{align}

  We next justify the Helffer--Sj\"ostrand commutator formula despite the fact that $f$ need not decay. Let $\chi\in C_c^\infty(\R)$ be such that
  $$
  \chi=1 \text{ on }[-1,1],
  \qquad
  \supp\chi\subset[-2,2],
  $$
  and, for $L\geq1$, set
  $$
  f_L(x) := \chi(x/L)f(x).
  $$
  Then $f_L\in C_c^\infty(\R)$,
  $$
  f_L(x) \to f(x) \quad\text{for every }x\in\R, \quad
  \sup_{L\geq1}\|f_L\|_\infty<\infty,
  $$
  and the family $(f_L)_{L\geq1}$ is bounded in $S^0(\R)$; indeed,
  \begin{align}
    \label{eq:uniform-S0-truncations}
    \sup_{L\geq1}\sup_{x\in\R} \langle x\rangle^k|f_L^{(k)}(x)|<\infty, \quad k\in\N_0.
  \end{align}
  Let $\widetilde f_{L,N}$ be defined from $f_L$ by the same construction as in \eqref{eq:explicit-AA-extension}. The estimate \eqref{eq:AA-extension-S0-corrected} then holds uniformly in $L$, and
  $$
  \overline{\partial} \widetilde f_{L,N}(z) \to \overline{\partial} \widetilde f_N(z)
  \quad\text{for every }z\in\C.
  $$
  Let
  $$
  B_t := \frac At \quad \text{and} \quad R_t(z):=(z-B_t)^{-1}.
  $$
  Since $B_t$ is selfadjoint,
  \begin{align}
    \label{eq:resolvent-bound}
    \|R_t(z)\| \leq |\im(z)|^{-1}, \quad z\in\C \setminus \R.
  \end{align}
  Under the hypotheses of Item~(1), the resolvent commutator identity is
  \begin{align}
    \label{eq:resolvent-commutator-first-corrected}
    [X,R_t(z)] = R_t(z)[X,B_t]R_t(z) = \frac1t R_t(z)[X,A]R_t(z).
  \end{align}
  To verify it, note that $R_t(z) \mathcal H \subset \D(A)$ and $X \D(A) \subset \D(A)$. Hence, all expressions are well-defined, initially on $\mathcal H$, and the identity follows from
  $$
  [X,T^{-1}] = -T^{-1}[X,T]T^{-1}, \quad T=z-B_t.
  $$
  Since $f_L$ is compactly supported, the usual Helffer--Sj\"ostrand formula in Proposition~\ref{prop:analyticextension} and \eqref{eq:resolvent-commutator-first-corrected} give
  \begin{align}
    \label{eq:truncated-commutator-HS}
    [X,f_L(B_t)]
    = \frac{1}{2\pi i\,t} \int_{\C} \overline{\partial}\widetilde f_{L,N}(z) R_t(z) [X,A] R_t(z)\, dz\wedge d\overline z.
  \end{align}
  By the spectral theorem, $f_L(B_t) \to f(B_t)$ strongly, and hence
  $$
  [X,f_L(B_t)] \to [X,f(B_t)] \quad\text{strongly}.
  $$
  On the other hand, by \eqref{eq:resolvent-bound} and \eqref{eq:Im-finite-corrected} with $m=2$, dominated convergence shows that the right-hand side of \eqref{eq:truncated-commutator-HS} converges in operator norm. This shows \eqref{eq:first-commutator-HS-corrected} and
  \begin{align*}
    \|[X,f(B_t)]\|
    & \lesssim \frac1t \|[X,A]\| \int_{\C} |\overline\partial\widetilde f_N(z)| |\im(z)|^{-2}\,dx\,dy
      \lesssim_f \frac1t\|[X,A]\|.
  \end{align*}
  This proves Item~(1).

  We now prove Item~(2), assuming $N\geq3$. Set
  $$
  C_t := [X,B_t]=\frac1t[X,A],
  \quad \text{and} \quad
  D_t := [C_t,B_t] = \frac1{t^2}[[X,A],A].
  $$
  Then,
  \begin{align}
    \label{eq:C-resolvent-corrected}
    [C_t,R_t(z)] = R_t(z)D_tR_t(z).
  \end{align}
  Thus, $C_t R_t(z) = R_t(z)C_t+R_t(z)D_tR_t(z)$, and therefore
  \begin{align}
    \label{eq:resolvent-second-expansion-corrected}
    R_t(z)C_tR_t(z) = R_t(z)^2C_t+R_t(z)^2D_tR_t(z).
  \end{align}
  Substituting this into \eqref{eq:first-commutator-HS-corrected} yields
  \begin{align}
    \label{eq:second-commutator-HS-corrected}
    \begin{split}
      [X,f(B_t)]
      & = \frac{1}{2\pi i} \left(\int_{\C} \overline{\partial}\widetilde f_N(z) R_t(z)^2 \,dz \wedge d\overline z\right) C_t \\
      & \quad + \frac{1}{2\pi i} \int_{\C} \overline{\partial}\widetilde f_N(z) R_t(z)^2D_tR_t(z)\, dz \wedge d\overline z.
    \end{split}
  \end{align}
  It remains to identify the first integral. For every $L\geq1$, differentiation of the scalar Helffer--Sj\"ostrand formula for $f_L=\chi(\cdot/L)f$ gives
  $$
  f_L'(B_t)
  = \frac{1}{2\pi i} \int_{\C} \overline{\partial}\widetilde f_{L,N}(z) R_t(z)^2 \,dz \wedge d\overline z.
  $$
  We have
  $$
  f_L'(x)-f'(x) = (\chi(x/L)-1)f'(x) + \frac1L\chi'(x/L)f(x),
  $$
  and therefore
  $$
  \|f_L'-f'\|_\infty \lesssim_f L^{-1}.
  $$
  Hence, $f_L'(B_t)\to f'(B_t)$ in operator norm. Applying dominated convergence once more, using $m=2$ in \eqref{eq:Im-finite-corrected} yields
  \begin{align}
    \label{eq:derivative-HS-corrected}
    f'(B_t) = \frac{1}{2\pi i} \int_{\C} \overline{\partial}\widetilde f_N(z) R_t(z)^2 \,dz\wedge d\overline z.
  \end{align}
  Thus, the first term in \eqref{eq:second-commutator-HS-corrected} equals
  $$
  f'(B_t)C_t = \frac1t f'(B_t)[X,A].
  $$
  Since $D_t=t^{-2}[[X,A],A]$, the second term is
  $$
  \ce_f(t) = \frac{1}{2\pi i\,t^2} \int_{\C} \overline{\partial}\widetilde f_N(z) R_t(z)^2[[X,A],A]R_t(z)\, dz\wedge d\overline z.
  $$
  Finally,
  $$
  \|R_t(z)^2[[X,A],A]R_t(z)\| \leq |\im(z)|^{-3} \|[[X,A],A]\|.
  $$
  Therefore, by \eqref{eq:Im-finite-corrected} with $m=3$,
  \begin{align*}
    \|\ce_f(t)\|
    & \lesssim \frac1{t^2} \|[[X,A],A]\| \int_{\C} |\overline{\partial}\widetilde f_N(z)| |\im(z)|^{-3}\,dx\,dy \\
    & \lesssim_f \frac1{t^2}\|[[X,A],A]\|.
  \end{align*}
  This proves Item~(2).
\end{proof}


\subsection{Estimates involving the generator of dilations}
\label{ss:preliminaries-generator-of-dilations}

We first compute the commutator $i[L_\kappa,A]$.

\begin{lemma}
  \label{lem5}
  Let $\alpha\in(0,2]\cap(0,d)$ and $\kappa\leq\kappa_{\rm c}$. Let $\mathcal Q_\kappa:=\D(L_\kappa^{1/2})$ be equipped with the form norm $\|u\|_{\mathcal Q_\kappa}:=(\|L_\kappa^{1/2}u\|_2^2+\|u\|_2^2)^{1/2}$, and let $\mathcal Q_\kappa^*$ denote its anti-dual.
  Then, the dilation group $U_t:=\me{itA}$, given by $(U_tu)(x)=\me{dt/2}u(\me{t}x)$, preserves both $\mathcal Q_\kappa$ and $\D(L_\kappa)$, and we have
  \begin{align}
    \label{eq_lem5_0}
    U_t L_\kappa U_{-t} = \me{-\alpha t} L_\kappa, \quad t\in\R.
  \end{align}
  The form commutator $i[L_\kappa,A]$ is well-defined as a bounded operator mapping $\mathcal Q_\kappa$ to $\mathcal Q_\kappa^*$ and satisfies
  \begin{equation}
    \label{eq_lem5_1}
    \langle \psi, i[L_\kappa,A] \varphi\rangle
    = \alpha \langle L_\kappa^{1/2}\psi,L_\kappa^{1/2}\varphi\rangle,
    \quad \psi,\varphi\in\mathcal Q_\kappa.
  \end{equation}
  Moreover,
  \begin{equation}
    \label{eq_lem5_2}
    \langle \psi, i[L_\kappa,A] \varphi\rangle
    = i\langle L_\kappa\psi,A\varphi\rangle - i \langle A\psi,L_\kappa\varphi\rangle
    = \alpha \langle \psi,L_\kappa\varphi\rangle
  \end{equation}
  holds for all $\psi,\varphi\in \D(A)\cap\D(L_\kappa)$.
\end{lemma}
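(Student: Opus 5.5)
The plan is to derive everything from the explicit dilation formula $(U_tu)(x)=\me{dt/2}u(\me{t}x)$ together with the homogeneity of both pieces of $L_\kappa$, and only afterwards differentiate in $t$.

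\textbf{Step 1: scaling of the form.} On $C_c^\infty(\R^d)$, a change of variables in Fourier space gives
$$
\langle U_tu,(-\Delta)^{\alpha/2}U_tu\rangle=\me{-\alpha t}\langle u,(-\Delta)^{\alpha/2}u\rangle .
$$
A change of variables in $x$ gives
$$
\langle U_tu,|x|^{-\alpha}U_tu\rangle=\me{-\alpha t}\langle u,|x|^{-\alpha}u\rangle .
$$
Hence the quadratic form $q_\kappa$ of $L_\kappa$ satisfies $q_\kappa[U_tu]=\me{-\alpha t}q_\kappa[u]$ on $C_c^\infty$. Since $U_t$ is unitary and maps $C_c^\infty$ onto itself, it maps the form-norm closure of $C_c^\infty$ to itself, with equivalent norms (constants $\max\{1,\me{\mp\alpha t}\}$). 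By definition of the Friedrichs extension (both for $\kappa<\kappa_{\rm c}$ and $\kappa=\kappa_{\rm c}$) this closure is $\mathcal Q_\kappa$, so $U_t\mathcal Q_\kappa=\mathcal Q_\kappa$ and $q_\kappa[U_tu]=\me{-\alpha t}q_\kappa[u]$ there. Consequently $U_{-t}L_\kappa U_t$ is the selfadjoint operator associated with the form $\me{-\alpha t}q_\kappa$; that is, $U_tL_\kappa U_{-t}=\me{-\alpha t}L_\kappa$, including the domain statement $U_t\D(L_\kappa)=\D(L_\kappa)$. This proves \eqref{eq_lem5_0}.

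\textbf{Step 2: the form commutator.} I define $i[L_\kappa,A]$ as the derivative at $t=0$ of
$$
t\mapsto\langle U_{-t}\psi,L_\kappa U_{-t}\varphi\rangle_{\rm form}=\me{\alpha t}q_\kappa(\psi,\varphi),
$$
or equivalently as the form limit of
$$
\frac{1}{t}\big(\langle L_\kappa^{1/2}U_{-t}\psi,L_\kappa^{1/2}U_{-t}\varphi\rangle-\langle L_\kappa^{1/2}\psi,L_\kappa^{1/2}\varphi\rangle\big).
$$
By Step 1 this limit is exactly $\alpha q_\kappa(\psi,\varphi)$, and this identifies \eqref{eq_lem5_1}. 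Boundedness $\mathcal Q_\kappa\to\mathcal Q_\kappa^*$ is immediate from $|q_\kappa(\psi,\varphi)|\le\|\psi\|_{\mathcal Q_\kappa}\|\varphi\|_{\mathcal Q_\kappa}$. The sign convention must be checked: $\frac{d}{dt}U_{-t}=-iAU_{-t}$. Differentiating $\langle U_{-t}\psi,L_\kappa U_{-t}\varphi\rangle$ formally gives $i\langle A\psi,L_\kappa\varphi\rangle-i\langle L_\kappa\psi,A\varphi\rangle$. I will fix whichever of $U_{\pm t}$ makes this equal to $\langle\psi,i[L_\kappa,A]\varphi\rangle$ as in \eqref{eq_lem5_2}, and the factor $\me{\pm\alpha t}$ then produces $+\alpha$. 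I expect only this bookkeeping to be delicate here.

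\textbf{Step 3: the strong identity \eqref{eq_lem5_2}.} For $\psi,\varphi\in\D(A)\cap\D(L_\kappa)$, I write the difference quotient
$$
\frac1t\big(\langle L_\kappa U_{s}\psi,U_{s}\varphi\rangle-\langle L_\kappa\psi,\varphi\rangle\big)
$$
with $s=\pm t$. It splits as
$$
\frac1t\langle L_\kappa(U_s\psi-\psi),U_s\varphi\rangle+\frac1t\langle L_\kappa\psi,U_s\varphi-\varphi\rangle .
$$
The second term converges to $\pm i\langle L_\kappa\psi,A\varphi\rangle$ because $\varphi\in\D(A)$. For the first, I move $L_\kappa$ onto $U_s\varphi\in\D(L_\kappa)$ (Step 1). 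Then I use $L_\kappa U_s\varphi=\me{\mp\alpha t}U_sL_\kappa\varphi\to L_\kappa\varphi$ in $L^2$, together with $(U_s\psi-\psi)/t\to\pm iA\psi$ in $L^2$ since $\psi\in\D(A)$. This yields the commutator expression in \eqref{eq_lem5_2}. Comparing with the left-hand side, which equals $\frac{d}{dt}\me{\pm\alpha t}\langle\psi,L_\kappa\varphi\rangle|_{t=0}$, gives the value $\alpha\langle\psi,L_\kappa\varphi\rangle$.

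The main obstacle is the critical case $\kappa=\kappa_{\rm c}$. There $\mathcal Q_\kappa$ is not $H^{\alpha/2}$, so one cannot argue via separate scaling of $L_0$ and the potential on the form domain. This is why I insist on using only density of $C_c^\infty$ in the form norm and the Friedrichs characterization in Step 1, which works uniformly in $\kappa\le\kappa_{\rm c}$.
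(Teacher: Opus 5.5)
Your route is the same as the paper's: scale $q_\kappa$ on $C_c^\infty(\R^d)$, extend by closure to $\mathcal Q_\kappa$ (which handles $\kappa=\kappa_{\rm c}$ uniformly), use the representation theorem for \eqref{eq_lem5_0}, define the form commutator as a $t$-derivative of the dilated form, and differentiate for \eqref{eq_lem5_2}. Your product-rule split in Step~3 is a harmless variant of the paper's differentiation of $\langle L_\kappa\psi,U_t\varphi\rangle=\me{\alpha t}\langle U_{-t}\psi,L_\kappa\varphi\rangle$.

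The gap is the sign of the commutator. It is the only non-routine content of the lemma, and it is what is used later (positivity of $i[H_\kappa,A]$ in the Mourre estimate \eqref{eq:the6-local-strict-mourre}). You never derive it, and your intermediate formulas point the wrong way. Since $\widehat{U_tu}(\xi)=\me{-dt/2}\hat u(\me{-t}\xi)$, one gets $\||D|^{\alpha/2}U_tu\|^2=\me{\alpha t}\||D|^{\alpha/2}u\|^2$ and $\int|x|^{-\alpha}|U_tu|^2\,dx=\me{\alpha t}\int|x|^{-\alpha}|u|^2\,dx$. Hence $q_\kappa[U_tu]=\me{+\alpha t}q_\kappa[u]$, not $\me{-\alpha t}q_\kappa[u]$; for $t>0$, $U_t$ compresses toward the origin and raises both energies. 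You then pass from ``$U_{-t}L_\kappa U_t$ has form $\me{-\alpha t}q_\kappa$'' to ``$U_tL_\kappa U_{-t}=\me{-\alpha t}L_\kappa$''. That is a second slip: the former would give $U_tL_\kappa U_{-t}=\me{\alpha t}L_\kappa$. The two errors cancel, so \eqref{eq_lem5_0} comes out right only by accident.

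The same happens in Step~2. In fact $q_\kappa(U_{-t}\psi,U_{-t}\varphi)=\me{-\alpha t}q_\kappa(\psi,\varphi)$, and your own formal computation shows that $+\frac{d}{dt}$ of it at $t=0$ equals $-\langle\psi,i[L_\kappa,A]\varphi\rangle$. Your plan to ``fix whichever of $U_{\pm t}$'' and then read off $+\alpha$ is only an assertion: with your Step~1 formula, the correct choice $U_t$ would give $-\alpha$. The same ambiguity enters Step~3, where you write $L_\kappa U_s=\me{\mp\alpha t}U_sL_\kappa$ for $s=\pm t$ instead of $\me{\pm\alpha t}$.

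The repair is short.
\begin{enumerate}
\item Use the correct scaling $q_\kappa(U_t\psi,U_t\varphi)=\me{\alpha t}q_\kappa(\psi,\varphi)$.
\item Define $\langle\psi,i[L_\kappa,A]\varphi\rangle:=\frac{d}{dt}q_\kappa(U_t\psi,U_t\varphi)|_{t=0}=-\frac{d}{dt}q_\kappa(U_{-t}\psi,U_{-t}\varphi)|_{t=0}=\alpha q_\kappa(\psi,\varphi)$, which is the paper's \eqref{eq_lem5_4}.
\item In Step~3 take $s=t$ and $L_\kappa U_t=\me{\alpha t}U_tL_\kappa$. Your split then converges to $i\langle L_\kappa\psi,A\varphi\rangle-i\langle A\psi,L_\kappa\varphi\rangle$, while the quotient itself converges to $\frac{d}{dt}\me{\alpha t}\langle\psi,L_\kappa\varphi\rangle|_{t=0}=\alpha\langle\psi,L_\kappa\varphi\rangle$.
\end{enumerate}
A quick check of the sign is $\alpha=2$, $\kappa=0$, where $i[-\Delta,A]=-2\Delta\ge0$.
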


Note that if $\kappa<\kappa_{\rm c}$, then $\mathcal Q_\kappa=H^{\alpha/2}(\R^d)$ with equivalent norms, so $i[L_\kappa,A]$ is also a bounded operator from $H^{\alpha/2}(\R^d)\to H^{-\alpha/2}(\R^d)$.

\begin{proof}
  On the form core $C_c^\infty(\R^d)$ (see, e.g., \cite[Theorem~5.4]{Bogdanetal2019} for $\alpha<2$ and $\kappa=\kappa_{\rm c}$), the nonnegative form defining the Friedrichs realization $L_\kappa$ is
  \begin{equation}
    q_\kappa(\psi,\varphi)
    = \langle |D|^{\alpha/2}\psi,|D|^{\alpha/2}\varphi\rangle - \kappa \langle |x|^{-\alpha/2}\psi,|x|^{-\alpha/2}\varphi\rangle.
  \end{equation}
  We use the same notation for its closure on $\mathcal Q_\kappa$, where $q_\kappa(\psi,\phi)=\langle L_\kappa^{1/2}\psi,L_\kappa^{1/2}\phi\rangle$. (Note that if $\kappa=\kappa_{\rm c}$, then the two above integrals may be separately infinite, although $q_\kappa(\psi,\phi)$ remains well-defined as a closed form.) Since $C_c^\infty(\R^d)$ is invariant under $U_t$, we get
  \begin{equation}
    \label{eq_lem5_3}
    q_\kappa(\me{-i t A} \psi, \me{-i t A} \varphi) = \me{-\alpha t} q_\kappa(\psi,\varphi).
  \end{equation}
  for all $\psi,\varphi\in C_c^\infty(\R^d)$. In particular, $\|U_{-t}u\|_{\mathcal Q_\kappa}^2=\|u\|_2^2+\me{-\alpha t}q_\kappa(u,u)$ on $C_c^\infty(\R^d)$. Hence, $U_{-t}$ extends to a bounded bijection of $\mathcal Q_\kappa$, with inverse $U_t$, and \eqref{eq_lem5_3} extends to all $\psi,\phi\in\mathcal Q_\kappa$ by closure. The representation theorem for closed forms then yields invariance of $\D(L_\kappa)$ and \eqref{eq_lem5_0}.

  \smallskip
  We now define the form commutator by
  \begin{equation}
    \label{eq_lem5_4}
    i[L_\kappa,A]_q(\psi,\varphi)
    := -\frac{d}{dt}q_\kappa(U_{-t}\psi, U_{-t}\varphi)\big|_{t=0}
    = \alpha q_\kappa(\psi,\varphi).
  \end{equation}
  It obeys
  $$
  |i[L_\kappa,A]_q(\psi,\phi)|
  \leq \alpha \|\psi\|_{\mathcal Q_\kappa} \|\phi\|_{\mathcal Q_\kappa}
  $$
  and therefore defines a bounded operator from $\mathcal Q_\kappa$ to $\mathcal Q_\kappa^*$ with
  \begin{equation}
    \label{eq_lem5_5}
    i [L_\kappa,A]_q(\psi,\varphi)
    = \langle \psi,i[L_\kappa,A]\varphi\rangle.
  \end{equation}
  This shows \eqref{eq_lem5_1}.

  \smallskip
  Finally, \eqref{eq_lem5_0} implies
  \begin{align*}
    \langle L_\kappa\psi,U_t\phi\rangle
    = \me{\alpha t} \langle U_{-t}\psi, L_\kappa \phi\rangle, \quad \psi,\phi\in\D(L_\kappa).
  \end{align*}
  If also $\psi,\phi\in\D(A)$, then both sides can be differentiated at $t=0$, with the resulting identity being \eqref{eq_lem5_2}.
\end{proof}

\begin{remark}
  \label{rem:Lkappa-ac-spectrum}
  Using Lemma~\ref{lem5}, let us present an argument showing that
  \begin{align}
    \label{eq:hardy-purelyac}
    \sigma(L_\kappa)=\sigma_{\rm ac}(L_\kappa)=[0,\infty), \qquad
    \sigma_{\rm pp}(L_\kappa)=\sigma_{\rm sc}(L_\kappa)=\emptyset.
  \end{align}  
  for all $\kappa\leq\kappa_{\rm c}$. These assertions are well-known for $\kappa<\kappa_{\rm c}$, see, e.g., \cite{Herbst1977} for $d=3$ and $\alpha=1$, \cite{MizutaniYao2021} for $\alpha<2$, and \cite{Bruneauetal2011} for $\alpha=2$. For $\alpha<2$, \eqref{eq:hardy-purelyac} can be inferred from the abstract classification of homogeneous selfadjoint operators in \cite[Proposition~2.7]{Bruneauetal2011}; preservation of homogeneity by the Friedrichs extension is also recorded in \cite[Proposition~2.6]{Bruneauetal2011}. Since we are otherwise unaware of a proof of \eqref{eq:hardy-purelyac} when $\alpha<2$ and $\kappa=\kappa_{\rm c}$ (even though it may be folklore), we give a self-contained proof.

  \smallskip
  \noindent
  {\bf Absence of zero eigenvalues:}
  Let us first show that zero is not an eigenvalue. If $\kappa<\kappa_{\rm c}$, this follows from $L_\kappa\gtrsim|D|^\alpha\geq0$ by Hardy's inequality. To see that zero is also not an eigenvalue for $\kappa=\kappa_{\rm c}$, we recall the ground state representation of $L_\kappa$, see \cite[Proposition~4.1]{Franketal2008H} or \cite[Proposition~5]{Bogdanetal2016} for $\alpha<2$. For $\alpha<2$, $\kappa=\kappa_{\rm c}$, $v\in C_c^\infty(\R^d\setminus\{0\})$ (which is also a form core of $L_\kappa$ according to \cite[Theorem~5.4]{Bogdanetal2019}), and $h(x)=|x|^{-(d-\alpha)/2}$, it states
  $$
  q_{\kappa_{\rm c}}(v,v) = c_{d,\alpha}\int_{\R^d}\int_{\R^d}\frac{|v(x)/h(x)-v(y)/h(y)|^2}{|x-y|^{d+\alpha}}h(x)h(y)\,dx\,dy.
  $$
  Now, let $u\in\ker L_\kappa$ and choose a sequence $(u_n)_{n\in\N}\subset C_c^\infty(\R^d\setminus\{0\})$ with $u_n\to u$ in $\mathcal Q_\kappa$ so that $q_\kappa(u_n,u_n)\to0$. Thus, passing to an almost-everywhere convergent subsequence and applying Fatou's lemma shows that $u/h$ is constant almost everywhere. Since $h\notin L^2(\R^d)$, we see $\ker L_\kappa$ is trivial also for $\kappa=\kappa_{\rm c}$. For $d\geq3$ and $\alpha=2$, the ground state representation states, with $\kappa_{\rm c}=(d-2)^2/4$ and $h(x)=|x|^{-(d-2)/2}$,
  $$
  q_{\kappa_{\rm c}}(v,v) = \int_{\R^d}h(x)^2\left|\nabla\left(\frac{v(x)}{h(x)}\right)\right|^2\,dx, \quad v\in C_c^\infty(\R^d).
  $$
  Let $u\in\ker L_{\kappa_{\rm c}}$ and choose $u_n\in C_c^\infty(\R^d)$ such that $u_n\to u$ in $\mathcal Q_{\kappa_{\rm c}}$. Since $q_{\kappa_{\rm c}}(u_n,u_n)\to0$, we obtain $\nabla(u_n/h)\to0$ in $L^2_{\rm loc}(\R^d\setminus\{0\})$. Moreover, $u_n/h\to u/h$ in $L^2_{\rm loc}(\R^d\setminus\{0\})$. Thus, $\nabla(u/h)=0$ in the distributional sense on $\R^d\setminus\{0\}$. Since this set is connected for $d\geq3$, we conclude that $u=ch$ for some $c\in\C$. As $h\notin L^2(\R^d)$, we have $c=0$. Hence, $\ker L_{\kappa_{\rm c}}=\{0\}$ also for $\alpha=2$.

  \smallskip
  \noindent
  {\bf Absence of positive eigenvalues:}
  Next, by Lemma~\ref{lem5} and the separability of $L^2(\R^d)$, $L_\kappa$ cannot have positive eigenvalues. Indeed, if $\lambda>0$ were an eigenvalue with eigenfunction $u$, then by scaling $L_\kappa U_t u = \me{\alpha t}\lambda U_t u$. Thus, as $t$ ranges over $\R$, this would produce uncountably many distinct eigenvalues and hence an uncountable orthogonal family of nonzero vectors, which contradicts the separability of $L^2(\R^d)$.

  \smallskip
  \noindent
  {\bf Absolute continuity on $(0,\infty)$:}
  Let us now show $\sigma_{\rm sc}(L_\kappa)=\emptyset$. For $u\in L^2(\R^d)$ and a Borel set $B\subset\R$, let $E(B):=\one_B(L_\kappa)$ and $\mu_u(B):=\langle u,E(B)u\rangle$. Then, by Lemma~\ref{lem5},
  \begin{align*}
    U_{-t}E(B)U_t = E(\me{-\alpha t}B).
  \end{align*}
  Let $N\subset(0,\infty)$ be a Borel set with zero Lebesgue measure and $u\in L^2(\R^d)$. We claim $\|E(N)u\|_2=0$. To this end, it suffices to show $\|E(N)U_tu\|_2=0$ for almost all $t$. Indeed, since $U_t$ is strongly continuous, we would then obtain that $\|E(N)U_tu\|_2$ is continuous and hence vanishes for every $t$. In particular, taking $t=0$ would then yield $E(N)u=0$ for every $u$, which shows that $\sigma(L_\kappa)$ is absolutely continuous on $(0,\infty)$. We now show $\|E(N)U_tu\|_2=0$ for almost all $t$. First, we write
  \begin{align*}
    \|E(N)U_tu\|_2^2
    = \mu_u(\me{-\alpha t}N)
    = \int_{(0,\infty)}\one_N(\me{\alpha t}\lambda)\,d\mu_u(\lambda).
  \end{align*}
  For every $\lambda>0$, the substitution $r=\me{\alpha t}\lambda$ yields
  \begin{align*}
    \int_\R\one_N(\me{\alpha t}\lambda)\,dt = \frac{1}{\alpha}\int_N\frac{dr}{r}=0.
  \end{align*}
  Hence, by Tonelli's theorem,
  $$
  \int_\R \|E(N)U_tu\|_2^2\,dt = 0
  $$
  which shows $\|E(N)U_tu\|_2=0$ for almost all $t$ and hence the absolute continuity of $\sigma(L_\kappa)$ on $(0,\infty)$.

  \smallskip
  \noindent
  {\bf Conclusion:}
  Finally, by Lemma~\ref{lem5}, we have $\sigma(L_\kappa)=\me{-\alpha t}\sigma(L_\kappa)$ for every $t\in\R$. Thus, since $L_\kappa$ is nonzero and closed, we obtain \eqref{eq:hardy-purelyac}.
\end{remark}

\medskip

For the proof of the phase space propagation estimate, we also use the following weighted estimate on $A$.

\begin{lemma}
  \label{lem12}
  Let $\mu\in[0,2]$. Then,
  \begin{equation}
    \langle D\rangle^{-\mu/2}\langle x\rangle^{-\mu/2}\langle A\rangle^{\mu}\langle x\rangle^{-\mu/2}\langle D\rangle^{-\mu/2} \in \mathcal B(L^2(\R^d)).
  \end{equation}
\end{lemma}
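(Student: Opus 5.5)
The plan is to prove the endpoint cases $\mu=0$ and $\mu=2$ and then interpolate. The case $\mu=0$ is trivial. For $\mu=2$ we must show that
$$
T := \langle D\rangle^{-1}\langle x\rangle^{-1}\langle A\rangle^{2}\langle x\rangle^{-1}\langle D\rangle^{-1}
$$
is bounded. Since $\langle A\rangle^2=1+A^2$, it suffices to bound $\langle D\rangle^{-1}\langle x\rangle^{-1}A^2\langle x\rangle^{-1}\langle D\rangle^{-1}$. This operator is nonnegative, and on $C_c^\infty$ its quadratic form is $\|A\langle x\rangle^{-1}\langle D\rangle^{-1}u\|^2$. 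So the key claim is that $A\langle x\rangle^{-1}\langle D\rangle^{-1}$ is bounded.

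To prove this, write $A=x\cdot D-\frac{i d}{2}$ and commute $D$ past $\langle x\rangle^{-1}$:
$$
A\langle x\rangle^{-1}\langle D\rangle^{-1}
= \frac{x}{\langle x\rangle}\cdot D\langle D\rangle^{-1}
+ x\cdot[D,\langle x\rangle^{-1}]\langle D\rangle^{-1}
- \tfrac{id}{2}\langle x\rangle^{-1}\langle D\rangle^{-1}.
$$
Each term is bounded. The first is $x/\langle x\rangle$ composed with the Fourier multiplier $D\langle D\rangle^{-1}$. In the second, $[D,\langle x\rangle^{-1}]=-i\nabla\langle x\rangle^{-1}$ is multiplication by a function of size $\langle x\rangle^{-2}$, so $x$ times it is bounded. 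The third term is obviously bounded.

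Hence $B:=\langle x\rangle^{-1}\langle D\rangle^{-1}$ satisfies $\langle A\rangle B\in\mathcal B(L^2)$, and consequently $B^*\langle A\rangle^2B=(\langle A\rangle B)^*(\langle A\rangle B)$ is bounded. This settles $\mu=2$.

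For $0<\mu<2$ there is one subtlety: the operator in the statement is $\langle D\rangle^{-\mu/2}\langle x\rangle^{-\mu/2}\langle A\rangle^\mu\langle x\rangle^{-\mu/2}\langle D\rangle^{-\mu/2}$, and $\langle x\rangle^{-\mu/2}\langle D\rangle^{-\mu/2}$ is not the $\mu/2$-th power of $B$. So we cannot simply apply Heinz/operator monotonicity to $B^*\langle A\rangle^2B$. Instead, we show that $\langle A\rangle^{\theta}\langle x\rangle^{-\theta}\langle D\rangle^{-\theta}$ is bounded for $\theta=\mu/2\in[0,1]$. The operator in the statement is then $S^*S$ with $S$ this operator, and the claim follows.

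We get this boundedness by Stein's complex interpolation applied to the analytic family
$$
F(z)=\langle A\rangle^{z}\langle x\rangle^{-z}\langle D\rangle^{-z}, \qquad 0\le\re z\le1.
$$
On the line $\re z=0$, $F$ is a product of unitaries and therefore bounded. On the line $\re z=1$, we have
$$
F(1+i\tau)=\langle A\rangle^{i\tau}\,\langle A\rangle\langle x\rangle^{-1}\langle x\rangle^{-i\tau}\langle D\rangle^{-1}\langle D\rangle^{-i\tau}.
$$
Here we must commute the unitary $\langle x\rangle^{-i\tau}$ past $\langle D\rangle^{-1}$, or equivalently show that $\langle A\rangle\langle x\rangle^{-1}\langle x\rangle^{-i\tau}\langle D\rangle^{-1}$ is bounded with at most polynomial growth in $\tau$. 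Redoing the computation above with $\langle x\rangle^{-1-i\tau}$ in place of $\langle x\rangle^{-1}$, the gradient term acquires a factor $(1+i\tau)$. This gives a bound $\lesssim\langle\tau\rangle$, which is admissible growth for Stein interpolation.

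The main obstacle is making the interpolation rigorous. We must check analyticity and admissible growth of $z\mapsto\langle F(z)u,v\rangle$ for $u,v$ in a dense set, and justify the unbounded $A$-manipulations on a core. For both, we would work on $C_c^\infty(\R^d)$, which is invariant under $A$ and under the dilation group, and use that $\langle x\rangle^{-z}\langle D\rangle^{-z}u$ remains Schwartz.
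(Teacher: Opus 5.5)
Your proposal is correct and follows the paper's proof: you reduce to boundedness of $\langle A\rangle^{\mu/2}\langle x\rangle^{-\mu/2}\langle D\rangle^{-\mu/2}$, apply Stein interpolation to $F(z)=\langle A\rangle^{z}\langle x\rangle^{-z}\langle D\rangle^{-z}$, and use the same commutator computation to show that $A\langle x\rangle^{-1}\langle D\rangle^{-1}$ is bounded. The only difference is on the line $\re z=1$. There you redo that computation with $\langle x\rangle^{-1-i\tau}$, picking up the factor $1+i\tau$ from $x\cdot\nabla\langle x\rangle^{-1-i\tau}$, whereas the paper factors out $\langle A\rangle\langle x\rangle^{-i\lambda}\langle A\rangle^{-1}$ and bounds it by $\langle\lambda\rangle$ using a double-commutator estimate and a second interpolation, so your version is slightly more direct.
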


\begin{proof}
  It suffices to prove that $\langle A\rangle^{\mu/2}\langle x\rangle^{-\mu/2}\langle D\rangle^{-\mu/2}$ is bounded for $0\leq\mu\leq2$. We define $F(z)=\langle A\rangle^z\langle x\rangle^{-z}\langle D\rangle^{-z}$ for $z\in\C$ with $0\leq \re(z) \leq1$ and apply Stein's complex interpolation theorem (see, e.g., \cite[Theorem~1.3.7]{Grafakos2014C}).
  All expressions involving $F(z)$ are initially interpreted on the Schwartz space $\mathcal S(\R^d)$. For $\phi,\psi\in\mathcal S(\R^d)$, the function $z\mapsto\langle\phi,F(z)\psi\rangle$ is analytic in the open strip and continuous on its closure. The boundary estimates below are uniform on this common core and therefore give bounded closures on the two boundary lines. Stein's interpolation theorem then yields the bounded operator in the interior, and the resulting estimate extends from $\mathcal S(\R^d)$ to $L^2$ by density.

  We first note that $\langle A\rangle\langle x\rangle^{-1}\langle D\rangle^{-1}$ is bounded. Indeed, since $A=x\cdot D-i\cdot d/2$,
  $$
  A \langle x\rangle^{-1} \langle D\rangle^{-1}
  = x \langle x\rangle^{-1} D \langle D\rangle^{-1}
  + [A,\langle x\rangle^{-1}] \langle D\rangle^{-1} - \frac{id}{2} \langle x\rangle^{-1}\langle D\rangle^{-1},
  $$
  which is bounded since $[A,\langle x\rangle^{-1}]=-ix\cdot\nabla\langle x\rangle^{-1}$ is also bounded.
   
  Next, $z\mapsto\langle \phi,F(z)\psi\rangle$ is analytic in the open strip $0<\re(z)<1$ and continuous on $0\leq \re(z) \leq1$ for $\phi,\psi\in C_c^\infty(\R^d)$. Indeed, $\langle x\rangle^{-z}=\me{-z\log\langle x\rangle}$ is a multiplication operator, $\langle D\rangle^{-z}$ is a Fourier multiplier, and $\langle A\rangle^z$ is defined by the functional calculus of the selfadjoint operator $A$.

  Since each factor of $F(i\lambda) = \langle A\rangle^{i\lambda}\langle x\rangle^{-i\lambda}\langle D\rangle^{-i\lambda}$ is unitary, $\|F(i\lambda)\|=1$ on the line $\re(z)=0$. 
  On the right end of the strip $\re(z)=1$, we have
  \begin{align}
    \label{eq:F1ilambda}
    \begin{split}
      F(1 + i\lambda)
      & = \langle A\rangle^{1+i\lambda}\langle x\rangle^{-1-i\lambda} \langle D\rangle^{-1-i\lambda} \\
      & = \langle A\rangle^{i\lambda} \cdot \langle A\rangle\langle x\rangle^{-i\lambda} \langle A\rangle^{-1} \cdot \langle A\rangle\langle x\rangle^{-1}\langle D\rangle^{-1} \cdot \langle D\rangle^{-i\lambda}.
    \end{split}
  \end{align}
  It remains to show
  \begin{align}
    \label{eq:AxAinverse}
    \|\langle A\rangle\langle x\rangle^{-i\lambda}\langle A\rangle^{-1}|\|\lesssim\langle\lambda\rangle.
  \end{align}
  To this end, we claim
  \begin{align}
    \label{eq:AxAinverse-squared}
    \| \langle A\rangle^2\langle x\rangle^{-i\lambda}\langle A\rangle^{-2} \|
    \lesssim \langle\lambda\rangle^2.
  \end{align}
  To show \eqref{eq:AxAinverse-squared}, let us denote the multiplication operator $\langle x\rangle^{-i\lambda}$ by $M_\lambda$ and observe
  \begin{align*}
    [A,M_\lambda] & = -ix\nabla M_\lambda, \\
    \|[A,M_\lambda]\| & \lesssim |\lambda|, \\
    \|[A,[A,M_\lambda]]\| & \lesssim |\lambda| + |\lambda|^2.
  \end{align*}
  Thus, \eqref{eq:AxAinverse-squared} follows by expanding the operator in this formula and using the above estimates.
  Applying complex interpolation for the family $\langle A\rangle^z M_\lambda \langle A\rangle^{-z}$ with $0\leq\re(z)\leq2$ then yields \eqref{eq:AxAinverse}.

  Thus, $\|F(1+i\lambda)\|\lesssim\langle\lambda\rangle$ for all $\lambda\in\R$, which concludes the proof.
\end{proof}

The following estimates will be useful to show the nonexistence of $W(L_\kappa-V^L,L_\kappa)$ for $0<\rho_L\leq\alpha\leq1$.

\begin{lemma}
  \label{lem:commutators-spectral-cutoff}
  Let $d\in\N$, $\alpha\in(0,2]\cap(0,d)$, $\kappa<\kappa_{\rm c}$, and $\rho>0$. Assume that $V\in C^2(\R^d)$ satisfies \eqref{ass5} for all multi-indices $\beta\in\N_0^d$ with $|\beta|\leq2$. Let $H:=L_\kappa-V$ and $\lambda\in\R$ be such that $H+\lambda\geq\delta>0$ for some $\delta>0$.
  Let $\chi\in C^\infty(\R)$ satisfy, for some $\theta>0$,
  $$
  \chi(s)=0
  \quad\text{for }s\leq\frac{\theta}{2},
  \qquad
  \chi(s)=1
  \quad\text{for }s\geq\theta.
  $$
  Then,
  \begin{align}
    \label{eq:chi-VL-commutator}
    \|[\chi(A/t),V]\| & \lesssim_\chi (\|V\|_\infty+\|x\cdot\nabla V\|_\infty) t^{-1}, \quad t>0.
  \end{align}
  Moreover, $R_\lambda:=(\lambda+H)^{-1}\in C^1(A)$ and
  \begin{align}
    \label{eq:chi-resolvent-commutator}
    \|[\chi(A/t),R_\lambda]\| & \lesssim_{\chi,V,\delta} t^{-1}, \quad t>0.
  \end{align}
\end{lemma}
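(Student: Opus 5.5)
For \eqref{eq:chi-VL-commutator} I would apply Lemma~\ref{lem:functional-calculus-commutator}(1) with $X=V$ and $f=\chi$. Since $\chi$ is smooth, constant for $s\leq\theta/2$ and for $s\geq\theta$, and bounded, it lies in $S^0(\R)$. The multiplication operator $V$ is bounded, and on $\mathcal S(\R^d)$ we have
$$
[V,A]=i\,x\cdot\nabla V,
$$
which is bounded by $\|x\cdot\nabla V\|_\infty\lesssim 1$ because of \eqref{ass5} with $|\beta|=1$. Next I check $V\D(A)\subset\D(A)$. I would do this through differentiability of $\me{-isA}V\me{isA}=V(\me{-s}\cdot)$: its derivative at $s=0$ is $-x\cdot\nabla V$, and this derivative is strongly continuous in $s$ because $V\in C^1$ with $x\cdot\nabla V$ bounded and continuous. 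Hence $V\in C^1(A)$, and \eqref{eq:first-functional-commutator} gives $\|[\chi(A/t),V]\|\lesssim_\chi t^{-1}\|x\cdot\nabla V\|_\infty$. That is stronger than the claimed bound, so the $\|V\|_\infty$ term is harmless.

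For \eqref{eq:chi-resolvent-commutator} I would again use Lemma~\ref{lem:functional-calculus-commutator}(1), now with $X=R_\lambda$. This reduces the claim to $R_\lambda\in C^1(A)$ with $\|[R_\lambda,A]\|\lesssim_{V,\delta}1$. The formal identity is
$$
[R_\lambda,A]=R_\lambda[H,A]R_\lambda,
\qquad
i[H,A]=\alpha L_\kappa+x\cdot\nabla V,
$$
where the first term on the right comes from Lemma~\ref{lem5} and the second from the computation above. Since $\kappa<\kappa_{\rm c}$, the form domain of $H$ is $\mathcal Q_\kappa=H^{\alpha/2}$, and $V$ is bounded, so $R_\lambda^{1/2}$ maps $L^2$ into $\mathcal Q_\kappa$ boundedly. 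It follows that $R_\lambda L_\kappa R_\lambda$ is bounded, with norm at most $\|L_\kappa^{1/2}R_\lambda^{1/2}\|^2\|R_\lambda\|\lesssim_{V,\delta}1$. The term $R_\lambda(x\cdot\nabla V)R_\lambda$ is bounded by $\delta^{-2}\|x\cdot\nabla V\|_\infty$.

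The main obstacle is making $R_\lambda\in C^1(A)$ rigorous. I would use the dilation group: by \eqref{eq_lem5_0}, $U_sHU_{-s}=\me{-\alpha s}L_\kappa-V(\me{-s}\cdot)$, which gives
$$
U_sR_\lambda U_{-s}=\bigl(\lambda+\me{-\alpha s}L_\kappa-V_s\bigr)^{-1},
\qquad V_s:=V(\me{-s}\cdot).
$$
Taking the difference quotient with the second resolvent identity yields
$$
\frac1s\bigl(U_sR_\lambda U_{-s}-R_\lambda\bigr)
=-\,U_sR_\lambda U_{-s}\Bigl[\frac{\me{-\alpha s}-1}{s}L_\kappa-\frac{V_s-V}{s}\Bigr]R_\lambda .
$$
Here $L_\kappa$ is sandwiched between resolvents, interpreted in the form sense via the $\mathcal Q_\kappa\to\mathcal Q_\kappa^*$ mapping. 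I would pick $|s|$ small enough that $\lambda+\me{-\alpha s}L_\kappa-V_s\geq\delta/2$; this is possible because $L_\kappa\geq0$ and $\|V_s-V\|_\infty\lesssim|s|\,\|x\cdot\nabla V\|_\infty$. Then the resolvents $U_sR_\lambda U_{-s}$ are uniformly bounded from $\mathcal Q_\kappa^*$ to $L^2$, and the difference quotient converges strongly to $-R_\lambda(-\alpha L_\kappa+x\cdot\nabla V)R_\lambda$. The derivative is also strongly continuous in $s$: conjugating by $U_s$ gives exactly this derivative expression transported to parameter $s$, and conjugation by $U_s$ is strongly continuous. This establishes $R_\lambda\in C^1(A)$ with a bounded commutator, and \eqref{eq:first-functional-commutator} concludes the proof.
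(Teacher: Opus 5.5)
Your proposal is correct and follows essentially the paper's argument: both bounds come from Lemma~\ref{lem:functional-calculus-commutator}(1), and $R_\lambda\in C^1(A)$ comes from the dilation covariance of $L_\kappa$ and the difference quotient of $U_sR_\lambda U_{-s}$ via the second resolvent identity (the paper works in $\mathcal B(\D(H),L^2)$ rather than in the form sense, and bounds the limit using $\alpha L_\kappa=\alpha H+\alpha V$). One harmless slip: $U_sVU_{-s}=V(\me{s}\cdot)$, not $V(\me{-s}\cdot)$, so the limit is $R_\lambda(\alpha L_\kappa+x\cdot\nabla V)R_\lambda$, which matches your own formula for $i[H,A]$; boundedness is unaffected.
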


\begin{proof}
  First notice that $\chi$ satisfies the assumptions of Lemma~\ref{lem:functional-calculus-commutator}.

  Let us prove \eqref{eq:chi-VL-commutator}. By assumption, $V,x\cdot\nabla V \in L^\infty(\R^d)$. Multiplication by $V$ preserves $\mathcal D(A)$ and, on $\D(A)$, we have $[V,A]=i\,x\cdot\nabla V$. Hence, $V\in C^1(A)$ and, for $t\geq1$, Lemma~\ref{lem:functional-calculus-commutator} yields
  $$
  \|[\chi(A/t),V]\| \lesssim_\chi t^{-1}\|x\cdot\nabla V\|_\infty \lesssim t^{-1}.
  $$
  For $0<t<1$,
  $$
  \|[\chi(A/t),V]\| \leq2\|\chi\|_\infty \|V\|_\infty.
  $$
  This proves \eqref{eq:chi-VL-commutator}.

  We turn to \eqref{eq:chi-resolvent-commutator}. Set
  $$
  B := i[H,A].
  $$
  We first verify that $R_\lambda\in C^1(A)$. Let $U(s):=\me{isA}$ for $s\in\R$. By the dilation covariance of $L_\kappa$ (Lemma~\ref{lem5}), we have $U(s)L_\kappa U(-s) = \me{-\alpha s}L_\kappa$. Moreover, $U(s)VU(-s)=V(\me{s}x)$. Hence, with $H_s:=U(s)HU(-s)$, we have $H_s=\me{-\alpha s}L_\kappa-V(\me{s} x)$. Since $V$ is bounded, $\mathcal D(H)=\mathcal D(L_\kappa)$, and this domain is invariant under $U(s)$. Set $F(x):=x\cdot\nabla V(x)$. By~\eqref{ass5}, $x\cdot\nabla F\in L^\infty(\R^d)$. Moreover, for $s\neq0$
  $$
  \frac{V(\me{s} x)-V(x)}{s} = \int_0^1F(e^{\theta s}x)\,d\theta.
  $$
  Since $\|F(\me{s}\cdot) - F\|_\infty \leq |s|\,\|x\cdot\nabla F\|_\infty$,
  $$
  \lim_{s\to0}\frac{V(\me{s}\cdot)-V}{s} = F \quad\text{in }L^\infty(\R^d).
  $$
  Thus,
  \begin{align}
    \label{eq:H-dilation-derivative}
    \lim_{s\to0} \frac{H_s-H}{s} = -\alpha L_\kappa-x\cdot\nabla V = -B \quad\text{in }\mathcal B(\mathcal D(H),L^2)
  \end{align}
  where $\mathcal D(H)$ is equipped with the graph norm. Let
  $$
  R_{\lambda,s} := U(s)R_\lambda U(-s) = (\lambda+H_s)^{-1}.
  $$
  Since $H_s$ is unitarily equivalent to $H$, we have $\|R_{\lambda,s}\| = \|R_\lambda\| \leq\delta^{-1}$. Using
  $$
  \frac{R_{\lambda,s}-R_\lambda}{s} = -R_{\lambda,s} \frac{H_s-H}{s} R_\lambda
  $$
  and \eqref{eq:H-dilation-derivative}, we obtain
  $$
  \lim_{s\to0}\frac{R_{\lambda,s}-R_\lambda}{s} = R_\lambda B R_\lambda \quad\text{in }\mathcal B(L^2).
  $$
  Thus, $R_\lambda\in C^1(A)$. In particular,
  \begin{align}
    \label{eq:resolvent-A-commutator}
    R_\lambda\mathcal D(A)\subset\mathcal D(A),
    \qquad
    [R_\lambda,A] = iR_\lambda B R_\lambda.
  \end{align}
  By Lemma~\ref{lem5},
  $$
  B = i[H,A] = \alpha L_\kappa+x\cdot\nabla V = \alpha H+W,
  $$
  where $W:=\alpha V+x\cdot\nabla V \in\mathcal B(L^2)$. Therefore,
  $$
  R_\lambda B R_\lambda = \alpha(R_\lambda-\lambda R_\lambda^2) + R_\lambda W R_\lambda \in\mathcal B(L^2).
  $$
  Hence, $[R_\lambda,A]\in\mathcal B(L^2)$. Thus, for $t\geq1$, Lemma~\ref{lem:functional-calculus-commutator} yields
  $$
  \|[\chi(A/t),R_\lambda]\| \lesssim_\chi t^{-1}\|[R_\lambda,A]\| \lesssim_{\delta,V} t^{-1}.
  $$
  Finally, for $0<t<1$,
  $$
  \|[\chi(A/t),R_\lambda]\| \leq 2\|\chi\|_\infty\|R_\lambda\| \lesssim t^{-1}.
  $$
  This proves \eqref{eq:chi-resolvent-commutator}.
\end{proof}

\begin{lemma}
  \label{lem:fractional-A-commutator}
  Let $\mathcal H$ be a Hilbert space, let $A$ be selfadjoint
  on $\mathcal H$, and let $B\in\mathcal B(\mathcal H)$.
  Assume that $B\in C^1(A)$, i.e.,
  $$
  B\mathcal D(A)\subset\mathcal D(A)
  $$
  and that
  $$
  [B,A] = BA-AB,
  \qquad
  \mathcal D([B,A])=\mathcal D(A),
  $$
  extends to a bounded operator on $\mathcal H$.
  Then, for every $0<s<1$,
  $$
  B\mathcal D(\langle A\rangle^s) \subset \mathcal D(\langle A\rangle^s)
  $$
  and $[B,\langle A\rangle^s]$, initially defined on $\D(\langle A\rangle^s)$, extends to a bounded operator on $\mathcal H$ with
  \begin{align}
    \label{eq:fractional-A-abstract-comm-bound}
    \|[B,\langle A\rangle^s]\| \lesssim_s \|[B,A]\|.
  \end{align}
\end{lemma}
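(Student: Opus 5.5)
The plan is to represent $\langle A\rangle^s$ through a resolvent integral and commute $B$ through the resolvents. For $0<s<1$ and $\lambda\geq1$ one has the Balakrishnan formula
$$
\langle A\rangle^{s}=\langle A\rangle^{2\cdot\frac s2},\qquad
T^{\gamma}=\frac{\sin(\pi\gamma)}{\pi}\int_0^\infty \tau^{\gamma-1}\,T(\tau+T)^{-1}\,d\tau,\quad T=\langle A\rangle^2=1+A^2,\ \gamma=s/2,
$$
valid on $\D(T)$. Instead of working with $A^2$ I would rather use a form adapted to first-order commutators: write $\langle A\rangle^s=\langle A\rangle\cdot\langle A\rangle^{s-1}$ and use
$$
\langle A\rangle^{s-1}=\frac{\sin(\pi(1-s)/2)}{\pi}\int_0^\infty \tau^{-(1-s)/2}\,(\tau+1+A^2)^{-1}\,d\tau .
$$
The core computation is then the commutator of $B$ with $(\tau+1+A^2)^{-1}=(A+i\mu)^{-1}(A-i\mu)^{-1}\cdot$ up to the factorization $\tau+1+A^2=(A+i\mu)(A-i\mu)$ with $\mu=\sqrt{1+\tau}$. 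Since $B\in C^1(A)$, the resolvent identity from the proof of Lemma~\ref{lem:functional-calculus-commutator} gives $[B,(A\pm i\mu)^{-1}]=(A\pm i\mu)^{-1}[B,A](A\pm i\mu)^{-1}$, and each resolvent has norm $\leq\mu^{-1}$.

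Concretely, I would first prove the statement on $\D(A)$, where everything is well defined: for $u\in\D(A)$ (a core for $\langle A\rangle^s$) write
$$
[B,\langle A\rangle^s]u=[B,A]\,\frac{A}{\langle A\rangle}\langle A\rangle^{s}\langle A\rangle^{-1}\cdots
$$
more cleanly, use the representation $\langle A\rangle^s=c_s\int_0^\infty \tau^{s/2-1}\,\langle A\rangle^2(\tau+\langle A\rangle^2)^{-1}\,d\tau$, rewrite $\langle A\rangle^2(\tau+\langle A\rangle^2)^{-1}=1-\tau(\tau+\langle A\rangle^2)^{-1}$, so that
$$
[B,\langle A\rangle^s]=-c_s\int_0^\infty \tau^{s/2}\,[B,(\tau+1+A^2)^{-1}]\,d\tau .
$$
Expanding $[B,(\tau+1+A^2)^{-1}]$ via the factorization above yields two terms of the form $(\tau+1+A^2)^{-1}(A\mp i\mu)\,R_\pm[B,A]R_\pm\cdots$; each is bounded in norm by $\lesssim \|[B,A]\|\,\mu^{-1}\cdot\mu^{-2}\cdot\mu=\|[B,A]\|(1+\tau)^{-1}\cdot(1+\tau)^{-1/2}$. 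Hence the integrand is $\lesssim \tau^{s/2}(1+\tau)^{-3/2}\|[B,A]\|$, which is integrable at infinity precisely because $s<1$, and at zero trivially. This gives $\|[B,\langle A\rangle^s]u\|\lesssim_s\|[B,A]\|\|u\|$ for $u\in\D(A)$.

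Finally, domain invariance: for $u\in\D(\langle A\rangle^s)$ take $u_n\in\D(A)$ with $u_n\to u$ in the graph norm of $\langle A\rangle^s$. Then $Bu_n\in\D(A)$ and $\langle A\rangle^sBu_n=B\langle A\rangle^su_n-[B,\langle A\rangle^s]u_n$ converges, so closedness of $\langle A\rangle^s$ gives $Bu\in\D(\langle A\rangle^s)$, and the bounded extension of the commutator coincides with the commutator on $\D(\langle A\rangle^s)$. The main obstacle I anticipate is the careful bookkeeping in the resolvent expansion, in particular making sure the factor $A$ appearing from $(\tau+1+A^2)^{-1}$ is always paired with a resolvent so that the net decay is $(1+\tau)^{-3/2}$ rather than $(1+\tau)^{-1}$ (the latter would fail to be integrable against $\tau^{s/2}$); if that pairing is awkward, I would instead use the simpler single-resolvent formula $|A|$-type representation $\langle A\rangle^{s}$ via $\int_0^\infty\tau^{s-1}\ldots$ with $(A\pm i\mu)^{-1}$ directly.
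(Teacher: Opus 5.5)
Your approach is correct and is genuinely different from the paper's. Two points in the write-up need repair, though neither is serious.

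First, the bookkeeping paragraph is inconsistent as written. Since $\mu^{-1}\cdot\mu^{-2}\cdot\mu=\mu^{-2}=(1+\tau)^{-1}$, that bound would \emph{not} be integrable against $\tau^{s/2}$. The clean route is the Leibniz rule. Put $\mu=\sqrt{1+\tau}$ and $R_\pm:=(A\pm i\mu)^{-1}$. Then $[B,R_\pm]=-R_\pm[B,A]R_\pm$ (note the sign), which is legitimate because $R_\pm\mathcal H=\mathcal D(A)$ and $B\mathcal D(A)\subset\mathcal D(A)$. This gives
\[
[B,(\tau+\langle A\rangle^2)^{-1}]=[B,R_+]R_-+R_+[B,R_-]=-R_+[B,A]R_+R_--R_+R_-[B,A]R_-,
\]
whose norm is at most $2\mu^{-3}\|[B,A]\|=2(1+\tau)^{-3/2}\|[B,A]\|$. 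No bare factor of $A$ ever appears, so the obstacle you anticipated does not arise.

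Second, you state the Balakrishnan formula on $\mathcal D(\langle A\rangle^2)$, but you apply it to $Bu$ with $u\in\mathcal D(A)$, and $B$ only preserves $\mathcal D(A)$. The representation does hold on $\mathcal D(A)$ as an absolutely convergent Bochner integral. Indeed, $\|\langle A\rangle^2(\tau+\langle A\rangle^2)^{-1}u\|\le\min\{\|u\|,\tfrac12\tau^{-1/2}\|\langle A\rangle u\|\}$, and $\tau^{s/2-1}\min\{1,\tau^{-1/2}\}$ is integrable exactly when $0<s<1$. Subtracting the representations of $B\langle A\rangle^s u$ and $\langle A\rangle^s Bu$ then gives your commutator integral. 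The constant term $1$ may be dropped only after commuting, not inside the individual integrals. Your core-plus-closedness argument for domain invariance is fine.

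\emph{Comparison with the paper.} The paper uses the Helffer--Sj\"ostrand formula with an almost analytic extension of the $S^s$-symbol $\langle x\rangle^s$. Because this symbol does not decay, it truncates to $f_L=\chi(\cdot/L)\langle\cdot\rangle^s$ and passes to the limit by dominated convergence. It then proves domain invariance via uniform bounds on $f_L(A)Bu$ and Fatou's lemma for the spectral measure of $Bu$.

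In the paper, $s<1$ enters through $\int|\overline\partial\widetilde f_N|\,|\im z|^{-2}\,dx\,dy\lesssim\int\langle x\rangle^{s-2}\,dx<\infty$. In your argument it enters through $\int_0^\infty\tau^{s/2}(1+\tau)^{-3/2}\,d\tau<\infty$.

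Your real-axis resolvent representation is more elementary. It needs only a scalar Beta-type integral and first-order resolvent identities, avoids the truncation, and gives an explicit constant of order $(1-s)^{-1}$ as $s\uparrow1$. Your closedness argument is also shorter than the Fatou argument.

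The paper's route, in turn, reuses the machinery of Lemma~\ref{lem:functional-calculus-commutator} and applies verbatim to any symbol $f\in S^s(\mathbb R)$ with $s<1$. Yours exploits the specific Stieltjes structure of $\langle A\rangle^s$ as a power of $1+A^2$.
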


\begin{proof}
  Set $f(x) := \langle x\rangle^s$. Then $f\in S^s(\R)$, i.e.,
  $$
  |f^{(k)}(x)| \lesssim_{s,k} \langle x\rangle^{s-k}, \quad k\in\N_0.
  $$
  Choose $\eta\in C_c^\infty(\R)$ such that
  $$
  \eta(r)=1\quad\text{for }|r|\leq1,
  \qquad
  \eta(r)=0\quad\text{for }|r|\geq2,
  $$
  and, for $N\in\{2,3,...\}$, define the almost analytic extension
  $$
  \widetilde f_N(x+iy) := \eta\left(\frac{y}{\langle x\rangle}\right) \sum_{k=0}^N \frac{f^{(k)}(x)(iy)^k}{k!}.
  $$
  Then,
  \begin{align}
    \label{eq:fractional-AA-symbol}
    |\overline\partial\widetilde f_N(x+iy)|
    \lesssim_{s,N} \langle x\rangle^{s-N-1}|y|^N \one_{|y|\leq2\langle x\rangle}.
  \end{align}
  Thus,
  \begin{align}
    \label{eq:fractional-AA-integrability}
    \int_{\C} |\overline\partial\widetilde f_N(z)| |\im z|^{-2}\,dx\,dy 
    \lesssim \int_{\R} \langle x\rangle^{s-N-1} \int_0^{2\langle x\rangle} y^{N-2}\,dy\,dx
    \lesssim \int_{\R}\langle x\rangle^{s-2}\,dx
    < \infty
  \end{align}
  since $s<1$. As in Lemma~\ref{lem:functional-calculus-commutator}, we can represent $[B,f(A)]$ as a Helffer--Sj\"ostrand-type integral by a truncation argument. To this end, let $\chi\in C_c^\infty(\R)$ be such that
  $$
  0\leq\chi\leq1,\qquad
  \chi=1\ \text{on }[-1,1],\qquad
  \supp\chi\subset[-2,2],
  $$
  and set
  $$
  f_L(x):=\chi(x/L)f(x), \quad L\geq1.
  $$
  Then $f_L\in C_c^\infty(\R)$, and $(f_L)_{L\geq1}$ is bounded in $S^s(\R)$, uniformly in $L$. Let $\widetilde f_{L,N}$ be the corresponding almost analytic extensions. Since $f_L$ is compactly supported, the ordinary Helffer--Sj\"ostrand formula yields
  \begin{align}
    \label{eq:fractional-truncated-comm}
    [B,f_L(A)] = \frac{1}{2\pi i} \int_{\C} \overline\partial\widetilde f_{L,N}(z) (z-A)^{-1}[B,A](z-A)^{-1} \,dz\wedge d\overline z.
  \end{align}
  Hence, by $\|(z-A)^{-1}\|\leq|\im z|^{-1}$, \eqref{eq:fractional-AA-integrability}, and dominated convergence, the right-hand side of \eqref{eq:fractional-truncated-comm} converges in operator norm as $L\to\infty$ to a bounded operator $K$ satisfying
  \begin{align}
    \label{eq:fractional-K-bound}
    \|K\| \lesssim_s \|[B,A]\|.
  \end{align}

  It remains to identify $K$ with $[B,\langle A\rangle^s]$ and show the domain invariance. Let $u\in\mathcal D(\langle A\rangle^s)$. By the spectral theorem,
  $$
  f_L(A)u \to f(A)u = \langle A\rangle^s u \quad \text{as } L\to\infty.
  $$
  Moreover,
  \begin{align}
    \label{eq:fractional-domain-limit}
    f_L(A)Bu = Bf_L(A)u-[B,f_L(A)]u.
  \end{align}
  The right-hand side converges in $\mathcal H$. In particular,
  $$
  \sup_{L\geq1}\|f_L(A)Bu\|<\infty.
  $$
  Since $f_L(x)\to f(x)$ pointwise, Fatou's lemma applied to the measure $\langle Bu,dE_A(\cdot)Bu\rangle$ yields
  $$
  \int_\R |f(\lambda)|^2\, \langle Bu,dE_A(\lambda)Bu\rangle < \infty.
  $$
  Hence, $Bu\in\mathcal D(f(A)) = \mathcal D(\langle A\rangle^s)$ and by the spectral theorem,
  $$
  f_L(A)Bu \to f(A)Bu \quad \text{as }L\to\infty.
  $$
  Taking the limit in \eqref{eq:fractional-domain-limit} shows
  $$
  B\langle A\rangle^s u - \langle A\rangle^s Bu = Ku.
  $$
  Therefore,
  $$
  [B,\langle A\rangle^s]=K
  $$
  on $\mathcal D(\langle A\rangle^s)$, and \eqref{eq:fractional-A-abstract-comm-bound} follows from \eqref{eq:fractional-K-bound}.
\end{proof}

With the previous lemma, we obtain the following boundedness of $\langle A\rangle^\rho V$ when $V$ decays like $|x|^{-\rho}$ in Sobolev spaces whose minimal regularity depends on $\rho$.

\begin{lemma}
  \label{lem:fractional-A-sandwich}
  Let $0<\rho<1$, $\rho\leq\alpha<2$, and suppose that $V\in C^\infty(\R^d)$ satisfies \eqref{ass5} for all multi-indices $\beta\in\N_0^d$. Then,
  \begin{align}
    \label{eq:fractional-sandwich}
    \langle D\rangle^{-\alpha/2} \langle A\rangle^\rho V \langle D\rangle^{-\alpha/2} \in\mathcal B(L^2(\R^d)).
  \end{align}
\end{lemma}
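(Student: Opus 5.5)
We factor $\langle A\rangle^\rho V$ through a weight that the Sobolev factors $\langle D\rangle^{-\alpha/2}$ can absorb. Write
$$
V = \langle x\rangle^{-\rho}\, \widetilde V, \qquad \widetilde V := \langle x\rangle^{\rho} V,
$$
so that $\widetilde V\in C^\infty$ satisfies $|\partial^\beta\widetilde V|\lesssim\langle x\rangle^{-|\beta|}$ for all $\beta$ by \eqref{ass5}. In particular, $\widetilde V$ and $x\cdot\nabla\widetilde V$ are bounded. Hence $\widetilde V\in C^1(A)$ with $[\widetilde V,A]=i\,x\cdot\nabla\widetilde V$, and by Lemma~\ref{lem:fractional-A-commutator} (with $s=\rho<1$) the commutator $[\widetilde V,\langle A\rangle^\rho]$ is bounded.

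The main term is then
$$
\langle A\rangle^\rho V = \langle A\rangle^\rho \langle x\rangle^{-\rho/2}\,\widetilde V\,\langle x\rangle^{-\rho/2}.
$$
We commute $\widetilde V$ only through pieces we can handle, or better, we split the weight symmetrically: $V=\langle x\rangle^{-\rho/2}\widetilde V\langle x\rangle^{-\rho/2}$. It then suffices to show that
$$
\langle D\rangle^{-\alpha/2}\langle A\rangle^\rho\langle x\rangle^{-\rho/2} \quad \text{and} \quad \langle x\rangle^{-\rho/2}\langle D\rangle^{-\alpha/2}
$$
are bounded, modulo commutator errors. The second operator is trivially bounded.

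For the first operator, we use Lemma~\ref{lem12} with $\mu=\rho\in(0,1)$ in the squared form
$$
\|\langle A\rangle^{\rho/2}\langle x\rangle^{-\rho/2}\langle D\rangle^{-\rho/2}\|<\infty,
$$
which the proof of Lemma~\ref{lem12} actually yields. This suggests the symmetric splitting
$$
\langle A\rangle^\rho V = \langle A\rangle^{\rho/2}\bigl(\langle A\rangle^{\rho/2}\langle x\rangle^{-\rho/2}\bigr)\widetilde V\langle x\rangle^{-\rho/2},
$$
which is not yet symmetric. So we instead write
$$
\langle A\rangle^\rho V = \langle A\rangle^{\rho/2}\,\bigl(\langle A\rangle^{\rho/2}\,V\,\langle A\rangle^{-\rho/2}\bigr)\,\langle A\rangle^{\rho/2}.
$$
It then suffices to show three things:
\begin{enumerate}
\item $\langle D\rangle^{-\alpha/2}\langle A\rangle^{\rho/2}\langle x\rangle^{-\rho/2}$ is bounded. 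This is the adjoint of $\langle x\rangle^{-\rho/2}\langle A\rangle^{\rho/2}\langle D\rangle^{-\alpha/2}$. Since $\rho\le\alpha$, we have $\langle D\rangle^{-\alpha/2}=\langle D\rangle^{-\rho/2}\langle D\rangle^{-(\alpha-\rho)/2}$, and after commuting $\langle x\rangle^{-\rho/2}$ through $\langle A\rangle^{\rho/2}$ the bound follows from Lemma~\ref{lem12}. The commutator $[\langle x\rangle^{-\rho/2},\langle A\rangle^{\rho/2}]$ is bounded by Lemma~\ref{lem:fractional-A-commutator}, because $x\cdot\nabla\langle x\rangle^{-\rho/2}$ is bounded.
\item The conjugated middle operator, written as $\langle x\rangle^{\rho/2}\langle A\rangle^{\rho/2}V\langle A\rangle^{-\rho/2}\langle x\rangle^{\rho/2}$ after extracting the weights $\langle x\rangle^{-\rho/2}$ on both sides, is bounded. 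Here $V\langle x\rangle^{\rho}=\widetilde V$, and the weights are moved using iterated commutators of $A$ with symbols in $x$. Each such commutator again produces a function in the same symbol class $|\partial^\beta f|\lesssim\langle x\rangle^{-\rho-|\beta|}$.
\item The right factor $\langle x\rangle^{-\rho/2}\langle A\rangle^{\rho/2}\langle D\rangle^{-\alpha/2}$ is bounded, exactly as in the first step.
\end{enumerate}

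A cleaner execution of the whole argument is as follows. Define the analytic family
$$
G(z):=\langle D\rangle^{-\alpha z/2}\langle A\rangle^{z}V_z\langle D\rangle^{-\alpha z/2}, \qquad V_z:=\langle x\rangle^{-z}\widetilde V_\rho, \qquad \widetilde V_\rho:=\langle x\rangle^{\rho}V,
$$
on the strip $0\le\re z\le1$, and apply Stein interpolation as in Lemma~\ref{lem12}. At $\re z=0$ the operator is bounded, with polynomial growth in $\im z$ coming from the conjugation estimate \eqref{eq:AxAinverse}. At $\re z=1$ we need $\langle D\rangle^{-\alpha/2}\langle A\rangle\langle x\rangle^{-1}\widetilde V_\rho\langle D\rangle^{-\alpha/2}$. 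Writing $A=\tfrac12(x\cdot D+D\cdot x)$, this becomes a sum of terms $\langle D\rangle^{-\alpha/2}D\,\cdot\,(x\langle x\rangle^{-1}\widetilde V_\rho)\langle D\rangle^{-\alpha/2}$. Such terms require one full derivative, so $\langle D\rangle^{-\alpha/2}\cdot\langle D\rangle^{-\alpha/2}$ absorbs it only if $\alpha\ge1$. For $\alpha<1$ this is the main obstacle: we would take instead the endpoint $\re z=\alpha$ (since $\rho\le\alpha$), i.e.\ interpolate between $\langle A\rangle^0$ and $\langle A\rangle^{\alpha}$. At $\re z=\alpha$ we handle $\langle A\rangle^\alpha\langle x\rangle^{-\alpha}\langle D\rangle^{-\alpha}$ by Lemma~\ref{lem12} with $\mu=2\alpha$, which is allowed since $\alpha<1$. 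The remaining commutators of $\langle A\rangle^{\alpha}$ with the symbol $\widetilde V_\rho$ and with $\langle D\rangle^{-\alpha/2}$ are bounded by Lemma~\ref{lem:fractional-A-commutator}. For $\langle D\rangle^{-\alpha/2}$ this uses that $[A,\langle D\rangle^{-\alpha/2}]=i\,D\cdot\nabla\langle D\rangle^{-\alpha/2}$ is bounded. Checking the growth in $\im z$ and the holomorphy on the core $\mathcal S(\R^d)$ is routine, as in the proof of Lemma~\ref{lem12}.
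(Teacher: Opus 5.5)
Your reduction has a genuine gap at its crux, and the interpolation fallback does not close it. In the three-factor scheme, steps (1) and (3) are fine (Lemma~\ref{lem12} with $\mu=\rho$ plus Lemma~\ref{lem:fractional-A-commutator} with $s=\rho/2$). But they push the entire difficulty into step (2), the boundedness of $M=\langle x\rangle^{\rho/2}\langle A\rangle^{\rho/2}V\langle A\rangle^{-\rho/2}\langle x\rangle^{\rho/2}$, and you give no argument for that step that works. The weights you must move through $\langle A\rangle^{\pm\rho/2}$ are the \emph{growing} ones $\langle x\rangle^{\rho/2}$. Lemma~\ref{lem:fractional-A-commutator} needs a bounded $B$, and commutators with $A$ gain no spatial decay ($x\cdot\nabla\langle x\rangle^{\rho/2}\sim\langle x\rangle^{\rho/2}$ at infinity), so an iterated-commutator expansion never closes. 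Concretely, $M=\widetilde V+\langle x\rangle^{\rho/2}[\langle A\rangle^{\rho/2},V]\langle A\rangle^{-\rho/2}\langle x\rangle^{\rho/2}$, and nothing you cite controls the second term. Note that $\langle A\rangle^{-\rho/2}\langle x\rangle^{\rho/2}$ alone is already unbounded: $\langle A\rangle$ commutes with the dilations $U_t$, while $U_t\langle x\rangle^{\rho/2}U_t^*=\langle \me{t}x\rangle^{\rho/2}$. In fact $M=\bigl(\langle x\rangle^{\rho/2}\langle A\rangle^{\rho/2}\langle x\rangle^{-\rho/2}\bigr)\widetilde V\bigl(\langle x\rangle^{-\rho/2}\langle A\rangle^{-\rho/2}\langle x\rangle^{\rho/2}\bigr)$ requires conjugating functions of $A$ by $\langle x\rangle^{\pm\rho/2}$. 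In the logarithmic radial variable this is a complex shift of $A$, and controlling it is an analytic-continuation statement rather than a commutator computation. That is exactly what is missing.

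The Stein-interpolation version is set up with the wrong scaling. With $\langle D\rangle^{-\alpha z/2}$ on both sides, the interior point $z=\rho$ is $\langle D\rangle^{-\alpha\rho/2}\langle A\rangle^\rho V\langle D\rangle^{-\alpha\rho/2}$, which is \emph{unbounded} for $\alpha<1$. Take a wave packet at $|x|\approx1$ with radial frequency $n$: $\langle A\rangle^\rho$ acts like $n^\rho$ and $V$ like a nonzero constant, while the two Sobolev factors only give $n^{-\alpha\rho}$, so the norm grows like $n^{\rho(1-\alpha)}$. Consistently with this, your endpoint bound at $\re z=\alpha$ silently uses $\langle D\rangle^{-\alpha/2}$ on each side, whereas your family supplies only $\langle D\rangle^{-\alpha^2/2}$ there. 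The case $\rho\le\alpha<1$ cannot be skipped, since it is exactly the one used in Corollary~\ref{cor:fractional-A-resolvent-V}. Switching to $\langle D\rangle^{-z/2}$ would make the derivative count consistent, but you would then have to control $\langle D\rangle^{-1/2}\langle A\rangle^{i\lambda}\langle D\rangle^{1/2}$, which is not covered by Lemma~\ref{lem12}.

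The missing idea is simply not to split $\langle A\rangle^\rho$. With $w=\langle x\rangle^{-\rho/2}$ and $b=\langle x\rangle^\rho V\in W^{1,\infty}$, write $\langle A\rangle^\rho V=w\langle A\rangle^\rho wb+[\langle A\rangle^\rho,w]wb$. The commutator is bounded by Lemma~\ref{lem:fractional-A-commutator}, which is where $\rho<1$ enters. For the main term, $\langle D\rangle^{-\alpha/2}w\langle A\rangle^\rho wb\langle D\rangle^{-\alpha/2}=\langle D\rangle^{-(\alpha-\rho)/2}\,T_\rho\,\bigl(\langle D\rangle^{\rho/2}b\langle D\rangle^{-\alpha/2}\bigr)$. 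Here $T_\rho=\langle D\rangle^{-\rho/2}w\langle A\rangle^\rho w\langle D\rangle^{-\rho/2}$ is bounded by Lemma~\ref{lem12} at $\mu=\rho$. The last factor is bounded because $W^{1,\infty}$ multipliers act boundedly on $H^{\rho/2}$ and $\rho\le\alpha$. This is the paper's argument, and it only ever commutes the decaying weight $w$ through a power of $\langle A\rangle$.
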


\begin{proof}
  Set
  $$
  w(x):=\langle x\rangle^{-\rho/2},
  \qquad
  b(x):=\langle x\rangle^\rho V(x).
  $$
  Then $V=w^2b$. By assumption, $b\in W^{1,\infty}(\R^d)$. Moreover, $x\cdot\nabla w\in L^\infty(\R^d)$. Thus, by Lemma~\ref{lem:fractional-A-commutator} (using $\rho<1$),
  \begin{align}
    \label{eq:fractional-w-comm}
    [\langle A\rangle^\rho,w] \in\mathcal B(L^2).
  \end{align}
  The following identities are first understood on $C_c^\infty(\R^d)$ (or another common core), and the estimates show that the operator sandwiched between the $\langle D\rangle^{-\alpha/2}$ extends uniquely to a bounded operator on $L^2$. Thus, on $C_c^\infty(\R^d)$,
  \begin{align}
    \label{eq:fractional-sandwich-decomp}
    \langle A\rangle^\rho V
    = \langle A\rangle^\rho w^2b
    = w\langle A\rangle^\rho wb + [\langle A\rangle^\rho,w] wb.
  \end{align}
  The second term is bounded by \eqref{eq:fractional-w-comm} since $w$ and $b$ are bounded. Hence,
  $$
  \langle D\rangle^{-\alpha/2} [\langle A\rangle^\rho,w] w b \langle D\rangle^{-\alpha/2} \in \mathcal B(L^2).
  $$
  For the first term in \eqref{eq:fractional-sandwich-decomp}, Lemma~\ref{lem12}, applied with $\mu=\rho$, yields
  \begin{align}
    \label{eq:fractional-lem12-use}
    T_\rho := \langle D\rangle^{-\rho/2} w\langle A\rangle^\rho w\langle D\rangle^{-\rho/2} \in\mathcal B(L^2).
  \end{align}
  Therefore,
  \begin{align}
    \label{eq:fractional-first-factorization}
    \langle D\rangle^{-\alpha/2} w\langle A\rangle^\rho w b \langle D\rangle^{-\alpha/2}
    = \langle D\rangle^{-(\alpha-\rho)/2} T_\rho \left(\langle D\rangle^{\rho/2} b \langle D\rangle^{-\alpha/2}\right).
  \end{align}
  The first two factors on the right-hand side are bounded. We claim that the last factor, i.e., $\langle D\rangle^{\rho/2}b\langle D\rangle^{-\alpha/2}$, is also bounded. Indeed, multiplication by a $W^{1,\infty}$ function (like $b$ here) is bounded on $H^r(\R^d)$ for every $0\leq r\leq1$. Indeed, the statement is immediate for $r=0$, while for $r=1$ it follows from $\nabla(bu) = b\nabla u+(\nabla b)u$; the intermediate values of $r$ follow by complex interpolation. Since $\rho/2<1$, we obtain
  \begin{align}
    \label{eq:fractional-b-Hq}
    \langle D\rangle^{\rho/2} b \langle D\rangle^{-\alpha/2}
    = \left( \langle D\rangle^{\rho/2} b \langle D\rangle^{-\rho/2} \right) \langle D\rangle^{-(\alpha-\rho)/2} \in \mathcal B(L^2)
  \end{align}
  since $\rho\leq\alpha$.
  Thus, also the first term in \eqref{eq:fractional-sandwich-decomp}, after sandwiching, is bounded.
  This concludes the proof.
\end{proof}

\begin{corollary}
  \label{cor:fractional-A-resolvent-V}
  Let $0<\rho\leq\alpha<1$, $\kappa<\kappa_{\rm c}$, and assume $V\in C^\infty(\R^d)$ satisfies \eqref{ass5} for all multi-indices $\beta\in\N_0^d$. Let $H_\kappa=L_\kappa-V$ and $\lambda\in\R$ be such that $H_\kappa+\lambda\geq\delta>0$ for some $\delta>0$. Then, for every $f\in C_c^\infty(0,\infty)$,
  \begin{align}
    \label{eq:fractional-A-resolvent-V}
    \langle A\rangle^\rho (\lambda+H_\kappa)^{-1} V f(L_\kappa) \in\mathcal B(L^2).
  \end{align}
\end{corollary}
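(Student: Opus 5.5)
We will commute $\langle A\rangle^\rho$ through the resolvent and then use Lemma~\ref{lem:fractional-A-sandwich} to absorb it into $V$. Write $R:=(\lambda+H_\kappa)^{-1}$. By Lemma~\ref{lem:commutators-spectral-cutoff}, $R\in C^1(A)$ with $[R,A]=iRBR$ bounded, where $B=\alpha H_\kappa+W$ and $W=\alpha V+x\cdot\nabla V$. Lemma~\ref{lem:fractional-A-commutator} then gives that $R$ preserves $\D(\langle A\rangle^\rho)$ and that $[\langle A\rangle^\rho,R]$ is bounded since $\rho<1$. Thus, on a suitable core,
\begin{align*}
  \langle A\rangle^\rho R V f(L_\kappa)
  = R\,\langle A\rangle^\rho V f(L_\kappa) + [\langle A\rangle^\rho,R]\,V f(L_\kappa).
\end{align*}
The second term is bounded because $V$ and $f(L_\kappa)$ are bounded.

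For the first term we factor
\begin{align*}
  R\langle A\rangle^\rho V f(L_\kappa)
  = \left(R\langle D\rangle^{\alpha/2}\right)\left(\langle D\rangle^{-\alpha/2}\langle A\rangle^\rho V\langle D\rangle^{-\alpha/2}\right)\left(\langle D\rangle^{\alpha/2}f(L_\kappa)\right).
\end{align*}
The middle factor is bounded by Lemma~\ref{lem:fractional-A-sandwich}. Its hypotheses $0<\rho<1$ and $\rho\leq\alpha<2$ follow from $\rho\leq\alpha<1$.

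The last factor $\langle D\rangle^{\alpha/2}f(L_\kappa)$ is bounded for the following reason. Since $\kappa<\kappa_{\rm c}$, Hardy's inequality gives $\mathcal Q_\kappa=H^{\alpha/2}$ with equivalent norms. Hence $\|\langle D\rangle^{\alpha/2}u\|\lesssim\|(L_\kappa+1)^{1/2}u\|$, and $(L_\kappa+1)^{1/2}f(L_\kappa)$ is bounded because $f$ has compact support.

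The first factor $R\langle D\rangle^{\alpha/2}$ is handled by duality. Its adjoint is $\langle D\rangle^{\alpha/2}R$, and $R$ maps $L^2$ into $\D(H_\kappa)=\D(L_\kappa)\subset\mathcal Q_\kappa=H^{\alpha/2}$. By the closed graph theorem, or directly via $\|(L_\kappa+1)^{1/2}Ru\|^2\lesssim\langle Ru,(H_\kappa+\lambda)Ru\rangle+C\|Ru\|^2$ using that $V$ is bounded, the operator $\langle D\rangle^{\alpha/2}R$ is bounded.

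The main obstacle is justifying these formal manipulations with unbounded operators. The splitting of $\langle A\rangle^\rho R$ is rigorous on $\D(\langle A\rangle^\rho)$ by Lemma~\ref{lem:fractional-A-commutator}. The issue is whether $Vf(L_\kappa)u$ lies in the right domain for the sandwiched expression. I would argue in the weak sense. For $\phi\in\D(\langle A\rangle^\rho)$ and $u\in L^2$, I would show that $\langle\langle A\rangle^\rho\phi,RVf(L_\kappa)u\rangle$ is bounded by $C\|\phi\|\|u\|$. To do this, I would move $R$ across with the commutator, then insert the $\langle D\rangle^{\pm\alpha/2}$ factors. Each pairing would first be computed on the core $C_c^\infty$, approximating $f(L_\kappa)u$ in $H^{\alpha/2}$ and $R^*\phi$ in the appropriate sense, and then extended by density using the boundedness of the sandwiched operator. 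This yields $RVf(L_\kappa)u\in\D(\langle A\rangle^\rho)$ together with the claimed bound.
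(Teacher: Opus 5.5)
Your proposal is correct and follows the paper's proof essentially verbatim. You commute $\langle A\rangle^\rho$ past $(\lambda+H_\kappa)^{-1}$ via Lemmas~\ref{lem:commutators-spectral-cutoff} and~\ref{lem:fractional-A-commutator}, then factor the main term through $\langle D\rangle^{\pm\alpha/2}$ and use Lemma~\ref{lem:fractional-A-sandwich} with $\mathcal Q_\kappa=H^{\alpha/2}(\R^d)$; your weak-formulation handling of domains just spells out the paper's remark that the identities hold first on a common core and then extend by boundedness.
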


\begin{proof}
  Set $R_\lambda:=(\lambda+H_\kappa)^{-1}$. By Lemma~\ref{lem:commutators-spectral-cutoff}, $R_\lambda\in C^1(A)$. Hence, Lemma~\ref{lem:fractional-A-commutator} with $B=R_\lambda$ yields
  \begin{align}
    \label{eq:fractional-R-comm}
    [\langle A\rangle^\rho,R_\lambda] \in\mathcal B(L^2).
  \end{align}
  Therefore, first on a common core, and then hence by bounded extension,
  \begin{align}
    \label{eq:fractional-resolvent-split}
    \langle A\rangle^\rho R_\lambda V f(L_\kappa)
    = R_\lambda \langle A\rangle^\rho V f(L_\kappa) + [\langle A\rangle^\rho,R_\lambda] V f(L_\kappa).
  \end{align}
  The second term is bounded by \eqref{eq:fractional-R-comm}.
  The first term is
  \begin{align}
    \label{eq:fractional-resolvent-factor}
    R_\lambda\langle A\rangle^\rho V f(L_\kappa)
    & = R_\lambda\langle D\rangle^{\alpha/2} \left( \langle D\rangle^{-\alpha/2} \langle A\rangle^\rho V \langle D\rangle^{-\alpha/2} \right) \langle D\rangle^{\alpha/2} f(L_\kappa).
  \end{align}
  By Lemma~\ref{lem:fractional-A-sandwich}, the middle factor is bounded.
  As $\kappa<\kappa_{\rm c}$, the form domain of $L_\kappa$ is $H^{\alpha/2}(\R^d)$. Thus, $\langle D\rangle^{\alpha/2} f(L_\kappa) \in\mathcal B(L^2)$. Moreover, since $V$ is bounded, $H_\kappa$ also has form domain $H^{\alpha/2}(\R^d)$ and hence, $R_\lambda\langle D\rangle^{\alpha/2} \in\mathcal B(L^2)$. Thus, every factor on the right-hand side of \eqref{eq:fractional-resolvent-factor} is bounded, as claimed.
\end{proof}


\section{Propagation estimates}
\label{s:propagationestimates}

In this section, we prove the minimal velocity estimate in Theorem~\ref{the6}, and additionally, outgoing phase space propagation and weighted propagation estimates, for the Schr\"odinger propagation with respect to $H_\kappa = L_\kappa - V$.
We will use these estimates to show the asymptotic completeness claimed in Theorem~\ref{asympcomplhardyplusperturbation}(\ref{asympcomplhardyplusperturbation:acshortrangesmallalpha})(b), and the nonexistence claim in Theorem~\ref{asympcomplhardyplusperturbationnonexistence}(\ref{asympcomplhardyplusperturbation:nonexistencelongrangesmallalpha}) for $\alpha<1$. These estimates are also of independent interest and may be useful to study corresponding many-particle scattering problems.

This section is structured as follows.
In Subsection~\ref{s:minimalvelocity}, we show the minimal escape velocity estimates. While it is intuitively clear that such bounds are useful to show existence (and completeness) of wave operators, they will also be important to prove nonexistence of wave operators involving long-range perturbations.
In Subsection~\ref{s:cookkurodaestimates} we prove a phase space propagation estimate, which is going to be crucial to carry out the Cook--Kuroda method.
In Subsection~\ref{s:weightedmaximalvelocity}, we show weighted propagation estimates which are useful to show the nonexistence claims.


\subsection{Minimal escape velocity estimates}
\label{s:minimalvelocity}

In this subsection, we prove the minimal escape velocity estimates in Theorem~\ref{the6}.
The proof relies on \cite[Corollaries~2.5--2.6]{Skibsted1991}, where the following definition and assumptions are used.

\begin{definition}[{\cite[Definition~2.1]{Skibsted1991}}]
  \label{definitionskibsted}
  Given $\beta,\alpha\geq0$ and $\epsilon>0$, let $\F_{\beta,\alpha,\epsilon}$ denote the set of functions $g,g(x,\tau)=g_{\beta,\alpha,\epsilon}(x,\tau) = -\tau^{-\beta} (-x)^\alpha \chi(\frac{x}{\tau})$, defined for $(x,\tau)\in\R\times\R_+$ and for $\chi\in C^\infty(\R)$ with the properties
  \begin{align*}
    \chi(x) = 1 \quad \text{for} \ x < -2 \epsilon, \qquad \chi(x)=0 \quad \text{for} \ x > -\epsilon, \\
    \frac{d}{dx}\chi(x) \leq 0 \quad \text{and} \quad \alpha\chi(x) + x \frac{d}{dx}\chi(x) = \tilde\chi(x)^2
  \end{align*}
  where $\tilde\chi(x)\geq0$ and $\tilde\chi \in C^\infty(\R)$.
\end{definition}

As noted in \cite{Skibsted1991}, $\F_{\beta,\alpha,\epsilon}\neq\emptyset$ for any parameters $\beta,\alpha,\epsilon$ as in Definition~\ref{definitionskibsted}. In the following, we use the iterated commutator notation
\begin{align}
  {\rm ad}_{A}^{0}(H) = H
  \quad \text{and} \quad
  i^n{\rm ad}_{A}^n(H) = i[i^{n-1} {\rm ad}_{A}^{n-1}(H),A]
  \quad \text{for } n\in\N.
\end{align}

\begin{assumption}[{\cite[Assumption~2.2]{Skibsted1991}}]
  \label{assumskibsted}
  Let $n_0\in\{2,3,...\}$, $t_0>0$, $k_0\geq0$, $\beta_0>0$, $n_0-1/2>\alpha_0>0$, $f,f_2\in C_c^\infty(\R)$, $f_2$ real-valued with $f f_2 = f$, and $H$, $A(\tau)$, $B$ be selfadjoint operators on a Hilbert space $\ch$. Assume with $\tau = t + t_0$, $t \geq 0$, that the operators $A(\tau)$ have a common domain $\D$, $\D(H)\cap\D$ is dense in $\D(H)$, $B \geq 1$, $H$ is bounded from below, and with $A = A(t_0)$ that $\langle A\rangle^{n_0/2} B^{-n_0/2}\in\mathcal B(\mathcal H)$. Assume also the following statements.
  \begin{enumerate}
  \item With ${\rm ad}_{A(\tau)}^{0}(H)=H$ and $1 \leq n \leq n_0$, the (commutator-)form (defined iteratively) $i^n{\rm ad}_{A(\tau)}^n(H)$ on $\D(H)\cap\D$ extends to a symmetric operator with domain $\D(H)$.

  \item If $A$ is unbounded, then $\sup_{|s|<1}\|H\me{i A(\tau)s}\psi\|<\infty$ for all $\psi\in\D(H)$ and $\tau \geq t_0$.

  \item For any $\tau_1,\tau_2 \geq t_0$, $A(\tau_1)-A(\tau_2)$ is bounded, and the derivative $d_{\tau}A(\tau) = \frac{d}{d\tau} A(\tau)$ exists as a bounded operator.

    For $n \leq n_0-1$ and $\tau \geq t_0$ the form (defined iteratively)
    $$
    i^n {\rm ad}_{A(\tau)}^{n}(d_\tau A(\tau)) = i [i^{n-1}{\rm ad}_{A(\tau)}^{n-1}(d_\tau A(\tau)),A(\tau)]
    $$
    on $\D$ extends to a bounded selfadjoint operator on $\ch$.

  \item For $n\leq n_0$, ${\rm ad}_{A(\tau)}^{n}(H) (H-i)^{-1}$ and ${\rm ad}_{A(\tau)}^{n-1}(d_\tau A(\tau))$ are continuous, $\cb(\ch)$-valued functions of $\tau \geq t_0$.

  \item
    \begin{enumerate}
    \item[(a)] ${\rm ad}_{A(\tau)}^{n_0-1}(d_\tau A(\tau)) = \mathcal{O}(\tau^{k_0})$ in $\cb(\ch)$ as $\tau\to\infty$.

    \item[(b)] For $n\leq n_0$, we have ${\rm ad}_{A(\tau)}^{n-1}(d_\tau A(\tau))(H-i)^{-1} = \mathcal{O}(1)$ in $\cb(\ch)$ as $\tau\to\infty$.

    \item[(c)] For $n\leq n_0$, we have ${\rm ad}_{A(\tau)}^{n}(H)(H-i)^{-1} = \mathcal{O}(1)$ in $\cb(\ch)$ as $\tau\to\infty$.
    \end{enumerate}

  \item The statement $q(\beta_0,\alpha_0)$, defined as follows, holds:

    With $\alpha_0'=\max\{m\in\N:\,m<\alpha_0\}$ (for $\alpha_0>1$) the following (positivity) estimate holds for $(\beta,\alpha)=(0,1),...,(0,\alpha_0'),(\beta_0,\alpha_0)$(=$(\beta_0,\alpha_0)$ if $0<\alpha_0\leq1$):

    Let $\mathbb D_HA(\tau)$ denote the symmetric operator $i[H,A(\tau)] + d_\tau A(\tau)$, the Heisenberg derivative. Given $\epsilon>0$ and $g(x,\tau)\in \F_{\beta,\alpha,\epsilon}$, there is $C>0$ so that with
    $$
    \zeta(t) = (g^{(1)}(A(\tau),\tau))^{1/2}\me{-i t H}f(H)B^{-\alpha/2}\phi,
    $$
    one has
    $$
    \int_0^\infty dt \left(\langle \zeta(t), f_2(H) \mathbb D_H A(\tau) f_2(H)\zeta(t)\rangle\right)_-
    \leq C \|\phi\|^2, \qquad \phi\in\ch.
    $$
  \end{enumerate}
\end{assumption}

\begin{proposition}[{\cite[Corollary~2.5]{Skibsted1991}}]
  \label{skibstedcor1}
  Suppose Assumption~\ref{assumskibsted} and in addition
  \begin{align}
    \label{eq:skibstedcor1_1}
    & k_0+\alpha_0+\frac32 < \beta_0+n_0, \\
    \label{eq:skibstedcor1_2}
    & k_0+\alpha_0'+\frac32 < n_0 \quad \text{(if $\alpha_0>1$)}, \\
    \label{eq:skibstedcor1_3}
    & \frac{\alpha_0}{2} + 2 < n_0+\beta_0, \\
    \label{eq:skibstedcor1_4}
    & \frac{\alpha_0'}{2}+2 \leq n_0 \quad \text{(implied by \eqref{eq:skibstedcor1_2})}.
  \end{align}
  Then for $(\beta,\alpha)=(0,1),...,(0,\alpha_0'),(\beta_0,\alpha_0)$, any $\epsilon>0$, $g(x,\tau)\in\F_{\beta,\alpha,\epsilon}$ and $\theta\in[0,1]$,
  \begin{align}
    (-g_{0,\alpha(1-\theta),\epsilon}(A(\tau),\tau))^{1/2}\me{-i t H}f(H)B^{-\alpha/2} = \mathcal{O}(\tau^{(\beta-\alpha\theta)/2}) \quad \text{in $\cb(\ch)$ for $\tau\to\infty$.}
  \end{align}
\end{proposition}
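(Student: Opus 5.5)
The plan is to run Skibsted's time-dependent positive commutator argument, using $-g(A(\tau),\tau)$ as propagation observable, and to proceed by induction along the finite list $(\beta,\alpha)=(0,1),\dots,(0,\alpha_0'),(\beta_0,\alpha_0)$.

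\emph{Setup.} Fix $\phi\in\ch$ and put $\psi_t:=\me{-itH}f(H)B^{-\alpha/2}\phi$. For $g\in\F_{\beta,\alpha,\epsilon}$ and $\tau=t+t_0$, consider
$$
\Phi(t):=\langle \psi_t, f_2(H)\,(-g(A(\tau),\tau))\,f_2(H)\psi_t\rangle\geq0 .
$$
\emph{Initial value.} Since $(-x)^\alpha\chi(x/\tau)\lesssim\langle x\rangle^\alpha$, we have $\Phi(0)\lesssim \|\langle A\rangle^{\alpha/2}f_2(H)f(H)B^{-\alpha/2}\phi\|^2$. This is bounded by $C\|\phi\|^2$ by commuting $f_2(H)f(H)$ through $\langle A\rangle^{\alpha/2}$ and using $\langle A\rangle^{n_0/2}B^{-n_0/2}\in\cb(\ch)$ interpolated down to $\alpha<n_0$. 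The commutator is controlled by items (1), (4) and (5) of Assumption~\ref{assumskibsted} and a Helffer--Sj\"ostrand expansion in the spirit of Lemma~\ref{lem:functional-calculus-commutator}.

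\emph{Derivative.} Differentiating gives $\Phi'(t)=-\langle\psi_t,f_2(H)\,\mathbb D_H g(A(\tau),\tau)\,f_2(H)\psi_t\rangle$. Expand the Heisenberg derivative of $g(A(\tau),\tau)$:
\begin{enumerate}
\item[(i)] The explicit term $\partial_\tau g$ equals $-\tau^{-\beta-1}(-x)^\alpha[-\beta\chi(x/\tau)-(x/\tau)\chi'(x/\tau)]$ at $x=A(\tau)$. Since $\chi'\le0$ and $x<0$ on $\supp\chi'$, this term has a sign and can be discarded, or it is of the form $\tau^{-1}$ times a controlled quantity.
\item[(ii)] The main term is $g^{(1)}(A,\tau)^{1/2}\,\mathbb D_HA(\tau)\,g^{(1)}(A,\tau)^{1/2}$. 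Here $g^{(1)}=\partial_xg=\tau^{-\beta}(-x)^{\alpha-1}\big(\alpha\chi+y\chi'(y)\big)|_{y=x/\tau}=\tau^{-\beta}(-x)^{\alpha-1}\tilde\chi^2\geq0$. Its negative part, integrated in time, is bounded by $C\|\phi\|^2$ by the positivity hypothesis (6) $q(\beta_0,\alpha_0)$.
\item[(iii)] The remainders come from a commutator expansion of $i[H,g(A)]$ and of $d_\tau A$ up to order $n_0$. They are written via almost-analytic extensions as integrals containing $\mathrm{ad}^n_{A(\tau)}(H)(H-i)^{-1}$ and $\mathrm{ad}^{n-1}_{A(\tau)}(d_\tau A)$, which are $\mathcal O(1)$ or $\mathcal O(\tau^{k_0})$ by item (5). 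Each gains powers $\tau^{-1}$, because $g$ is homogeneous in $x/\tau$ apart from the factor $(-x)^\alpha$. They are therefore bounded by $\tau^{-1-\delta}$ times lower-order expressions of the form $\|(-g_{0,\alpha'})^{1/2}\psi_t\|^2$ with $\alpha'<\alpha$, which the induction hypothesis controls, plus a top-order term of size $\tau^{k_0+\alpha+3/2-\beta-n_0}$. That top-order term is integrable exactly under \eqref{eq:skibstedcor1_1}--\eqref{eq:skibstedcor1_4}.
\end{enumerate}
Integrating $\Phi'$ over $[0,t]$ then yields $\sup_t\Phi(t)\le C\|\phi\|^2$. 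A further commutator with $f_2(H)$ removes the energy cutoffs, so
$$
\|(-g_{\beta,\alpha,\epsilon}(A(\tau),\tau))^{1/2}\psi_t\|=\mathcal O(1).
$$

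\emph{Passing to general $\theta$.} On $\supp\chi(\cdot/\tau)$ we have $-x\ge\epsilon\tau$, hence $(-x)^{\alpha(1-\theta)}\le(\epsilon\tau)^{-\alpha\theta}(-x)^\alpha$. Therefore $-g_{0,\alpha(1-\theta),\epsilon}\le C\tau^{\beta-\alpha\theta}\,(-g_{\beta,\alpha,\epsilon'})$ for a suitable $\epsilon'<\epsilon$; adjusting the cutoff is harmless since $\F$ is nonempty for every $\epsilon$. This gives the claimed bound $\mathcal O(\tau^{(\beta-\alpha\theta)/2})$. The induction runs through $\alpha=1,2,\dots,\alpha_0'$ with $\beta=0$, and then through $(\beta_0,\alpha_0)$. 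At each stage the previous stages with $\theta\in[0,1]$ supply exactly the lower-order weighted bounds needed in step (iii).

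\emph{Expected main obstacle.} The delicate step is the bookkeeping of the commutator remainders in (iii). Each term must be placed between square roots of lower-order observables, where the inductive bounds apply, so that its size is integrable in $\tau$; this is where the numerology \eqref{eq:skibstedcor1_1}--\eqref{eq:skibstedcor1_4} enters. I would first try to prove this with the symbol-class Helffer--Sj\"ostrand expansion of Lemma~\ref{lem:functional-calculus-commutator}, carried to order $n_0$. Since this proposition is quoted verbatim from \cite[Corollary~2.5]{Skibsted1991}, in the paper one simply cites it.
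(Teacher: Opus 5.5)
The paper does not prove this proposition; it quotes Skibsted's Corollary~2.5, as you note yourself. Your final step is correct: on $\supp\chi(\cdot/\tau)$ one has $-x\geq\epsilon\tau$, so with an auxiliary cutoff equal to $1$ on $(-\infty,-\epsilon]$, the $\theta$-statement follows from a uniform bound on $\|(-g(A(\tau),\tau))^{1/2}\psi_t\|$. The gap is in your sketch of that uniform bound, at step (iii).

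Hypothesis (6) controls only $\langle\zeta,f_2(H)\,\mathbb D_HA(\tau)\,f_2(H)\zeta\rangle$ with $\zeta=(g^{(1)})^{1/2}\psi_t$. The derivative of $\Phi$, however, produces $\langle\zeta,\mathbb D_HA(\tau)\zeta\rangle$. Since $f_2(H)\psi_t=\psi_t$, your closing ``commutator with $f_2(H)$'' is vacuous; the commutation is needed inside $\Phi'$.

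The difference between the two forms contains the cross term $2\re\langle(1-f_2(H))\zeta,\mathbb D_HA(\tau)f_2(H)\zeta\rangle$, where $(1-f_2(H))\zeta=(1-f_2(H))[(g^{(1)})^{1/2},f(H)]\me{-itH}B^{-\alpha/2}\phi$. This vector is first order in the derivative of $(g^{(1)})^{1/2}$, and it does not vanish when $A$ couples different energies. In the application, $[A,f(H_\kappa)]$ is in general not a function of $H_\kappa$, because $i[H_\kappa,A]=\alpha H_\kappa+K$ with $K=\alpha V+x\cdot\nabla V$.

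The cross term is therefore second order, naturally bounded by $\|(1-f_2(H))\zeta\|\,\|\zeta\|$. For $(\beta,\alpha)=(0,m)$, your induction hypothesis supplies only the uniform bounds from level $(0,m-1)$. These give $\|(1-f_2(H))\zeta\|\lesssim\tau^{-1}$ and $\|\zeta\|\lesssim1$, hence only $O(\tau^{-1})$, which is not integrable. At the base level $(0,1)$ there is no lower level at all. So the claim that every remainder is $\tau^{-1-\delta}$ times lower-order uniform bounds fails exactly here.

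To close the argument you must keep the good terms rather than discarding the $\partial_\tau g$ term in (i). Using $-x\geq\epsilon\tau$ on $\supp\chi$ and $\beta\chi\geq0$, one has pointwise
\[
g^{(1)}\le\epsilon^{-1}\bigl(\alpha\tau^{-1}(-g)+\partial_\tau g\bigr),
\]
hence $\|\zeta\|^2\lesssim\tau^{-1}\Phi(t)+\langle\psi_t,\partial_\tau g\,\psi_t\rangle$. The cross term is then bounded by $\tau^{-3/2}\Phi^{1/2}+\delta\langle\psi_t,\partial_\tau g\,\psi_t\rangle+C_\delta\tau^{-2}$. The middle piece is absorbed by the nonpositive contribution $-\langle\psi_t,\partial_\tau g\,\psi_t\rangle$ to $\Phi'$, and a Gronwall-type argument for $\sup_{s\le t}\Phi(s)$ finishes.

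Your numerology is also inconsistent. Under \eqref{eq:skibstedcor1_1}, the exponent $k_0+\alpha+3/2-\beta-n_0$ is merely negative, so the top-order term as you describe it is not integrable. Integrability under \eqref{eq:skibstedcor1_1} requires a bound of order $\tau^{k_0+\alpha+1/2-\beta-n_0}$.

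For the paper's purposes the citation suffices. A self-contained proof needs the absorption and Gronwall bookkeeping above.
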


The following remark is taken from \cite[Remark~2.3]{Skibsted1991}.
\begin{remark}
  \label{skibstedremark}
  As for Item~(6) in Assumption~\ref{assumskibsted} $\me{-i t H} f(H) B^{-\frac\alpha2}\phi \!\in\! \D((g^{(1)}(A(\tau),\tau))^{1/2})$ for every $\phi\in\ch$ and $\tau\geq t_0$. A sufficient condition is $f_2(H) \mathbb D_H A(\tau) f_2(H)\geq0$. A more refined sufficient condition for Assumption~\ref{assumskibsted}(6) is in \cite[Corollary~2.6]{Skibsted1991}.
\end{remark}

\begin{proof}[Proof of Theorem~\ref{the6}]
  Theorem~\ref{the6} follows from Proposition~\ref{skibstedcor1} once we verify its assumptions.
  Let us begin with two reductions.
  First, it suffices to prove the theorem where the step function in the statement of Theorem~\ref{the6} is replaced with a smoothed step function of slightly larger support. To prove the estimate with this smoothed step function, we apply Proposition~\ref{skibstedcor1} and Remark~\ref{skibstedremark}.
  Second, it suffices to prove Theorem~\ref{the6} for $\nu\in\N$. Once we have achieved this, \eqref{eq_the6_1} holds for every real $1\leq \nu' \leq \nu$ by complex interpolation with the family $\one(\frac{A}{t}\leq\theta)f(H_\kappa)\me{-itH_\kappa}\langle A\rangle^{-\nu z}$ with $0\leq\re(z)\leq1$, noting that $\|\one(\frac{A}{t}\leq\theta)f(H_\kappa)\me{-itH_\kappa}\|\lesssim1$.
  
  \smallskip
  Let $\epsilon>0$ and $\chi\in C^\infty(\R;[0,1])$ be a nonincreasing function which satisfies
  \begin{align*}
    \chi(s)=
    \begin{cases}
      1 & \mbox{if} \quad s<-\frac{3}{2}\epsilon, \\
      0 & \mbox{if} \quad s>-\epsilon,
    \end{cases}
  \end{align*}
  and $\gamma\chi(s)+s\chi'(s)=\psi(s)^2$ where $0\leq\psi\in C^\infty(\R)$ and $\gamma>0$.
  Let $t_0>0$ be fixed, $\tau=t+t_0$, and define $A(\tau)=A-3\epsilon\tau$.
  We will prove the desired estimate with $\chi(A(\tau)/\tau)$.
  All iterated commutators required in Assumption~\ref{assumskibsted} are first computed on $\D(L_\kappa)\cap\D(A)$. Below we show that $\D(H_\kappa)\cap\D(A)$ is dense in $\D(H_\kappa)$. Hence, the iterated commutators occuring in Assumption~\ref{assumskibsted} extend continuously to $\D(H_\kappa)$, uniformly for $\tau\geq t_0$.
  
  To verify Assumption~\ref{assumskibsted}, we note the following:
  \begin{enumerate}
  \item[(i)] ${\rm ad}_{A(\tau)}^n(L_\kappa)=(-i\alpha)^n L_\kappa$.

  \item[(ii)] ${\rm ad}_{A(\tau)}^n(V)$ is the bounded multiplication operator obtained from $(x\cdot\nabla)^n V$ (up to corresponding powers of $i$), which is bounded and vanishes as $|x|\to\infty$ uniformly in $\tau\geq t_0$.

  \item[(iii)] ${\rm ad}^{0}_{A(\tau)}(d_\tau A(\tau))=-3\epsilon$ and ${\rm ad}^{n}_{A(\tau)}(d_\tau A(\tau))=0$ for $n\geq1$.
  \end{enumerate}
  
  We now verify Assumption~\ref{assumskibsted} with Skibsted's parameters $(\alpha,\beta,\theta)$ in Definition~\ref{definitionskibsted}, Assumption~\ref{assumskibsted}, and Proposition~\ref{skibstedcor1} being denoted by $(\alpha,\beta,\theta)=(\gamma,\beta,\omega)$. We will take $\gamma=2\nu$, $\beta=0$, and $\omega=1$ here.
  We also take $k_0=1/4, \beta_0=1, \alpha_0=2\nu+1/4, n_0=2\nu+2$. Then $\alpha_0'=2\nu$ and \eqref{eq:skibstedcor1_1}--\eqref{eq:skibstedcor1_4} are satisfied.
  Moreover, we take Skibsted's $B=\langle A(t_0)\rangle \sim \langle A\rangle$ and $H=H_\kappa$.

  \smallskip
  First, we verify that $\D(H_\kappa)\cap\D(A)$ is dense in $\D(H_\kappa)$. Indeed, since $\me{isA}L_\kappa\me{-isA}=\me{-\alpha s}L_\kappa$ and $V$ is bounded, we have $\me{isA}\D(H_\kappa)=\D(H_\kappa)$ and $s\mapsto H_\kappa\me{isA}u$ is strongly continuous for $u\in\D(H_\kappa)$. Now mollify $u$ by considering $u_\delta=\int\delta^{-1}\eta(s/\delta)\me{isA}u\,ds$, where $\delta>0$ and $\eta\in C_c^\infty$ with $\int_\R\eta(s)\,ds=1$. Then $u_\delta\in \D(H_\kappa)\cap\D(A)$ and $u_\delta\to u$ as $\delta\to0$ in the graph norm of $H_\kappa$. This shows the density assumption. We now proceed with verifying Items~(1)--(6) from Assumption~\ref{assumskibsted}.
  
  \begin{enumerate}
  \item First,
    \begin{equation}
      \label{eq_the6_2}
      i[H_\kappa,A]
      = i[L_\kappa,A] - i[V,A]
      = \alpha L_\kappa + x \cdot \nabla_x V
    \end{equation}
    holds on $\D(A)\cap\D(L_\kappa)$ and is extended to a symmetric operator on $\D(L_\kappa)$ by Lemma~\ref{lem5}. Therefore, $i^n {\rm ad}_{A(\tau)}^n(H_\kappa)$ also extends to a symmetric operator on $\D(L_\kappa)$ and $i^n {\rm ad}_{A(\tau)}^n(H_\kappa)\langle H_\kappa\rangle^{-1}$ is uniformly bounded in $\tau \geq t_0$ for all $n\in\N$.

  \item On $\D(L_\kappa)$, we have $L_\kappa\me{isA}=\me{\alpha s}\me{isA}L_\kappa$. Hence,
    \begin{equation}
      \label{eq_the6_3}
      \|H_\kappa \me{i A(\tau) s}\phi\|
      \leq \me{\alpha s}  \|L_\kappa\phi\| + \|V\|_\infty\|\phi\|
    \end{equation}
    for all $|s|\leq1$ and $\tau\geq t_0$ and $\phi\in\D(L_\kappa)$ since the scalar part of $\me{iA(\tau)s}$ only contributes a phase.
    
  \item This is trivially satisfied.

  \item This follows from the above notes (i)--(iii).

  \item All the items (5)(a,b,c) follow from the above notes (i)--(iii).

  \item We verify Assumption~\ref{assumskibsted}(6) using Remark~\ref{skibstedremark}. Recall
    \begin{align}
      \label{eq:the6-mourre-decomposition}
      i[H_\kappa,A] = \alpha L_\kappa + x\cdot\nabla V = \alpha H_\kappa+K,
      \quad K := \alpha V + x \cdot \nabla V.
    \end{align}
    By our assumptions on $V$, the function $K$ is bounded and tends to zero as $|x|\to\infty$. In particular, $K\langle D\rangle^{-\alpha/2}$ is compact on $L^2(\R^d)$. Moreover, if $J\subset\subset(0,\infty)$ is a bounded interval compactly contained in $(0,\infty)$, then $\langle D\rangle^{\alpha/2}E_J(H_\kappa)$ is bounded, since $\kappa<\kappa_{\rm c}$, $V\in L^\infty$, and the form domain of $H_\kappa$ is $H^{\alpha/2}(\R^d)$. Hence,
    \begin{align}
      \label{eq:the6-compact-K}
      K E_J(H_\kappa)\quad\text{and}\quad
      E_J(H_\kappa)K E_J(H_\kappa)
      \quad\text{ are compact.}
    \end{align}    
    We next prove a strict Mourre estimate locally at every $\lambda\in\supp f$. Fix such a $\lambda$. By assumption, $\lambda\notin\sigma_{\rm p}(H_\kappa)$, so $E_{\{\lambda\}}(H_\kappa)=0$. For $\delta>0$, let $I_\delta = (\lambda-\delta,\lambda+\delta)$. Taking $\delta>0$ small enough that $I_\delta\subset\subset(0,\infty)$, we have
    $$
    \slim_{\delta\searrow0} E_{I_\delta}(H_\kappa) = E_{\{\lambda\}}(H_\kappa) = 0.
    $$
    It follows from \eqref{eq:the6-compact-K} and compactness that
    \begin{align}
      \label{eq:the6-K-small}
      \lim_{\delta\searrow0}\| E_{I_\delta}(H_\kappa) K E_{I_\delta}(H_\kappa) \| = 0.
    \end{align}
    Choose $\delta_\lambda>0$ so small that $\delta_\lambda<\frac{\lambda}{4}$ and
    $$
    \left\| E_{I_{\delta_\lambda}}(H_\kappa) K E_{I_{\delta_\lambda}}(H_\kappa) \right\|
    \leq \frac{\alpha\lambda}{4}.
    $$
    Then, using \eqref{eq:the6-mourre-decomposition}, we obtain the following strict Mourre estimate
    \begin{align}
      \label{eq:the6-local-strict-mourre}
      \begin{split}
        E_{I_{\delta_\lambda}}(H_\kappa) i[H_\kappa,A] E_{I_{\delta_\lambda}}(H_\kappa)
        & = \alpha E_{I_{\delta_\lambda}}(H_\kappa) H_\kappa E_{I_{\delta_\lambda}}(H_\kappa) + E_{I_{\delta_\lambda}}(H_\kappa) K E_{I_{\delta_\lambda}}(H_\kappa) \\
        & \geq \left( \alpha(\lambda-\delta_\lambda) - \frac{\alpha\lambda}{4} \right) E_{I_{\delta_\lambda}}(H_\kappa)
          \geq \frac{\alpha\lambda}{2} E_{I_{\delta_\lambda}}(H_\kappa).
      \end{split}
    \end{align}
    Since $\supp f$ is compact, finitely many of the intervals $I_{\delta_\lambda}$ cover $\supp f$. Taking a smooth partition of unity subordinate to this finite covering, we can write
    $$
    f=\sum_{j=1}^N f_j,
    \qquad
    f_j\in C_c^\infty(I_j),
    $$
    where for each $j$ there is $c_j>0$ such that
    \begin{align}
      \label{eq:the6-local-mourre-j}
      E_{I_j}(H_\kappa)i[H_\kappa,A]E_{I_j}(H_\kappa) \geq c_j E_{I_j}(H_\kappa).
    \end{align}
    Since the desired estimate is linear in $f$, it suffices to prove it for each $f_j$. We therefore fix $j$ and, to simplify notation, write $f=f_j$. Choose a real-valued function
    $$
    f_2\in C_c^\infty(I_j),
    \qquad
    f_2=1 \quad\text{on }\supp f,
    $$
    so that $f_2f=f$. By \eqref{eq:the6-local-mourre-j},
    \begin{align}
      \label{eq:the6-f2-mourre}
      f_2(H_\kappa)i[H_\kappa,A]f_2(H_\kappa) \geq c_j f_2(H_\kappa)^2.
    \end{align}
    By our definition $ A(\tau)=A-3\epsilon\tau$, we have
    $$
    \mathbb D_H A(\tau)
    = i [H_\kappa,A(\tau)] + d_\tau A(\tau)
    = i [H_\kappa,A]-3\epsilon.
    $$
    Choosing $\epsilon>0$ sufficiently small that $3\epsilon<c_j$, we infer from \eqref{eq:the6-f2-mourre} that
    \begin{align}
      \label{eq:the6-positive-heisenberg}
      \begin{split}
        f_2(H_\kappa) \mathbb D_H A(\tau) f_2(H_\kappa)
        & = f_2(H_\kappa)i[H_\kappa,A]f_2(H_\kappa) - 3\epsilon f_2(H_\kappa)^2 \\
        & \geq (c_j-3\epsilon)f_2(H_\kappa)^2 \geq0.
      \end{split}
    \end{align}
    We can now apply Remark~\ref{skibstedremark}. For every pair $(\beta,\gamma)$ occurring in Assumption~\ref{assumskibsted}(6), every $g\in\F_{\beta,\gamma,\epsilon}$, and every $\phi\in\ch$,
    $$
    \zeta(t)
    = \left(g^{(1)}(A(\tau),\tau)\right)^{1/2} \me{-itH_\kappa} f(H_\kappa)B^{-\gamma/2}\phi
    $$
    is well-defined, and \eqref{eq:the6-positive-heisenberg} yields
    $$
    \left( \left\langle \zeta(t), f_2(H_\kappa) \mathbb D_H A(\tau) f_2(H_\kappa)\zeta(t) \right\rangle \right)_- = 0.
    $$
    Thus,
    $$
    \int_0^\infty \left( \left\langle \zeta(t), f_2(H_\kappa) \mathbb D_H A(\tau) f_2(H_\kappa)\zeta(t) \right\rangle \right)_-\, dt = 0.
    $$
    Thus, Assumption~\ref{assumskibsted}(6) holds, in fact with constant $C=0$.
  \end{enumerate}

  Thus, from the conclusion of Proposition~\ref{skibstedcor1} with $(\beta,\gamma)=(0,2\nu)$, we obtain
  \begin{align}
    \label{eq:skibsted-result}
    \begin{split}
    \left\|\sqrt{(-A(\tau))^{\gamma(1-\omega)} \chi\left(\frac{A(\tau)}{\tau}\right)} f(H_\kappa) \me{-i t H_\kappa}\phi \right\|
      & \lesssim \tau^{-\gamma\omega/2} \|\langle A(t_0)\rangle^{\gamma/2}\phi\| \\
      & \lesssim t^{-\gamma\omega/2} \|\langle A\rangle^{\gamma/2}\phi\|
    \end{split}
  \end{align}
  for $\tau \gg 1$ and $0\leq\omega\leq1$.

  We now conclude the proof by passing from the smooth cutoff in Proposition~\ref{skibstedcor1} to the sharp cutoff in \eqref{eq_the6_1}. Recall $\tau=t+t_0$, $A(\tau)=A-3\epsilon\tau$, and $\chi\in C^\infty(\R;[0,1])$ with $\chi(s)=1$ for $s\leq-3\epsilon/2$. Let $P_t:=\one\left(\frac{A}{t}\leq\epsilon\right)$. If $a/t\leq\epsilon$, then, since $\tau>t$, we have $a/\tau-3\epsilon\leq-2\epsilon$. Hence, $\chi(a/\tau-3\epsilon)=1$ on the spectral support of $P_t$. Because both operators are functions of $A$, we have
  \begin{align}
    \label{eq:sharp-smooth-Skibsted}
    P_t = P_t\chi\left(\frac{A(\tau)}{\tau}\right),
    \qquad
    \|P_tu\| \leq \left\| \chi\left(\frac{A(\tau)}{\tau}\right)u \right\|.
  \end{align}
  Applying \eqref{eq:sharp-smooth-Skibsted} to $u=f(H_\kappa)\me{-itH_\kappa}\phi$ and using \eqref{eq:skibsted-result} with $(\gamma,\omega)=(2\nu,1)$ yields
  $$
  \left\| \one\left(\frac{A}{t} \leq\epsilon\right) f(H_\kappa)\me{-itH_\kappa}\phi \right\|
  \lesssim t^{-\nu} \|\langle A\rangle^\nu\phi\|.
  $$
  Thus, \eqref{eq_the6_1} follows with $\theta=\epsilon$.
\end{proof}


\subsection{Outgoing phase space propagation estimate}
\label{s:cookkurodaestimates}

In this subsection, we will prove the following spectrally localized outgoing phase space propagation estimate which is important to prove Theorem~\ref{asympcomplhardyplusperturbation}(\ref{asympcomplhardyplusperturbation:acshortrangesmallalpha})(b).

\begin{lemma}
  \label{the11suff-unified}
  Let $d\in\N$, $\alpha\in(1/2,2]\cap(1/2,d)$, and $\kappa<\kappa_{\rm c}$.
  Assume $V^S$ satisfies \eqref{eq:basicassumpshortrange} and \eqref{ass5} for some $\rho_S>1$. Let $\sigma = \Phi^{-1}(\kappa)<(d-1)/2$ if $\alpha\leq1$ and $H_\kappa=L_\kappa-V^S$.
  Let
  \begin{align}
    \label{eq:mu-unified}
    1 < \mu < \min\{\rho_S,2,2\alpha,d-2\sigma\},
  \end{align}
  $$
  K_0,K_1 \in \{L_\kappa,H_\kappa\},
  $$
  and $f_0,f_1\in C_c^\infty(0,\infty)$ be real-valued.
  Let $\theta>0$, and choose a nondecreasing $\chi\in C^\infty(\R;[0,1])$ such that
  $$
  \chi(s)=0\quad \text{for} \ s\leq\theta/4,
  \qquad
  \chi(s)=1\quad \text{for} \ s\geq\theta/2.
  $$
  Then
  \begin{align}
    \label{eq:unified-localized-Cook}
    \left\| f_1(K_1)\me{itK_1} V^S \chi\left(\frac At\right)^2 \me{-itK_0}f_0(K_0) \right\|
    \lesssim_{f_0,f_1} t^{-\mu}, \quad t\geq1.
  \end{align}
\end{lemma}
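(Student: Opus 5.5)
The estimate should follow from splitting $V^S$ into a localized weight and a bounded remainder, and then bounding $\langle x\rangle^{\mu}\chi(A/t)$ on the outgoing region by a power of $A/t$ times a controlled operator. Write $V^S=\langle x\rangle^{-\mu}\,b$ with $b:=\langle x\rangle^{\mu}V^S$. Since $\mu<\rho_S$, the function $b$ is bounded together with all its derivatives, by \eqref{ass5}. Because $f_1(K_1)\me{itK_1}$ has norm at most $\|f_1\|_\infty$, it suffices to show
\begin{align*}
  \left\|b\,\langle x\rangle^{-\mu}\chi\left(\frac At\right)^2\me{-itK_0}f_0(K_0)\right\|\lesssim t^{-\mu}.
\end{align*}
Commuting $b$ to the right is harmless, and could be avoided entirely by keeping $b$ on the left. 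The core of the argument is therefore
\begin{align*}
  \left\|\langle x\rangle^{-\mu}\chi\left(\frac At\right)\right\|_{\text{restricted to } \ran(\tilde f_0(K_0))}\lesssim t^{-\mu},
\end{align*}
where $\tilde f_0\in C_c^\infty(0,\infty)$ satisfies $\tilde f_0 f_0=f_0$, together with commuting $\tilde f_0(K_0)$ through $\chi(A/t)$.

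For the main step I would write $\chi(s)=s^{\mu}\tilde\chi(s)$, where $\tilde\chi(s)=s^{-\mu}\chi(s)$ is a bounded $S^0$-type symbol because $\chi$ vanishes for $s\le\theta/4$. This gives
\begin{align*}
  \chi\left(\frac At\right)=t^{-\mu}A_+^{\mu}\,\tilde\chi\left(\frac At\right).
\end{align*}
It then remains to show that $\langle x\rangle^{-\mu}\langle A\rangle^{\mu}\tilde\chi(A/t)\tilde f_0(K_0)$ is bounded uniformly in $t$. I would use Lemma~\ref{lem12} in the form that $\langle x\rangle^{-\mu/2}\langle A\rangle^{\mu/2}$ is controlled by $\langle D\rangle^{\mu/2}$, and then argue as in Lemma~\ref{lem:fractional-A-sandwich}. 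Concretely, I would split $\langle A\rangle^{\mu}=\langle A\rangle^{\mu/2}\langle A\rangle^{\mu/2}$. One factor $\langle A\rangle^{\mu/2}$ is moved to the left past $\langle x\rangle^{-\mu/2}$ using Lemma~\ref{lem:fractional-A-commutator}; its commutator with $\langle x\rangle^{-\mu/2}$ is bounded because $\mu/2<1$. The other factor is moved to the right past $\tilde\chi(A/t)$, with which it commutes, and then past $\tilde f_0(K_0)$. The commutator $[\tilde\chi(A/t),\tilde f_0(K_0)]=\mathcal O(t^{-1})$ is controlled by Lemma~\ref{lem:functional-calculus-commutator} and Lemma~\ref{lem:commutators-spectral-cutoff}, writing $\tilde f_0(K_0)$ via Helffer--Sj\"ostrand in terms of resolvents that lie in $C^1(A)$. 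What remains is to bound $\langle D\rangle^{\mu/2}\tilde f_0(K_0)$, and also $\langle D\rangle^{\mu/2}\langle A\rangle^{\mu/2}\tilde f_0(K_0)$ after restoring the commuted factor. Here I would apply Proposition~\ref{eqsob}, which replaces $|D|^{\mu}$ by $L_\kappa^{\mu/\alpha}$ and requires $\mu/\alpha<2$ and $\mu<d-2\sigma$. This is exactly the origin of the constraints $\mu<2\alpha$ and $\mu<d-2\sigma$ in \eqref{eq:mu-unified}. For $K_0=H_\kappa$ I would compare with $L_\kappa$ via the resolvent identity, using that $V^S$ is bounded and smooth.

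The main obstacle is controlling $\langle A\rangle^{\mu/2}$ applied after the energy cutoff. The cutoff $\tilde f_0(K_0)$ does not localize in $A$, so I would not try to make $\langle A\rangle^{\mu/2}\tilde f_0(K_0)$ itself bounded. Instead I would organise the argument symmetrically, as
\begin{align*}
  \langle x\rangle^{-\mu/2}\langle A\rangle^{\mu/2}\cdot\left(\frac At\right)_+^{\mu/2}\tilde\chi\cdot\langle A\rangle^{\mu/2}\langle x\rangle^{-\mu/2},
\end{align*}
with the weights split between the two sides of $\chi^2$. This can be arranged because $\chi^2=\chi\cdot\chi$ and $V^S=\langle x\rangle^{-\mu/2}\,b\,\langle x\rangle^{-\mu/2}$. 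Each side then becomes $\langle x\rangle^{-\mu/2}\langle A\rangle^{\mu/2}\times(\text{bounded})\times\tilde f_j(K_j)$, after commuting $\chi(A/t)$ with the propagator. That commutation is the delicate point. I plan to handle it with a Heisenberg-derivative argument: since $\chi$ is nondecreasing and $i[K,A]\approx\alpha K>0$ on the energy support, the outgoing region $\{A/t\ge\theta/4\}$ is essentially propagation-invariant up to $\mathcal O(t^{-1})$ errors. These errors are bounded using Lemma~\ref{lem:functional-calculus-commutator}(2), and iterating them gives the power $t^{-\mu}$ for $\mu<2$. Finally, the bound on $\langle x\rangle^{-\mu/2}\langle A\rangle^{\mu/2}\langle D\rangle^{-\mu/2}$ from Lemma~\ref{lem12}, combined with the equivalence of Sobolev norms from Proposition~\ref{eqsob}, closes the estimate.
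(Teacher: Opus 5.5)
Your ingredients are the right ones: the split weight $w=\langle x\rangle^{-\mu/2}$, Lemma~\ref{lem12}, Proposition~\ref{eqsob} at the half power $|D|^{\mu/2}$, and Lemma~\ref{lem:functional-calculus-commutator}(2). The gap is the step you single out as delicate, namely commuting $\chi(A/t)$ with the propagator up to $\mathcal O(t^{-1})$ errors via a Heisenberg-derivative argument. That step is false, and it is also unnecessary. For $K=L_\kappa$, Lemma~\ref{lem5} gives $\me{itL_\kappa}A\me{-itL_\kappa}=A+\alpha tL_\kappa$. Hence $\me{itL_\kappa}\chi(A/t)\me{-itL_\kappa}=\chi(A/t+\alpha L_\kappa)$. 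On $\ran f_0(L_\kappa)$ the shift $\alpha L_\kappa$ in the argument of $\chi$ is of order one, the same size as the cutoff scale $\theta$. So $[\chi(A/t),\me{-itL_\kappa}]f_0(L_\kappa)$ does not become small as $t\to\infty$, and no iteration turns it into $t^{-\mu}$.

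The idea you are missing is that the one-sided bound is static. Since $f_j(K_j)$ commutes with $\me{-itK_j}$, you only have to move $\chi(A/t)$ past the weight $w$. This costs $\|[w,\chi(A/t)]\|\lesssim t^{-1}\leq t^{-\mu/2}$ by Lemma~\ref{lem:functional-calculus-commutator}(1). Then, with $u_t=\me{-itK_j}f_j(K_j)u$ and $\chi(A/t)^2\lesssim t^{-\mu}\langle A\rangle^\mu$,
\[
\Bigl\|\chi\Bigl(\frac At\Bigr)wu_t\Bigr\|\lesssim t^{-\mu/2}\bigl\|\langle A\rangle^{\mu/2}w\,f_j(K_j)\me{-itK_j}u\bigr\|\leq t^{-\mu/2}\bigl\|\langle A\rangle^{\mu/2}w\langle D\rangle^{-\mu/2}\bigr\|\,\bigl\|\langle D\rangle^{\mu/2}f_j(K_j)\bigr\|\,\|u\|.
\]
Both norms on the right are finite, by Lemma~\ref{lem12} and by Proposition~\ref{eqsob} with exponent $\mu/\alpha$. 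This is where $\mu<2\alpha$ and $\mu<d-2\sigma$ come from, and no propagation argument enters.

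What you do need, and do not carry out, is to bring $V^S\chi_t^2$ (with $\chi_t=\chi(A/t)$) into symmetric form by moving one cutoff across $V^S$, not across $\me{-itK_0}$:
\[
V^S\chi_t^2=\chi_tV^S\chi_t+[V^S,\chi_t]\chi_t.
\]
\begin{itemize}
\item The first term is bounded by $\|w\chi_tu_t\|\,\|w\chi_tv_t\|\lesssim t^{-\mu}$. This uses $|V^S|\lesssim w^2$ (since $\mu<\rho_S$) and the display above applied also to $v_t=\me{-itK_1}f_1(K_1)v$.
\item For the second term, the crude bound $\|[V^S,\chi_t]\|\lesssim t^{-1}$ is not enough, because $\mu>1$.
\item Instead, Lemma~\ref{lem:functional-calculus-commutator}(2) gives $[V^S,\chi_t]=t^{-1}\chi'(A/t)[V^S,A]+\ce_t$ with $\|\ce_t\|\lesssim t^{-2}$.
\item Since $|x\cdot\nabla V^S|\lesssim w^2$ and $\chi'$ is supported in $[\theta/4,\theta/2]$, the leading term again has a weight and an outgoing cutoff on both sides. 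It therefore contributes $t^{-1-\mu}$.
\item The total is $t^{-1-\mu}+t^{-2}\lesssim t^{-\mu}$, and this is where $\mu<2$ is used.
\end{itemize}

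Finally, your opening one-sided reduction, which puts all of $\langle x\rangle^{-\mu}$ on the $K_0$ side, cannot work with the available tools. It needs $\langle D\rangle^{\mu}\tilde f_0(K_0)$ bounded. Via Proposition~\ref{eqsob} that requires $\mu\leq\alpha$ and $2\mu<d-2\sigma$, which is impossible for $\alpha\leq1$ since $\mu>1$. It also needs a version of Lemma~\ref{lem12} with exponent $\mu>1$, which is not available. The constraints you attribute to that reduction are in fact those of the half power.
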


\begin{proof}
  We first verify that
  \begin{align}
    \label{eq:unified-spectral-regularity}
    \langle D\rangle^{\mu/2}f_j(K_j) \in\mathcal B(L^2), \qquad j\in\{0,1\}.
  \end{align}
  Since $\mu/\alpha<2$ and $\mu<d-2\sigma$, Proposition~\ref{eqsob} yields
  \begin{align}
    \label{eq:unified-Sobolev}
    \| |D|^{\mu/2}u \| \lesssim \|L_\kappa^{\mu/(2\alpha)} u\|.
  \end{align}
  If $K_j=L_\kappa$, then
  $$
  L_\kappa^{\mu/(2\alpha)} f_j(L_\kappa) \in\mathcal B(L^2)
  $$
  by the spectral theorem. If $K_j=H_\kappa$, then by the boundedness of $V^S$, we have $\D(H_\kappa) = \D(L_\kappa)$ with equivalent graph norms. In particular,
  $$
  L_\kappa f_j(H_\kappa)
  = H_\kappa f_j(H_\kappa) + V^S f_j(H_\kappa) \in \mathcal B(L^2).
  $$
  Interpolation for the nonnegative selfadjoint operator $L_\kappa$ therefore gives
  $$
  L_\kappa^{\mu/(2\alpha)} f_j(H_\kappa) \in \mathcal B(L^2).
  $$
  Together with \eqref{eq:unified-Sobolev}, this proves \eqref{eq:unified-spectral-regularity}. Let
  $$
  w(x) = \langle x\rangle^{-\mu/2}.
  $$
  By Lemma~\ref{lem12},
  $$
  Q_\mu := \langle D\rangle^{-\mu/2} w\langle A\rangle^\mu w \langle D\rangle^{-\mu/2}
  $$
  is bounded. Hence, \eqref{eq:unified-spectral-regularity} implies
  \begin{align}
    \label{eq:unified-weighted-A}
    f_j(K_j)w\langle A\rangle^\mu wf_j(K_j) \in\mathcal B(L^2),
    \qquad j\in\{0,1\}.
  \end{align}
  We next claim that, if $\eta\in S^0(\R)$ with $\supp\eta\subset[c,\infty)$ for some $c>0$, then
  \begin{align}
    \label{eq:unified-weighted-outgoing}
    \left\| w\eta\left(\frac At\right) \me{-itK_j}f_j(K_j) \right\|
    \lesssim_{\eta,f_j} t^{-\mu/2}, \qquad t\geq1.
  \end{align}
  Indeed, functional calculus for $A$ gives
  $$
  \eta\left(\frac At\right)^* \eta\left(\frac At\right)
  \lesssim t^{-\mu}\langle A\rangle^\mu.
  $$
  Thus, for
  $$
  u_{j,t} = \me{-itK_j} f_j(K_j)u, \quad u \in L^2
  $$
  estimate \eqref{eq:unified-weighted-A} yields
  $$
  \left\| \eta\left(\frac At\right)w u_{j,t} \right\|
  \lesssim_{f_j} t^{-\mu/2} \|u\|.
  $$
  Moreover, since $[w,A]=i\, x\cdot\nabla w$ is bounded, by Lemma~\ref{lem:functional-calculus-commutator}(1) with $X=w$ and $f=\eta$ (with $\eta\in C^\infty$ bounded and $\eta'$ compactly supported), we have
  \begin{align}
    \label{eq:unified-commutator-w-eta}
    \left\| \left[w,\eta\left(\frac At\right)\right] \right\| \lesssim_\eta t^{-1}.
  \end{align}
  Since $\mu<2$, we have $t^{-1}\lesssim t^{-\mu/2}$ for $t\geq1$. Therefore,
  \begin{align*}
    \left\| w\eta\left(\frac At\right)u_{j,t} \right\|
    & \leq \left\| \eta\left(\frac At\right)wu_{j,t} \right\| + \left\| \left[w,\eta\left(\frac At\right)\right]u_{j,t} \right\|
      \lesssim_{\eta,f_j} t^{-\mu/2} \|u\|,
  \end{align*}
  which proves \eqref{eq:unified-weighted-outgoing}.

  Let $u,v\in L^2$ and
  $$
  \chi_t=\chi(A/t),
  \qquad
  u_t = \me{-itK_0}f_0(K_0)u,
  \qquad
  v_t = \me{-itK_1}f_1(K_1)v.
  $$
  We have
  $$
  V^S\chi_t^2 = \chi_tV^S\chi_t+[V^S,\chi_t]\chi_t.
  $$
  Since $\mu<\rho_S$,
  $$
  |V^S(x)| \lesssim w(x)^2.
  $$
  Therefore, by \eqref{eq:unified-weighted-outgoing},
  \begin{align}
    \label{eq:unified-main-term}
    \left| \langle v_t, \chi_t V^S \chi_t u_t\rangle \right|
    = \left| \langle \chi_t v_t, V^S\chi_tu_t\rangle \right|
    \lesssim \| w \chi_t u_t \| \, \| w \chi_t v_t\|
    \lesssim_{f_0,f_1} t^{-\mu} \|u\| \|v\|.
  \end{align}
  It remains to estimate the $[V^S,\chi_t]\chi_t$ term. By Lemma~\ref{lem:functional-calculus-commutator}(2) with $X=V^S$ and $f=\chi$ (whose assumptions are satisfied since $x\cdot\nabla V^S, x\cdot\nabla(x\cdot\nabla V^S) \in L^\infty$),
  \begin{align}
    \label{eq:unified-commutator-expansion}
    [V^S,\chi_t] = \frac1t\chi'\left(\frac At\right)[V^S,A] + \ce_t,
    \qquad
    \|\ce_t\| \lesssim t^{-2}
  \end{align}
  with
  \begin{align*}
    \ce_t = \frac{1}{2\pi i t^2} \int_\C \overline{\partial}\tilde\chi(z) \left(z-\frac At\right)^{-2} \, [[V^S,A],A]\, \left(z-\frac{A}{t}\right)^{-1}\,dz \wedge d\overline{z},
  \end{align*}
  which indeed obeys $\|\ce_t\| \lesssim t^{-2} \|(x\cdot \nabla)^2 V^S\|_\infty$. Here, we have used that $[V^S,A] = i \, x\cdot\nabla V^S$ and $[[V^S,A],A] = -(x\cdot\nabla)^2 V^S$ are bounded.
  Since $\chi,\chi'\in S^0(\R)$ and are supported in a positive half-line and we have $|[V^S,A](x)| \lesssim \langle x\rangle^{-\rho_S} \lesssim w(x)^2$, \eqref{eq:unified-weighted-outgoing} gives
  \begin{align*}
    \left| \left\langle v_t, \chi'\left(\frac At\right) [V^S,A]\chi_tu_t \right\rangle \right|
    & \lesssim \|w \chi_t u_t\| \, \left\| w \chi'\left(\frac At\right) v_t \right\| 
      \lesssim_{f_0,f_1} t^{-\mu} \|u\| \|v\|.
  \end{align*}
  It follows from \eqref{eq:unified-commutator-expansion} that
  $$
  \left| \langle[V^S,\chi_t]\chi_tu_t,v_t\rangle \right|
  \lesssim_{f_0,f_1} (t^{-1-\mu}+t^{-2}) \|u\| \|v\|
  \lesssim t^{-\mu}\|u\| \|v\|
  $$
  since $1<\mu<2$. Combining this with \eqref{eq:unified-main-term} proves \eqref{eq:unified-localized-Cook}.
\end{proof}


\subsection{Weighted propagation estimates}
\label{s:weightedmaximalvelocity}

In this subsection, we prove weighted propagation estimates. The first estimate covers $\alpha\geq1$ and uses the Helffer--Sj\"ostrand formula together with the following weighted estimates.

\begin{lemma}
  \label{lem8}
  Let $d\in\N$, $\alpha\in[1,2]\cap(0,d)$, $\kappa<\kappa_{\rm c}$, and $\rho>0$.
  Assume $V \in L^\infty(\R^d)$ satisfies $|V(x)| \lesssim \langle x\rangle^{-\rho}$.
  Let $H_\kappa=L_\kappa-V$, $z\in\C\setminus\R$, and $\mu\in[0,1]$.
  Then there exists a positive function $Z(z,\mu)$, which grows at most polynomially in $\langle z\rangle$ and $|\im(z)|^{-1}$, such that
  \begin{align}
    \label{eq_lem8_1}
    \|\langle x\rangle^\mu(z-H_\kappa)^{-1}\langle x\rangle^{-\mu}\|
    & \lesssim Z(z,\mu)
  \end{align}
  holds for $0\leq\mu\leq1$.
  Moreover, this statement also holds for $1<\mu\leq2$ whenever $\alpha\geq1$ if $d\geq3$ and $\alpha>1$ if $d=2$.   
\end{lemma}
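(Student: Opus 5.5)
Write $R(z):=(z-H_\kappa)^{-1}$ and $w_\mu:=\langle x\rangle^\mu$. The plan is the standard commutator argument: express the weighted resolvent through commutators of $R(z)$ with $w_\mu$,
\begin{align*}
  w_\mu R(z) w_\mu^{-1} = R(z) + [w_\mu,R(z)]w_\mu^{-1}
  = R(z) + R(z)[w_\mu,H_\kappa]R(z)w_\mu^{-1},
\end{align*}
understood first on a core such as $C_c^\infty(\R^d\setminus\{0\})$ with $w_\mu$ regularized by $\langle x\rangle^\mu(1+\epsilon\langle x\rangle)^{-\mu}$, which makes the weights bounded; the bounds are uniform in $\epsilon$, so we can then let $\epsilon\to0$. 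Since $V$ is a multiplication operator, $[w_\mu,H_\kappa]=[w_\mu,(-\Delta)^{\alpha/2}]$, while the Hardy term and $V$ commute with $w_\mu$. It therefore suffices to show that
\[
  R(z)[w_\mu,(-\Delta)^{\alpha/2}]R(z)w_\mu^{-1}
\]
is bounded with polynomial dependence on $\langle z\rangle$ and $|\im z|^{-1}$. We will use $\|R(z)\|\le|\im z|^{-1}$ and, because $\kappa<\kappa_{\rm c}$ and $V$ is bounded, that $\|\langle D\rangle^{s}R(z)\|\lesssim \langle z\rangle|\im z|^{-1}$ for $s\le\alpha$ (the domain of $H_\kappa$ being $H^\alpha$ via Proposition~\ref{eqsob}, or at least $s\le\alpha/2$ from the form domain).

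\textbf{Case $0\le\mu\le1$.} Here $w_\mu$ is Lipschitz with $|\nabla w_\mu|\lesssim1$, and all higher derivatives are bounded as well. For $\alpha\in[1,2]$, the commutator $[(-\Delta)^{\alpha/2},w_\mu]$ is bounded from $H^{\alpha-1}$ to $L^2$. For $\alpha=2$ this is explicit: the commutator equals $-2\nabla w_\mu\cdot\nabla-\Delta w_\mu$. For $1\le\alpha<2$ we use a Kato--Ponce/Calder\'on commutator estimate, and the kernel bound $|w_\mu(x)-w_\mu(y)|\,|x-y|^{-d-\alpha}\lesssim|x-y|^{1-d-\alpha}$ makes this a Riesz-type operator of order $\alpha-1$; this is the same mechanism as Lemma~\ref{calderoncommutator}. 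Since $\alpha-1\le\alpha/2$ when $\alpha\le2$, combining this with the resolvent bounds gives
\[
  \|R(z)[w_\mu,H_\kappa]R(z)w_\mu^{-1}\|\lesssim |\im z|^{-1}\,\|\langle D\rangle^{\alpha-1}R(z)\|\,\|w_\mu^{-1}\|\lesssim \langle z\rangle|\im z|^{-2},
\]
which is polynomial. The hypothesis $\alpha\ge1$ is exactly what prevents the low-frequency singularity of $|\xi|^{\alpha}$ from producing an unbounded commutator: for $\alpha<1$ the commutator symbol $\nabla_\xi|\xi|^\alpha$ would be singular at $\xi=0$.

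\textbf{Case $1<\mu\le2$.} We iterate the identity once more. Writing $w_\mu=w_{\mu-1}w_1$ (up to bounded factors) and commuting a second time, we need to control $w_{\mu-1}[w_1,(-\Delta)^{\alpha/2}]$, in effect the second commutator $[[(-\Delta)^{\alpha/2},w_\mu],w_\mu]$-type terms. The plan is to show that $[(-\Delta)^{\alpha/2},w_\mu]w_{\mu-1}^{-1}$ is $H_\kappa$-bounded. For the long-range part, with $|x-y|$ large, the kernel $(w_\mu(x)-w_\mu(y))|x-y|^{-d-\alpha}$ is of size $\langle x\rangle^{\mu-1}|x-y|^{1-d-\alpha}+|x-y|^{\mu-d-\alpha}$, and the second term is integrable in Schur tests only when $\mu<\alpha+1$ and when the Riesz-potential/Hardy bound $|D|^{-(\alpha-\mu)}\lesssim|x|^{...}$ is available. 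This is where the dimensional restriction enters: Hardy's inequality of order $\mu-\alpha\le1$ with $|x|^{-(\mu-\alpha)}$ requires $\mu-\alpha<d/2$, i.e.\ $d\ge3$, or $d=2$ with $\alpha>1$. After that, the terms are closed by the first case applied to $w_{\mu-1}R(z)w_{\mu-1}^{-1}$ with $\mu-1\le1$.

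I expect the main obstacle to be exactly this second step: justifying the nonlocal commutator bound for $\alpha<2$ with growing weights, and tracking the domains rigorously. For this I would first try the Helffer--Sj\"ostrand or subordination representation $(-\Delta)^{\alpha/2}=c\int_0^\infty(1-\me{t\Delta})t^{-1-\alpha/2}dt$, which reduces the problem to commutators of $w_\mu$ with heat semigroups, where the Gaussian moments can be computed explicitly.
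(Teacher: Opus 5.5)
Your overall architecture matches the paper's: the identity $[w,R(z)]=R(z)[w,H_\kappa]R(z)$, the fact that only $(-\Delta)^{\alpha/2}$ fails to commute with the weight, absorbing an order-$(\alpha-1)$ commutator by the form domain because $\alpha-1\le\alpha/2$, and, for $\mu>1$, factoring through the already controlled weighted resolvent and treating a Riesz-potential remainder with Hardy's inequality. The gap is this. Because you use the non-polynomial weights $\langle x\rangle^\mu$, the commutator bounds carry the whole proof, and you do not establish them; the justifications you sketch fail as stated.

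\emph{Case $\mu\le1$.} For $\alpha\ge1$ the kernel $|x-y|^{1-d-\alpha}$ is not locally integrable. So it is not a Riesz potential, and pointwise domination by it proves nothing. Boundedness of $[|D|^\alpha,\langle x\rangle^\mu]\colon H^{\alpha-1}\to L^2$ needs cancellation: Calder\'on's commutator theorem at $\alpha=1$, and for $\alpha>1$ symbolic calculus at high frequencies plus an $L^1$ kernel at low frequencies.

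\emph{Case $1<\mu\le2$.} Here no Schur test with absolute values can close. The piece $\nabla w_\mu(y)\cdot(y-x)|x-y|^{-d-\alpha}$ is hypersingular on the diagonal, and for $\alpha=1$ it decays only like $|x-y|^{-d}$ after the weight $\langle y\rangle^{1-\mu}$. It has to be identified exactly as $c\sum_j|D|^{\alpha-2}D_j\,(\partial_jw_\mu)$. Only the Taylor remainder, of size $\min\{|x-y|^2,|x-y|^\mu\}|x-y|^{-d-\alpha}$, may be bounded pointwise, by the Riesz potential $|D|^{-(\mu-\alpha)}$; your $|D|^{-(\alpha-\mu)}$ has the sign reversed. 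Hardy's inequality then gives the condition $\mu-\alpha<d/2$. You leave this step open yourself, so the claim for $1<\mu\le2$ is not proved.

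The paper avoids all of this by using polynomial weights and interpolating in $\mu$. Since $x_j$ acts as $i\partial_{\xi_j}$ on the Fourier side, both commutators are explicit combinations of Fourier multipliers and $x_j$:
$[x_j,|D|^\alpha]=i\alpha|D|^{\alpha-2}D_j$ and $[x_j^2,|D|^\alpha]=i\alpha(2|D|^{\alpha-2}D_jx_j+i|D|^{\alpha-2}+i(\alpha-2)|D|^{\alpha-4}D_j^2)$.

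For $\mu=1$ one only needs $\langle H_\kappa\rangle^{-1}|D|^{\alpha-2}D_j\in\mathcal{B}(L^2)$ and $\langle x\rangle\lesssim1+\sum_j|x_j|$. For $\mu=2$ one writes the commutator term, up to bounded multipliers, as $R(z)(|D|^{\alpha-1}x_j\langle x\rangle^{-1}+|D|^{\alpha-2}\langle x\rangle^{-1})\langle x\rangle R(z)\langle x\rangle^{-1}$. One then uses the $\mu=1$ bound together with Hardy's inequality for $|D|^{\alpha-2}|x|^{\alpha-2}$. That inequality needs $2-\alpha<d/2$, which is exactly the stated dimensional restriction. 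All $\mu\in[0,2]$ then follow by complex interpolation.

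As a side remark, $\mathcal{D}(H_\kappa)=H^\alpha$ is false in general: Proposition~\ref{eqsob} with $s=2$ needs $2\alpha<d-2\sigma$. You only need the form-domain bound anyway.
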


We provide quantitative upper bounds for $Z(z,\mu)$ in the proof below. We note that a slightly longer argument yields analogous estimates for $\alpha\in(0,1)$ and $\mu\in(0,\alpha)$. However, we were unable to use it to prove corresponding propagation bounds for $\alpha<1$; in fact, in Lemma~\ref{lem9-small-alpha}, we will prove weighted propagation bounds for $L_\kappa$ whose proof proceeds rather differently.

\begin{proof}
  We first prove \eqref{eq_lem8_1} for $\mu=1$. We compute
  \begin{align}
    \label{eq_lem8_3}
    \begin{split}
      [x_j,(z-H_\kappa)^{-1}]
      & = (z-H_\kappa)^{-1}[x_j,|D|^\alpha](z-H_\kappa)^{-1} \\
      & = i\, \alpha (z-H_\kappa)^{-1}|D|^{\alpha-2}D_j(z-H_\kappa)^{-1}.
    \end{split}
  \end{align}
  Note that
  \begin{align}
    \label{eq_lem8_4}
    \langle H_\kappa\rangle^{-1}|D|^{\alpha-2}D_j
    = \langle H_\kappa\rangle^{-1}\langle D\rangle^{\alpha/2}\langle D\rangle^{-\alpha/2}|D|^{\alpha-2}D_j \in \mathcal B(L^2)
  \end{align}
  since $\kappa<\kappa_{\rm c}$ and $0\leq\alpha-1\leq\alpha/2$. Thus, together with
  $$
  \left\| (z-H_\kappa)^{-1} \langle H_\kappa\rangle\right\|
  = \sup_{\lambda \in \sigma(H_\kappa)} \frac{\sqrt{1+\lambda^2}}{|\lambda - z|}
  \lesssim \frac{\langle z\rangle}{|\im(z)|},
  $$
  we have
  \begin{align*}
    \|x_j(z-H_\kappa)^{-1}\langle x\rangle^{-1}\|
    \lesssim |\im(z)|^{-1} + |\im(z)|^{-2} \langle z\rangle.
  \end{align*}
  This, and $\langle x\rangle\lesssim1+\sum_{j=1}^d|x_j|$,
  show \eqref{eq_lem8_1} with
  \begin{align*}
    Z(z,1) = |\im(z)|^{-1} + |\im(z)|^{-2} \langle z\rangle.
  \end{align*}
  By complex interpolation, \eqref{eq_lem8_1} therefore holds for all $0\leq\mu\leq1$.

  Next, we prove \eqref{eq_lem8_1} for $\mu=2$ with a similar argument.
  We have
  \begin{align}
    \label{eq_lem8_5}
    [x_j^2,(z-H_\kappa)^{-1}]
    = (z-H_\kappa)^{-1}[x_j^2,|D|^\alpha](z-H_\kappa)^{-1},
  \end{align}
  and
  \begin{align}
    \label{eq_lem8_6}
    [x_j^2,|D|^\alpha]
    = i\alpha \left(2|D|^{\alpha-2}D_j x_j + i |D|^{\alpha-2} + i (\alpha-2) |D|^{\alpha-4} D_j^2 \right).
  \end{align}
  Since $(|D|^{\alpha-2}D_j)^2 \leq |D|^{2\alpha-2}$ and $(|D|^{\alpha-4}D_j^2)^2\leq |D|^{2\alpha-4}$, it suffices to consider $|D|^{\alpha-1}x_j+|D|^{\alpha-2}$ instead of \eqref{eq_lem8_6}. Let us write
  \begin{align}
    \begin{split}
      & (z-H_\kappa)^{-1}(|D|^{\alpha-1}x_j+|D|^{\alpha-2})(z-H_\kappa)^{-1}\langle x\rangle^{-1} \\
      & \quad = (z-H_\kappa)^{-1}(|D|^{\alpha-1}x_j\langle x\rangle^{-1}+|D|^{\alpha-2}\langle x\rangle^{-1})\langle x\rangle(z-H_\kappa)^{-1}\langle x\rangle^{-1}.
    \end{split}
  \end{align}
  Since $|D|^{\alpha-2}\langle x\rangle^{-1}=\left(|D|^{\alpha-2}|x|^{\alpha-2}\right) \left(|x|^{2-\alpha}\langle x\rangle^{-1}\right)$ is bounded for $\alpha\geq1$ if $d\geq3$ and $\alpha>1$ if $d=2$ by Hardy's inequality, we have
  \begin{align}
    \label{eq_lem8_7}
    \|(z-H_\kappa)^{-1}(|D|^{\alpha-1}x_j+|D|^{\alpha-2})(z-H_\kappa)^{-1}\langle x\rangle^{-1}\|
    \lesssim \left(|\im(z)|^{-2}+|\im(z)|^{-3} \langle z\rangle\right) \langle z\rangle
  \end{align}
  using the bound in the case $\mu=1$.
  By \eqref{eq_lem8_5} and \eqref{eq_lem8_7} we have \eqref{eq_lem8_1} with $\mu=2$ and
  \begin{align}
    \begin{split}
      Z(z,2)
      & = |\im(z)|^{-1}+|\im(z)|^{-3}+|\im(z)|^{-4} \langle z\rangle \\
      & \quad + (|\im(z)|^{-2} + |\im(z)|^{-3} \langle z\rangle)\langle z\rangle (|\im(z)|^{-1} + |\im(z)|^{-2} \langle z\rangle).
    \end{split}
  \end{align}
  By complex interpolation between $\mu=0$ and $\mu=2$, we obtain \eqref{eq_lem8_1} for all $\mu\in[0,2]$ with $Z(z,\mu) = |\im(z)|^{\mu/2-1} Z(z,2)^{\mu/2}$.
\end{proof}

We are now ready to state our weighted propagation estimates.
\begin{lemma}
  \label{lem9}
  Let $d\in\N$, $\alpha\in[1,2]\cap(0,d)$, $\kappa<\kappa_{\rm c}$, $\rho>0$, $\mu\in[0,1]$, and $f\in C_c^\infty(0,\infty)$.
  Assume $V \in L^\infty(\R^d)$ satisfies $|V(x)| \lesssim \langle x\rangle^{-\rho}$.
  Then, for $H_\kappa=L_\kappa-V$,
  \begin{equation}
    \label{eq_lem9_1}
    \|\langle x\rangle^\mu f(H_\kappa) \me{-i t H_\kappa}\langle x\rangle^{-\mu}\|
    \lesssim \langle t\rangle^\mu, \quad t\in\R.
  \end{equation}
  Moreover, \eqref{eq_lem9_1} also holds for $1<\mu\leq2$ for all $d\geq3$, or $d=2$ and $\alpha>1$.
\end{lemma}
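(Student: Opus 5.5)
The plan is to prove \eqref{eq_lem9_1} first for the endpoint $\mu=1$ (and $\mu=2$ under the stated dimensional restrictions) and then interpolate. For the endpoint I will compute the commutator of $\langle x\rangle^\mu$ with the propagator via a Duhamel/Heisenberg argument. Concretely, set
$$
F(t):=\langle x\rangle f(H_\kappa)\me{-itH_\kappa}\langle x\rangle^{-1}.
$$
I write $x_j f(H_\kappa)\me{-itH_\kappa}=f(H_\kappa)\me{-itH_\kappa}x_j+[x_j,f(H_\kappa)\me{-itH_\kappa}]$. The second term is obtained from
$$
[x_j,\me{-itH_\kappa}]=-i\int_0^t\me{-i(t-s)H_\kappa}[x_j,H_\kappa]\me{-isH_\kappa}\,ds,
$$
where $[x_j,H_\kappa]=[x_j,|D|^\alpha]=i\alpha|D|^{\alpha-2}D_j$. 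This is the same commutator as in \eqref{eq_lem8_3}, because $V$ commutes with $x_j$.

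The key point is that $|D|^{\alpha-2}D_j$ is bounded relative to energy-localized states for $\alpha\ge1$. Indeed, as in \eqref{eq_lem8_4}, $|D|^{\alpha-2}D_j\langle H_\kappa\rangle^{-1}$ is bounded, since $\kappa<\kappa_{\rm c}$ gives form domain $H^{\alpha/2}$ and $0\le\alpha-1\le\alpha/2$. Inserting a cutoff $\tilde f\in C_c^\infty(0,\infty)$ with $\tilde f f=f$ then gives $\|[x_j,\me{-itH_\kappa}f(H_\kappa)]\|\lesssim|t|$. The commutator $[x_j,f(H_\kappa)]$ is handled with the Helffer--Sj\"ostrand formula (Proposition~\ref{prop:analyticextension}) together with Lemma~\ref{lem8}, or simply by the resolvent identity above; it is a bounded operator whose $Z(z,1)$ growth is integrable against $\bar\partial\tilde f$. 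I will keep $f(H_\kappa)$ split as $\tilde f(H_\kappa)f(H_\kappa)$ so that every factor $|D|^{\alpha-2}D_j$ sits next to an energy cutoff. Combining these bounds with $\langle x\rangle\lesssim1+\sum_j|x_j|$ yields $\|F(t)\|\lesssim\langle t\rangle$.

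For $\mu=2$, I iterate this scheme with $x_j^2$ and use \eqref{eq_lem8_6}. The commutator produces terms $|D|^{\alpha-2}D_jx_j$, $|D|^{\alpha-2}$ and $|D|^{\alpha-4}D_j^2$. The piece $|D|^{\alpha-2}D_jx_j$ inside the Duhamel integral is controlled by the $\mu=1$ estimate at time $s$, which gives a factor $\langle s\rangle$ and hence $\int_0^t\langle s\rangle\,ds\lesssim\langle t\rangle^2$. The lower-order pieces $|D|^{\alpha-2}\langle x\rangle^{-1}$ are bounded by Hardy's inequality exactly under the hypotheses $d\ge3$, or $d=2$ and $\alpha>1$, as in Lemma~\ref{lem8}.

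I expect the main obstacle to be the middle factor $\langle x\rangle^{-1}$ placed between evolutions in the Duhamel integral. This requires the mixed bound $\|\langle x\rangle\me{-isH_\kappa}\tilde f(H_\kappa)\langle x\rangle^{-2}\|$, which I will get by bootstrapping from the $\mu=1$ case, together with a careful domain justification. For that justification I first replace $\langle x\rangle$ by the bounded weights $\langle x\rangle(1+\epsilon\langle x\rangle)^{-1}$, for which all commutators are bounded and uniform in $\epsilon$, and then let $\epsilon\to0$.

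Finally, I interpolate in $\mu$ using Stein interpolation with the family $\langle x\rangle^{z}f(H_\kappa)\me{-itH_\kappa}\langle x\rangle^{-z}\langle t\rangle^{-z}$ on $0\le\re z\le1$, or $0\le\re z\le2$ for the second range. On the imaginary line the weights are unitary, so the bound on $\re z=0$ is $1$, and this gives all $\mu\in[0,1]$, respectively $[0,2]$.
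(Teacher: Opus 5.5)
Your proposal is correct and follows essentially the paper's proof. You establish the $\mu=1$ bound via Duhamel, the boundedness of $|D|^{\alpha-2}D_j$ against an energy cutoff, and Helffer--Sj\"ostrand with Lemma~\ref{lem8}. You bootstrap to $\mu=2$ using Hardy's inequality for the negative-order terms, and you finish by complex interpolation.

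The only differences are cosmetic. You use a single Duhamel formula with $[x_j^2,H_\kappa]$, and you bound the $|D|^{\alpha-2}$ terms by $|D|^{\alpha-2}\langle x\rangle^{-1}\in\mathcal B(L^2)$ together with the $\mu=1$ estimate. The paper instead iterates the first-order Duhamel formula and invokes the $\mu=2-\alpha$ estimate. Both routes need the same Hardy condition $2-\alpha<d/2$.
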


\begin{proof}
  We first prove \eqref{eq_lem9_1} when $\mu=1$. By the Helffer--Sj\"ostrand formula \eqref{eq:hs},
  \begin{equation}
    \label{eq_lem9_2}
    [x_j,f(H_\kappa)]
    = \frac{\alpha}{2\pi} \int_{\C}\bar{\partial} \tilde{f}(z)(z-H_\kappa)^{-1}|D|^{\alpha-2}D_j(z-H_\kappa)^{-1} \, dz \wedge d\bar{z}.
  \end{equation}
  Note that
  \begin{equation}
    \label{eq_lem9_3}
    x_j \me{-i t H_\kappa}
    = \me{-i t H_\kappa}x_j + \alpha\int_0^t \me{-i(t-s) H_\kappa} |D|^{\alpha-2} D_j \me{-i s H_\kappa} \, ds.
  \end{equation}
  Therefore,
  \begin{equation}
    \label{eq_lem9_5}
    \|x_jf(H_\kappa) \me{-i t H_\kappa}\langle x\rangle^{-1}\|
    \leq \|f(H_\kappa) x_j \me{-i t H_\kappa}\langle x\rangle^{-1}\| + \|[x_j,f(H_\kappa)] \me{-i t H_\kappa}\|
    \lesssim \langle t\rangle
  \end{equation}
  follows from \eqref{eq_lem8_4}, \eqref{eq_lem9_3}, and $\langle x\rangle\lesssim1+\sum_{j=1}^d|x_j|$. By complex interpolation between $\mu=0$ and $\mu=1$, we have \eqref{eq_lem9_1} for all $0\leq\mu\leq1$.

  Next, we prove \eqref{eq_lem9_1} when $\mu=2$. It suffices to prove
  \begin{equation}
    \label{eq_lem9_6}
    \|x_j^2 f(H_\kappa) \me{-i t H_\kappa}g(H_\kappa)\langle x\rangle^{-2}\|
    \lesssim \langle t\rangle^2
  \end{equation}
  for $g\in C_c^\infty(0,\infty)$ such that $f = f g$. We have
  \begin{align}
    \label{eq_lem9_10}
    \begin{split}
      [x_j^2,f(H_\kappa)]
      & = \frac{\alpha}{2\pi}\int_{\C}\bar{\partial} \tilde{f}(z)(z-H_\kappa)^{-1}\left\{2|D|^{\alpha-2}D_jx_j\right. \\
      & \qquad\qquad \left. + i |D|^{\alpha-2} + (\alpha-2)\, i|D|^{\alpha-4}D_j^2\right\}(z-H_\kappa)^{-1} \, dz \wedge d\bar{z}.
    \end{split}
  \end{align}
  Since
  \begin{align*}
    \|[(z-H_\kappa)^{-1},x_j]\|
    & = \alpha\|(z-H_\kappa)^{-1}|D|^{\alpha-2}D_j(z-H_\kappa)^{-1}\| \\
    & \lesssim |\im(z)|^{-2} \langle z\rangle,
    \quad z \in \supp(\tilde f)
  \end{align*}
  by \eqref{eq_lem8_3} and \eqref{eq_lem8_4}, we have
  \begin{align}
    \label{eq_lem9_7}
    \begin{split}
      & \||D|^{\alpha-2}D_j x_j(z-H_\kappa)^{-1}g(H_\kappa) \me{-i t H_\kappa}\langle x\rangle^{-1}\| \\
      & \quad \lesssim \langle t\rangle |\im(z)|^{-1} \langle z\rangle
        + |\im(z)|^{-2} \langle z\rangle, \quad z\in\supp(\tilde f)
    \end{split}
  \end{align}
  using $\|D_j|D|^{-1}\|\leq1$, \eqref{eq_lem8_4}, and \eqref{eq_lem9_5}. We now consider the second and third summand in \eqref{eq_lem9_10}; for the third summand, we use $\|D_j^2|D|^{-2}\|\leq1$. We have
  \begin{align}
    \label{eq_lem9_8}
    \begin{split}
      & \||D|^{\alpha-2}(z-H_\kappa)^{-1}g(H_\kappa) \me{-i t H_\kappa}\langle x\rangle^{\alpha-2}\| \\
      & \quad \lesssim \||D|^{\alpha-2}|x|^{\alpha-2}\|\|\langle x\rangle^{2-\alpha}(z-H_\kappa)^{-1}\langle x\rangle^{\alpha-2}\| \|\langle x\rangle^{2-\alpha}g(H_\kappa) \me{-i t H_\kappa}\langle x\rangle^{\alpha-2}\| \\
      & \quad \lesssim Z(z,2-\alpha) \langle t\rangle^{2-\alpha}
    \end{split}
  \end{align}
  by $|x|^{\alpha-2}\langle D\rangle^{\alpha-2}\lesssim1$, Lemmas~\ref{lem8} and~\eqref{eq_lem9_1} with $\mu=2-\alpha$. By \eqref{eq_lem9_7} and \eqref{eq_lem9_8},
  \begin{equation}
    \label{eq_lem9_9}
    \|[x_j^2,f(H_\kappa)] \me{-i t H_\kappa}g(H_\kappa)\langle x\rangle^{-1}\|
    \lesssim \langle t\rangle,
  \end{equation}
  where we used that $\tilde f$ is compactly supported and can be chosen of sufficiently high order to compensate the inverse powers of $|\im(z)|$. In particular,
  \begin{equation}
    \label{eq_lem9_11}
    \|\langle x\rangle^2f(H_\kappa)\langle x\rangle^{-2}\| \lesssim 1
  \end{equation}
  by applying the Helffer--Sj\"ostrand formula to $f(H_\kappa)$ and using Lemma~\ref{lem8} to control $\langle x\rangle^2(z-H_\kappa)^{-1}\langle x\rangle^{-2}$ in the resulting integrand.
  By \eqref{eq_lem9_9}, to prove \eqref{eq_lem9_6}, it suffices to prove
  \begin{equation}
    \label{eq_lem9_12}
    \|f(H_\kappa) x_j^2 \me{-i t H_\kappa}g(H_\kappa)\langle x\rangle^{-2}\|
    \lesssim \langle t\rangle^2.
  \end{equation}
  Using \eqref{eq_lem9_3}, we compute
  \begin{align}
    \label{eq_lem9_13}
    \begin{split}
      x_j^2 \me{-i t H_\kappa}
      & = \me{-i t H_\kappa}x_j^2 + \alpha \int_0^t \me{-i (t-s) H_\kappa}|D|^{\alpha-2} D_j \me{-i s H_\kappa} x_j \, ds \\
      & \quad + \alpha \int_0^t \me{-i (t-s) H_\kappa} x_j |D|^{\alpha-2} D_j \me{-i s H_\kappa}\, ds \\
      & \quad + \alpha^2 \int_0^t \int_0^{t-s} \me{-i (t-s-r) H_\kappa} |D|^{\alpha-2} D_j \me{-i r H_\kappa} |D|^{\alpha-2} D_j \me{-i s H_\kappa} \, dr \, ds.
    \end{split}
  \end{align}
  The second summand on the right-hand side of \eqref{eq_lem9_13} contributes $\mathcal O(t)$. Indeed, writing $M_j:=|D|^{\alpha-2}D_j$, we see
  \begin{align}
    \label{eq:spectral-Mj-bounds}
    f(H_\kappa)M_j, \quad M_j f(H_\kappa)\in\mathcal B(L^2).
  \end{align}
  by inserting $1=\langle H_\kappa\rangle\langle H_\kappa\rangle^{-1}$ and using \eqref{eq_lem8_4}. Hence, we also have
  $$
  f(H_\kappa)\me{-i(t-s)H_\kappa} M_j
  = \left(f(H_\kappa)\langle H_\kappa\rangle \, \me{-i(t-s)H_\kappa}\right) \left(\langle H_\kappa\rangle^{-1} M_j\right) \in \mathcal B(L^2).
  $$
  Furthermore,
  $$
  x_j g(H_\kappa)\langle x\rangle^{-2}
  = g(H_\kappa) x_j \langle x\rangle^{-2} + [x_j,g(H_\kappa)]\langle x\rangle^{-2} \in \mathcal B(L^2).
  $$
  The boundedness is clear for the first summand; to see this for the second summand, one may write $[x_j,g(H_\kappa)]$ as in \eqref{eq_lem9_2} in terms of the Helffer--Sj\"ostrand formula and then use that the operator norm of
  $$
  (H_\kappa-z)^{-1}M_j(H_\kappa-z)^{-1}
  = \left((H_\kappa-z)^{-1}\langle H_\kappa\rangle\right) \left(\langle H_\kappa\rangle^{-1} M_j\right) (H_\kappa-z)^{-1}
  $$
  is bounded (by \eqref{eq_lem8_4}) by a function which is integrable against a suitable almost-analytic extension of $g$.
  By a similar argument, the contribution from the double-integral in the last line of \eqref{eq_lem9_13} is $\mathcal O(t^2)$. Indeed, by using the spectral localizations and \eqref{eq:spectral-Mj-bounds},
  \begin{align*}
    & \left\| f(H_\kappa)\me{-i(t-s-r)H_\kappa} M_j \me{-irH_\kappa} M_j \me{-isH_\kappa} g(H_\kappa)\langle x\rangle^{-2} \right\| \\
    & \quad = \left\| \me{-i(t-s-r)H_\kappa} \left(f(H_\kappa)M_j\right) \me{-irH_\kappa} \left(M_jg(H_\kappa)\right)\me{-isH_\kappa} \langle x\rangle^{-2} \right\| \lesssim1,
  \end{align*}
  uniformly for $r,s\geq0$. Integration over $\{(s,r):0\leq s\leq t,\ 0\leq r\leq t-s\}$ therefore gives an $\mathcal O(t^2)$ contribution.

  It remains to estimate the third single integral in \eqref{eq_lem9_13}. We use $x_jM_j=M_jx_j+[x_j,M_j]$, where
  \begin{align}
    \label{eq:xj-Mj-comm-repair}
    [x_j,M_j] = i|D|^{\alpha-2} + (\alpha-2)i|D|^{\alpha-4}D_j^2.
  \end{align}
  For the first term we obtain, using \eqref{eq:spectral-Mj-bounds} and the already proved $\mu=1$ case,
  \begin{align*}
    \left\| f(H_\kappa)M_jx_j \me{-isH_\kappa} g(H_\kappa)\langle x\rangle^{-2} \right\|
    \lesssim \left\| x_j g(H_\kappa)\me{-isH_\kappa}\langle x\rangle^{-1}\right\|
    \lesssim \langle s\rangle.
  \end{align*}
  We treat the commutator term next. To this end, let $\beta=2-\alpha$. Then both summands in \eqref{eq:xj-Mj-comm-repair} are of the form $Q|D|^{-\beta}$ with $Q\in\mathcal B(L^2)$ being a bounded Fourier multiplier. By $|D|^{-\beta}\langle x\rangle^{-\beta}\in\mathcal B(L^2)$ and the already proved weighted propagation estimate with $\mu=\beta\leq1$,
  \begin{align*}
    \left\| |D|^{-\beta}g(H_\kappa)\me{-isH_\kappa} \langle x\rangle^{-2} \right\| 
    & \leq \left\| |D|^{-\beta}\langle x\rangle^{-\beta}\right\| \left\| \langle x\rangle^\beta g(H_\kappa)\me{-isH_\kappa} \langle x\rangle^{-\beta} \right\| \left\| \langle x\rangle^{\beta-2}\right\|_\infty \\
    & \lesssim \langle s\rangle^\beta \lesssim \langle s\rangle.
  \end{align*}
  Thus, the integrand of the third term in \eqref{eq_lem9_13} is $\mathcal O(\langle s\rangle)$, and hence the third term contributes $\mathcal O(t^2)$. Together with the previous estimates, this proves \eqref{eq_lem9_12} and thus~\eqref{eq_lem9_6}.

  Finally, by complex interpolation between $\mu=0$ and $\mu=2$, we obtain \eqref{eq_lem9_1} for all $0\leq\mu\leq2$.
\end{proof}

We now consider the more difficult regime $\alpha<1$. Here we prove a weighted propagation estimate only for $L_\kappa$. The reason why we do not treat an external potential here is that our proof uses the equivalence of Sobolev norms in Proposition~\ref{eqsob} and we were unable to use a generalization of Lemma~\ref{lem8} for $\alpha<1$. On the positive side, the following estimate also covers $\kappa=\kappa_{\rm c}$ (unlike Lemma~\ref{lem9} above), and it suffices to prove the nonexistence of $W(L_\kappa-V^L,L_\kappa)$ when $0<\rho_L\leq\alpha\leq1$.

\begin{lemma}
  \label{lem9-small-alpha}
  Let $d\in\N$, $\alpha\in(1/2,1)\cap(0,d)$, $\kappa\leq\kappa_{\rm c}$, and let $f\in C_c^\infty(0,\infty)$. Then,
  \begin{align}
    \label{eq:lem9-small-alpha}
    \left\| \langle x\rangle^\mu f(L_\kappa)e^{-itL_\kappa} \langle x\rangle^{-\mu} \right\|
    \lesssim_f \langle t\rangle^\mu, \quad 0\leq\mu\leq1.
  \end{align}
  The same conclusion holds for $\alpha=1/2$ provided $d\geq2$.
\end{lemma}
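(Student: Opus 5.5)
The plan is to prove \eqref{eq:lem9-small-alpha} for $\mu=1$ and then obtain $0\leq\mu\leq1$ by complex interpolation with the trivial case $\mu=0$, as in Lemma~\ref{lem9}. Since $\langle x\rangle\lesssim1+\sum_j|x_j|$, it suffices to bound $\|x_jf(L_\kappa)\me{-itL_\kappa}\langle x\rangle^{-1}\|\lesssim\langle t\rangle$. As before, $[x_j,L_\kappa]=[x_j,|D|^\alpha]=i\alpha M_j$ with $M_j:=|D|^{\alpha-2}D_j$. The new difficulty for $\alpha<1$ is that $M_j$ has negative order $\alpha-1$, so it is singular at low momenta. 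Hence \eqref{eq_lem8_4} is useless and Lemma~\ref{lem8} is unavailable.

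\textbf{Step 1: spectrally localized bound for $M_j$.} I will show $|D|^{\alpha-1}f(L_\kappa)\in\mathcal B(L^2)$, and hence $M_jf(L_\kappa)$ and $f(L_\kappa)M_j$ are bounded. To this end, apply Proposition~\ref{eqsob}(2) with exponent $s=2(1-\alpha)/\alpha$; note $s\in(0,2]$ exactly because $\alpha\geq1/2$. This gives $\|L_\kappa^{s/2}u\|\lesssim\||D|^{1-\alpha}u\|$, and by duality $\||D|^{\alpha-1}v\|\lesssim\|L_\kappa^{-s/2}v\|$. Since $f$ is supported away from $0$, $L_\kappa^{-s/2}f(L_\kappa)$ is bounded. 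Item~(2) of Proposition~\ref{eqsob} has no restriction on $\sigma$, which is why $\kappa=\kappa_{\rm c}$ is allowed. Its only requirement, $s<d/\alpha$, i.e.\ $2(1-\alpha)<d$, is automatic for $\alpha>1/2$ and needs $d\geq2$ when $\alpha=1/2$. This matches the hypotheses of the lemma.

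\textbf{Step 2: Duhamel.} By \eqref{eq_lem9_3} with $V=0$,
$$
f(L_\kappa)x_j\me{-itL_\kappa}\langle x\rangle^{-1}
= \me{-itL_\kappa}f(L_\kappa)x_j\langle x\rangle^{-1}
+ \alpha\int_0^t\me{-i(t-s)L_\kappa}\bigl(f(L_\kappa)M_j\bigr)\me{-isL_\kappa}\langle x\rangle^{-1}\,ds.
$$
By Step~1 the right-hand side is $\mathcal O(\langle t\rangle)$. It then remains to control the commutator $[x_j,f(L_\kappa)]\me{-itL_\kappa}\langle x\rangle^{-1}$, and I only need it to be bounded uniformly in $t$.

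\textbf{Step 3: the commutator $[x_j,f(L_\kappa)]$ (main obstacle).} The Helffer--Sj\"ostrand formula gives the integrand $(z-L_\kappa)^{-1}M_j(z-L_\kappa)^{-1}$. The resolvents do not localize away from zero energy, so $M_j(z-L_\kappa)^{-1}$ is not bounded. I will handle this as follows.

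\begin{enumerate}
\item Choose $g,h\in C_c^\infty(0,\infty)$ with $h=1$ on $\supp f$ and $g=1$ on $\supp h$, and take the almost analytic extension $\tilde f$ supported in $\{\re z\geq c\}$ with $c>0$.
\item Insert $1=g(L_\kappa)+(1-g)(L_\kappa)$ on both sides of $M_j$ in the integrand.
\item The piece $g(L_\kappa)M_jg(L_\kappa)$ is bounded by Step~1, and integrates to a bounded operator with the usual $|\im z|^{-2}$ bound.
\item The piece with $(1-g)(L_\kappa)$ on both sides is a holomorphic function of $z$ near $\supp\tilde f$, at least in the weak sense on a core. Its $\bar\partial$-integral should therefore reduce to $f(L_\kappa)(1-g)(L_\kappa)M_j(1-g)(L_\kappa)=0$.
\item The mixed pieces are the delicate ones. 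For these I would, first, factor $(z-L_\kappa)^{-1}(1-g)(L_\kappa)M_jg(L_\kappa)(z-L_\kappa)^{-1}$ and use that $(z-L_\kappa)^{-1}(1-g)(L_\kappa)$ is $z$-analytic off $\supp(1-g)$, so that the integral collapses to expressions like $f(L_\kappa)(1-g)(L_\kappa)(\cdots)=0$ or $(\cdots)f(L_\kappa)$ with $M_j$ adjacent to a spectral cutoff.
\end{enumerate}

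If this algebraic collapse fails to be rigorous, the fallback is to use the weight. The operator $|D|^{\alpha-1}|x|^{\alpha-1}$ is bounded by Hardy's inequality, which again needs $1-\alpha<d/2$. So $M_j$ acting on $\me{-isL_\kappa}\langle x\rangle^{-1}$ can be traded for $\langle x\rangle^{1-\alpha}$-weighted propagation at the lower power $\mu=1-\alpha$. That case can be obtained first by the same Duhamel argument, or by interpolation in a bootstrap $\mu\mapsto\mu+\alpha$ starting from $\mu=0$. The resulting bound is $\mathcal O(\langle t\rangle^{1-\alpha})\subset\mathcal O(\langle t\rangle)$, which still suffices.

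Combining Steps~2 and~3 gives the $\mu=1$ estimate, and interpolation concludes the proof.
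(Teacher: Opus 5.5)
\textbf{Assessment.} Your Step~1 coincides with the paper's first step. Your overall architecture is also right: a bounded commutator $[x_j,f(L_\kappa)]$, plus a Duhamel argument in which $M_j$ always sits next to a spectral cutoff. The gap is that the two identities carrying the proof are only formal for $\alpha<1$.

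\textbf{The gap.} The representation $[x_j,f(L_\kappa)]=\frac{\alpha}{2\pi}\int\bar\partial\tilde f(z)R(z)M_jR(z)\,dz\wedge d\bar z$ rests on $[x_j,R(z)]=i\alpha R(z)M_jR(z)$. The Duhamel formula \eqref{eq_lem9_3} rests on $\me{-isL_\kappa}$ preserving $\D(x_j)$ on the relevant vectors. For $\kappa\neq0$ and $\alpha<1$ both are weighted resolvent/propagation statements with weight $\langle x\rangle$, which are not available here; the paper only indicates an analogue of Lemma~\ref{lem8} for $\mu<\alpha$ when $\alpha<1$. Moreover, as you note yourself, $M_jR(z)$ is unbounded. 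So the integrand is not a bounded operator, and there is no norm-convergent integral to split into four pieces. ``In the weak sense on a core'' does not name a core on which all of this holds. The fallback does not close the gap either, since it needs weighted bounds for $R(z)$ or for the unlocalized propagator that you have not established.

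\textbf{The missing device: regularization.} The paper replaces $x_j$ by the bounded difference quotients $B_{j,\tau}=(\me{i\tau x_j}-1)/\tau$. These preserve $\D(L_\kappa)$ and satisfy $[B_{j,\tau},L_\kappa]=q_\tau^-(D)U_j(\tau)=U_j(\tau)q_\tau^+(D)$, with $|q_\tau^\pm(\xi)|\lesssim|\xi|^{\alpha-1}$ uniformly in $0<|\tau|\leq1$. All Helffer--Sj\"ostrand integrals are then genuine, and one gets $\sup_\tau\|[B_{j,\tau},h(L_\kappa)]\|<\infty$. A form argument as $\tau\to0$ then yields $h(L_\kappa)\D(x_j)\subset\D(x_j)$ with $[x_j,h(L_\kappa)]$ bounded.

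\textbf{How your split fits in.} To place a cutoff next to the singular symbol, the paper uses $[B,h(L_\kappa)]=g(L_\kappa)[B,h(L_\kappa)]+[B,g(L_\kappa)]h(L_\kappa)$ with $gh=h$. Once regularized, your four-term split also works.
\begin{itemize}
\item The term with $(1-g)(L_\kappa)$ on both sides is bounded for fixed $\tau$. Its integral vanishes by Stokes, because $\supp\tilde f$ lies over $\supp f$, away from the closure of $\{g\neq1\}$. This separation is needed at high as well as low energies, so the condition $\re z\geq c$ is not the relevant one.
\item The mixed terms need no collapsing argument. They are directly bounded, since $M_jg(L_\kappa)$ and $g(L_\kappa)M_j$ lie in $\mathcal B(L^2)$ and $\|R(z)(1-g)(L_\kappa)\|\lesssim1$ on $\supp\tilde f$.
\end{itemize}

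\textbf{The Duhamel step.} Instead of your unbounded Duhamel formula, the paper applies Duhamel to the bounded operator $K=k(L_\kappa)$ with $k(\lambda)=\lambda g(\lambda)$. It uses $[x_j,K]\in\mathcal B(L^2)$ and $\me{-itK}f(L_\kappa)=\me{-itL_\kappa}f(L_\kappa)$.
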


\begin{proof}
  We divide the proof into four steps.

  \medskip
  \noindent
  {\bf Step 1: Localization of the velocity.}
  Set $\beta:=1-\alpha\in(0,1/2)$ and
  $$
  s_\alpha:=\frac{2\beta}{\alpha}=\frac{2(1-\alpha)}{\alpha}.
  $$
  Since $\alpha>1/2$, we have $0<s_\alpha<2$. Moreover, $s_\alpha<\frac d\alpha$, which is automatic since $\alpha>1/2$ and $d\geq1$. We now apply Proposition~\ref{eqsob}(2) to $V(x)=\kappa/|x|^\alpha$. Since $L_0^{s_\alpha/2}=|D|^{\alpha s_\alpha/2}=|D|^\beta$, Proposition~\ref{eqsob}(2), implies that $|D|^{-\beta} L_\kappa^{s_\alpha/2}\in\mathcal B(L^2)$.
  
  Let $h\in C_c^\infty(0,\infty)$. Since $L_\kappa^{-s_\alpha/2}h(L_\kappa) \in\mathcal B(L^2)$ by the spectral theorem, we obtain
  \begin{align}
    \label{eq:negative-sobolev-spectral}
    |D|^{-\beta}h(L_\kappa)
    = (|D|^{-\beta}L_\kappa^{s_\alpha/2})\, (L_\kappa^{-s_\alpha/2}h(L_\kappa)) \in\mathcal B(L^2).
  \end{align}
  Introduce $\alpha^{-1}$ times the $j$-th component of the velocity,
  \begin{align}
    v_j := |D|^{\alpha-2}D_j = \frac{D_j}{|D|}\,|D|^{-\beta}.
  \end{align}
  Since $D_j/|D|$ is bounded on $L^2$,
  \begin{align}
    \label{eq:localized-velocity-left}
    h(L_\kappa)v_j\in\mathcal B(L^2).
  \end{align}
  At the endpoint $\alpha=1/2$, one has $s_\alpha=2$. Proposition~\ref{eqsob}(2) is still applicable provided $2<d/\alpha=2d$, i.e., $d\geq2$. Thus, \eqref{eq:negative-sobolev-spectral}--\eqref{eq:localized-velocity-left} remain valid in that case. It is in this step where we need the assumption $\alpha\geq1/2$.

  \medskip
  \noindent
  {\bf Step 2: Local $C^1(x_j)$ regularity of $h(L_\kappa)$.}
  We show that, for every $j\in\{1,2,...,d\}$ and $h\in C_c^\infty(0,\infty)$, one has
  \begin{align}
    \label{eq:hL-C1-x}
    h(L_\kappa)\mathcal D(x_j)\subset\mathcal D(x_j),
    \qquad
    [x_j,h(L_\kappa)]\in\mathcal B(L^2).
  \end{align}
  To this end, let
  $$
  U_j(\tau):=\me{i\tau x_j},
  \qquad
  B_{j,\tau}:=\frac{U_j(\tau)-1}{\tau},
  \qquad
  0<|\tau|\leq1.
  $$
  We first prove
  \begin{align}
    \label{eq:uniform-difference-comm}
    \sup_{0<|\tau|\leq1} \|[B_{j,\tau},h(L_\kappa)]\|<\infty,
  \end{align}
  which eventually allows us to pass to the limit $\tau\to0$. To prove \eqref{eq:uniform-difference-comm}, first note that
  \begin{align*}
    U_j(\tau) \frac{\kappa}{|x|^\alpha} U_j(\tau)^* & = \frac{\kappa}{|x|^\alpha}, \\
    U_j(\tau)|D|^\alpha U_j(\tau)^* & = |D-\tau e_j|^\alpha,
  \end{align*}
  where $e_j$ denotes the $j$-th canonical basis vector in $\R^d$.
  Since $0<\alpha<1$,
  $$
  \left| |\xi-\tau e_j|^\alpha-|\xi|^\alpha \right| \leq |\tau|^\alpha.
  $$
  Hence,
  $$
  U_j(\tau)L_\kappa U_j(\tau)^*-L_\kappa \text{ is bounded for every fixed $\tau$.}
  $$
  In particular, $U_j(\tau)$ preserves $\mathcal D(L_\kappa)$ and the usual resolvent commutator identity may be used with $B_{j,\tau}$. To write it, define
  \begin{align}
    q_\tau^-(\xi) & := \frac{|\xi-\tau e_j|^\alpha-|\xi|^\alpha}{\tau}, \\
    q_\tau^+(\xi) & := \frac{|\xi|^\alpha-|\xi+\tau e_j|^\alpha}{\tau}.
  \end{align}
  Then,
  \begin{align}
    \label{eq:Btau-L-two-forms}
    [B_{j,\tau},L_\kappa] = q_\tau^-(D)U_j(\tau) = U_j(\tau)q_\tau^+(D).
  \end{align}
  We claim that
  \begin{align}
    \label{eq:q-tau-bound}
    |q_\tau^\pm(\xi)| \lesssim |\xi|^{\alpha-1} = |\xi|^{-\beta}, \quad \xi\neq0, \quad 0<|\tau|\leq1,
  \end{align}
  uniformly in $\tau$.
  Indeed, if $|\xi|\geq2|\tau|$, the mean value theorem gives
  $$
  \left| |\xi-\tau e_j|^\alpha-|\xi|^\alpha \right|
  \lesssim |\tau|\,|\xi|^{\alpha-1}.
  $$
  If $|\xi|<2|\tau|$, then, since $\alpha<1$,
  $$
  \left| |\xi-\tau e_j|^\alpha-|\xi|^\alpha \right|
  \lesssim |\tau|^\alpha
  \lesssim |\tau|\,|\xi|^{\alpha-1}.
  $$
  Thus, there are uniformly bounded Fourier multipliers $m_\tau^\pm(D)$ such that
  \begin{align}
    \label{eq:q-factor}
    q_\tau^\pm(D) = m_\tau^\pm(D)|D|^{-\beta},
    \qquad  \sup_{0<|\tau|\leq1}\|m_\tau^\pm(D)\|<\infty.
  \end{align}
  By \eqref{eq:negative-sobolev-spectral}, for every $a\in C_c^\infty(0,\infty)$,
  \begin{align}
    \label{eq:q-spectral-localized}
    \sup_{0<|\tau|\leq1} \left( \|q_\tau^\pm(D)a(L_\kappa)\| + \|a(L_\kappa)q_\tau^\pm(D)\| \right) <\infty.
  \end{align}
  Choose a real-valued $g\in C_c^\infty(0,\infty)$ such that $g=1$ on a neighborhood of $\supp h$. Thus, $h(L_\kappa)=g(L_\kappa)h(L_\kappa)$. Since for every $\tau\neq0$, $B_{j,\tau}$ is bounded,
  \begin{align}
    \label{eq:difference-product-decomp}
    [B_{j,\tau},h(L_\kappa)]
    = g(L_\kappa)[B_{j,\tau},h(L_\kappa)] + [B_{j,\tau},g(L_\kappa)]h(L_\kappa).
  \end{align}
  Write
  $$
  R(z):=(z-L_\kappa)^{-1}.
  $$
  By the Helffer--Sj\"ostrand formula \eqref{eq:hs},
  \begin{align}
    \label{eq:local-C1-first}
    g(L_\kappa)[B_{j,\tau},h(L_\kappa)]
    & = \frac{1}{2\pi i} \int_{\C} \overline\partial\widetilde h(z)\, g(L_\kappa)R(z) [B_{j,\tau},L_\kappa] R(z)\, dz\wedge d\overline z.
  \end{align}
  Using the first representation of $[B_{j,\tau},L_\kappa]$ in \eqref{eq:Btau-L-two-forms} shows that the integrand equals $\overline\partial\widetilde h(z)\, R(z)g(L_\kappa) q_\tau^-(D) U_j(\tau) R(z)$. By \eqref{eq:q-spectral-localized},
  $$
  \left\| g(L_\kappa)R(z) [B_{j,\tau},L_\kappa]R(z) \right\|
  \lesssim |\im z|^{-2},
  $$
  uniformly in $\tau$.
  Similarly, using the second representation in \eqref{eq:Btau-L-two-forms},
  \begin{align}
    \label{eq:local-C1-second}
    [B_{j,\tau},g(L_\kappa)]h(L_\kappa)
    & = \frac{1}{2\pi i} \int_{\C} \overline\partial\widetilde g(z)\, R(z)[B_{j,\tau},L_\kappa] R(z)h(L_\kappa)\, dz\wedge d\overline z.
  \end{align}
  Similarly as above, we get, uniformly in $\tau$,
  $$
  \left\| R(z)[B_{j,\tau},L_\kappa] R(z)h(L_\kappa) \right\| \lesssim |\im z|^{-2}.
  $$
  Choosing the almost analytic extensions of sufficiently high order, the two Helffer--Sj\"ostrand integrals are absolutely convergent in operator norm. Thus, \eqref{eq:difference-product-decomp} gives \eqref{eq:uniform-difference-comm}.

  \smallskip
  We now pass from bounded difference quotients to the commutator with $x_j$. Let $u,v\in\mathcal D(x_j)$. Since
  $$
  B_{j,\tau}v \to i x_jv
  \quad \text{and} \quad
  B_{j,\tau}^*u \to -i x_ju
  \quad \text{as }\tau\to0,
  $$
  the forms associated with $[B_{j,\tau},h(L_\kappa)]$ converge to those associated with $i[x_j,h(L_\kappa)]$. Since the uniform estimate \eqref{eq:uniform-difference-comm} gives
  $$
  |\langle x_ju,h(L_\kappa)v\rangle - \langle h(L_\kappa)^*u,x_jv\rangle|
  \lesssim \|u\| \|v\|,
  $$
  the limiting quadratic commutator form
  $$
  (u,v)\mapsto \langle x_ju,h(L_\kappa)v\rangle - \langle h(L_\kappa)^*u,x_jv\rangle
  $$
  extends continuously from $\mathcal D(x_j)\times\mathcal D(x_j)$ to $L^2\times L^2$.
  Denote the corresponding bounded operator by $C_h$. If $v\in\D(x_j)$, then for every $u\in\D(x_j)$,
  $$
  \langle x_ju,h(L_\kappa)v\rangle = \langle h(L_\kappa)^*u,x_jv\rangle + \langle u,C_hv\rangle
  $$
  The right-hand side is bounded by $C_v\|u\|$. Since $x_j$ is selfadjoint, this implies $h(L_\kappa)v\in\mathcal D(x_j)$. Moreover, $[x_j,h(L_\kappa)]=C_h$ as a bounded operator. This proves \eqref{eq:hL-C1-x}.

  \medskip
  \noindent
  {\bf Step 3: Conclusion of \eqref{eq:lem9-small-alpha} with $\mu=1$.}
  Choose $g\in C_c^\infty((0,\infty);\R)$ such that $g=1$ on a neighborhood of $\supp f$, and define the bounded selfadjoint operator
  $$
  K := k(L_\kappa),
  \qquad
  k(\lambda) := \lambda g(\lambda).
  $$
  By Step~2,
  $$
  K\in C^1(x_j),
  \qquad
  [x_j,K]\in\mathcal B(L^2).
  $$
  Hence,
  \begin{align}
    \label{eq:bounded-K-Duhamel}
    [x_j,\me{-itK}] = -i\int_0^t \me{-i(t-s)K} [x_j,K] \me{-isK}\,ds,
  \end{align}
  and therefore
  \begin{align}
    \label{eq:bounded-K-comm-bound}
    \|[x_j,\me{-itK}]\| \lesssim |t|.
  \end{align}
  Since $k(\lambda)=\lambda$ on $\supp f$, the spectral theorem gives $\me{-itK}f(L_\kappa) = \me{-itL_\kappa}f(L_\kappa)$. Thus,
  \begin{align}
    \label{eq:x-weight-small-alpha}
    \begin{split}
      \|x_jf(L_\kappa)\me{-itL_\kappa} \langle x\rangle^{-1}\|
      & = \|x_j \me{-itK}f(L_\kappa) \langle x\rangle^{-1}\| \\
      & \leq \|\me{-itK}x_jf(L_\kappa) \langle x\rangle^{-1}\| + \|[x_j,\me{-itK}] f(L_\kappa)\langle x\rangle^{-1}\|.
    \end{split}
  \end{align}
  By Step~2, $[x_j,f(L_\kappa)]\in\mathcal B(L^2)$, and hence
  \begin{align}
    x_jf(L_\kappa)\langle x\rangle^{-1}
    = f(L_\kappa)x_j\langle x\rangle^{-1} + [x_j,f(L_\kappa)] \langle x\rangle^{-1}
  \end{align}
  is bounded. Together with \eqref{eq:bounded-K-comm-bound}, this yields
  \begin{align}
    \label{eq:x-weight-small-alpha-final}
    \|x_jf(L_\kappa) \me{-itL_\kappa} \langle x\rangle^{-1}\|
    \lesssim_f \langle t\rangle.
  \end{align}
  Since $\langle x\rangle \lesssim 1+\sum_{j=1}^d|x_j|$, we conclude \eqref{eq:lem9-small-alpha} with $\mu=1$.

  \medskip
  \noindent
  {\bf Step 4: Interpolation.}
  For fixed $t$, set $T_t := f(L_\kappa) \me{-itL_\kappa}$. We have $\|T_t\|\leq\|f\|_\infty$. Thus, complex interpolation with \eqref{eq:lem9-small-alpha} with $\mu=1$ yields \eqref{eq:lem9-small-alpha} for all $0\leq\mu\leq1$.
\end{proof}


\section{Asymptotic completeness of $W(L_\kappa - V^S,L_\kappa)$ for $1/2<\alpha\leq1$ using propagation estimates}

\label{s:asympcomplperturbedwosmallalpha_a}

\begin{proof}[Proof of Theorem~\ref{asympcomplhardyplusperturbation}(\ref{asympcomplhardyplusperturbation:acshortrangesmallalpha})(b)]
  We first show the existence of $W(H,H_0)$ using the Cook--Kuroda method with the notation
  $$
  H_0 := L_\kappa,
  \qquad
  H := H_\kappa = L_\kappa-V^S.
  $$
  To this end, we aim to apply the minimal escape velocity estimate in Theorem~\ref{the6} and the outgoing phase space propagation estimate in Lemma~\ref{the11suff-unified}.

  \medskip
  \noindent
  {\bf Step 1: Preparation.}
   We check that the conditions of Theorem~\ref{the6} and Lemma~\ref{the11suff-unified} are satisfied. To this end, let $\chi\in C^\infty(\R)$ be such that
  $$
  0\leq\chi\leq1,
  \qquad
  \chi(s) = 0 \quad \text{for } s \leq \frac{\theta}{4},
  \qquad
  \chi(s) = 1 \quad \text{for } s \geq \frac{\theta}{2}.
  $$
  Let also
  $$
  \chi_t := \chi(A/t),
  \qquad
  F_t := \one(A/t\leq\theta).
  $$
  Then
  \begin{align}
    \label{eq:proof-acshortrangesmallalpha-b-cutoff}
    1-\chi_t^2=(1-\chi_t^2)F_t.
  \end{align}
  Recall $H_0=L_\kappa$ has purely absolutely continuous spectrum $[0,\infty)$ (Remark~\ref{rem:Lkappa-ac-spectrum}). Moreover, since $V^S$ is bounded and decays, $V^S$ is relatively compact with respect to $L_0$ and, since $H_0$ has the same form domain as $L_0$, also with respect to $H_0=L_\kappa$. Hence, Weyl's theorem for the essential spectrum $\sigma_{\rm ess}(H)$ of $H$ implies $\sigma_{\rm ess}(H)=[0,\infty)$, and the spectrum of $H$ in $(-\infty,0)$ is discrete. Moreover, the Mourre estimate \eqref{eq:the6-local-strict-mourre} used in the proof of Theorem~\ref{the6}, together with $H\in C^2(A)$, implies absence of singular continuous spectrum on $(0,\infty)$ and local finiteness of the positive eigenvalues away from zero. Hence,
  \begin{align}
    \label{eq:proof-acshortrangesmallalpha-b-no-sc}
    \mathcal H_{\rm sc}(H) = \{0\}.
  \end{align}
  Let $f_0\in C_c^\infty((0,\infty);\R)$ be such that $\supp f_0 \cap \sigma_{\rm p}(K)=\emptyset$ with $K\in\{H_0,H\}$. Choose $f_1\in C_c^\infty((0,\infty);\R)$ such that $f_1=1$ on a neighborhood of $\supp f_0$.  
  Thus, Theorem~\ref{the6} with $f\equiv f_0$ there is applicable. Let $\theta>0$ and $T>0$ be the resulting constants.
  By the assumptions $1/2<\alpha\leq1$, $\sigma<(d-1)/2$, and $\rho_S>1$, we may choose
  \begin{align}
    \label{eq:proof-acshortrangesmallalpha-b-mu}
    1 < \mu < \min\{\rho_S,2,2\alpha,d-2\sigma\}.
  \end{align}  
  With this choice, Lemma~\ref{the11suff-unified} is applicable.

  \medskip
  \noindent
  {\bf Step 2: Existence of $W(H,H_0)$.}
  Let $f_0\in C_c^\infty((0,\infty);\R)$ and $f_1\in C_c^\infty((0,\infty);\R)$ be such that $f_1=1$ on a neighborhood of $\supp f_0$. Consider
  $$
  U_{10}(t) := f_1(H) \me{itH} \me{-itH_0} f_0(H_0)
  $$
  and fix $\nu>1$. Since $H-H_0=-V^S$,
  $$
  \frac{d}{dt}U_{10}(t) = -i f_1(H) \me{itH} V^S \me{-itH_0}f_0(H_0).
  $$
  Then, Lemma~\ref{the11suff-unified}, with $K_1=H$ and $K_0=H_0$, yields
  $$
  \left\| f_1(H) \me{itH} V^S\chi_t^2 \me{-itH_0}f_0(H_0) \right\|
  \lesssim_{f_0,f_1,T} t^{-\mu}, \quad t\geq T.
  $$
  By \eqref{eq:proof-acshortrangesmallalpha-b-cutoff} and Theorem~\ref{the6},
  \begin{align*}
    \left\| f_1(H) \me{itH} V^S (1-\chi_t^2) \me{-itH_0}f_0(H_0)\phi \right\| 
    & \lesssim \left\| F_t f_0(H_0) \me{-itH_0}\phi \right\| \\
    & \lesssim_{f_0,\nu,T} t^{-\nu} \|\langle A\rangle^\nu \phi \|, \quad t\geq T, \ \phi \in \D(\langle A\rangle^\nu).
  \end{align*}
  Since $\mu,\nu>1$, the derivative $\frac{d}{dt}U_{10}(t)\phi$ is integrable on $[T,\infty)$ for every $\phi\in\mathcal D(\langle A\rangle^\nu)$. Hence, $U_{10}(t)\phi$ has a limit on this dense subspace. As $\sup_{t\geq T}\|U_{10}(t)\| \leq \|f_1\|_\infty\|f_0\|_\infty$, this strong limit extends to all of $L^2$.

  Since $V^S$ is relatively compact with respect to $H_0$,
  \begin{align}
    \label{eq:proof-acshortrangesmallalpha-b-compact-fc}
    h(H)-h(H_0) \ \text{is compact for all} \ h\in C_0(\R).
  \end{align}
  Since $(1-f_1)f_0=0$,
  \begin{align*}
    (1-f_1(H)) \me{itH} \me{-itH_0}f_0(H_0)
    = \me{itH} (1-f_1(H)) (f_0(H_0)-f_0(H)) \me{-itH_0}.
  \end{align*}
  By \eqref{eq:proof-acshortrangesmallalpha-b-compact-fc}, $f_0(H_0)-f_0(H)$ is compact.
  Moreover, $\me{-itH_0}\phi\rightharpoonup0$ as $t\to\infty$ since $H_0$ has purely absolutely continuous spectrum; here $\rightharpoonup$ denotes weak convergence. Thus,
  $
  \slim_{t\to\infty}(1-f_1(H)) \me{itH} \me{-itH_0}f_0(H_0)=0.
  $
  Therefore, $\slim_{t\to\infty} \me{itH} \me{-itH_0}f_0(H_0)$ exists.

  Since $H_0$ has purely absolutely continuous spectrum $[0,\infty)$, the union of the ranges of $f_0(H_0)$ is dense in $L^2$. Since $\me{i t H} \me{-i t H_0}$ are unitary (hence uniformly bounded in $t$), we obtain the existence of $W(H,H_0)$.

  \medskip
  \noindent
  {\bf Step 3: Existence of $W(H_0,H)$.}
  Let $f_0\in C_c^\infty((0,\infty);\R)$ and satisfy $\supp f_0\cap\sigma_{\rm p}(H)=\emptyset$. Choose $f_1\in C_c^\infty((0,\infty);\R)$ such that $f_1=1$ on a neighborhood of $\supp f_0$.
  Fix $\nu>1$ and let
  $$
  \widetilde U_{10}(t) := f_1(H_0) \me{itH_0} \me{-itH} f_0(H).
  $$
  By exactly the same arguments that showed the existence of $\slim_{t\to\infty}\me{itH}\me{-itH_0}f_0(H_0)$, we obtain the existence of $\slim_{t\to\infty} \me{itH_0}\me{-itH}f_0(H)$. It remains to replace $f_0(H)$ with $P_{\rm ac}(H)$. We proceed similarly as before. First note that the union of the ranges of the operators $f_0(H)$ is dense in $\mathcal H_{\rm ac}(H)$; indeed, the negative spectrum is discrete, hence carries no absolutely continuous part. Moreover, $E_H(\{0\})P_{\rm ac}(H)=0$, i.e., the absolutely continuous subspace is exhausted by compact positive-energy intervals away from zero. On any interval $[a,b]\subset(0,\infty)$, there are only finitely many eigenvalues of $H$. Since an absolutely continuous spectral measure gives zero mass to those points, one can choose smooth compactly supported functions $f_0$ whose supports avoid those eigenvalues and which approximate $1$ on $[a,b]$ except in arbitrarily small neighborhoods of them. Thus,
  $$
  \overline{\bigcup_{f_0\in C_c^\infty(0,\infty),\,\supp(f_0)\cap\sigma_{\rm p}(H)=\emptyset}\ran(f_0(H))} = \mathcal H_{\rm ac}(H).
  $$
  Hence, $W(H_0,H)=\slim_{t\to\infty}\me{itH_0}\me{-itH}P_{\rm ac}(H)$ exists.

  \medskip
  \noindent
  {\bf Step 4: Conclusion.}
  Passing to the strong limit in $\left( \me{itH} \me{-itH_0}\right) \left(\me{itH_0} \me{-itH}P_{\rm ac}(H)\right) = P_{\rm ac}(H)$ yields $W(H,H_0)W(H_0,H)=P_{\rm ac}(H)$. Since $\ran W(H,H_0) \subset\mathcal H_{\rm ac}(H)$, the preceding identity gives the reverse inclusion. Therefore, $\ran W(H,H_0) = \mathcal H_{\rm ac}(H)$. Thus, $W(L_\kappa-V^S,L_\kappa)$ is asymptotically complete.
\end{proof}


\section{Proof of Corollary~\ref{cor:massiverelativisticexistence}}
\label{s:proofcormassiverelativistic}

\begin{proof}[{Proof of Corollary~\ref{cor:massiverelativisticexistence}}]
  Let $\alpha>1$ and let $T:=C_0(-\Delta)$ with $C_0\in\ck_\alpha$. We first verify that the free operator $T$ satisfies the hypotheses on the Fourier multiplier in Theorem~\ref{giere}. We then verify Assumption~\ref{assumheatdifference} for $H_1:=T-V_\kappa$ and $H_2:=T-V_\kappa-V^S$.

  \medskip
  \noindent
  {\bf Step 1: Admissibility of the Fourier multiplier.}
  By the definition of $\ck_\alpha$, for every $t>0$ there is a probability density $\sigma_t^{(C_0)}$ on $\R_+$ such that
  $$
  \me{-tC_0(\lambda)} = \int_0^\infty \me{-s\lambda}\sigma_t^{(C_0)}(s)\,ds
  \quad\text{with}\quad
  \sigma_t^{(C_0)}(s) \leq\me{ct}\sigma_t^{(\alpha/2)}(s)
  $$
  for some $c\geq0$.
  In particular, $C_0(0)=0$. Moreover, $\lambda\mapsto\me{-tC_0(\lambda)}$ is completely monotone for every $t>0$, and therefore $C_0$ is a Bernstein function. In particular,
  \begin{align}
    \label{eq:bernstein-consequence}
    C_0\in C^\infty(0,\infty),
    \quad\text{and}\quad
    C_0'(\lambda)>0
    \quad\text{for all } \lambda>0
  \end{align}
  since $C_0$ is nonconstant.
  Recall also that, by operator monotonicity of the logarithm,
  \begin{align}
    \label{eq:C0-lower-fractional}
    C_0(-\Delta)\geq L_0-c.
  \end{align}
  Set
  $$
  P(\xi):=C_0(|\xi|^2).
  $$
  Then, for $\xi\neq0$, we have $\nabla P(\xi) = 2C_0'(|\xi|^2)\xi$, and hence $\nabla P(\xi)\neq0$.
  Now, for $h(|\xi|):=P(\xi)$, we verify that the set $\{k>0:\, h''(k)=(h'(k))^2\}$ is discrete. To this end, note that $h:(0,\infty)\to(0,\infty)$ is analytic since $C_0$ is a Bernstein function.
  Hence, also $g(k):=h''(k)-(h'(k))^2$ is analytic on $(0,\infty)$. Thus, by the identity theorem for real-analytic functions, the zero set $\{k>0:\, g(k)=0\}$ is either discrete or $g$ vanishes identically. Let us show that $g\not\equiv0$ via a contradiction, i.e., suppose $g\equiv0$. Then, define $f(k):=\me{-h(k)}$, which obeys the differential equation
  $$
  f''(k) = - \left(h''(k)-(h'(k))^2\right)\me{-h(k)}.
  $$
  By our assumption, $f''(k)=0$ for all $k>0$, i.e., $f(k)=Ak+B$ for some integration constants $A,B\in\R$. Thus, $h(k)=-\ln\left(Ak+B\right)$ and therefore $C_0(\lambda)=-\ln\left(A\sqrt\lambda +B\right)$.
  But this forces $C_0$ into an impossible form; indeed, for $A<0$ the argument of $C_0$ becomes eventually negative; for $A\geq0$, the function $C_0(\lambda)$ is nonincreasing, which is in contrast to \eqref{eq:bernstein-consequence}.
  We have thus shown $M_1=\emptyset$ and, since $0$ is not an accumulation point of $M_2$, that $M_2=\{k>0:\,g(k)=0\}$ is discrete.
  Thus, $T$ satisfies the assumptions of Theorem~\ref{giere}.

  \medskip
  \noindent
  {\bf Step 2: Heat kernel comparison.}
  Let
  $$
  S_T(t):=\me{-tT}.
  $$
  By subordination and the defining estimate for $\sigma_t^{(C_0)}$,
  \begin{align}
    \label{eq:massive-free-domination}
    S_T(t)(x,y) \leq \me{ct}\me{-tL_0}(x,y).
  \end{align}
  By assumption,
  \begin{align}
    \label{eq:Vkappa-absolute-bound}
    |V_\kappa(x)| \leq C_{\kappa,\tilde\kappa}|x|^{-\alpha}.
  \end{align}
  Thus, by Trotter's formula and \eqref{eq:massive-free-domination},
  \begin{align}
    \label{eq:massive-H1-domination}
    0\leq \me{-tH_1}(x,y) \lesssim \me{ct}\me{-tL_\kappa}(x,y).
  \end{align}
  Notice also that, by \eqref{eq:C0-lower-fractional},
  $$
  H_1 = T-V_\kappa \geq L_0-\frac{\kappa}{|x|^\alpha}-c = L_\kappa-c,
  $$
  so $H_1$ is bounded from below.

  We next consider $H_2=H_1-V^S$, which is also bounded from below since $V^S$ is bounded. Moreover, by \eqref{eq:massive-H1-domination} and Trotter's formula,
  \begin{align}
    \label{eq:massive-H2-domination}
    0 \leq \me{-tH_2}(x,y) \lesssim \me{c't}\me{-tL_\kappa}(x,y).
  \end{align}

  \medskip
  \noindent
  {\bf Step 3: On $W(H_1,T)$.}
  We verify the assumptions of Theorem~\ref{giere} for $(H,H_0)=(H_1,T)$.
  By Duhamel's formula,
  $$
  \me{-tH_1}-\me{-tT} = \int_0^t \me{-(t-s)H_1} V_\kappa \me{-sT}\,ds.
  $$
  By \eqref{eq:massive-free-domination}, \eqref{eq:massive-H1-domination}, and \eqref{eq:Vkappa-absolute-bound},
  \begin{align}
    \label{eq:massive-Hardy-Duhamel-majorant}
    \left| (\me{-tH_1}-\me{-tT})(x,y) \right|
    \lesssim_t \int_0^t ds \int_{\R^d}dz\, \me{-(t-s)L_\kappa}(x,z) |z|^{-\alpha} \me{-sL_0}(z,y).
  \end{align}
  The kernel on the right-hand side is exactly the kernel majorant estimated in Subsections~\ref{s:compactness}--\ref{s:weightedbound}.
  Thus, Theorem~\ref{giere} applies and shows that
  \begin{align}
    \label{eq:massive-WH1T}
    W(H_1,T) = W(C_0(-\Delta)-V_\kappa,C_0(-\Delta))
  \end{align}
  exist and are asymptotically complete.

  \medskip
  \noindent
  {\bf Step 4: On $W(H_2,T)$.}
  We now verify the same two assumptions for $H_2$ and $T$. Since
  $$
  H_2=T-V_\kappa-V^S,
  $$
  Duhamel's formula yields
  \begin{align}
    \label{eq:massive-H2-Duhamel}
    \me{-tH_2}-\me{-tT}
    = \int_0^t \me{-(t-s)H_2} (V_\kappa + V^S) \me{-sT}\,ds.
  \end{align}
  By \eqref{eq:massive-H2-domination}, \eqref{eq:massive-free-domination}, and \eqref{eq:Vkappa-absolute-bound},
  \begin{align*}
    \left|\me{-tH_2}-\me{-tT}(x,y)\right|
    \lesssim_t \int_0^t ds \int_{\R^d}dz\, \me{-(t-s)L_\kappa}(x,z)\left(|z|^{-\alpha} + |V^S(z)|\right)\me{-sL_0}(z,y).
  \end{align*}
  The kernel on the right-hand side is exactly the kernel majorant estimated in Section~\ref{s:wlkappavlkappacomplete}.
  Thus, Theorem~\ref{giere} yields the asymptotic completeness of
  \begin{align}
    \label{eq:massive-WH2T}
    W(H_2,T) = W(C_0(-\Delta)-V_\kappa-V^S,C_0(-\Delta)).
  \end{align}

  \medskip
  \noindent
  {\bf Step 5: On $W(H_2,H_1)$.}
  By \eqref{eq:massive-WH1T} and \eqref{eq:massive-WH2T}, both $W(H_1,T)$ and $W(H_2,T)$ are asymptotically complete. Since $T$ has purely absolutely continuous spectrum, completeness gives $W(T,H_1) = W(H_1,T)^*$ on $\ch_{\rm ac}(H_1)$. Hence, the chain rule yields also asymptotic completeness of $W(H_2,H_1)=W(C_0(-\Delta)-V_\kappa-V^S,C_0(-\Delta)-V_\kappa)$.
\end{proof}


\section{Nonexistence of wave operators}
\label{s:nonexistence}

In this section, we prove the nonexistence results in Theorems~\ref{asympcomplhardy}(2) and~\ref{asympcomplhardyplusperturbationnonexistence}, and Corollary~\ref{cor:massiverelativisticnonexistence}.

\subsection{Abstract nonexistence criteria}

The proofs of nonexistence of $W(L_\kappa-V^L,L_0)$ and, for $\alpha\leq1$, also of $W(L_\kappa,L_0)$, are inspired by \cite[Section~3]{IshidaWada2020}. In the following, we consider a splitting of an external potential $U$ into a bounded, eventually sign-definite, long-range tail $U^L$, and a singular part $U^R$ which is dynamically of lower order. We denote the Schwartz space by $\mathcal S(\R^d)$ and $\hat \phi(\xi)=(2\pi)^{-d/2}\int_{\R^d}\me{-ix\cdot\xi}\phi(x)\,dx$ denotes the Fourier transform of $\phi$.

\begin{lemma}
  \label{lem:generalized-ishida-wada}
  Let $H_0=p(D)$ be a selfadjoint Fourier multiplier on $L^2(\R^d)$, where $p\in C^0(\R^d;[0,\infty))$.
  Let $U=U^{L}+U^{R}$ be real-valued potentials such that $H:=H_0-U$ has a selfadjoint realization.
  Assume that there exist
  $$
  0<\gamma\leq1,\qquad
  \epsilon_{\rm s}\in\{-1,1\},\qquad
  c_0,C_0,R_0>0
  $$
  such that $U^{\rm L}\in L^\infty(\R^d)$ and
  \begin{align}
    \label{eq:IW-leading-lower}
    \epsilon_{\rm s} U^{L}(x) & \geq c_0\langle x\rangle^{-\gamma}, \quad |x|\geq R_0, \\
    \label{eq:IW-leading-upper}
    |U^{L}(x)| & \leq C_0\langle x\rangle^{-\gamma}, \quad |x|\geq R_0.
  \end{align}
  Suppose moreover that there exists $0\not\equiv\phi\in\mathcal S(\R^d)$ with $\hat\phi\in C_c^\infty(\R^d)$ such that, with $K:=\supp\hat\phi$, the function $p$ is $C^\infty$ on a neighborhood of $K$ and
  \begin{align}
    \label{eq:IW-nonzero-velocity}
    0< v_-:= \inf_{\xi\in K}|\nabla p(\xi)|
    \leq \sup_{\xi\in K}|\nabla p(\xi)| =: v_+
    <\infty.
  \end{align}
  Writing $\phi_t:=\me{-itH_0}\phi$, assume that
  \begin{align}
    \label{eq:IW-remainder-small}
    \|U^{R}\phi_t\|_2 = o(t^{-\gamma}), \quad t\to\infty.
  \end{align}
  Finally, assume that the Duhamel formula
  \begin{align}
    \label{eq:IW-Duhamel}
    \me{it_2H}\me{-it_2H_0}\phi - \me{it_1H}\me{-it_1H_0}\phi
    = -i \int_{t_1}^{t_2} \me{itH}U\me{-itH_0}\phi\,dt
  \end{align}
  holds whenever $t_2>t_1$ are sufficiently large.
  Then $W(H,H_0):=\slim_{t\to\infty}\me{itH}\me{-itH_0}$ does not exist.

  \smallskip
  In particular, if $p(\xi)=\Psi(|\xi|^2)$ is radial, $\Psi\in C^\infty(0,\infty)$, and $\Psi'(\lambda)>0$ for all $\lambda>0$, and if $U^{R}$ is supported in a fixed ball and, for some $0\leq a<d$,
  \begin{align}
    \label{eq:IW-local-singularity}
    |U^{R}(x)| \lesssim 1+|x|^{-a},
  \end{align}
  then one can choose a nonzero $\phi$ with $\hat\phi\in C_c^\infty(\R^d\setminus\{0\})$ such that \eqref{eq:IW-nonzero-velocity} and \eqref{eq:IW-remainder-small} hold. In fact, for every $N\in\N$, one has $\|U^{R}\phi_t\|_2 \lesssim_N t^{-N}$.
\end{lemma}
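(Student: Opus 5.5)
We argue by contradiction: suppose $W(H,H_0)$ exists. Then $\Omega(t):=\me{itH}\me{-itH_0}\phi$ is Cauchy as $t\to\infty$. The Duhamel formula \eqref{eq:IW-Duhamel} gives, for every $\psi$,
\begin{align*}
\langle \psi_{t_2},\Omega(t_2)-\Omega(t_1)\rangle = -i\int_{t_1}^{t_2}\langle \me{-itH}\psi,U\phi_t\rangle\,dt .
\end{align*}
Rather than test against an arbitrary $\psi$, we test against $\Omega(t)$ itself. More precisely, we use $\psi=\Omega_+:=W(H,H_0)\phi$ and the intertwining property. Then $\me{-itH}\Omega_+=\phi_t+o(1)$, so the integrand becomes $\langle\phi_t,U\phi_t\rangle+o(1)\cdot\|U\phi_t\|$. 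The left-hand side tends to zero as $t_1\to\infty$. Hence it suffices to show that
\begin{align*}
\int_{t_1}^{t_2}\left(\langle\phi_t,U\phi_t\rangle+r(t)\|U\phi_t\|\right)dt,\qquad r(t)\to0,
\end{align*}
is unbounded.

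To close this, we need $\|U\phi_t\|\lesssim t^{-\gamma}$ together with a lower bound $\epsilon_{\rm s}\langle\phi_t,U\phi_t\rangle\gtrsim t^{-\gamma}$. Since $\gamma\le1$, $\int^\infty t^{-\gamma}dt=\infty$, and the error term $r(t)t^{-\gamma}$ is negligible against the main term, which yields the contradiction.

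The key input is non-stationary phase for $\phi_t$. Since $\hat\phi$ is supported in $K$, where $v_-\le|\nabla p|\le v_+$, repeated integration by parts in $\xi$ shows that
\begin{align*}
\|\one_{\{|x|\le v_-t/2\}\cup\{|x|\ge 2v_+t\}}\phi_t\|=O(t^{-N}),
\end{align*}
using the operator $L=\frac{\nabla p\cdot\nabla_\xi}{i t|\nabla p|^2}$ away from the classical region $x=t\nabla p(\xi)$. On the complementary annulus $v_-t/2\le|x|\le2v_+t$, with $t$ large so that $|x|\ge R_0$, we have $\langle x\rangle^{-\gamma}\sim t^{-\gamma}$. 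Then \eqref{eq:IW-leading-lower} gives $\epsilon_{\rm s}\langle\phi_t,U^L\phi_t\rangle\ge c t^{-\gamma}\|\phi\|^2-O(t^{-N})$, and \eqref{eq:IW-leading-upper} gives $\|U^L\phi_t\|\lesssim t^{-\gamma}$, using $U^L\in L^\infty$ for the outer regions. The remainder $U^R$ contributes $o(t^{-\gamma})$ by \eqref{eq:IW-remainder-small}, both in the norm and in the quadratic form, via Cauchy--Schwarz. The main obstacle is making the non-stationary phase bound rigorous with only local smoothness of $p$ near $K$. We handle this by inserting a cutoff $\tilde\chi\in C_c^\infty$ equal to $1$ on $K$, supported where $p$ is smooth, and controlling derivatives of $\me{-itp}\hat\phi$, each of which costs at most a factor $t$ that the integration by parts compensates.

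For the final claim we choose $\hat\phi\in C_c^\infty$ supported in an annulus $a_1\le|\xi|\le a_2$ with $0<a_1$. Then $\nabla p(\xi)=2\Psi'(|\xi|^2)\xi$ has $|\nabla p|\ge 2a_1\min\Psi'>0$, which gives \eqref{eq:IW-nonzero-velocity}. Since $U^R$ is supported in a fixed ball $B_\varrho$, the bound above gives $\sup_{|x|\le\varrho}|\phi_t(x)|=O(t^{-N})$ pointwise, since the integration by parts works pointwise there. Because $|U^R|\lesssim1+|x|^{-a}$ with $a<d$, we obtain $\|U^R\phi_t\|_2\le\|U^R\|_{L^2(B_\varrho)}\sup_{B_\varrho}|\phi_t|$, provided $a<d/2$. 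For $d/2\le a<d$ we instead use the $L^\infty$ bound together with $\|U^R\|_{L^1}$-type control, or interpolate between the pointwise decay and the $L^2$ bound. In the applications the singularity is $|x|^{-\alpha}$ with $\alpha<d/2$ or handled in the form sense, and we would check which version is needed.
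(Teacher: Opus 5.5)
Your contradiction argument for the main assertion is essentially the paper's. You test the Duhamel identity against $W\phi$, write $\me{-itH}W\phi=\phi_t+\me{-itH}(W-W_t)\phi$, and play $\epsilon_{\rm s}\langle\phi_t,U\phi_t\rangle\gtrsim t^{-\gamma}$ against $\|U\phi_t\|_2\lesssim t^{-\gamma}$ and $\int^\infty t^{-\gamma}\,dt=\infty$. The paper bounds the mass in $|x|\geq bt$ via $\|x\phi_t\|_2\lesssim 1+v_+t$ rather than by non-stationary phase; either works. Your vector field is misstated, though. It must be built from the gradient of the full phase, $(x-t\nabla p(\xi))\cdot\nabla_\xi/(i|x-t\nabla p(\xi)|^2)$. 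The operator $\nabla p\cdot\nabla_\xi/(it|\nabla p|^2)$ only yields $|\phi_t(x)|\lesssim_N t^{-N}\langle x\rangle^N$, which is useless on the cones $|x|\sim t$.

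The genuine gap is the final claim for $d/2\leq a<d$, a range the lemma explicitly includes. If $\hat\phi$ is merely supported in an annulus, then $\phi_t(0)=(2\pi)^{-d/2}\int\me{-itp(\xi)}\hat\phi(\xi)\,d\xi$ is in general nonzero. For $|U^R|\sim|x|^{-a}$ near $0$ with $2a\geq d$, the function $U^R\phi_t$ is then not in $L^2$ at all. So no $L^\infty$--$L^1$ or interpolation argument can give $\|U^R\phi_t\|_2=o(t^{-\gamma})$, and the Duhamel formula is not even meaningful in $L^2$.

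Nor is this range irrelevant for the applications. There $U^R=(1-\chi)\kappa|x|^{-\alpha}$, so $a=\alpha$, and $\alpha\geq d/2$ occurs: for $d=1$ with $\alpha\in[1/2,1)$, for $d=2$ with $\alpha\in[1,2)$, and for $d=3$ with $\alpha\in[3/2,2]$.

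The missing idea is to use rotation invariance to make $\phi_t$ vanish to high order at the origin, uniformly in $t$.
\begin{itemize}
\item Take $\hat\phi(\xi)=g(|\xi|)Y_\ell(\xi/|\xi|)$ with $0\not\equiv g\in C_c^\infty((r_0,r_1))$, $r_0>0$, and $Y_\ell$ a spherical harmonic of degree $\ell>a-d/2$ (an odd function if $d=1$).
\item Since $p$ is radial, for $|\beta|<\ell$ the derivative $\partial_x^\beta\phi_t(0)$ factors into a radial integral times $\int_{\bs^{d-1}}\omega^\beta Y_\ell(\omega)\,d\omega=0$.
\item Non-stationary phase gives $\sup_{|x|\leq R_1}|\partial_x^\beta\phi_t(x)|\lesssim_{\beta,N}t^{-N}$.
\item Taylor's theorem then yields $|\phi_t(x)|\lesssim_N t^{-N}|x|^\ell$ on the ball containing $\supp U^R$.
\end{itemize}
Hence $\|U^R\phi_t\|_2^2\lesssim_N t^{-2N}\int_{|x|\leq R_1}(1+|x|^{-2a})|x|^{2\ell}\,dx<\infty$, because $2\ell-2a+d>0$.
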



\begin{proof}
  We first prove the abstract statement. To this end, we argue by contradiction and assume that $W:=W(H,H_0)$ exists. Let $W_t:=\me{itH}\me{-itH_0}$.

  \medskip
  \noindent
  {\bf Step 1: Free propagation estimate.}
  Fix $0<a<v_-$. Since
  $$
  \phi_t(x) = (2\pi)^{-d/2} \int_K \me{i(x\cdot\xi-tp(\xi))} \hat\phi(\xi)\,d\xi,
  $$
  we have, for $|x|\leq at$ and $\xi\in K$,
  $$
  \left| \nabla_\xi (x\cdot\xi-tp(\xi)) \right| \geq t(v_--a).
  $$
  Repeated integration by parts in $\xi$ therefore gives, for every $M\in\N$,
  $$
  \sup_{|x|\leq at} |\phi_t(x)| \lesssim_{M,\phi} t^{-M}.
  $$
  Thus, for every $N\in\N$,
  \begin{align}
    \label{eq:IW-inner-propagation}
    \left\| \one_{|x|\leq at}\phi_t \right\|_2 \lesssim_{N,\phi} t^{-N}.
  \end{align}
  Next, we show a crude propagation estimate. Since $x_j\me{-itH_0} = \me{-itH_0} (x_j+t\,\partial_jp(D))$,
  \begin{align}
    \label{eq:IW-first-moment}
    \|x\phi_t\|_2 \leq \|x\phi\|_2 + t\|\nabla p(D)\phi\|_2 \lesssim_\phi 1 + v_+ t.
  \end{align}
  We can choose $b=b_\phi>a$ sufficiently large and then $T=T_\phi>0$ sufficiently large, both depending on $\phi$, so that
  \begin{align}
    \label{eq:IW-outer-mass}
    \left\| \one_{|x|\geq bt}\phi_t \right\|_2^2
    \leq \frac{1}{b^2t^2} \|x\phi_t\|_2^2
    \leq \frac14\|\phi\|_2^2, \quad t\geq T
  \end{align}
  and 
  $$
  \left\| \one_{|x|\leq at}\phi_t \right\|_2^2 \leq \frac14\|\phi\|_2^2, \quad t\geq T.
  $$
  Thus, since $\|\phi_t\|_2=\|\phi\|_2$,
  \begin{align}
    \label{eq:IW-annulus-mass}
    \left\| \one_{at<|x|<bt}\phi_t \right\|_2^2 \geq
    \frac12\|\phi\|_2^2, \quad t \geq T.
  \end{align}

  \medskip
  \noindent
  {\bf Step 2: Lower bound for $\epsilon_{\rm s}\langle\phi_t,U\phi_t\rangle$.}
  We take $t$ so large that $at\geq R_0$. By \eqref{eq:IW-leading-lower}, we may discard all contributions from $|x|\geq R_0$ outside the annulus in \eqref{eq:IW-annulus-mass} for a lower bound for $\epsilon_{\rm s}\langle\phi_t,U\phi_t\rangle$. Hence,
  \begin{align}
    \label{eq:IW-leading-expectation-pre}
    \epsilon_{\rm s} \langle \phi_t,U^{L}\phi_t\rangle
    \geq c_0 \int_{at<|x|<bt} \langle x\rangle^{-\gamma} |\phi_t(x)|^2\,dx
    - \|U^{L}\|_\infty \left\| \one_{|x|<R_0}\phi_t \right\|_2^2.
  \end{align}
  For the first term, we use $\langle x\rangle^{-\gamma} \gtrsim_b t^{-\gamma}$ and \eqref{eq:IW-annulus-mass}, and the last term in \eqref{eq:IW-leading-expectation-pre} is $\mathcal O_N(t^{-N})$ by \eqref{eq:IW-inner-propagation}. Therefore,
  \begin{align}
    \label{eq:IW-leading-expectation}
    \epsilon_{\rm s} \langle \phi_t, U^{L}\phi_t\rangle
    \geq c_\phi t^{-\gamma} - C_{N,\phi}t^{-N}
  \end{align}
  for every $N$ and all sufficiently large $t$, where $c_\phi>0$.

  By \eqref{eq:IW-remainder-small} and conservation of the $L^2$ norm,
  $$
  \left| \langle \phi_t, U^{R}\phi_t\rangle \right|
  \leq \|U^{R}\phi_t\|_2\|\phi\|_2
  = o(t^{-\gamma}).
  $$
  Hence, choosing $N>\gamma$,
  \begin{align}
    \label{eq:IW-total-expectation}
    \epsilon_{\rm s} \langle \phi_t, U\phi_t\rangle \geq \frac{c_\phi}{2}t^{-\gamma}
  \end{align}
  for all sufficiently large $t$.

  \medskip
  \noindent
  {\bf Step 3: Upper bound for $\|U\phi_t\|_2$.}
  Using \eqref{eq:IW-leading-upper} and \eqref{eq:IW-inner-propagation}, we have, for $at\geq R_0$,
  \begin{align*}
    \|U^{L}\phi_t\|_2
    & \leq \left\| U^{L} \one_{|x|<at}\phi_t \right\|_2 + \left\| U^{L} \one_{|x|\geq at}\phi_t \right\|_2 \\
    & \lesssim \|U^{L}\|_\infty \left\| \one_{|x|<at}\phi_t \right\|_2 + t^{-\gamma}\|\phi\|_2 \lesssim_\phi t^{-\gamma}.
  \end{align*}
  Together with the assumption \eqref{eq:IW-remainder-small}, this yields
  \begin{align}
    \label{eq:IW-total-norm}
    \|U\phi_t\|_2 \leq C_\phi t^{-\gamma}
  \end{align}
  for all sufficiently large $t$.

  \medskip
  \noindent
  {\bf Step 4: Contradiction.}
  Since, by assumption, $W_t\phi\to W\phi$ in $L^2$, for every $\delta>0$ there exists $T_\delta$ such that
  \begin{align}
    \label{eq:IW-wave-convergence}
    \|(W-W_t)\phi\|_2 \leq\delta, \quad t\geq T_\delta.
  \end{align}
  Let $t_2>t_1\geq T_\delta$. Then, by \eqref{eq:IW-Duhamel},
  \begin{align}
    \label{eq:IW-Duhamel-consequence}
    W_{t_2}\phi-W_{t_1}\phi = -i \int_{t_1}^{t_2} \me{itH}U\phi_t\,dt.
  \end{align}
  Since $\me{-itH}W_t\phi=\phi_t$, we have $\me{-itH}W\phi = \phi_t + \me{-itH}(W-W_t)\phi$. Therefore,
  \begin{align}
    \label{eq:IW-integrand-lower}
    \epsilon_{\rm s} \re \left\langle \me{-itH}W\phi, U\phi_t \right\rangle
    \geq \epsilon_{\rm s} \langle \phi_t, U\phi_t\rangle - \delta\|U\phi_t\|_2, \quad t\geq T_\delta.
  \end{align}
  Choose $\delta>0$ so small that $\delta C_\phi<\frac{c_\phi}{4}$, where $C_\phi$ is the constant in \eqref{eq:IW-total-norm}. Then \eqref{eq:IW-total-expectation}, \eqref{eq:IW-total-norm}, and \eqref{eq:IW-integrand-lower} imply
  $$
  \epsilon_{\rm s} \re \left\langle \me{-itH}W\phi,U\phi_t \right\rangle
  \geq \frac{c_\phi}{4}t^{-\gamma}, \quad t \text{ sufficiently large.}
  $$
  Taking the scalar product of the Duhamel formula \eqref{eq:IW-Duhamel-consequence} with $W\phi$, multiplying by $i\epsilon_{\rm s}$ and taking its real part, we obtain
  \begin{align}
    \label{eq:IW-final-lower}
    \left| \left\langle W\phi, (W_{t_2}-W_{t_1})\phi \right\rangle \right|
    \geq \frac{c_\phi}{4} \int_{t_1}^{t_2}t^{-\gamma}\,dt.
  \end{align}
  Since $0<\gamma\leq1$, the right-hand side diverges as $t_2\to\infty$.
  On the other hand, $\left|\left\langle W\phi,(W_{t_2}-W_{t_1})\phi \right\rangle \right|
  \leq 2\|\phi\|_2^2$, a contradiction. Thus, $W(H,H_0)$ does not exist.

  \medskip
  \noindent
  {\bf Step 5: Application.}
  Assume now
  $$
  p(\xi)=\Psi(|\xi|^2),
  \qquad
  \Psi\in C^\infty(0,\infty),
  \qquad
  \Psi'(\lambda)>0,
  \quad \lambda>0,
  $$
  and \eqref{eq:IW-local-singularity}, i.e., there is $R_1>0$ such that
  $$
  \supp U^{R}\subset B(0,R_1),
  \qquad
  |U^{R}(x)| \lesssim 1+|x|^{-a}, \qquad
  0\leq a<d.
  $$
  We now show the existence of a nonzero $\phi$ with $\hat\phi\in C_c^\infty(\R^d\setminus\{0\})$ such that \eqref{eq:IW-nonzero-velocity} and $\|U^{R}\phi_t\|_2 = O_N(t^{-N})$ hold. To this end, choose $0<r_0<r_1<\infty$. Let $\ell\in\N_0$ satisfy
  \begin{align}
    \label{eq:IW-angular-condition}
    \ell>a-\frac d2,
  \end{align}
  choose $0\not\equiv g\in C_c^\infty((r_0,r_1))$, and let $Y_\ell$ be a spherical harmonic of degree $\ell$; in $d=1$, there are two spherical harmonics, $Y_0(x)=1$ and $Y_1(x)=x/|x|$ for $x\in\R$, and we take $\ell=1$ in this case. Take
  \begin{align}
    \label{eq:IW-angular-test-state}
    \hat\phi(\xi) := g(|\xi|) Y_\ell\left(\frac{\xi}{|\xi|}\right).
  \end{align}
  Then, indeed, $\supp\hat\phi \subset\subset\R^d\setminus\{0\}$ and $0\neq\phi\in\mathcal S(\R^d)$.
  Since $\Psi'>0$, \eqref{eq:IW-nonzero-velocity} holds. Since $H_0$ is spherically symmetric, $\phi_t$ remains in the same angular momentum channel $\ell$ for all $t$. Moreover, on the fixed ball $B(0,R_1)$, non-stationary phase gives, for every multi-index $\beta$ and every $N\in\N$,
  \begin{align}
    \label{eq:IW-local-derivative}
    \sup_{|x|\leq R_1} |\partial_x^\beta\phi_t(x)| \lesssim_{\beta,N,\phi} t^{-N}, \quad t\geq 2R_1/v_-.
  \end{align}
  For $|\beta|<\ell$, we have $\partial_x^\beta \phi_t(0)=0$ as a consequence of
  \begin{align}
    \label{eq:IW-spherical-orthogonality}
    \int_{\bs^{d-1}}\omega^\beta Y_\ell(\omega)\,d\omega=0.
  \end{align}
  To see this, recall that every homogeneous polynomial $p$ on $\R^d$ of degree $m$ can be uniquely written in the form
  $$
  p = p_m + |x|^2p_{m-2} + ... + |x|^{2k}p_{m-2k},
  $$
  where $k=\lfloor m/2\rfloor$ and each $p_j$ is a harmonic, homogeneous polynomial of degree $j$; see \cite[Theorem~5.7]{Axleretal2001}. Restricting this equality to $\bs^{d-1}$ allows us to write
  $$
  \omega^\beta = Y_{|\beta|}(\omega) + Y_{|\beta|-2}(\omega)+ ...
  $$
  This expansion, $|\beta|<\ell$, and the orthonormality of spherical harmonics yield \eqref{eq:IW-spherical-orthogonality}.
  
  Taylor's theorem and \eqref{eq:IW-local-derivative} thus give
  \begin{align}
    \label{eq:IW-origin-vanishing}
    |\phi_t(x)| \lesssim_{N,\phi} t^{-N}|x|^\ell, \quad |x|\leq R_1, \ t\geq2R_1/v_-.
  \end{align}
  Hence,
  \begin{align}
    \label{eq:IW-local-remainder}
    \|U^{R}\phi_t\|_2^2
    \lesssim_{N,\phi} t^{-2N} \int_{|x|\leq R_1} (1+|x|^{-2a})|x|^{2\ell}\,dx
    \lesssim_{N,\phi} t^{-2N}, \quad t\geq2R_1/v_-,
  \end{align}
  where the integral is finite since $2\ell-2a+d>0$. This concludes the proof.
\end{proof}

\subsection{Nonexistence of $W(L_\kappa,L_0)$ and $W(C_\kappa,C_0)$ for $\alpha\leq1$}

\begin{proof}[{Proofs of Theorem~\ref{asympcomplhardy}(2) and
  Corollary~\ref{cor:massiverelativisticnonexistence}(1)}]
  We treat both statements simultaneously.
  Let $0<\alpha\leq1$, $\kappa\neq0$, and
  $$
  T_0=
  \begin{cases}
    L_0=(-\Delta)^{\alpha/2},\\
    C_0=C_0(-\Delta),
  \end{cases}
  \qquad
  T_\kappa := T_0-\frac{\kappa}{|x|^\alpha}.
  $$
  We aim to apply Lemma~\ref{lem:generalized-ishida-wada} with
  $$
  H_0 = T_0,
  \qquad
  U(x) = \frac{\kappa}{|x|^\alpha},
  \qquad
  H = T_\kappa.
  $$
  For $T_0=L_0$, the Fourier symbol is
  $$
  p(\xi)=|\xi|^\alpha = \Psi(|\xi|^2),
  \qquad
  \Psi(\lambda)=\lambda^{\alpha/2}
  $$
  with $\Psi'(\lambda)>0$ for all $\lambda>0$.
  For $T_0=C_0$, the defining properties of $\ck_\alpha$ imply that $C_0$ is a nonconstant Bernstein function. Hence, $C_0\in C^\infty(0,\infty)$ and $C_0'(\lambda)>0$ for all $\lambda>0$.
  Thus, in both cases the symbol of $T_0$ has the form $p(\xi)=\Psi(|\xi|^2)$ with $\Psi'(\lambda)>0$ for $\lambda>0$, so the final assertion of Lemma~\ref{lem:generalized-ishida-wada} is applicable.

  Let $R\geq1$ and choose a radial function $\chi\in C^\infty(\R^d;[0,1])$ such that
  $$
  \chi(x)=0\quad\text{for }|x|\leq R,
  \qquad
  \chi(x)=1\quad\text{for }|x|\geq2R,
  $$
  Set
  $$
  U^{L}(x) := \chi(x)\frac{\kappa}{|x|^\alpha},
  \qquad
  U^{R}(x) := (1-\chi(x))\frac{\kappa}{|x|^\alpha}.
  $$
  Then $U^{L}\in L^\infty(\R^d)$ and, with $\epsilon_{\rm s}:=\sgn(\kappa)$, there are $c,C>0$ such that
  $$
  c\langle x\rangle^{-\alpha} \leq \epsilon_{\rm s}U^{L}(x) \leq C\langle x\rangle^{-\alpha}, \quad |x|\geq2R.
  $$
  Thus, the long-range exponent in Lemma~\ref{lem:generalized-ishida-wada} is $\gamma=\alpha\leq1$. Moreover, $U^{R}$ is supported in $B(0,2R)$ and $|U^{R}(x)| \lesssim 1+|x|^{-\alpha}$. Since $\alpha<d$, Lemma~\ref{lem:generalized-ishida-wada} yields the existence of a nonzero $\phi\in\mathcal S(\R^d)$ with $\hat\phi\in C_c^\infty(\R^d\setminus\{0\})$, such that, with $\phi_t:=\me{-itT_0}\phi$, we have $\|U^{R}\phi_t\|_2 \lesssim_N t^{-N}$ for every $N\in\N$.

  Let us also verify the Duhamel formula \eqref{eq:IW-Duhamel}. The function $\hat\phi_t$ has the same compact support as $\hat\phi$, and hence $\phi_t\in\mathcal S(\R^d)$ and $T_0\phi_t\in L^2(\R^d)$. Furthermore, $U\phi_t = U^{L}\phi_t + U^{R}\phi_t \in L^2(\R^d)$.
  Since $T_\kappa$ is defined by the corresponding semibounded quadratic form,
  we have $\phi_t\in\D(T_\kappa)$ and $T_\kappa\phi_t = T_0\phi_t-U\phi_t$. As $t\mapsto U\phi_t$ is locally continuous in $L^2$,
  $$
  \frac{d}{dt} \left(\me{itT_\kappa}\me{-itT_0}\phi\right)
  = -i\me{itT_\kappa}U\me{-itT_0}\phi
  $$
  for sufficiently large $t$, and integration yields the Duhamel formula \eqref{eq:IW-Duhamel}.

  All assumptions of that lemma are therefore satisfied. Hence, $W(T_\kappa,T_0)$ does not exist. Thus, taking $T_0=L_0$ proves Theorem~\ref{asympcomplhardy}(2), while taking $T_0=C_0$ proves Corollary~\ref{cor:massiverelativisticnonexistence}(1).
\end{proof}

\subsection{Nonexistence of $W(L_\kappa-V^L,L_0)$ and $W(C_\kappa-V^L,C_0)$}

\begin{proof}[{Proof of Theorem~\ref{asympcomplhardyplusperturbationnonexistence}(\ref{asympcomplhardyplusperturbation:nonexistencefree}) and
    Corollary~\ref{cor:massiverelativisticnonexistence}(2)}]
  Again, we treat the homogeneous and massive kinetic energies simultaneously.
  Let
  $$
  T_0 =
  \begin{cases}
    L_0 = (-\Delta)^{\alpha/2},\\
    C_0 = C_0(-\Delta).
  \end{cases}
  $$
  Set
  $$
  U(x) := \frac{\kappa}{|x|^\alpha}+V^L(x),
  \qquad
  H := T_0-U.
  $$
  Thus,
  $$
  H =
  \begin{cases}
    L_\kappa-V^L,\\
    C_\kappa-V^L.
  \end{cases}
  $$
  As in the preceding proof, the Fourier symbol of $T_0$ is of the form
  $$
  p(\xi) = \Psi(|\xi|^2),
  \qquad
  \Psi'(\lambda)>0,
  \quad \lambda>0.
  $$
  Hence, Lemma~\ref{lem:generalized-ishida-wada} is applicable.
  We first determine the leading long-range behavior of $U$.

  \smallskip
  \noindent
  {\bf Case 1: $\rho_L<\alpha$.}
  In this case, $|x|^{-\alpha} = o(\langle x\rangle^{-\rho_L})$ as $|x|\to\infty$.
  Let $\epsilon_L\in\{-1,1\}$ denote the sign of $V^L$, so that by \eqref{ass1} or \eqref{ass2},
  $$
  K_1\langle x\rangle^{-\rho_L} \leq \epsilon_LV^L(x) \leq K_2\langle x\rangle^{-\rho_L}.
  $$
  Hence, after increasing $R>0$ if necessary,
  \begin{align}
    \label{eq:nonexistence-total-leading-rho}
    c\langle x\rangle^{-\rho_L} \leq \epsilon_L U(x) \leq C\langle x\rangle^{-\rho_L}, \quad |x|\geq R.
  \end{align}
  Thus, the leading exponent is $\gamma=\rho_L$.

  \smallskip
  \noindent
  {\bf Case 2: $\rho_L>\alpha$.}
  If $\kappa\neq0$, then $\langle x\rangle^{-\rho_L} = o(|x|^{-\alpha})$, and hence, after increasing $R$,
  \begin{align}
    \label{eq:nonexistence-total-leading-alpha}
    c\langle x\rangle^{-\alpha} \leq \sgn(\kappa)U(x) \leq C\langle x\rangle^{-\alpha}, \quad |x|\geq R.
  \end{align}
  Thus, $\gamma=\alpha$. Notice that $\alpha<\rho_L\leq1$, and therefore $\gamma=\alpha<1$.

  If $\kappa=0$, then $U=V^L$, and the long-range assumptions directly give
  $$
  c\langle x\rangle^{-\rho_L} \leq \epsilon_LU(x) \leq C\langle x\rangle^{-\rho_L}
  $$
  for all $x$, so that $\gamma=\rho_L\leq1$.

  \smallskip
  \noindent
  {\bf Case 3: $\rho_L=\alpha$.}
  Here the two contributions have the same order and may cancel. However, by assumption there exist $\epsilon_{\rm s}\in\{-1,1\}$, $c>0$, and $R>0$ such that
  \begin{align*}
    \epsilon_{\rm s} \left(\frac{\kappa}{|x|^\alpha}+V^L(x)\right)
    \geq c\langle x\rangle^{-\alpha}, \quad |x|\geq R.
  \end{align*}
  The assumptions on $V^L$ also give the corresponding upper bound
  $$
  |U(x)| \lesssim \langle x\rangle^{-\alpha}, \quad |x|\geq R.
  $$
  Thus, $\gamma=\alpha=\rho_L\leq1$.

  \smallskip
  In all three cases, we have found $0<\gamma\leq1$, $\epsilon_{\rm s}\in\{-1,1\}$, and $c,C,R>0$ such that
  \begin{align}
    \label{eq:nonexistence-total-tail}
    c\langle x\rangle^{-\gamma} \leq \epsilon_{\rm s}U(x),
    \qquad
    |U(x)| \leq C\langle x\rangle^{-\gamma}, \quad |x|\geq R.
  \end{align}
  Choose a radial cutoff $\chi\in C^\infty(\R^d;[0,1])$ with
  $$
  \chi=0
  \quad\text{on }B(0,R),
  \qquad
  \chi=1
  \quad\text{on }\R^d\setminus B(0,2R),
  $$
  and define
  \begin{align}
    \label{eq:nonexistence-UL-UR-cutoff}
    U^{L} := \chi U,
    \qquad
    U^{R} := (1-\chi)U.
  \end{align}
  Then, $U^{L}\in L^\infty(\R^d)$ and \eqref{eq:nonexistence-total-tail} implies the assumptions \eqref{eq:IW-leading-lower}--\eqref{eq:IW-leading-upper} in Lemma~\ref{lem:generalized-ishida-wada}.
  On the other hand, $\supp U^{R} \subset B(0,2R)$. Since $V^L\in L^\infty(\R^d)$,
  $$
  |U^{R}(x)| \lesssim 1+|x|^{-\alpha}.
  $$
  Since $\alpha<d$, Lemma~\ref{lem:generalized-ishida-wada} provides a nonzero $\phi\in\mathcal S(\R^d)$ with $\hat\phi \in C_c^\infty(\R^d\setminus\{0\})$ such that, with $\phi_t:=\me{-itT_0}\phi$, we have $\|U^{R}\phi_t\|_2 \lesssim_N t^{-N}$ for every $N\in\N$.

  The Duhamel formula \eqref{eq:IW-Duhamel} is verified exactly as in the previous subsection.
  Thus, by Lemma~\ref{lem:generalized-ishida-wada}, $W(H,T_0)$ does not exist.
  For $T_0=L_0$, this proves $W(L_\kappa-V^L,L_0)$ does not exist.
  For $T_0=C_0$, it proves $W(C_\kappa-V^L,C_0)$ does not exist.
\end{proof}

\subsection{Nonexistence of $W(L_\kappa-V^L,L_\kappa)$ and $W(C_\kappa-V^L,C_\kappa)$ for $\alpha>1$}

\begin{proof}[{Proof of the nonexistence assertions in
    Theorem~\ref{asympcomplhardyplusperturbationnonexistence}(\ref{asympcomplhardyplusperturbation:nonexistencerelativelargealpha}) and
    Corollary~\ref{cor:massiverelativisticnonexistence}(3)}]
  We assume $\alpha>1$. We again treat both kinetic energies simultaneously. Set
  $$
  H_0 =
  \begin{cases}
    L_0, \\
    C_0,
  \end{cases}
  \qquad
  H_1 =
  \begin{cases}
    L_\kappa, \\
    C_\kappa,
  \end{cases}
  \qquad
  H_2:=H_1-V^L.
  $$
  By Theorem~\ref{asympcomplhardy}(1), respectively Corollary~\ref{cor:massiverelativisticexistence}(1), the wave operator
  $$
  W_{10}:=W(H_1,H_0)
  $$
  exists and is asymptotically complete. Thus, $\ran W_{10} = \ch_{\rm ac}(H_1)$.
  Since $0<\rho_L\leq1<\alpha$, we have, by Theorem~\ref{asympcomplhardyplusperturbationnonexistence}(\ref{asympcomplhardyplusperturbation:nonexistencefree}) and Corollary~\ref{cor:massiverelativisticnonexistence}(2) (proved in the previous subsection), that $W(H_2,H_0)$ does not exist.

  Suppose, contrary to the claim, that the relative wave operator
  \begin{align}
    \label{eq:relative-wave-assume}
    W_{21} := \slim_{t\to\infty} \me{itH_2}\me{-itH_1}P_{\rm ac}(H_1)
  \end{align}
  exists. Then, by the chain rule for wave operators, $W_{20}$ exists; this is a contradiction.
  Thus, taking $(H_0,H_1,H_2) = (L_0,L_\kappa,L_\kappa-V^L)$ proves $W(L_\kappa-V^L,L_\kappa)$ does not exist.
  Taking $(H_0,H_1,H_2) = (C_0,C_\kappa,C_\kappa-V^L)$ proves $W(C_\kappa-V^L,C_\kappa)$ does not exist.
\end{proof}

\subsection{Nonexistence of $W(L_\kappa-V^L,L_\kappa)$ for $0<\rho_L\leq\alpha\leq1$ and $\alpha\geq1/2$}

The proof crucially relies on the propagation estimates in Lemmas~\ref{lem9} and~\ref{lem9-small-alpha}, which we use to infer kinds of maximal velocity estimates in Step~3 below, and the estimates in Subsection~\ref{ss:preliminaries-generator-of-dilations}.

\begin{proof}[{Proof of Theorem~\ref{asympcomplhardyplusperturbationnonexistence}
  (\ref{asympcomplhardyplusperturbation:nonexistencelongrangesmallalpha})}]
  We write
  $$
  L:=L_\kappa,
  \qquad
  H:=L_\kappa-V^L,
  \qquad
  \rho:=\rho_L,
  $$
  and argue by contradiction. Thus, suppose that $W:=W(H,L) = \slim_{t\to\infty}\me{i t H}\me{-i t L}$
  exists.

  \medskip
  \noindent
  {\bf Step 1: Constructing a test function.}
  Since $\kappa<\kappa_{\rm c}$, the spectrum of $L_\kappa$ is $[0,\infty)$ and is purely absolutely continuous. Choose $0\not\equiv h\in C_c^\infty(0,\infty)$ such that $h(L)\neq0$. Since $\D(\langle x\rangle)\cap\D(\langle A\rangle^2)$ is dense in $L^2(\R^d)$, we can choose
  $$
  \psi\in \D(\langle x\rangle)\cap\D(\langle A\rangle^2)
  $$
  such that
  $$
  \phi:=h(L)\psi\neq0.
  $$
  By Lemma~\ref{lem9-small-alpha}, respectively by Lemma~\ref{lem9} when $\alpha=1$, evaluated at $t=0$ and $\mu=1$, we have $h(L)\D(\langle x\rangle) \subset \D(\langle x\rangle)$. Moreover, by $\me{i s A}L\me{-i s A} = \me{-\alpha s}L$,
  $$
  \me{i s A}h(L)\me{-i s A} = h(\me{-\alpha s}L).
  $$
  Since $h\in C_c^\infty(0,\infty)$, the right-hand side is $C^\infty$ in $s$ in operator norm. Hence, $h(L)\in C^\infty(A)$ and, in particular, $h(L)\D(\langle A\rangle^2) \subset \D(\langle A\rangle^2)$. Thus, $\phi\in \D(\langle x\rangle)\cap\D(\langle A\rangle^2)$. Choose $f\in C_c^\infty(0,\infty)$ such that $f=1$ on a neighborhood of $\supp h$. Then
  \begin{align}
    \label{eq:nonexistence-unified-spectral-localization}
    \phi=f(L)\phi.
  \end{align}
  In particular, $\phi\in\D(L^m)$ for every $m\geq0$. We now set $u_t:=\me{-i t L}\phi$.

  \medskip
  \noindent
  {\bf Step 2: Duhamel decomposition.}
  Since $V^L$ is bounded,
  $$
  \frac{d}{dt} \left( \me{i t H}\me{-i t L} \right) = -i\me{i t H}V^L\me{-i t L}
  $$
  in the strong sense. Therefore, for $0<t_1<t_2$,
  \begin{align}
    \label{eq:nonexistence-unified-Duhamel}
    \left\langle W\phi, \left(\me{i t_2 H}\me{-i t_2 L} - \me{i t_1 H}\me{-i t_1 L} \right)\phi \right\rangle 
    = -i\int_{t_1}^{t_2} \left\langle W\phi, \me{i t H}V^Lu_t \right\rangle\,dt.
  \end{align}
  For some $\eta>0$ to be specified below (in Step~3), we choose $\lambda>0$ so large that
  \begin{align}
    \label{eq:nonexistence-unified-positive-resolvent}
    \lambda+H\geq\eta>0.
  \end{align}
  By the intertwining property, $(\lambda+H)W\phi = W(\lambda+L)\phi$. Writing $R_\lambda:=(\lambda+H)^{-1}$, we get
  \begin{align}
    \label{eq:nonexistence-unified-resolvent-split}
    \begin{split}
      \left\langle W\phi,\me{i t H}V^Lu_t \right\rangle
      & = \left\langle \me{-i t H}W(\lambda+L)\phi,R_\lambda V^L u_t \right\rangle \\
      & = \left\langle \me{-i t L}(\lambda+L)\phi, R_\lambda V^Lu_t \right\rangle \\
      & \quad + \left\langle \me{-i t H} \left(W-\me{i t H}\me{-i t L} \right)(\lambda+L)\phi, R_\lambda V^Lu_t \right\rangle.
    \end{split}
  \end{align}
  Define
  \begin{align}
    \label{eq:nonexistence-unified-J1}
    J_1 & := \int_{t_1}^{t_2} \left\langle \me{-i t L}(\lambda+L)\phi, R_\lambda V^Lu_t \right\rangle\,dt, \\
    \label{eq:nonexistence-unified-J2}
    J_2 & := \int_{t_1}^{t_2}\left\langle \me{-i t H} \left( W-\me{i t H}\me{-i t L} \right)(\lambda+L)\phi, R_\lambda V^Lu_t \right\rangle\,dt.
  \end{align}
  Then,
  \begin{align}
    \label{eq:nonexistence-unified-J1J2}
    \left\langle W\phi, \left(\me{i t_2 H}\me{-i t_2 L} - \me{i t_1 H}\me{-i t_1 L} \right)\phi \right\rangle
    = -i (J_1 + J_2).
  \end{align}

  \medskip
  \noindent
  {\bf Step 3: Lower bound for the main term $J_1$.}
  Since $\me{-i t L}(\lambda+L)\phi = (\lambda+L)u_t$ and $R_\lambda V^Lu_t\in\D(H)=\D(L)$,
  $$
  q(t)
  := \left\langle (\lambda+L)u_t, R_\lambda V^Lu_t \right\rangle
  = \left\langle u_t, (\lambda+L)R_\lambda V^Lu_t \right\rangle
  $$
  is well-defined.
  Since $L=H+V^L$, we have $(\lambda+L)R_\lambda = 1+V^LR_\lambda$. Hence,
  \begin{align}
    \label{eq:nonexistence-unified-q}
    q(t) = \langle u_t, V^Lu_t \rangle + \langle V^Lu_t, R_\lambda V^Lu_t \rangle.
  \end{align}
  Suppose first that $V^L$ satisfies \eqref{ass1}, i.e., is positive. Since $R_\lambda\geq0$,
  \begin{align}
    \label{eq:nonexistence-unified-q-positive}
    q(t) \geq K_1 \left\langle u_t, \langle x\rangle^{-\rho}u_t \right\rangle.
  \end{align}
  Suppose instead that $V^L$ satisfies \eqref{ass2}, i.e., is negative. In this case, choose $\eta$ (and hence $\lambda$) in~\eqref{eq:nonexistence-unified-positive-resolvent} so large that $\eta>K_2^2/K_1$. Then, $0<R_\lambda\leq\eta^{-1}$. Since $|V^L|^2 \leq K_2^2\langle x\rangle^{-\rho}$,
  \begin{align}
    \label{eq:nonexistence-unified-q-negative}
    \begin{split}
      q(t)
      & \leq -K_1 \left\langle u_t, \langle x\rangle^{-\rho} u_t \right\rangle + \eta^{-1} \langle u_t, |V^L|^2 u_t\rangle \\
      & \leq -\left(K_1-\frac{K_2^2}{\eta} \right)\left\langle u_t, \langle x\rangle^{-\rho} u_t \right\rangle.
    \end{split}
  \end{align}
  Thus, in either case, $q(t)$ has a fixed sign, independent of $t$, and there exists $c_0>0$ such that
  \begin{align}
    \label{eq:nonexistence-unified-q-lower}
    |q(t)| \geq c_0 \left\langle u_t, \langle x\rangle^{-\rho} u_t\right\rangle.
  \end{align}
  We now show that a fixed positive fraction of the mass of $u_t$ stays inside a ballistic cone. By \eqref{eq:nonexistence-unified-spectral-localization} and Lemma~\ref{lem9-small-alpha}, respectively Lemma~\ref{lem9} when $\alpha=1$, we have, for $t\geq1$, that $\|\langle x\rangle u_t\| \lesssim_f t\|\langle x\rangle\phi\|$. Therefore, for every $\Theta>0$,
  \begin{align}
    \label{eq:nonexistence-unified-outer-cone}
    \left\| \one_{|x|\geq\Theta t}u_t \right\|
    \leq \frac{1}{\Theta t} \|\langle x\rangle u_t\|
    \lesssim_f \Theta^{-1}\|\langle x\rangle\phi\|.
  \end{align}
  Choose $\Theta=\Theta_\phi\geq1$ so large that
  $$
  \left\| \one_{|x|\geq\Theta t}u_t \right\|^2 \leq \frac12\|\phi\|^2
  $$
  for every $t\geq1$. Since $\|u_t\|=\|\phi\|$, we then have
  \begin{align}
    \label{eq:nonexistence-unified-inner-mass}
    \left\| \one_{|x|<\Theta t}u_t\right\|^2 \geq \frac12\|\phi\|^2.
  \end{align}
  On $\{|x|<\Theta t\}$, with $t\geq1$ and $\Theta\geq1$, we have $\langle x\rangle \leq \sqrt{1+\Theta^2t^2} \leq \sqrt2\,\Theta t$. Hence,
  $$
  \langle x\rangle^{-\rho} \geq (\sqrt2\,\Theta)^{-\rho}t^{-\rho} \quad \text{on } \{|x|<\Theta t\}.
  $$
  By \eqref{eq:nonexistence-unified-q-lower} and \eqref{eq:nonexistence-unified-inner-mass}, there exists $c_\phi>0$ such that
  \begin{align}
    \label{eq:nonexistence-unified-q-final}
    |q(t)| \geq c_\phi t^{-\rho}, \quad t\geq1.
  \end{align}
  Since $q(t)$ has a fixed sign,
  \begin{align}
    \label{eq:nonexistence-unified-J1-lower}
    |J_1| = \int_{t_1}^{t_2}|q(t)|\,dt \geq c_\phi \int_{t_1}^{t_2}t^{-\rho}\,dt
  \end{align}
  whenever $t_1\geq1$.

  \medskip
  \noindent
  {\bf Step 4: Estimate of the error term $J_2$.}
  Let $\epsilon>0$. Since, by assumption, $\me{i t H}\me{-i t L} \to W$ strongly, there exists $T_\epsilon\geq1$ such that
  \begin{align}
    \label{eq:nonexistence-unified-wave-convergence}
    \left\| \left(W-\me{i t H}\me{-i t L}\right)(\lambda+L)\phi \right\| \leq\epsilon
  \end{align}
  for all $t\geq T_\epsilon$. Thus, if $t_1\geq T_\epsilon$,
  \begin{align}
    \label{eq:nonexistence-unified-J2-first}
    |J_2| \leq \epsilon \int_{t_1}^{t_2}\|R_\lambda V^Lu_t\|\,dt.
  \end{align}
  We now estimate $R_\lambda V^Lu_t$. To this end, let $\theta>0$ be the minimal escape velocity constant from Theorem~\ref{the6} with $V=0$, the function $f$ from \eqref{eq:nonexistence-unified-spectral-localization}, and $\nu=2$ there. Choose $\chi\in C^\infty(\R;[0,1])$ such that
  $$
  \chi(s)=0
  \quad\text{for }s\leq\frac{\theta}{2},
  \qquad
  \chi(s)=1
  \quad\text{for }s\geq\theta.
  $$
  We decompose
  \begin{align}
    \label{eq:nonexistence-unified-low-high-A}
    \|R_\lambda V^Lu_t\|
    \leq \|R_\lambda V^L(1-\chi(A/t))u_t\| + \|R_\lambda V^L\chi(A/t)u_t\|.
  \end{align}
  Since $|1-\chi(s)| \leq \one_{(-\infty,\theta]}(s)$, Theorem~\ref{the6} and \eqref{eq:nonexistence-unified-spectral-localization} yield, for all sufficiently large $t$,
  \begin{align}
    \label{eq:nonexistence-unified-low-A}
    \|R_\lambda V^L(1-\chi(A/t))u_t\|
    \lesssim \left\| \one\left(\frac At\leq\theta\right) f(L)\me{-i t L}\phi \right\|
    \lesssim t^{-2}\|\langle A\rangle^2\phi\|.
  \end{align}
  We next estimate the high-$A/t$ contribution. By Lemma~\ref{lem:commutators-spectral-cutoff},
  \begin{align}
    \label{eq:nonexistence-unified-cutoff-commutators}
    \|[\chi(A/t),V^L]\| + \|[\chi(A/t),R_\lambda]\| \lesssim_{V,\eta} t^{-1}.
  \end{align}
  Therefore,
  \begin{align}
    \label{eq:nonexistence-unified-move-chi}
    R_\lambda V^L\chi(A/t) = \chi(A/t)R_\lambda V^L + E_t,
    \qquad
    \|E_t\|\lesssim_{V,\eta} t^{-1}.
  \end{align}
  We claim that
  \begin{align}
    \label{eq:nonexistence-unified-A-regularity}
    \langle A\rangle^\rho R_\lambda V^Lf(L) \in\mathcal B(L^2).
  \end{align}
  If $0<\rho<1$, this is Corollary~\ref{cor:fractional-A-resolvent-V} since $\rho\leq\alpha$.
  It remains to prove \eqref{eq:nonexistence-unified-A-regularity} for $\rho=1$, i.e., $\alpha=1$ in the present parameter range. To this end, we use $A = -i\left(x\cdot\nabla+\frac d2\right)$ to get
  $$
  AV^L =-i\sum_{j=1}^d x_jV^L\partial_j - i\left(x\cdot\nabla V^L+\frac d2V^L \right).
  $$
  By \eqref{ass5} with $\rho=1$, we have $x_jV^L\in W^{1,\infty}(\R^d)$ and $x\cdot\nabla V^L,\, V^L\in L^\infty(\R^d)$. Hence,
  \begin{align}
    \label{eq:nonexistence-unified-AV-sandwich}
    \langle D\rangle^{-1/2} AV^L\langle D\rangle^{-1/2} \in\mathcal B(L^2),
  \end{align}
  because $\xi_j\langle\xi\rangle^{-1} \in L^\infty(\R^d)$ and multiplication by $W^{1,\infty}$ functions is bounded on $H^{1/2}(\R^d)$. Since $\kappa<\kappa_{\rm c}$, we have that $R_\lambda\langle D\rangle^{1/2}$ and $\langle D\rangle^{1/2}f(L)$ are bounded for $\alpha=1$. Thus,
  \begin{align*}
    R_\lambda AV^Lf(L)
    & = R_\lambda\langle D\rangle^{1/2} \left( \langle D\rangle^{-1/2} AV^L \langle D\rangle^{-1/2}\right) \langle D\rangle^{1/2}f(L) \in \mathcal B(L^2).
  \end{align*}
  Since
  $$
  AR_\lambda V^Lf(L) = R_\lambda AV^Lf(L) + [A,R_\lambda]V^Lf(L),
  $$
  and $[A,R_\lambda]\in\mathcal B(L^2)$ by Lemma~\ref{lem:commutators-spectral-cutoff}, we obtain $AR_\lambda V^Lf(L)\in\mathcal B(L^2)$. Hence, we have shown $\langle A\rangle R_\lambda V^Lf(L)\in\mathcal B(L^2)$, i.e., \eqref{eq:nonexistence-unified-A-regularity} also holds for $\rho=\alpha=1$.

  Since $\chi(A/t)$ is supported where $A/t\geq\theta/2$,
  \begin{align}
    \label{eq:nonexistence-unified-chi-A-weight}
    \|\chi(A/t)\langle A\rangle^{-\rho}\|
    \lesssim_{\theta,\rho} t^{-\rho}, \quad t\geq1.
  \end{align}
  Using $u_t=f(L)u_t$, we obtain from \eqref{eq:nonexistence-unified-A-regularity} and \eqref{eq:nonexistence-unified-chi-A-weight} that
  \begin{align}
    \label{eq:nonexistence-unified-high-A-main}
    \|\chi(A/t)R_\lambda V^Lu_t\| \lesssim t^{-\rho}\|\phi\|.
  \end{align}
  Combining this with \eqref{eq:nonexistence-unified-move-chi}, and using $0<\rho\leq1$, gives
  \begin{align}
    \label{eq:nonexistence-unified-high-A}
    \|R_\lambda V^L\chi(A/t)u_t\| \lesssim_\phi t^{-\rho}.
  \end{align}

  Together with \eqref{eq:nonexistence-unified-low-A}, we conclude
  \begin{align}
    \label{eq:nonexistence-unified-RV-bound}
    \|R_\lambda V^Lu_t\| \lesssim_\phi t^{-2}+t^{-\rho} \quad \text{for $t$ large enough.}
  \end{align}
  Thus,
  \begin{align}
    \label{eq:nonexistence-unified-J2-final}
    |J_2| \leq \epsilon C_\phi \left(1+\int_{t_1}^{t_2}t^{-\rho}\,dt\right),
  \end{align}
  where $C_\phi$ is independent of $\epsilon,t_1,t_2$.

  \medskip
  \noindent
  {\bf Step 5: Contradiction.}
  Combining \eqref{eq:nonexistence-unified-J1-lower} and \eqref{eq:nonexistence-unified-J2-final}, we obtain
  \begin{align}
    \label{eq:nonexistence-unified-final-lower}
    |J_1|-|J_2| \geq \left(c_\phi-\epsilon C_\phi\right) \int_{t_1}^{t_2}t^{-\rho}\,dt - \epsilon C_\phi.
  \end{align}
  Choose $\epsilon>0$ sufficiently small that $c_\phi-\epsilon C_\phi>0$, and then choose $t_1\geq T_\epsilon$ sufficiently large that all preceding propagation estimates apply. Since
$0<\rho\leq1$, we have
  $$
  \int_{t_1}^{t_2}t^{-\rho}\,dt \to \infty \quad \text{as }t_2\to\infty.
  $$
  Hence,
  \begin{align}
    \label{eq:nonexistence-unified-divergence}
    |J_1|-|J_2|\to \infty.
  \end{align}
  On the other hand, by \eqref{eq:nonexistence-unified-J1J2} and the triangle inequality,
  \begin{align}
    \label{eq:nonexistence-unified-upper}
    |J_1|-|J_2|
    \leq |J_1+J_2|
    = \left| \left\langle W\phi, \left(\me{i t_2 H}\me{-i t_2 L} - \me{i t_1 H}\me{-i t_1 L} \right)\phi \right\rangle \right|
    \leq 2\|\phi\|^2
  \end{align}
  since $\me{i t H}\me{-i t L}$ is unitary and $W$ is an isometry. This contradicts \eqref{eq:nonexistence-unified-divergence}. Thus, $W(L_\kappa-V^L,L_\kappa)$ does not exist.
\end{proof}

\appendix


\section{Proof of Lemma~\ref{lem:MY-sufficient}}
\label{s:proof-MY-sufficient}

Set $h(x):=|x|^{-\alpha}$ for $x\in\R^d\setminus\{0\}$ and
$$
W_{\kappa,j}:=(-x\cdot\nabla)^j W_\kappa.
$$
We first record
\begin{align}
  \label{eq:dilation-hardy}
  (-x\cdot\nabla)^j h=\alpha^j h, \quad j=0,1,2,
\end{align}
and
\begin{align}
  \label{eq:Wj}
  W_{\kappa,j} = \alpha^j\kappa h+V^S_j, \quad j=0,1,2.
\end{align}
Hence,
$$
W_\kappa \leq (\kappa+M)h, \qquad
W_{\kappa,1}\leq (\alpha\kappa_++M)h.
$$

\begin{enumerate}
\item By the definition of $M$,
  \begin{align}
    \label{eq:Vj-pointwise}
    |V_j^S(x)| \leq M|x|^{-\alpha} = Mh(x), \quad j=0,1,2,
  \end{align}
  for almost every $x\neq0$. In particular,
  \begin{equation}
    \label{eq:Wj-pointwise}
    |W_{\kappa,j}(x)| \leq (\alpha^j|\kappa|+M)h(x).
  \end{equation}
  Since $h\in L^1_{\rm loc}(\R^d)$, this also shows that $W_{\kappa,j}\in L_{\rm loc}^1(\R^d)$ for $j=0,1,2$. Taking $j=0$ in \eqref{eq:Vj-pointwise} gives $-Mh(x)\leq V^S(x)\leq Mh(x)$, and therefore
  $$
  (\kappa-M)h(x) \leq W_\kappa(x) \leq(\kappa+M)h(x),
  $$
  which proves \eqref{eq:natural-sandwich}.
  
\item We next verify that $W_\kappa$ satisfies Assumption~\ref{mizutaniyaoconditions} when $M<\min\{1,\alpha\}(\kappa_{\rm c}-\kappa_+)$.
  
  \smallskip
  \noindent
  {\em Verification of \eqref{eq:mizutaniyaocond1}.}
  Let $f\in L^2(\R^d)$ and set $g=(L_0+1)^{-1/2}f$. By \eqref{eq:Wj-pointwise}, Hardy's inequality, and the spectral theorem,
  \begin{align*}
    \| |W_{\kappa,j}|^{1/2} (L_0+1)^{-1/2}f\|_2^2
    & = \int_{\R^d}|W_{\kappa,j}(x)|\,|g(x)|^2\,dx
      \leq (\alpha^j|\kappa|+M) \int_{\R^d}\frac{|g(x)|^2}{|x|^\alpha}\,dx \\
    & \leq \frac{\alpha^j|\kappa|+M}{\kappa_{\rm c}} \langle g,L_0g\rangle
      \leq \frac{\alpha^j|\kappa|+M}{\kappa_{\rm c}} \|f\|_2^2.
  \end{align*}
  Thus,
  \begin{align}
    \label{eq:MY1-explicit}
    \| |W_{\kappa,j}|^{1/2} (L_0+1)^{-1/2}\|
    \leq \left( \frac{\alpha^j|\kappa|+M}{\kappa_{\rm c}} \right)^{1/2},
    \quad j=0,1,2,
  \end{align}
  which proves \eqref{eq:mizutaniyaocond1} without using any upper bound on $M$.
  
  \smallskip
  \noindent
  {\em Verification of \eqref{eq:mizutaniyaocond2}.}
  From $W_\kappa(x)\leq(\kappa+M)h(x)$ and Hardy's inequality,
  \begin{align}
    \label{eq:MY2-explicit}
    \begin{split}
      \langle u, (L_0-W_\kappa)u\rangle
      & \geq \langle u, L_0u\rangle - (\kappa+M) \int_{\R^d}h(x)|u(x)|^2\,dx 
      \geq \left(1-\frac{\kappa_++M}{\kappa_{\rm c}}\right) \langle u,L_0u\rangle.
    \end{split}
  \end{align}
  Hence, \eqref{eq:mizutaniyaocond2} holds whenever
  \begin{align}
    \label{eq:smallness-zero}
    M<\kappa_{\rm c} - \kappa_+.
  \end{align}
  
  \smallskip
  \noindent
  {\em Verification of \eqref{eq:mizutaniyaocond3}.}
  By \eqref{eq:Wj}, we have $W_{\kappa,1} = \alpha\kappa h+V^S_1$. Since $V^S_1\leq Mh$,
  \begin{align}
    \label{eq:MY3-explicit}
    \begin{split}
      \langle u,(\alpha L_0-W_{\kappa,1})u\rangle
      & \geq \alpha \langle u,L_0u\rangle - (\alpha\kappa+M) \int_{\R^d}h(x)|u(x)|^2\,dx \\
      & \geq \left( \alpha - \frac{\alpha \kappa_++M}{\kappa_{\rm c}} \right) \langle u,L_0u\rangle \\
      & = \frac{\alpha(\kappa_{\rm c}-\kappa_+)-M}{\kappa_{\rm c}} \langle u, L_0u\rangle.
    \end{split}
  \end{align}
  Thus, \eqref{eq:mizutaniyaocond3} holds whenever
  \begin{align}
    \label{eq:smallness-first}
    M<\alpha(\kappa_{\rm c} - \kappa_+).
  \end{align}
  
  \smallskip
  \noindent
  {\em Verification of \eqref{eq:mizutaniyaocond4}.}
  By \eqref{eq:Wj-pointwise} with $j=2$ and Hardy's inequality,
  \begin{align}
    \label{eq:second-comm-upper}
    \begin{split}
      \left| \langle u, (\alpha^2L_0-W_{\kappa,2})u\rangle \right|
      & \leq \alpha^2 \langle u,L_0u\rangle + \int_{\R^d}|W_{\kappa,2}(x)|\,|u(x)|^2\,dx \\
      & \leq \left[ \alpha^2 + \frac{\alpha^2|\kappa|+M}{\kappa_{\rm c}} \right] \langle u,L_0u\rangle.
    \end{split}
  \end{align}
  On the other hand, provided \eqref{eq:smallness-first} holds, \eqref{eq:MY3-explicit} yields
  \begin{align}
    \label{eq:L0-from-first}
    \langle u,L_0u\rangle
    \leq \frac{\kappa_{\rm c}}{\alpha(\kappa_{\rm c}-\kappa_+)-M} \langle u,(\alpha L_0-W_{\kappa,1})u\rangle.
  \end{align}
  Combining \eqref{eq:second-comm-upper} and \eqref{eq:L0-from-first} yields
  \begin{align}
    \label{eq:MY4-explicit}
    \left| \langle u,(\alpha^2L_0-W_{\kappa,2})u\rangle \right|
    & \leq \frac{\alpha^2\kappa_{\rm c} + \alpha^2|\kappa|+M}{\alpha(\kappa_{\rm c}-\kappa_+)-M} \langle u,(\alpha L_0-W_{\kappa,1})u\rangle.
  \end{align}
  This proves \eqref{eq:mizutaniyaocond4} when $M<\alpha(\kappa_{\rm c}-\kappa_+)$.
  
  We have therefore shown that $W_\kappa$ satisfies Assumption~\ref{mizutaniyaoconditions} whenever
  $$
  M < \min\{ \kappa_{\rm c}-\kappa_+,\, \alpha(\kappa_{\rm c}-\kappa_+) \}
  = \min\{1,\alpha\}(\kappa_{\rm c}-\kappa_+).
  $$
  Thus, Item~(2) is proved.
  
\item Suppose that $\tilde\kappa<\kappa<\kappa'<\kappa_{\rm c}$ are prescribed and that \eqref{eq:MY-smallness-fixed} holds. Since $M<\kappa-\tilde\kappa$ and $M<\kappa'-\kappa$, the bounds \eqref{eq:natural-sandwich} imply $\tilde\kappa/|x|^\alpha \leq W_\kappa(x) \leq \kappa'|x|^{-\alpha}$.
  Moreover, since $\kappa'<\kappa_{\rm c}$,
  $$
  \langle u,(L_0-W_\kappa)u\rangle \geq \left(1-\frac{(\kappa')_+}{\kappa_{\rm c}}\right) \langle u,L_0u\rangle.
  $$
  Thus, \eqref{eq:mizutaniyaocond2} holds. The third inequality in \eqref{eq:MY-smallness-fixed} yields
  $$
  M<\alpha(\kappa_{\rm c}-\kappa_+),
  $$
  and hence \eqref{eq:mizutaniyaocond3} and \eqref{eq:mizutaniyaocond4} follow from the preceding argument.
  This proves Item~(3).

\item We first show the bounds for $A_j$, $j=0,1,2$. Since $\rho_S\geq\alpha$, we have $|x|^\alpha\langle x\rangle^{-\rho_S} \leq1$. Thus, by \eqref{eq:A0},
  \begin{align}
    \label{eq:VS-zero-critical}
    |V^S(x)| \leq A_0|x|^{-\alpha}.
  \end{align}
  Next,
  \begin{align}
    |x|^\alpha |x\cdot\nabla V^S(x)|
    \leq A_1 |x|^{\alpha+1} \langle x\rangle^{-\rho_S-1}
    \leq A_1.
  \end{align}
  Thus,
  \begin{align}
    \label{eq:VS-one-critical}
    |x\cdot\nabla V^S(x)| \leq A_1|x|^{-\alpha}.
  \end{align}
  Finally,
  \begin{align}
    \label{eq:D2-expansion}
    (x\cdot\nabla)^2V^S(x)
    = x\cdot\nabla V^S + \langle x,{\rm Hess}(V^S(x))x\rangle_{\R^d}.
  \end{align}
  Hence, $|(-x\cdot\nabla)^2 V^S(x)| \leq A_1|x|\langle x\rangle^{-\rho_S-1} + A_2|x|^2\langle x\rangle^{-\rho_S-2}$. Since $\rho_S\geq\alpha$, this implies
  \begin{equation}
    \label{eq:VS-two-critical}
    |(-x\cdot\nabla)^2 V^S(x)| \leq (A_1+A_2)|x|^{-\alpha}.
  \end{equation}
  We now prove that $W_\kappa$ is bounded from above and below by Hardy potentials. By \eqref{eq:VS-zero-critical} and \eqref{eq:symbol-smallness-zero},
  $$
  -(\kappa-\tilde\kappa)|x|^{-\alpha} \leq V^S(x) \leq (\kappa'-\kappa)|x|^{-\alpha}.
  $$
  Adding $\kappa|x|^{-\alpha}$ yields $\tilde\kappa/|x|^\alpha \leq W_\kappa(x) \leq \kappa'/|x|^\alpha$, as claimed.

  It remains to verify Assumption~\ref{mizutaniyaoconditions}.

  \smallskip
  \noindent
  \emph{Verification of \eqref{eq:mizutaniyaocond1}.}
  From \eqref{eq:Wj}, \eqref{eq:VS-zero-critical}, \eqref{eq:VS-one-critical}, and \eqref{eq:VS-two-critical}, we have
  \begin{align}
    \label{eq:W0-critical}
    |W_{\kappa,0}(x)| & \leq (|\kappa|+A_0)|x|^{-\alpha}, \\
    \label{eq:W1-critical}
    |W_{\kappa,1}(x)| & \leq (\alpha|\kappa|+A_1)|x|^{-\alpha}, \\
    \label{eq:W2-critical}
    |W_{\kappa,2}(x)| & \leq (\alpha^2|\kappa|+A_1+A_2)|x|^{-\alpha}.
  \end{align}
  In particular, $W_{\kappa,j}\in L^1_{\mathrm{loc}}(\mathbb R^d)$ for $j=0,1,2$.

  Let $f\in L^2(\R^d)$ and set $g=(L_0+1)^{-1/2}f$. Then, $g$ belongs to the quadratic form domain of $L_0$. Thus, if
  $$
  B_0:=|\kappa|+A_0,
  \qquad
  B_1:=\alpha|\kappa|+A_1,
  \qquad
  B_2:=\alpha^2|\kappa|+A_1+A_2,
  $$
  then
  \begin{align*}
    \| |W_{\kappa,j}|^{1/2} (L_0+1)^{-1/2}f \|_2^2
    & \leq B_j \int_{\R^d} \frac{|g(x)|^2}{|x|^\alpha}\,dx 
      \leq \frac{B_j}{\kappa_{\rm c}} \langle g,L_0g\rangle 
      \leq \frac{B_j}{\kappa_{\rm c}}\|f\|_2^2.
  \end{align*}
  This shows \eqref{eq:mizutaniyaocond1}.

  \smallskip
  \noindent
  \emph{Verification of \eqref{eq:mizutaniyaocond2}.}
  By $W_\kappa(x)\leq\kappa'/|x|^\alpha$ and Hardy's inequality,
  \begin{align*}
    \langle u,(L_0-W_\kappa)u\rangle
    & \geq \left(1-\frac{(\kappa')_+}{\kappa_{\rm c}}\right) \langle u,L_0u\rangle
  \end{align*}
  for all $u\in C_c^\infty(\R^d)$. Since $\kappa'<\kappa_{\rm c}$, this proves \eqref{eq:mizutaniyaocond2}.

  \smallskip
  \noindent
  \emph{Verification of \eqref{eq:mizutaniyaocond3}.}
  Recall $W_{\kappa,1} = \alpha\kappa|x|^{-\alpha} -x\cdot\nabla V^S$. Thus, by \eqref{eq:VS-one-critical}, we have $W_{\kappa,1}(x) \leq (\alpha\kappa+A_1)|x|^{-\alpha}$. Hence, by Hardy's inequality,
  \begin{align}
    \label{eq:MY-coercivity-symbol}
    \begin{split}
      \langle u,(\alpha L_0-W_{\kappa,1})u \rangle
      & \geq \frac{\alpha(\kappa_{\rm c}-\kappa_+)-A_1}{\kappa_{\rm c}} \langle u,L_0u\rangle.
    \end{split}
  \end{align}
  By \eqref{eq:symbol-smallness-one}, the coefficient on the right-hand side is strictly positive. This proves~\eqref{eq:mizutaniyaocond3}.

  \smallskip
  \noindent
  \emph{Verification of \eqref{eq:mizutaniyaocond4}.}
  By \eqref{eq:W2-critical} and Hardy's inequality,
  \begin{align}
    \label{eq:MY-second-upper-symbol}
    \begin{split}
      \left| \langle u,(\alpha^2L_0-W_{\kappa,2})u \rangle \right|
      & \leq \left(\alpha^2 + \frac{\alpha^2|\kappa|+A_1+A_2}{\kappa_{\rm c}}\right) \langle u,L_0u\rangle.
    \end{split}
  \end{align}
  On the other hand, \eqref{eq:MY-coercivity-symbol} implies
  \begin{align}
    \label{eq:L0-controlled-symbol}
    \langle u,L_0u\rangle
    \leq \frac{\kappa_{\rm c}}{\alpha(\kappa_{\rm c}-\kappa_+)-A_1} \langle u,(\alpha L_0-W_{\kappa,1})u \rangle.
  \end{align}
  Combining \eqref{eq:MY-second-upper-symbol} and \eqref{eq:L0-controlled-symbol}, we obtain
  \begin{align}
    \left| \langle u,(\alpha^2L_0-W_{\kappa,2})u \rangle \right|
    \leq \frac{\alpha^2\kappa_{\rm c} +\alpha^2|\kappa| + A_1 + A_2}{\alpha(\kappa_{\rm c}-\kappa_+) - A_1} \cdot \langle u,(\alpha L_0-W_{\kappa,1})u\rangle.
  \end{align}
  This is \eqref{eq:mizutaniyaocond4}. Notice that $A_2$ occurs only in the finite constant on the right-hand side, and therefore no smallness assumption on $A_2$ is needed.
  This proves the first part of Item~(4).

  Finally, assume \eqref{eq:symbol-common-constant}. Then $A_0\leq C_S$. Moreover, since every first-order partial derivative is bounded by $C_S\langle x\rangle^{-\rho_S-1}$,
  $$
  |\nabla V^S(x)| \leq \sqrt d\,C_S \langle x\rangle^{-\rho_S-1},
  $$
  and hence $A_1\leq\sqrt d\,C_S$. Similarly, every entry of ${\rm Hess}(V^S(x))$ has absolute value at most $C_S\langle x\rangle^{-\rho_S-2}$. By bounding the $\|\cdot\|_{\R^d,\R^d}$-norm, e.g., by the Frobenius norm, we get
  $$
  \|{\rm Hess}(V^S(x))\|_{\R^d,\R^d} \leq d\,C_S\langle x\rangle^{-\rho_S-2}.
  $$
  Hence, $A_2\leq dC_S<\infty$.
  Thus, \eqref{eq:common-constant-smallness} implies \eqref{eq:symbol-smallness-zero}, \eqref{eq:symbol-smallness-one}, and \eqref{eq:symbol-second-finite}, and the preceding argument applies.
\end{enumerate}
    
\qed


\section{Proof of Lemma~\ref{calderoncommutator}}
\label{s:calderoncommutator}

We distinguish the cases $0<s<1$ and $s=1$.

\medskip
\noindent
\textbf{Case 1: $0<s<1$.}
Let $\beta:=1-s\in(0,1)$, i.e., $0<\beta<d/2$. Then,
\begin{align}
  \label{eq:comm-fourier}
  ([|D|,|x|^s]g)^\wedge(\xi)
  = c_{d,s} \int_{\R^d} \frac{|\xi|-|\eta|}{|\xi-\eta|^{d+s}} \hat g(\eta)\,d\eta
\end{align}
for some $c_{d,s}\neq0$ and with the singularity at $\eta=\xi$ being locally integrable in view of the triangle inequality. Therefore, the integrand is bounded in absolute value by a constant times $|\xi-\eta|^{-d-s+1} = |\xi-\eta|^{-d+\beta}$. Let $f(\eta) := |\eta|^\beta \hat g(\eta)$. Then the right-hand side of \eqref{eq:comm-fourier} is $Tf(\xi)$, where
$$
(Tf)(\xi) = \int_{\R^d}K(\xi,\eta)f(\eta)\,d\eta
$$
with kernel
\begin{align}
  \label{eq:K}
  K(\xi,\eta) = c_{d,s} \frac{|\xi|-|\eta|}{|\xi-\eta|^{d+1-\beta}} |\eta|^{-\beta}.
\end{align}
We prove that $T$ is bounded on $L^2$ by a weighted Schur test with weight $(|\xi|/|\eta|)^{d/2}$. The kernel $K$ is homogeneous of degree $-d$, i.e., $K(\lambda\xi,\lambda\eta)=\lambda^{-d}K(\xi,\eta)$ for all $\lambda>0$, and invariant under simultaneous rotations of $\xi$ and $\eta$. Thus, to verify the first Schur bound it suffices to prove, for a fixed unit vector $e\in\bs^{d-1}$,
\begin{align}
  \label{eq:schur1}
  \int_{\R^d}|K(e,\eta)|\,|\eta|^{-d/2}\,d\eta<\infty.
\end{align}
We split the integral into neighborhoods of $\eta=0$, $\eta=e$, and infinity.

\smallskip
{\em Case (a): Near $\eta=e$.} Here, $|1-|\eta| | \leq |e-\eta|$, and hence
$$
|K(e,\eta)|\,|\eta|^{-d/2} \lesssim |e-\eta|^{-d+\beta},
$$
which is locally integrable because $\beta>0$.

\smallskip
{\em Case (b): Near $\eta=0$.} Here, $|e-\eta|\simeq1$ and $|1-|\eta||\lesssim1$, so
$$
|K(e,\eta)|\,|\eta|^{-d/2} \lesssim |\eta|^{-\beta-d/2}.
$$
This is locally integrable  because $\beta<d/2$.

\smallskip
{\em Case (c): $|\eta|\geq2$.} Here, $|1-|\eta| | \lesssim |\eta|$ and $|e-\eta|\simeq|\eta|$. Therefore,
$$
|K(e,\eta)|\,|\eta|^{-d/2} \lesssim |\eta|^{-3d/2},
$$
which is integrable at infinity. Thus, \eqref{eq:schur1} holds. Similarly, by homogeneity, the second Schur bound reduces to
\begin{align}
  \label{eq:schur2}
  \int_{\R^d}|K(\xi,e)|\,|\xi|^{-d/2}\,d\xi<\infty.
\end{align}

{\em Case (a): Near $\xi=e$.} We again use $| |\xi|-1 | \leq |\xi-e|$ to obtain the locally integrable bound
$$
|K(\xi,e)|\,|\xi|^{-d/2} \lesssim |\xi-e|^{-d+\beta}.
$$

{\em Case (b): Near $\xi=0$.} Here,
$$
|K(\xi,e)|\,|\xi|^{-d/2} \lesssim |\xi|^{-d/2},
$$
which is locally integrable.

\smallskip
{\em Case (c): $|\xi|\geq2$.} Here,
$$
|K(\xi,e)|\,|\xi|^{-d/2} \lesssim |\xi|^{-3d/2+\beta}.
$$
After radial integration, the corresponding integral is bounded by
$$
\int_2^\infty r^{-d/2+\beta-1}\,dr \lesssim1
$$
since $\beta<d/2$. This shows \eqref{eq:schur2}.

\smallskip
The weighted Schur test therefore yields
$$
\|Tf\|_2\lesssim_{d,s}\|f\|_2.
$$
By Plancherel's theorem and the definition of $f$,
$$
\| [|D|,|x|^s]g \|_2
\lesssim_{d,s} \| |\eta|^\beta \hat g(\eta)\|_2
= \||D|^\beta g\|_2
= \||D|^{1-s}g\|_2.
$$
This proves \eqref{eq:commutator-weight} for $0<s<1$.

\medskip
\noindent
\textbf{Case 2: $s=1$.}
The function $a(x)=|x|$ is Lipschitz continuous and obeys $\|\nabla a\|_{L^\infty}\leq1$. Thus, by Calder\'on's commutator theorem \cite{Calderon1965} for the order-one Fourier multiplier $|D|$,
$$
\|[|D|,|x|]g\|_2
\lesssim_d \|\nabla |x|\|_\infty \, \|g\|_2
\lesssim_d \|g\|_2.
$$
This is \eqref{eq:commutator-weight} when $s=1$.
\qed


\section{Proof of (\ref{eq:derivativeheatkernel})}

\label{s:derivativeheatkernel}

\begin{proposition}
  \label{heatkernelestimates-halfplane}
  Let $d\in\N$, $\alpha\in(0,2]\cap(0,d)$, $\tilde\kappa\leq\kappa\leq\kappa_{\rm c}$, and $\sigma=\Phi^{-1}(\kappa)\in(-M,(d-\alpha)/2]$.
  Assume $V:\R^d\to\R$ obeys $\tilde\kappa/|x|^\alpha \leq V(x)\leq\kappa/|x|^\alpha$ for all $x\in\R^d$. Let $H=(-\Delta)^{\alpha/2}-V$.  
  Let $p_z(x,y)$ be the kernel associated to the semigroups $\me{-zH}$ with $z\in \C_{\pi/2}=\{z\in \C: |\arg z|<\pi/2\}$.
  \begin{enumerate}
  \item Let $\alpha\in (0,2)$. Then for any $\epsilon\in (0,1)$, there exists a constant $C=C_\epsilon$ such that
    \begin{align}
      \label{boundedp_t(x,y)-complex-alpha < 2}
      |p_z(x,y)| \leq C \left(1\wedge\frac{|x|}{|z|^{1/\alpha}}\right)^{-\sigma} \left(1\wedge\frac{|y|}{|z|^{1/\alpha}}\right)^{-\sigma} |z|^{-d/\alpha}\left(\frac{|z|^{1/\alpha}}{|z|^{1/\alpha}+|x-y|}\right)^{(d+\alpha)(1-\epsilon)}
    \end{align}
    for all $x,y \in \R^d$ and $z\in \C_{\epsilon\pi/4}$.
        
    \item Let $\alpha=2$. Then there exist constants $C, c>0$ such that
    \begin{align}
      \label{boundedp_t(x,y)-complex-alpha = 2}
      |p_z(x,y)|\leq C  \left(1\wedge\frac{|x|}{\sqrt{|z|}}\right)^{-\sigma} \left(1\wedge\frac{|y|}{\sqrt{|z|}}\right)^{-\sigma}|z|^{-d/2} \exp\left(-\frac{|x-y|^2}{c|z|}\right)
    \end{align}
    for all $x,y \in \Rd$ and $z\in \C_{\pi/4}$.
  \end{enumerate}
\end{proposition}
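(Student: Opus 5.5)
The plan is to derive the complex-time bounds from the real-time sharp bounds \eqref{eq:heatkernelbounds} (and their analogue for $H=L_0-V$ via Trotter's formula, $p_t\le \me{-tL_\kappa}$) by a weighted Phragm\'en--Lindel\"of argument in the spirit of Davies and Duong--Robinson \cite{DuongRobinson1996}. The ground-state weights are inserted by hand. Set $w_r(x):=(1\wedge |x|/r^{1/\alpha})^{\sigma}$ for $\sigma\ge0$; for $\sigma<0$ the weight is bounded and absorbed trivially. For fixed $x,y$ we study the holomorphic function
$$F(z):=p_z(x,y),\quad z\in\C_{\pi/2}.$$
By analyticity of the semigroup $\me{-zH}$ (selfadjoint, bounded below), $F$ is holomorphic, since $p_z(x,y)=\langle \me{-zH/3}\delta_x,\me{-zH/3}\,\me{-zH/3}\delta_y\rangle$ after regularization.

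\textbf{Step 1: an a priori bound on the whole half-plane.} For $\re z=t>0$, write $\me{-zH}=\me{-tH/2}\me{-is H}\me{-tH/2}$. Cauchy--Schwarz gives
$$|p_z(x,y)|\le p_{t}(x,x)^{1/2}p_t(y,y)^{1/2}.$$
The real bounds then give $|p_z(x,y)|\lesssim t^{-d/\alpha}w_t(x)^{-1}w_t(y)^{-1}$. This carries the correct singular weights but has no off-diagonal decay, and $t$ appears instead of $|z|$.

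\textbf{Step 2: interpolate with the sharp real-axis bound.} On the real axis we have
$$|F(t)|\lesssim t^{-d/\alpha}w_t(x)^{-1}w_t(y)^{-1}\bigl(t^{1/\alpha}/(t^{1/\alpha}+|x-y|)\bigr)^{d+\alpha}.$$
We pass to the sector $\C_{\pi/2}$ in the variable $\zeta=\log z$ (a strip), multiply $F$ by suitable analytic compensators such as $z^{d/\alpha}$, $z^{-\sigma/\alpha}$ and $(z^{1/\alpha}+|x-y|)^{\dots}$, and apply Phragm\'en--Lindel\"of/three-lines. The compensator powers have moduli comparable to those of $|z|$ in any sector strictly inside $\C_{\pi/2}$. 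The boundary of the strip corresponds to $\arg z\to\pm\pi/2$, where only the Step~1 bound is available; near that boundary we use it with $t=|z|\cos\arg z$.

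Interpolating the decay exponent $(d+\alpha)$ on $\arg z=0$ against exponent $0$ on $\arg z=\pm(\pi/2-\delta)$, at angle $\epsilon\pi/4$, leaves the exponent $(d+\alpha)(1-\epsilon)$ once $\delta$ is chosen appropriately. That loss is exactly the $\epsilon$ appearing in \eqref{boundedp_t(x,y)-complex-alpha < 2}. The weight factors $w_{|z|}$ pass through because they are moduli of the analytic functions $(1\wedge\cdots)$; we replace the nonanalytic min by the comparable analytic expression $(|z|^{1/\alpha}/(z^{1/\alpha}+|x|))^{\sigma}$.

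For $\alpha=2$ we use instead Davies' Gaussian version. Conjugate with $\me{\lambda\cdot x}$, use the real weighted Gaussian bound, and optimize in $\lambda$ after the Phragm\'en--Lindel\"of step on $\C_{\pi/4}$. This yields \eqref{boundedp_t(x,y)-complex-alpha = 2}.

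The main obstacle is Step~2 with the singular weights. Phragm\'en--Lindel\"of needs an analytic majorant whose modulus reproduces $(1\wedge|x|/|z|^{1/\alpha})^{-\sigma}$ uniformly on the sector, and we need growth control of $F$ in the interior to exclude exotic growth. I would first try the analytic weight $g(z)=\bigl(z^{1/\alpha}/(z^{1/\alpha}+|x|)\bigr)^{\sigma}$. Its modulus is comparable to $(1\wedge|x|/|z|^{1/\alpha})^{-\sigma}$ up to constants depending only on the aperture, and $g\,F$ obeys the polynomial a priori bound from Step~1. Then \eqref{eq:derivativeheatkernel} follows from Cauchy's integral formula on a circle of radius $\sim\epsilon t$ around $t$.
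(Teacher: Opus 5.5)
Your architecture matches the paper's. Your Step~1 (Cauchy--Schwarz after $\me{-zH}=\me{-tH/2}\me{-isH}\me{-tH/2}$) is the same estimate the paper gets from $\|\me{-tH}\|_{L^1_{w_z^{-1}}\to L^2}\lesssim t^{-d/(2\alpha)}$ and its dual. It gives the weighted on-diagonal bound on $\C_{\pi/4}$, where $\re z\sim|z|$. The paper then invokes an abstract Phragm\'en--Lindel\"of theorem (Duong--Robinson, Prop.~3.3, or Merz, Thm.~2.1). You instead run three lines in $\log z$ by hand, which is legitimate. However, the step you flag as the crux is not carried out correctly.

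\textbf{The compensators.} There are three problems, all with the same fix.
\begin{enumerate}
\item Your $g(z)=\bigl(z^{1/\alpha}/(z^{1/\alpha}+|x|)\bigr)^{\sigma}$ has modulus $\sim(1\wedge|z|^{1/\alpha}/|x|)^{\sigma}$, so the roles of $|x|$ and $|z|^{1/\alpha}$ are swapped. For $\sigma>0$ and $|x|\to0$ it tends to $1$, while the weight $(1\wedge|x|/|z|^{1/\alpha})^{-\sigma}$ blows up. The variant with $|z|^{1/\alpha}$ in the numerator is not analytic.
\item Anything built on $z^{1/\alpha}$ fails for small $\alpha$. On a sector of aperture $\theta_0$, $z^{1/\alpha}+|x|$ vanishes at $z=|x|^\alpha\me{i\alpha\pi}$ as soon as $\alpha\pi<\theta_0$. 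For your rays $\pm(\pi/2-\delta)$ this happens whenever $\alpha<1/2-\delta/\pi$, and for $\theta_0=\pi/4$ whenever $\alpha<1/4$. Shrinking the sector to avoid the zero enlarges the exponent loss, so you would no longer get $(d+\alpha)(1-\epsilon)$ on $\C_{\epsilon\pi/4}$ for all $\alpha\in(0,2)$.
\item For $\sigma<0$ the factor $(1\wedge|x|/|z|^{1/\alpha})^{-\sigma}\le1$ is a gain that must be propagated, not absorbed.
\end{enumerate}

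\textbf{The fix.} Use $z$ instead of $z^{1/\alpha}$. For $\re z>0$ and $a\ge0$ one has $|z+a|\ge(|z|+a)/\sqrt2$. Hence, with principal branches,
\[
G(z):=z^{d/\alpha}\Bigl(\tfrac{|x|^\alpha}{z+|x|^\alpha}\Bigr)^{\sigma/\alpha}\Bigl(\tfrac{|y|^\alpha}{z+|y|^\alpha}\Bigr)^{\sigma/\alpha}\Bigl(1+\tfrac{|x-y|^\alpha}{z}\Bigr)^{(d+\alpha)/\alpha}p_z(x,y)
\]
has the following properties:
\begin{itemize}
\item it is analytic on the right half-plane;
\item each factor has exactly the intended modulus, for either sign of $\sigma$;
\item $|G|\lesssim1$ on $(0,\infty)$ by the real-time bounds;
\item $|G(z)|\lesssim(1+|x-y|/|z|^{1/\alpha})^{d+\alpha}$ on $\C_{\pi/4}$ by your Step~1.
\end{itemize}
Apply three lines in $\log z$ between $\arg z=0$ and $\arg z=\pm\pi/4$. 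The boundary datum $\log(1+|x-y|\me{-s/\alpha})$ is Lipschitz in $s=\log|z|$, so the Poisson extension costs only a constant. This gives the exponent $(d+\alpha)(1-4|\arg z|/\pi)\ge(d+\alpha)(1-\epsilon)$ on $\C_{\epsilon\pi/4}$, which is the claim.

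\textbf{The case $\alpha=2$.} Conjugating by $\me{\lambda\cdot x}$ does not combine with your Step~1. For fixed $x,y$ it only multiplies $p_z(x,y)$ by a constant, and $\me{\lambda\cdot x}\me{-isH}\me{-\lambda\cdot x}$ is not bounded. The route the paper points to is Davies' Gaussian Phragm\'en--Lindel\"of lemma (Section~3.4 of his book), applied to the same weighted analytic function.
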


The proof is strongly inspired by \cite[Proposition~2.4]{BuiMerz2026}.

\begin{proof} 
  We only give the proof for \eqref{boundedp_t(x,y)-complex-alpha < 2} since the proof of  \eqref{boundedp_t(x,y)-complex-alpha = 2} is similar. To that end, we use a Phragm\'en--Lindel\"of argument as in \cite[Section~3.4]{Davies1990}. We set
  \[
    w_z(x)= \left(1+\frac{ {|z|^{1/\alpha}}}{|x|}\right)^{-\sigma}, \quad x\in  \R^d,\ z\in\C
  \]
  and first claim that there exists $C>0$ such that
  \begin{align}\label{boundedp_t(x,y)-complex1'}
    \left|w_z(x) p_z(x,y)w_z(y)\right|\le \frac{C}{|z|^{d/\alpha}}
  \end{align}
  holds for all $x,y\in \R^d$ and $z\in\C_{\pi/4}$.
  To prove this, we define, for $f: \R^d \rightarrow \C$, the norms
  $$
  |f|_{L_{w_{z}}^\infty(\R^d)} = \sup_{x\in \R^d} \left|f(x)w_z(x)\right|
  $$
  and
  \[
    |f|_{L_{w_{z}}^1(\R^d)} = \int_{\Rd}|f(x)|w_z(x)dx.
  \]
  Hence, the inequality  (\ref{boundedp_t(x,y)-complex1'}) is equivalent to
  $$
  \|\me{-zH}\|_{L^1_{w_z^{-1}}(\R^d)\rightarrow L^\infty_{w_z}(\R^d)} \leq \frac{C}{|z|^{d/\alpha}}.
  $$
  For $z\in\C_{\pi/4}$ we can write $z=2t+is$ for some $t\geq 0$ and $s\in \R$ such that $t\sim |z|$. Then, 
  \begin{align*}
    & \|\me{-z H}\|_{L^1_{w_z^{-1}}(\R^d)\rightarrow L^\infty_{w_z}(\R^d)} \\
    & \quad \le \|\me{-tH}\|_{L^2(\R^d)\rightarrow L^\infty_{w_z}(\R^d)} \|\me{-isH}\|_{L^2(\R^d)\rightarrow L^2(\R^d)} \|\me{-tH}\|_{L^1_{{w_z}^{-1}}(\Rd)\rightarrow L^2(\Rd)}.
  \end{align*}
  We first estimate $\|\me{-tH}\|_{L^1_{{w_z}^{-1}}(\Rd)\rightarrow L^2(\Rd)}$. For $f\in L^1_{{w_z}^{-1}}(\Rd)$, we have, by the real-time heat kernel estimates for $\La$, Trotter's formula, and
  $$
  \left(1\wedge\frac{|y|}{t^{1/\alpha}}\right)^{-\sigma}
  \sim \left(1\wedge\f{|y|}{|z|^{1/\alpha}}\right)^{-\sigma}
  \sim w_z(y)^{-1},
  $$
  that
  \begin{align*}
    \footnotesize
    \begin{split}
      \|\me{-tH}f\|_{L^2(\Rd)}
      & \leq \|\me{-t\La}|f|\|_{L^2(\Rd)} \\
      & \lesssim \left[\int_{\R^d} \left|\int_{\R^d} {t^{-\frac d\alpha}  \left( \f{t^{1/\alpha}+|x-y|}{t^{1/\alpha}}\right)^{-d-\alpha} w_z(y)^{-1} |f(y)|}\,dy \right|^2 \left(1\wedge\frac{|x|}{t^{1/\alpha}}\right)^{-2\sigma} \,dx\right]^{\frac12}.
    \end{split}
  \end{align*}
  By Minkowski's inequality,
  \begin{align*}
    \footnotesize
    \begin{split}
    \|\me{-tH}f\|_{L^2(\Rd)}
    & \lesssim \int_{\R^d}\left[\int_{\R^d}\left| t^{-\frac d\alpha} \left(\f{t^{1/\alpha}+|x-y|}{t^{1/\alpha}}\right)^{-d-\alpha}\right|^2 \left(1\wedge\frac{|x|}{t^{1/\alpha}}\right)^{-2\sigma}\,dx\right]^{\frac12} w_z(y)^{-1}|f(y)|\,dy \\
    & \lesssim  {\sup_{y\in \Rd}}\left[\int_{\R^d}\left| t^{-\frac d\alpha}\left(\f{t^{1/\alpha}+|x-y|}{t^{1/\alpha}}\right)^{-d-\alpha}\right|^2 \left(1\wedge\frac{|x|}{t^{1/\alpha}}\right)^{-2\sigma}\,dx\right]^{\frac12} \|f\|_{L^1_{w_z^{-1}}}.
    \end{split}
  \end{align*}
  This, along with the fact that
  \begin{align}
    \label{eq1-norm}
    {\sup_{y\in \Rd}} \left[\int_{\R^d}\left| t^{-d/\alpha} \left(\f{t^{1/\alpha}+|x-y|}{t^{1/\alpha}}\right)^{-d-\alpha}\right|^2 \left(1\wedge\frac{|x|}{t^{1/\alpha}}\right)^{-2\sigma}\, dx\right]^{1/2}
    \lesssim t^{-\f{d}{2\alpha}},
  \end{align}
  which can be seen by distinguishing between $|x|\lessgtr t^{1/\alpha}$ and using $\sigma<d/2$, implies
  \[
    \|\me{-tH}f\|_{L^2(\Rd)} \lesssim t^{-\f{d}{2\alpha}}\|f\|_{L^1_{w_z^{-1}}(\R^d)}.
  \]
  It follows that 
  \[
    \|\me{-tH}\|_{L^1_{{w_z}^{-1}}(\Rd)\rightarrow L^2(\Rd)} \lesssim t^{-\f{d}{2\alpha}}.
  \]
  Similarly,
  \[
    \|\me{-tH}\|_{L^2(\Rd)\to L^\infty_{{w_z}}(\Rd)} \lesssim t^{-\f{d}{2\alpha}}.
  \]
  Moreover, since $H$ is a (nonnegative) selfadjoint operator, $\|\me{-isH}\|_{L^2(\Rd)\rightarrow L^2(\Rd)}\leq 1$. Taking these three estimates into account, we get
  $$
  \|\me{-zH}\|_{L^1_{w_z^{-1}}(\Rd)\rightarrow L^\infty_{w_z}(\Rd)}
  \le t^{-\f{d}{\alpha}}
  \sim \f{1}{|z|^{d/\alpha}},
  $$
  which implies \eqref{boundedp_t(x,y)-complex1'}, i.e.,
  \[
    |p_z(x,y)| \lesssim \f{1}{|z|^{d/\alpha}} \left(1\wedge\f{|x|}{|z|^{1/\alpha}}\right)^{-\sigma}\left(1\wedge\f{|y|}{|z|^{1/\alpha}}\right)^{-\sigma}
  \]
  for all $x,y\in \Rd$ and $z\in\C_{\pi/4}$. By the Phragm\'en--Lindel\"of theorem in \cite[Proposition 3.3]{DuongRobinson1996} or \cite[Theorem~2.1]{Merz2022}, we get \eqref{boundedp_t(x,y)-complex-alpha < 2}.
\end{proof}

\begin{proposition}
  \label{thm-ptk}
  Let $d\in\N$, $\alpha\in(0,2]\cap(0,d)$, $\tilde\kappa\leq\kappa\leq\kappa_{\rm c}$, and $\sigma=\Phi^{-1}(\kappa)\in(-M,(d-\alpha)/2]$.
  Assume $V:\R^d\to\R$ obeys $\tilde\kappa/|x|^\alpha \leq V(x)\leq\kappa/|x|^\alpha$ for all $x\in\R^d$. Let $H=(-\Delta)^{\alpha/2}-V$.
  For each $k\in \N$, we denote by $p_{t,k}(x,y)$ the kernel of $H^k \me{-tH}$.
  \begin{enumerate}
  \item Let $\alpha\in (0,2)$. Then for any $\epsilon>0$ and $k\in \N$, there exists a constant $C=C(k,\epsilon)$ such that
    \begin{align}
      \label{boundedp_t k(x,y)-complex-alpha < 2}
      |p_{t,k}(x,y)|\le C  \left(1\wedge\f{|x|}{t^{1/\alpha}}\right)^{-\sigma} \left(1\wedge\f{|y|}{t^{1/\alpha}}\right)^{-\sigma} t^{-(k+d/\alpha)} \left(\f{t^{1/\alpha}+|x-y|}{t^{1/\alpha}}\right)^{-d-\alpha+\epsilon}
    \end{align}
    for all $x,y \in \Rd$ and $t>0$.
    
  \item Let $\alpha=2$. Then for each $k\in \N$ there exist constants $C, c>0$ such that
    \begin{align}
      \label{boundedp_t k(x,y)-complex-alpha = 2}
      |p_{t,k}(x,y)| \le C \left(1\wedge\f{|x|}{\sqrt t}\right)^{-\sigma} \left(1\wedge\f{|y|}{\sqrt t}\right)^{-\sigma} t^{-(k+d/2)}\exp\left(-\f{|x-y|^2}{ct}\right)
    \end{align}
    for all $x,y \in \Rd$ and $t>0$.
  \end{enumerate}		
\end{proposition}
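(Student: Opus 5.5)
The plan is to derive the bounds on $p_{t,k}$ from the complex-time bounds of Proposition~\ref{heatkernelestimates-halfplane} via Cauchy's integral formula in the time variable. Since $H$ is nonnegative and selfadjoint, $z\mapsto \me{-zH}$ is analytic in $\C_{\pi/2}$ with kernel $p_z(x,y)$, and
$$
H^k\me{-tH} = (-1)^k \frac{d^k}{dt^k}\me{-tH} = (-1)^k\frac{k!}{2\pi i}\oint_{|z-t|=\delta t}\frac{\me{-zH}}{(z-t)^{k+1}}\,dz .
$$
We take $\delta=\delta(\epsilon)>0$ so small that the circle $\{|z-t|=\delta t\}$ lies in the sector $\C_{\epsilon'\pi/4}$, where $\epsilon'$ is chosen below. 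This identity holds at the level of kernels: for $f,g\in C_c^\infty$ the pairing $\langle f,\me{-zH}g\rangle$ is analytic, so the kernel identity follows almost everywhere, and continuity of the kernels off the diagonal (or a density argument) upgrades it to a pointwise statement.

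Next we insert \eqref{boundedp_t(x,y)-complex-alpha < 2}, applied with a smaller parameter $\epsilon'$ in place of $\epsilon$ so that $(d+\alpha)(1-\epsilon')\geq d+\alpha-\epsilon$; for instance $\epsilon'=\epsilon/(d+\alpha)$, capped below $1$. On the circle we have $|z|\sim t$ with constants depending only on $\delta$. Hence
$$
\Big(1\wedge\tfrac{|x|}{|z|^{1/\alpha}}\Big)^{-\sigma}\sim\Big(1\wedge\tfrac{|x|}{t^{1/\alpha}}\Big)^{-\sigma},
$$
which holds for both signs of $\sigma$, and likewise $|z|^{-d/\alpha}\sim t^{-d/\alpha}$ and $\frac{|z|^{1/\alpha}}{|z|^{1/\alpha}+|x-y|}\sim\frac{t^{1/\alpha}}{t^{1/\alpha}+|x-y|}$. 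The Cauchy integral therefore contributes a factor $k!\,(\delta t)^{-k}$ times the supremum of $|p_z|$ over the circle, which gives \eqref{boundedp_t k(x,y)-complex-alpha < 2}. The case $\alpha=2$ is handled the same way using \eqref{boundedp_t(x,y)-complex-alpha = 2}: on the circle $\re(1/z)\gtrsim 1/t$ and $|z|\sim t$, so the Gaussian factor becomes $\exp(-|x-y|^2/(c't))$ with a new constant $c'$.

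The substantive input is Proposition~\ref{heatkernelestimates-halfplane}, whose proof above still has one step outstanding: passing from the weighted on-diagonal bound \eqref{boundedp_t(x,y)-complex1'} to the off-diagonal decay through the Phragm\'en--Lindel\"of theorem of \cite{DuongRobinson1996,Merz2022}. That step needs the real-time bound together with the sector bound, and it interpolates the off-diagonal decay exponent at the cost of the factor $(1-\epsilon)$. The weights $w_z$ have to be handled by applying the Phragm\'en--Lindel\"of theorem to $w_z(x)p_z(x,y)w_z(y)$, and I expect this to be the main obstacle. Since $w_z$ depends on $|z|$ and is not analytic in $z$, the first thing I would try is to freeze the weight at $|z|\sim t$ along rays of fixed modulus and compare, using $w_z\sim w_t$ whenever $|z|\sim t$.

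Once Proposition~\ref{heatkernelestimates-halfplane} is in hand, the Cauchy step described above is routine.
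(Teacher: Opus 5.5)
Your argument is correct and is essentially the paper's proof. The paper also writes $H^k\me{-tH}$ via Cauchy's formula on the circle $|z-t|=\eta t$ inside a thin sector, applies Proposition~\ref{heatkernelestimates-halfplane} with $\epsilon/(d+\alpha)$ in place of $\epsilon$, and uses $|z|\sim t$ on that circle. The Phragm\'en--Lindel\"of step you flag belongs to the proof of Proposition~\ref{heatkernelestimates-halfplane}, which the paper settles by citing an abstract Phragm\'en--Lindel\"of theorem, so for this statement it is an established input rather than a gap.
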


\begin{proof}
  Applying Cauchy's formula, for every $t>0$ and $k\in \N$,
  $$
  H^k \me{-tH} = \f{(-1)^kk!}{2\pi i}\int_{|\xi-t|=\eta t} \me{-\xi H}\f{d\xi}{(\xi-t)^{k+1}},
  $$
  where $\eta>0$ is small enough so that $\{\xi: |\xi-t|=\eta t\}\subset \C_{\tilde\epsilon\pi/4}$ with $\tilde \epsilon = \f{\epsilon}{d+\alpha}$, and the integral does not depend on the choice of $\eta$. Thus, by Proposition \ref{heatkernelestimates-halfplane} and $|\xi|\simeq |\xi-t|\simeq t$,
  \begin{align*}
    |p_{t,k}(x,y)|
    & \leq C_{k,\tilde\epsilon} \left(1\wedge\f{|x|}{t^{1/\alpha}}\right)^{-\sigma} \left(1\wedge\f{|y|}{t^{1/\alpha}}\right)^{-\sigma}t^{-(k+d/\alpha)} \left(\f{t^{1/\alpha}+|x-y|}{t^{1/\alpha}}\right)^{-(d+\alpha)(1-\tilde\epsilon)} \\
    & \leq C_{k,\epsilon} \left(1\wedge\f{|x|}{t^{1/\alpha}}\right)^{-\sigma} \left(1\wedge\f{|y|}{t^{1/\alpha}}\right)^{-\sigma} t^{-(k+d/\alpha)} \left(\f{t^{1/\alpha}+|x-y|}{t^{1/\alpha}}\right)^{-(d+\alpha-\epsilon)}
  \end{align*}
  holds for all $x,y\in \Rd$ and $t>0$.
\end{proof}


\section*{Acknowledgments}

A.~I.~was supported by JSPS KAKENHI Grant Numbers JP21K03279, JP26K06834.
K.~M.~thanks Katsushika Division, Institute of Arts and Sciences of Tokyo University of Science for their hospitality during a stay in April and May 2026, when part of this work was carried out.
We are grateful to Gian Michele Graf for fruitful discussions.
The authors acknowledge the use of AI tools for the final proofreading of the manuscript. All mathematical arguments and proofs in the final manuscript were checked and written by the authors.

\newcommand{\etalchar}[1]{$^{#1}$}
\def\cprime{$'$}

\end{document}